\documentclass[12pt]{article}
\usepackage{fontspec}
\usepackage{amsmath,amssymb,amsthm}
\usepackage{xcolor}

\usepackage{hyperref}

\newcommand{\conc}{\mathcal{Q}}

\newtheorem{theorem}{Theorem}[section]
\newtheorem{lemma}[theorem]{Lemma}
\newtheorem{prop}[theorem]{Proposition}
\newtheorem{corollary}[theorem]{Corollary}
\newtheorem{conjecture}[theorem]{Conjecture}

\theoremstyle{definition}
\newtheorem{definition}[theorem]{Definition}

\renewenvironment{proof}{\noindent{\bf Proof}}{\hspace*{\fill}$\Box$}

\newenvironment{proofof}[1]{%
\noindent {\bf Proof of #1}}%
{\hspace*{\fill}$\Box$}

\newcommand{\pr}{\mathbb P}
\newcommand{\E}{\mathbb E}
\newcommand{\I}{\mathbb I}
\def\Var{{\mathrm{Var}}\,}
\newcommand{\SE}{\mathrm{SE}}

\newcommand{\epsgrid}{\epsilon_{grid}}
\newcommand{\tse}{{t_{SE}}}
\newcommand{\tbal}{{t_{SE}^{bal}}}
\newcommand{\topt}{t}
\newcommand{\supp}{\mathrm{supp}}
\newcommand{\lesspeaked}{\preccurlyeq}
\newcommand{\lesspeakedeps}{\lesspeaked_\epsilon}
\newcommand{\LPMSigma}{\Sigma_1}
\newcommand{\round}[1]{\left[ #1 \right]}

\newcommand{\Clogconcvariancenewcol}{2^{\frac{2}{3}}}
\newcommand{\Clogconcdomination}{3 \cdot 2^{\frac{4}{9}}}
\newcommand{\Cbalancedcontinuitylargea}{4 \cdot 2^{\frac{2}{3}} \cdot 3^{\frac{1}{2}}}

\newcommand{\Cbalancedcontinuitylargebr}{12}
\newcommand{\Cbalancedcontinuitylarge}{14}
\newcommand{\Cpeakednesslone}{\frac{2^{\frac{1}{3}}}{4}}
\newcommand{\Cpeakednessloneb}{\frac{65536 \cdot 2^{\frac{2}{3}}}{\epsilon^{4}}}
\newcommand{\Cpeakednessltwo}{\frac{320 \cdot 2^{\frac{1}{3}}}{\epsilon^{2}}}
\newcommand{\Clogconcdominationdelta}{\frac{2^{\frac{2}{3}} \delta^{3}}{108}}

\newcommand{\Cfewdroppedsimpler}{\frac{70}{\delta^{3}}}
\newcommand{\Cpeakednesslonevardelta}{\frac{16777216 \cdot 2^{\frac{2}{3}}}{\delta^{4}}}
\newcommand{\Cbalancedapproxoptimalfinala}{\frac{2684354560000 \cdot 2^{\frac{2}{3}}}{\delta^{4}}}
\newcommand{\Cbalancedapproxoptimalfinalb}{\frac{128000 \cdot 2^{\frac{1}{3}}}{\delta^{2}}}
\newcommand{\Cbalancedapproxoptimalfdeltasolved}{\frac{\delta^{3}}{\delta^{3} + 3456 \cdot 2^{\frac{1}{3}}}}
\newcommand{\Clesspeakedmedium}{72957}
\newcommand{\Ctsealphaapprox}{\frac{140 K^{4}}{\delta^{3}}}
\newcommand{\Cbalancedcontinuous}{8}
\newcommand{\Cbalancedcontinuoustwice}{16}
\newcommand{\Cbalancedcontinuousfourtimes}{32}
\newcommand{\Cmediumcontnew}{17^{\frac{1}{2}} \cdot 2^{\frac{2}{3}}}

\newcommand{\Cmidsizealphacontinuity}{39}
\newcommand{\Cbalancedcontinuitylargeapp}{28}
\newcommand{\Ctwobalanced}{99}
\newcommand{\CtwobalancedKbound}{1.49 \cdot 10^{16}}
\newcommand{\Ctse}{\frac{6.89 \cdot 10^{66}}{\delta^{27}}}
\newcommand{\Cnineteen}{7}
\newcommand{\Clpmhip}{\left(d + 1\right)^{d}}
\newcommand{\Clpmbee}{16 \cdot 2^{\frac{1}{2}}}

\title{An asymptotically optimal bound for the concentration function of a sum of independent
integer
random variables}

\author{Valentas Kurauskas\footnote{Faculty of Mathematics and Informatics, Vilnius University, Naugarduko 24, LT-03225 Vilnius, Lithuania. Email: valentas.kurauskas@mif.vu.lt.}}

\date{\today}

\begin{document}

\maketitle

\begin{abstract}
    For a 
    random
    variable $X$ define $\conc(X) = \sup_{x \in \mathbb{R}} \pr(X=x)$.

    Let $X_1, \dots, X_n$ be independent integer random variables.
    Suppose $\conc(X_i) \le \alpha_i \in (0,1]$ for each $i \in \{1, \dots, n\}$.
    Juškevičius (2023) conjectured that 
    $\conc(X_1 + \dots +X_n) \le \conc(Y_1 + \dots+ Y_n)$
    where $Y_1, \dots, Y_n$ are independent and
    $Y_i$ is a random integer variable with $\conc(Y_i) =\alpha_i$ that has the smallest variance, i.e. the distribution of $Y_i$ has probabilities $\alpha_i, \dots, \alpha_i, \beta_i$ or probabilities $\beta_i, \alpha_i, \dots, \alpha_i$ on some interval of integers, where $0 \le \beta_i < \alpha_i$.

    We prove this conjecture asymptotically:
    i.e., we show that for each $\delta > 0$ there is $V_0 = V_0(\delta)$ such that
    if $\Var (\sum Y_i) \ge V_0$ then $\conc(\sum X_i) \le (1+\delta) \conc(\sum Y_i)$.

    This implies an analogous asymptotically optimal inequality for concentration at a point
    when $X_1$, $\dots$, $X_n$ take values in a 
    separable Hilbert space.

    Our long and technical argument relies on several non-trivial previous
    results including an inverse Littlewood--Offord theorem and
    an approximation in total variation distance of sums of multivariate
    lattice random vectors by a discretized Gaussian distribution.

%
\end{abstract}

\section{Introduction}
\label{sec.introduction}


The \emph{concentration function} 
of
a random variable $X$ at value $t > 0$ will be defined as $\conc(X, t) = \sup_{x \in \mathbb{R}} \pr(X \in (x, x+t))$.
We will denote the maximum concentration at a point by $\conc(X) = \sup_{x \in \mathbb{R}} \pr(X=x)$.
If 
$X$ is an integer random variable, then
simply
$\conc(X,1) = \conc(X) = \sup_{x \in \mathbb{Z}} \pr(X = x)$. 

For any random variable and a positive integer $k$ we define $\conc_k(X)= \sup_{A: |A| = k} \pr(X \in A)$, that is $\conc_k(X)$ is the sum of $k$ largest probability masses of (the law of) $X$.
If the law of $X$ is $\mu$, we will use the symbols $X$ and $\mu$ interchangeably, so $\conc(\mu)$ is the same as $\conc(X)$ and similarly for other functionals in this paper.

Let $\alpha_1, \dots, \alpha_n \in (0,1]$. What is the maximum possible
value of $\conc(S_n)$ (or $\conc(S_n, L)$) if $S_n = X_1 + \dots + X_n$ and $X_1, \dots, X_n$ 
are independent random variables such that for each $i$ we have $\conc(X_i) \le \alpha_i$ (or $\conc(X_i, \lambda_i) \le \alpha_i)$)? 
These questions and their variants have been the subject of quite a lot of research 
over the past 
90 years.
They originated in the works of L\'evy, Doeblin and Kolmogorov, also in those of Littlewood and Offord, and were further studied by Esseen, Kesten, Rogozin and others, see, for example, the paper \cite{miroshnikovrogozin} and the references therein.
Nevertheless, 
for a general sequence of constraints,
inequalities obtained using classical characteristic function methods, e.g. \cite{miroshnikovrogozin,ushakov}
can be
suboptimal by at least a constant factor.

A subtler type of results are related to determining sequences
of random variables that satisfy the same conditions as $(X_i)$
and maximize the concentration function value of the sum.
If
the extremizing sequence has a simple structure, 
we can easily get sharp quantitative bounds.
For the interval version
(in the case when $\lambda_1 = \dots \lambda_n = L$)
optimal results of this kind have been obtained in~\cite{tj2024} (single-dimensional case)
and \cite{jk2024} (asymptotically, high-dimensional case).
%
The question of optimal bounds for the pointwise
version of the problem has remained open.
While the 
two versions
of the concentration function problem
look
similar, 
they are not equivalent and the extremizers are different in general: they are symmetric in the interval case, see \cite{tj2024, jk2024},
and non-symmetric in the pointwise case, see \cite{jk2021} and the definition of $\nu_\alpha$ below.
This makes the pointwise problem more technically challenging and 
we need to use
completely different ideas
in this work.

%
It is known, see Ushakov~\cite{ushakov}, that the general problem of 
maximizing
the pointwise concentration $\conc(S_n)$ given pointwise bounds for the summands can be reduced to
the case of \emph{integer} random variables, see Corollary~\ref{cor.general} below. 
To our knowledge, the best previous results for the concentration function problem for integer random variables, or its equivalent formulation in the language of sums, have been obtained in \cite{HLP,lev1998,madimanwangwoo2018}. 
For example, building on Gabriel's inequality, see Hardy, Littlewood and P\'{o}lya~\cite{HLP}, Lev~\cite{lev1998} showed that if each $X_i$ is a \emph{uniform random variable} on a set of size $k_i$ then $\conc(S_n) \le \conc(Y_1 + \dots + Y_n)$
where $Y_1, \dots, Y_n$ are independent and $Y_i$ is distributed uniformly on the interval $\{0, \dots, k_i - 1\}$. In other words, in the uniform case, removing ``gaps'' in the supports of all the random summands can only increase the 
maximum probability of the sum. Madiman, Wang and Woo~\cite{madimanwangwoo2018} obtained a similar result that works for a more general family of distributions of integers that includes all two-point distributions, see Theorem~\ref{thm.madimanwangwoo} below.
Given $\alpha \in (0,1]$ let $\nu_\alpha$ be the probability measure
which satisfies 
\[
\nu_\alpha(\{i\}) = \alpha \mbox{ for } i \in \{0, \dots, \lfloor \alpha^{-1} \rfloor - 1\}
\]
and 
\[
 \nu_\alpha(\{\lfloor \alpha^{-1} \rfloor\}) = 1 - \alpha \lfloor \alpha^{-1} \rfloor.
\]
For example, when $\alpha \ge \frac 1 2$, $\nu_\alpha$ is 
the Bernoulli distribution with parameter $1-\alpha$ and when $\alpha = \frac 1 k$ for an integer $k$, it is the uniform distribution on $\{0,\dots,k-1\}$. 

It will be important that $\nu_\alpha$ has minimum 
variance among probability distributions
$\mu$ on integers with $\conc(\mu) \le \alpha$, this can be seen using a standard argument and the main result of \cite{ak2025}.

In this work we prove the following theorem.
\begin{theorem}\label{thm.tse}
    For any $\delta > 0$ there exists a number $V_0(\delta)$ such that the following holds.

    Let $X_1, \dots, X_n$ be a sequence of independent integer random variables such that $\conc(X_i) \le \alpha_i$,
    $\alpha_i \in (0,1]$ and let 
    $S_n = \sum_{i=1}^n X_i$. 
    Let $Y_1, \dots, Y_n$ be independent, $Y_i \sim \nu_{\alpha_i}$.

    If \begin{equation}\label{eq.tse.var}
    \Var (\sum_{i=1}^n Y_i) 
    \ge
    V_0(\delta)
    \end{equation} then 
    \begin{equation}\label{eq.tse}
        \conc(S_n) \le \conc(\sum_{i=1}^n a_i Y_i ) (1 + \delta)
    \end{equation}
    for some $a_1, \dots, a_n \in \{-1, 1\}$. 
\end{theorem}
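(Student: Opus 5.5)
We will prove the theorem by a chain of reductions, each costing at most a factor $1+O(\delta)$.

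\emph{Step 1: local CLT for the extremal sum.} Write $\sigma^2 = \Var(\sum Y_i)$. Since $\nu_{\alpha_i}$ lives on an interval of integers, $\sum a_i Y_i$ satisfies a local limit theorem once $\sigma^2$ is large. Hence
\[
\conc\Bigl(\sum a_i Y_i\Bigr) = \frac{1+o(1)}{\sqrt{2\pi}\,\sigma}
\]
whenever the summands are ``balanced''. The term ``balanced'' must be read with care. When many $\alpha_i$ are close to $1$, the sum is essentially binomial plus a few large pieces. If a few summands dominate the variance, the local CLT fails, and there the comparison has to be made directly. The signs $a_i$ are needed exactly in this degenerate regime, where a single non-symmetric $\nu_\alpha$ carries a large share of the variance. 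So the first task is a dichotomy. Either no single $Y_i$ (or bounded group) carries more than an $\epsilon(\delta)$ fraction of $\sigma^2$, or we peel off the large summands and treat them by exact comparison, conditioning on the rest.

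\emph{Step 2: reduce the $X_i$ to a structured form.} If $\conc(S_n)$ exceeds $(1+\delta)/(\sqrt{2\pi}\sigma)$, an inverse Littlewood--Offord theorem applies. After conditioning each $X_i$ on pairs of values (a symmetrization or decomposition of each $X_i$ into a mixture of two-point or short-interval laws), it forces most of the mass of the relevant steps to lie in a generalized arithmetic progression of bounded rank $d$. We then write $S_n$ as a projection of a sum of independent lattice random vectors in $\mathbb{Z}^d$. The quoted multivariate approximation in total variation by a discretized Gaussian then gives
\[
\conc(S_n)\approx \max_x \sum_{v:\,\langle v,\text{gens}\rangle=x} \phi_\Sigma(v).
\]
The variance of each summand's contribution is at least the minimal variance, namely that of $\nu_{\alpha_i}$, by the minimal-variance property stated before the theorem. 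Hence the covariance $\Sigma$ dominates in the appropriate sense. For rank $1$ with step $1$ this directly gives $\conc(S_n)\le (1+\delta)/(\sqrt{2\pi}\sigma)$. For higher rank, or for steps larger than $1$, the projected lattice Gaussian is more spread out, or its mass is spread over a coarser lattice with correspondingly larger variance. We will verify this with a Poisson summation estimate.

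\emph{Step 3: the mixed regime.} Here summands with $\alpha_i$ close to $1$ (nearly degenerate) coexist with others. We will use a peakedness comparison. Via the Madiman--Wang--Woo result (Theorem~\ref{thm.madimanwangwoo}) for two-point laws, replace each $X_i$ with $\conc(X_i)\ge 1/2$ by its Bernoulli extremizer, with a suitable sign, without decreasing concentration. After this replacement, the remaining summands are handled by Step 2.

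\emph{Main obstacle.} The main obstacle will be the quantitative uniformity across all these regimes. $V_0$ must depend only on $\delta$, while the inverse theorem and the Gaussian approximation have constants depending on $d$ and on the fraction of variance captured. I would try first a continuity argument in the $\alpha_i$: group the $\alpha_i$ on an $\epsilon$-grid, and show that $\conc(\sum a_iY_i)$ changes by at most a factor $1+O(\epsilon)$. This reduces matters to finitely many profile types. A compactness-style case analysis (small, medium and large $\alpha$) would then fix $d\le d(\delta)$ and produce an explicit, if enormous, $V_0(\delta)$.
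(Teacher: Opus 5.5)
There is a genuine gap in the proposal. Your Step~2 is close in spirit to the paper's Part~II (inverse Littlewood--Offord, a total-variation local limit theorem for lattice vectors, the minimal-variance property of $\nu_\alpha$). The gap is exactly where the regimes must be glued together, and the step you offer there is false.

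In Step~3 you use Theorem~\ref{thm.madimanwangwoo} to replace each $X_i$ with $\conc(X_i)\ge\frac12$ by a signed Bernoulli extremizer ``without decreasing concentration''. That theorem gives $X_1+\dots+X_n\lesspeaked X_1^\#+\dots+X_n^\#$ only when \emph{every} summand is \#-log-concave. Inside an arbitrary sum the replacement can lower $\conc$. Take $X$ with values $0,2$ and probabilities $0.6,0.4$, and $W$ with values $0,2,4$ and probabilities $0.4,0.2,0.4$. Then $\conc(X+W)=0.32$, while moving the atom of $X$ at $2$ to $1$ or to $-1$ gives $\conc=0.24$.

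The ``exact comparison, conditioning on the rest'' in Step~1 has the same problem. For three or more dominant non-symmetric summands it is Conjecture~\ref{conj.main} itself. In mixed situations (a few large summands plus a CLT-like bulk, each carrying a constant share of the variance) neither branch of your dichotomy applies.

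Step~2 also cannot serve as the universal engine. Suppose all $\alpha_i=1/k$ with $k\gg n^{C}$. The target is then of order $1/(k\sqrt n)<n^{-C}$, so Theorem~\ref{thm.nguyenvu2011} receives no input. Even when it applies, it only places two-point \emph{differences} in a GAP; covering whole supports by a bounded dilate needs bounded support size and quantized masses. This is why the paper splits into three regimes:
\begin{itemize}
\item medium $\alpha_i\in[\frac1K,\frac12]\cap K^{-2}\mathbb{Z}$: the inverse Littlewood--Offord and local limit machinery, confined to Lemma~\ref{lem.less_peaked_medium};
\item small $\alpha_i$: round $\alpha_i^{-1}$ up to odd integers and apply Theorem~\ref{thm.HLP};
\item $\alpha_i\ge\frac12$: Theorem~\ref{thm.madimanwangwoo} applied to the block plus a uniform $U_k$ (so all summands are \#-log-concave), Berry--Esseen, and Lemma~\ref{lem.strongly_balanced}.
\end{itemize}

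Once blocks are handled by different tools, pointwise bounds like $\conc(S_n)\le(1+\delta)/(\sqrt{2\pi}\sigma)$ do not survive convolution. The missing idea is to prove, for each block, $\epsilon$-domination of the whole profile $\conc_j$, $j\ge1$, by a \emph{symmetric} log-concave law (Definition~\ref{def.lesspeaked}). The blocks are then combined via Lemma~\ref{lem.less_peaked_coupling} and Lemmas~\ref{lem.peakednessl1}, \ref{lem.peakednessl2}. Controlling $\conc_j$ for $j$ of order $\sigma$ means bounding sums of the $j$ largest atoms, not interval probabilities. That is why the paper needs Odlyzko--Richmond on top of Berry--Esseen.

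Finally, your grid step needs more than continuity of $\conc(\sum a_iY_i)$. A uniform grid fails near $\alpha=1$ and near $\alpha=0$. You need:
\begin{itemize}
\item a sandwich $\beta'\le\alpha\le\beta''$ with balanced truncations;
\item monotonicity of $\topt$ and $\tse$ (Lemmas~\ref{lem.topt_nondec}, \ref{lem.tse_nondec});
\item continuity of $\tbal$.
\end{itemize}
The paper gets these by pairing adjacent $\alpha_i$ near $1$, using a $K^{-2}$ grid in the middle, and using odd reciprocals for small $\alpha_i$.
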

As 
the sequence $(a_i Y_i)$ itself satisfies the conditions of the theorem, (\ref{eq.tse}) is optimal up to the factor $(1+\delta)$.

For a random element $X$ in a Hilbert space $\mathbb{H}$ we 
can extend the notation by defining $Q(X) = \sup_{x \in \mathbb{H}} \pr(X = x)$.
The following corollary follows immediately by applying
the reduction of~\cite{ushakov}.

\begin{corollary}\label{cor.general}
Theorem~\ref{thm.tse} extends verbatim to the case 
when $X_1, \dots, X_n$ are independent random elements 
from a separable Hilbert space $\mathbb{H}$ over the reals.
In particular, the constant $V_0(\delta)$ does not depend on $\mathbb{H}$.
\end{corollary}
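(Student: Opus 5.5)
We need to prove Corollary~\ref{cor.general}: Theorem~\ref{thm.tse} extends to independent random elements of a separable Hilbert space $\mathbb{H}$, with the same $V_0(\delta)$. The plan is to reduce the Hilbert-space setting to integer random variables by a projection that preserves every atom, and then apply Theorem~\ref{thm.tse} directly.

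\textbf{Step 1: reduce to countably many atoms.} Fix $\epsilon > 0$. Each $X_i$ has at most countably many atoms, so the set $A_i \subset \mathbb{H}$ of its atoms is countable. Write $p = \conc(S_n)$. If $p = 0$ there is nothing to prove. Otherwise choose $x$ with $\pr(S_n = x) \ge p - \epsilon$. Only the atomic parts of the $X_i$ can contribute to a point mass of the sum. Concretely, conditioning on which summands take atomic values shows that $\pr(S_n = x)$ is achieved through configurations in which every summand is atomic. A diffuse component, once independent of the rest, places zero mass on any single point.

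\textbf{Step 2: choose a linear map that is injective on the relevant sums.} Consider the countable set $D$ of all differences $u - v$, where $u, v$ range over $\{a_1 + \dots + a_n : a_i \in A_i \cup \{0\}\}$, together with $A_i - A_i$. We choose a continuous linear functional $f(y) = \langle y, e \rangle$ with $f(d) \neq 0$ for every nonzero $d \in D$. Such an $e$ exists because for each fixed $d \neq 0$ the set of $e$ with $\langle d, e\rangle = 0$ is a closed hyperplane, which is nowhere dense, so a countable union of these cannot cover $\mathbb{H}$ by Baire's theorem. Then $f$ is injective on each $A_i$ and on the sums of atoms. It follows that $\conc(f(X_i)) = \conc(X_i) \le \alpha_i$, and that $\pr(f(S_n) = f(x)) \ge \pr(S_n = x)$.

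\textbf{Step 3: pass from real to integer variables.} The real variables $f(X_i)$ take atomic values in a countable set $B_i$, and the $\mathbb{Q}$-span of $\bigcup_i B_i$ is countable. Following Ushakov's reduction, I would fix a $\mathbb{Q}$-linear map $g$ from this span into $\mathbb{Q}$, again chosen generic so that it stays injective on the relevant sums. I would then truncate to finitely many atoms carrying all but $\epsilon$ of the mass, clear denominators to land in $\mathbb{Z}$, and send the non-atomic part to integers far away. The resulting independent integer variables $Z_i$ satisfy $\conc(Z_i) \le \alpha_i$ and $\conc(\sum Z_i) \ge p - 2\epsilon$.

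The comparison variables $Y_i \sim \nu_{\alpha_i}$ depend only on the $\alpha_i$, so hypothesis (\ref{eq.tse.var}) is unchanged by this construction. Theorem~\ref{thm.tse} then gives $p - 2\epsilon \le (1+\delta)\conc(\sum a_i Y_i)$. Letting $\epsilon \to 0$, and noting that there are only finitely many sign patterns $(a_i)$, yields the claim with the same $V_0(\delta)$, independent of $\mathbb{H}$.

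The main obstacle is Step 3: making the projection injective on sums while correctly controlling the non-atomic components. I would handle the diffuse parts by the conditioning argument in Step 1 before projecting, rather than trying to control them after projection.
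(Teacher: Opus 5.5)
Your overall route is the paper's. The paper quotes the reduction from the proof of Ushakov's Lemma~3 (independent integer $Z_i$ with $\conc(Z_i)=\conc(X_i)$ and $\conc(\sum X_i)\le\conc(\sum Z_i)$) and then applies Theorem~\ref{thm.tse}. As you note, the $Y_i$, and hence condition (\ref{eq.tse.var}), depend only on the $\alpha_i$.

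Where you reconstruct the reduction yourself, the order of operations in Step~3 fails as written. A $\mathbb{Q}$-linear map $g$ into $\mathbb{Q}$ that is injective on a prescribed countable set of differences need not exist. Take $\mathbb{H}=\mathbb{R}$ and let $X_1$ have an atom at every point of $\mathbb{Q}+\mathbb{Q}\sqrt2$. The $\mathbb{Q}$-span of $B_1$ is then $2$-dimensional, so every $\mathbb{Q}$-linear $g$ into $\mathbb{Q}$ has a nonzero kernel $d$ there. Since $B_1-B_1$ is the whole span, $g$ merges the largest atom $b_0$ with the atom $b_0+d$, giving $\conc(g(f(X_1)))>\conc(X_1)$; this exceeds $\alpha_1$ whenever $\conc(X_1)=\alpha_1$. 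The fix is to reverse the order: first truncate each $X_i$ to finitely many atoms, losing mass at most $\epsilon/n$ per summand, and only then choose the map. The additive group generated by finitely many vectors of $\mathbb{H}$ is finitely generated and torsion-free, hence $\cong\mathbb{Z}^m$. Composing with $z\mapsto\sum_j z_jT^{j-1}$ for $T$ large gives a homomorphism $h$ into $\mathbb{Z}$ that is injective on any prescribed finite set. This also makes the Baire projection of Step~2 and the clearing of denominators unnecessary.

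Second, your claim in Step~2 that $\conc(f(X_i))=\conc(X_i)$ follows from injectivity on $A_i$ is unjustified when $X_i$ has a non-atomic part, because a projection can create atoms out of diffuse mass. You do not need this claim. Build $Z_i$ from the retained atoms mapped by $h$, each of mass at most $\conc(X_i)\le\alpha_i$. Place the remaining mass (the diffuse part plus the discarded atoms) in pieces of size at most $\alpha_i$ on distinct integers avoiding the images of the retained atoms. Putting that mass on a single far-away integer could violate $\conc(Z_i)\le\alpha_i$, for instance when $X_i$ is purely diffuse.

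Injectivity on sums is never needed. Additivity of $h$ already gives $\conc(\sum Z_i)\ge\pr(S_n=x)-\epsilon$, and injectivity is required only on the retained atoms of each single summand. With these repairs, your limiting argument over the finitely many sign patterns completes the proof with the same $V_0(\delta)$.
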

Since we can embed the random variables $Y_i$
into $\mathbb{H}$ by multiplying each by a fixed vector
from $\mathbb{H}$, the corollary is also optimal up to
the constant factor $(1+\delta)$.

Prior to this work, 
similar asymptotically tight 
upper bounds 
were available in the ``uniform'' case
$\alpha_1 = \dots = \alpha_n$ \cite{jk2021, ushakov},
and exact ones could be deduced,
via the reduction of~\cite{ushakov}, 
in the case $\alpha_i^{-1} \in [1,2] \cup \mathbb{Z}$ for all $i \in \{1,\dots,n\}$ from
the results of \cite{lev1998,madimanwangwoo2018} mentioned above.
The latter results could also be used to show bounds similar to (\ref{eq.tse}) for more general sequences $(\alpha_i)$,
however, with an extra constant factor of up to $\frac {2 \sqrt 6} 3$.
Alternatively, they can be derived from a very general framework of Madiman, Melbourne and Xu~\cite{madimanmelbournexu2017}.

This work has been motivated by a conjecture that an exact version of (\ref{eq.tse}) holds, see also \cite{bobkovmarsigliettimelbourne,jk2021,madimanwangwoo2018}:
\begin{conjecture}(Juškevičius~\cite{tjp})\label{conj.main}
  Let $X_1, \dots, X_n$ and $Y_1, \dots, Y_n$ be as in Theorem~\ref{thm.tse}.
  Then there are $a_1, \dots, a_n \in \{-1,1\}$
  such that $\conc(\sum_{i=1}^n X_i) \le \conc(\sum_{i=1}^n a_i Y_i)$.
\end{conjecture}

Let us explain 
the need of a
general sequence $(a_i) \in \{-1,1\}^n$ in the statements of both the theorem and the conjecture. 
Consider the special case $\alpha_1 = \dots = \alpha_n = \alpha$. When $n$ is even, the balanced choice, for example $a_i = (-1)^i, i \in \{1,\dots,n\}$
maximizes $\conc(\sum a_i Y_i)$ \cite{jk2021, singhal}. 
However, when $n$ is odd and, for example, $\alpha \in (\frac 1 2,1)$, the optimal number of -1s and 1s can depend
on $\alpha$ in a non-trivial way \cite{singhal}. On the other hand, it seems plausible that
\emph{asymptotically}, in the general setting of Theorem~\ref{thm.tse}, the choice of these signs has only a second order effect.
%

Theorem~\ref{thm.tse} shows that 
Conjecture~\ref{conj.main}
is 
close to being true 
in the limit; however, the constant $V_0(\delta)$ that arises
in our proof is huge and not explicit. 
If 
the conjecture 
is indeed true, its proof 
appears to be 
beyond the
reach of the methods
we are 
currently
aware of.

The rest of the paper consists of two 
major 
parts: Part I (Section~\ref{sec.main_proof}) and 
 Part II (Section~\ref{sec.nontrivial}). 
The final proof of Theorem~\ref{thm.tse} is given in Section~\ref{subsec.entry_point} of Part I. 
The most difficult result needed for the proof,
Lemma~\ref{lem.less_peaked_medium}, is stated in Part I,
but the content of the whole Part II is devoted to its proof.



A reader who wishes to focus on the less trivial Part II 
may 
 proceed straight to
Section~\ref{sec.nontrivial} after reading just the definitions in Section~\ref{sec.definitions} and
Section~\ref{subsec.epsilon_less_peaked}, and the statement
of  Lemma~\ref{lem.less_peaked_medium}.

Intuitively, Lemma~\ref{lem.less_peaked_medium} that we prove in Part II can be seen as a special case of
Theorem~\ref{thm.tse} where for $i\in\{1,\dots,n-1\}$ the random variables $X_i$ are distributed according to some ``rearrangement'' of the distribution $\nu_{\alpha_i}$ (i.e. each $X_i$ has $\lfloor \alpha_i^{-1} \rfloor$ atoms with probability mass $\alpha_i$ and, if there is any, an atom with the remaining mass), for $i \in \{1,\dots, n-1\}$ the values $\alpha_i$ are bounded away from zero, ``quantized'' and ``balanced'' (Definition~\ref{def.balanced}),
while the last random variable $X_n$ is a uniform random variable on an arbitrary finite set of integers.

\section {Part I: Main proof}
\label{sec.main_proof}

In this part we state the necessary lemmas and prove the main result, Theorem~\ref{thm.tse}.

\subsection{Definitions}
\label{sec.definitions}


A measure $\mu$ on integers is called \emph{unimodal} if there is $m \in \mathbb{Z}$ such that $\mu(\{m\}) \ge \mu(\{m+1\}) \ge \dots$ and $\mu(\{m\}) \ge \mu(\{m-1\}) \ge \dots$. Any such $m$ is called a \emph{mode} of $\mu$.

In their work on rearrangement inequalities, Hardy, Littlewood and P\'{o}lya \cite{HLP} introduced the notation $\mu^+$, ${}^+\mu$ and $\mu^*$ 
which will be defined
as follows.
Let $\mu$ be a finite measure on 
integers. We call a measure $\mu'$ on $\mathbb{Z}$
a rearrangement of $\mu$ if there exists a bijection $f: \mathbb{Z} \to \mathbb{Z}$
such that $\mu'(\{x\}) = \mu(\{f(x)\})$ for each $x \in \mathbb{Z}$.
$\mu^+$ is the rearrangement of $\mu$ that satisfies
 $\mu^+(\{0\}) \ge \mu^+(\{1\}) \ge \mu^+(\{-1\}) \ge \mu^+(\{2\}) \ge \mu^+(\{-2\}) \ge \dots$. Similarly ${}^+\mu$ is the rearrangement 
 of $\mu$ that satisfies  ${}^+\mu(\{0\}) \ge {}^+\mu(\{-1\}) \ge {}^+\mu(\{1\}) \ge {}^+\mu(\{-2\}) \ge {}^+\mu(\{2\}) \ge \dots$. If $\mu^+ = {}^+\mu$ then we say that $\mu$ has a \emph{symmetric decreasing} rearrangement and denote this symmetric measure by $\mu^*$.
 Note that for a symmetric unimodal measure on integers 
 ${}^+\mu = \mu^+ = \mu^* = \mu$. When the law of an integer random variable $X$ is $\mu$, we let $X^+$, $^+X$ and $X^*$ denote random variables distributed
 according to $\mu^+$, ${}^+\mu$ and $\mu^*$ respectively, whenever these distributions exist.

\begin{definition}\label{def.extremal}
    Let $\alpha \in (0,1]$. Let $k =  \lfloor \alpha^{-1} \rfloor$.
    We call a (probability) measure $\mu$ \emph{extremal} for $\alpha$ if there is a set $S \subset \mathbb{Z}$, 
    $|S|=k$ and a number $b \in \mathbb{Z} \setminus S$ such that 
    $\mu(i)=\alpha$ for all $i \in S$ and $\mu(b) = 1-k\alpha$.

\end{definition}

Recall that an extremal measure for $\alpha$ obtained by taking $S=\{0, \dots, k-1\}$ and $b=k$ is denoted by $\nu_\alpha$.

\begin{definition}\label{def.standard_extremal} 
    Let $\alpha \in (0,1]$.
    We call a (probability) measure $\mu$ \emph{standard extremal} for $\alpha$ if it is the distribution of $X + i$ where $i$ is an integer and 
    $X \sim \nu_\alpha$ or $-X \sim \nu_\alpha$,
    i. e. the support of $\mu$ is an interval and the smallest atom, whenever it is unique, is on one of the endpoints of the interval. We denote the set of standard extremal measures for $\alpha$ by $\SE(\alpha)$.
    Any measure which is 
    standard extremal for some $\alpha$ will be called
    \emph{standard extremal}.
\end{definition}

\begin{definition}\label{def.optimal_se}
    Let $\alpha=(\alpha_1, \dots, \alpha_n) \in (0,1]^n$.
    A sequence of
    standard extremal measures $(\mu_i, i \in \{1, \dots,n\})$ will be called a 
    \emph{standard extremal sequence (of measures) for $\alpha$} 
    if
    for each $i$ $\mu_i$ is 
    standard extremal for $\alpha_i$.
    It will be called \emph{optimal} 
    if 
    its convolution maximizes the concentration over the standard extremal sequences for $\alpha$:
    \[
        (\mu_i) \in \arg \max_{(\mu_i): \forall i\, \mu_i \in \SE(\alpha_i)} \conc(*_{i=1}^n \mu_i).
    \]
    We denote the corresponding maximum concentration function value by $\tse(\alpha_1, \dots, \alpha_n)$. 
\end{definition}

We will use the definitions above and the next one interchangeably both for sequences $(\mu_i)$ of measures and for sequences $(X_i)$
of corresponding independent random variables.

\begin{definition}\label{def.balanced}
    Let $(\alpha_1, \dots, \alpha_n) \in (0,1]^n$. A standard extremal sequence $(X_i)$ for $(\alpha_i)$ will be
    called 
    \emph{balanced}
    if each index $i \in \{1,\dots,n\}$ 
    can be mapped 
    bijectively to
    $j_i \in \{1, \dots, n\}$ 
    so that $X_{j_i} \sim - X_i$.


    We call $(\alpha_1, \dots, \alpha_n) \in (0,1]^n$ \emph{balanced} if
    a balanced standard extremal sequence exists for $(\alpha_i)$, equivalently, if for each $\alpha \in (0,1]$ either $\alpha^{-1}$ is an odd integer or $|\{i: \alpha_i=\alpha\}|$ is even. We call $(\alpha_1, \dots, \alpha_n)$ \emph{strongly balanced} if $|\{i: \alpha_i=\alpha\}|$ is even for each $\alpha \in (0,1]$.

    $\pr(\sum X_i = 0)$ equals $\conc(\sum X_i)$ and is the same for each standard extremal balanced sequence $(X_i)$ for $(\alpha_1, \dots, \alpha_n)$.
    We denote this probability by $\tbal(\alpha_1, \dots, \alpha_n)$.
\end{definition}

Finally, the ultimate quantity of our interest will be the maximum concentration
a sum of random variables with given concentration bounds can achieve:
\begin{definition} Given $\alpha = (\alpha_1, \dots, \alpha_n) \in (0,1]^n$ define
    \[
        \topt(\alpha_1, \dots, \alpha_n) := \sup_{(\mu_i): \forall i \, \conc(\mu_i) \le \alpha_i} \conc(*_{i=1}^n \mu_i).  
    \]
\end{definition}

A probability measure $\mu$ on $\mathbb{Z}$ is called \emph{strongly unimodal} if its convolution with any unimodal probability measure
on $\mathbb{Z}$ is unimodal. It is called log-concave if for all $i \in \mathbb{Z}$
\[
    \mu(\{i\})^2 \ge \mu(\{i-1\}) \mu(\{i+1\}).
\]
Keilson and Gerber~\cite{keilsongerber} proved that (a) a probability measure $\mu$ on $\mathbb{Z}$ is log-concave if and only if it is strongly unimodal (their Theorem~3) and (b) the class of strongly unimodal measures is closed under convolution (their Theorem~2). 

As our measures $\nu_\alpha$ are log-concave, they are strongly unimodal and
the convolution of each standard extremal sequence of measures is also log-concave and strongly unimodal.
This property will be of fundamental importance in our proofs.

\subsection{Useful lemmas}

In this section we present a few simple but essential lemmas.

\begin{lemma}\label{lem.variance}
    Let $Y_\alpha \sim \nu_{\alpha}$. Then
    \[
       \Var Y_\alpha 
= \frac 1 {12} \left(-3 \alpha^2 k^2 (k+1)^2 + 2\alpha k (k+1) (2k+1) \right), \quad  k = \lfloor \alpha^{-1} \rfloor.
    \]
    In particular, if $\alpha = \frac 1 k$ for a positive integer $k$ then
    \[
         \Var Y_\alpha = \frac {k^2 - 1} {12}.
    \]
    If $\alpha \ge \frac 1 2$ then
    \[
        \Var Y_\alpha = \alpha (1-\alpha).
    \]
    Furthermore, let $f$ be defined by $f(\alpha) = \Var Y_\alpha$. 
    Then $f$ is continuous and non-increasing for $\alpha \in (0,1)$.
    If $k$ is a positive integer, then in the interval $(\frac 1 {k+1}, \frac 1 k)$
    $f$ is differentiable and for $\alpha$ in this interval $f''(\alpha)<0$ and
    \[
        -\frac 1 6 k (k+1)(k +2)  \le  f'(\alpha) \le - \frac 1 6  k (k^2-1) \le 0.
    \]
    Finally, if $k \in \mathbb{Z}$, $k \ge 2$ then
    \[
        \partial_+ f(\frac 1 {k+1}) = 
    \sup_{\alpha \in (\frac 1 {k+1}, \frac 1 {k})} f'(\alpha)
=
 - \frac 1 6  k (k^2-1) =
    \inf_{\alpha \in (\frac 1 {k}, \frac 1 {k-1})} f'(\alpha)
     = \partial_- f(\frac 1 {k-1}), 
    \]
    where $\partial_+ f(\alpha)$, $\partial_- f(\alpha)$ denote the right and left derivatives of $f$ at $\alpha$ respectively.
\end{lemma}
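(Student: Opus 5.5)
The proof is a direct computation followed by elementary calculus on each interval $(\frac 1{k+1},\frac 1k)$. First I compute the moments of $Y_\alpha\sim\nu_\alpha$ with $k=\lfloor\alpha^{-1}\rfloor$ and $\beta=1-k\alpha$:
\[
\E Y_\alpha=\alpha\frac{k(k-1)}2+\beta k=k-\alpha\frac{k(k+1)}2,
\qquad
\E Y_\alpha^2=\alpha\frac{(k-1)k(2k-1)}6+\beta k^2 = k^2-\alpha\frac{k(k+1)(4k-1)}{6}.
\]
Expanding $\Var Y_\alpha=\E Y_\alpha^2-(\E Y_\alpha)^2$, the $k^2$ terms cancel. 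The linear terms combine to $\alpha k(k+1)\bigl(k-\frac{4k-1}{6}\bigr)=\alpha\frac{k(k+1)(2k+1)}{6}$, and the quadratic term is $-\alpha^2k^2(k+1)^2/4$. This is exactly the stated formula. The special cases follow by substitution:
\begin{itemize}
\item $\alpha=1/k$ gives $-\frac{(k+1)^2}{4}+\frac{(k+1)(2k+1)}{6}=\frac{k^2-1}{12}$;
\item $\alpha\ge\frac12$ with $k=1$ gives $\alpha-\alpha^2$, and at $\alpha=1$ the variance is $0$, consistent with the Bernoulli description.
\end{itemize}

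For the analytic claims, write $f_k(\alpha)=\frac1{12}\bigl(-3\alpha^2k^2(k+1)^2+2\alpha k(k+1)(2k+1)\bigr)$. This is a concave quadratic, so $f''<0$ on each open interval $(\frac1{k+1},\frac1k)$, where $f=f_k$.

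Continuity at the breakpoints $\alpha=\frac1k$ is checked by evaluating $f_{k-1}(\frac1k)$. Here $\nu_{1/k}$, viewed with $\lfloor\alpha^{-1}\rfloor=k-1$, has $\beta=1/k$, so it is still uniform on $\{0,\dots,k-1\}$. Hence both one-sided formulas give the same measure and $f_{k-1}(\frac1k)=f_k(\frac1k)=\frac{k^2-1}{12}$.

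The derivative is $f_k'(\alpha)=\frac{k(k+1)}6\bigl((2k+1)-3\alpha k(k+1)\bigr)$, which is linear and decreasing in $\alpha$. Its extreme values on the interval are the endpoint values:
\[
f_k'(\tfrac1{k+1})=\tfrac{k(k+1)}6(2k+1-3k)=-\tfrac{k(k+1)(k-1)}6,
\qquad
f_k'(\tfrac1k)=\tfrac{k(k+1)}6(2k+1-3k-3)=-\tfrac{k(k+1)(k+2)}6.
\]
These give the two-sided bound and also $\sup f'=\partial_+f(\frac1{k+1})=-\frac16k(k^2-1)$.

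For the interval $(\frac1k,\frac1{k-1})$ one uses $f_{k-1}$. Its infimum of $f'$ is attained at the right end $\alpha=\frac1{k-1}$ and equals $-\frac16(k-1)k(k+1)$ by the same formula with $k$ replaced by $k-1$. This matches $\partial_+ f(\frac1{k+1})$ and is the value of $\partial_-f(\frac1{k-1})$.

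Monotonicity follows because $f'\le0$ on each open piece and $f$ is continuous. On the piece $k=1$ we have $f'=1-2\alpha\le 0$ on $(\frac12,1)$, consistent with the bound $-\frac16k(k^2-1)=0$ there.

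The only real obstacle is bookkeeping: matching the indices $k$ versus $k\pm1$ at the breakpoints, and checking that the endpoint values of the one-sided derivatives agree as claimed. There is also a typo-level check: the displayed chain reads $\partial_- f(\frac1{k-1})$, whereas the natural matching point is the left derivative at $\frac1k$ computed from the piece $(\frac1{k+1},\frac1k)$. I would verify with the explicit linear formula for $f_k'$ which identification is intended, and state the equalities accordingly.
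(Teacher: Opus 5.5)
Your argument is correct. The calculus part is the same as in the paper: each piece $f_k$ is a concave quadratic, $f_k'$ is linear and decreasing so its extremes are the endpoint values, and continuity at the breakpoints plus $f'\le 0$ gives monotonicity.

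The only genuine difference is how you get the closed form. You compute $\E Y_\alpha$ and $\E Y_\alpha^2$ directly. The paper instead writes $\nu_\alpha = p\,\nu_{1/k}+(1-p)\,\nu_{1/(k+1)}$ with $p = 1-(k+1)(1-k\alpha)$ and applies the law of total variance:
\[
\Var Y_\alpha = \frac{p(k^2-1)}{12}+\frac{(1-p)((k+1)^2-1)}{12}+\frac{p(1-p)}{4},
\]
where the last term comes from the means of the two uniform laws differing by $\frac12$. The mixture route needs less algebra and makes continuity at $\frac1k$ and $\frac1{k+1}$ automatic, since $p\in\{0,1\}$ gives a uniform law. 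Your route is fully self-contained, and you handle continuity correctly by noting that $f_{k-1}(\frac1k)$ also describes the uniform law on $\{0,\dots,k-1\}$.

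Drop your final ``typo'' remark: the chain is correct as displayed, and your preceding paragraph already proves it. The left derivative at $\frac1{k-1}$ comes from the piece $(\frac1k,\frac1{k-1})$, where $f=f_{k-1}$. It equals $f_{k-1}'(\frac1{k-1})=-\frac16(k-1)k(k+1)=-\frac16k(k^2-1)$, as required.

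The alternative you suggest, the left derivative at $\frac1k$ from the piece $(\frac1{k+1},\frac1k)$, is $f_k'(\frac1k)=-\frac16k(k+1)(k+2)$. That does not match $-\frac16k(k^2-1)$. The intended content of the chain is that the top of the derivative range on the $k$-th piece equals the bottom of the range on the $(k-1)$-th piece; this is the ``increasing order'' of consecutive ranges remarked after the lemma.

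A minor point: at $\alpha=\frac12$ one has $k=2$, not $k=1$. The Bernoulli formula still holds there, since $\frac{k^2-1}{12}=\frac14=\alpha(1-\alpha)$.
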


Thus at points $\frac 1 k$, $k \in \mathbb{Z}$, $f$ is described by a convex function,
however in-between those points $f$ is concave. Also its derivative within the intervals $(\frac 1 {k+1}, \frac 1 k)$ it is decreasing, however the ranges of $f'$ in the consequtive intervals have an opposite, ``increasing'' order.
\medskip

\begin{proof}
    The second and the third equalities are just the variance of a uniform random variable on $k$ points
    an the variance of a Bernoulli random variable.
    To prove the first equality, let $p = p(\alpha) = \frac {\alpha - \frac 1 {k+1}} {\frac 1 k - \frac 1 {k+1}} = 1 - (k+1) (1-k\alpha)$.
    Then
    \[
        \nu_\alpha = p \nu_{\frac 1 k} + (1-p) \nu_{\frac 1 {k+1}}.
    \]
    By the law of total variance and the case $\alpha = \frac 1 k$ of this lemma and by a simple calculation:
    \begin{align*}
        &\Var Y_\alpha = \frac {p(k^2 - 1)} {12} +  \frac { (1-p) ((k+1)^2 - 1)} {12} + p (1-p)/4
        \\ & = \frac 1 {12} \left(-3 \alpha^2 k^2 (k+1)^2 + 2\alpha k (k+1) (2k+1) \right).
    \end{align*}
    It is easy to check from this that $\lim_{x \to 1-}f(x) = 0$ and $\lim_{x \to \frac 1 k -}f(x) = \lim_{x \to \frac 1 k +}f(x)$ for $k$ integer, $k \ge 2$.

    For $k$ a positive integer and $\alpha \in (\frac 1 {k+1}, \frac 1 k)$
    we have $f'(\alpha) = 12^{-1}(-6 \alpha k^2 (k+1)^2 + 2 k (k+1) (2k+1))$,
    so $f''(\alpha)<0$ in this interval and
    \begin{align*}
        &-\frac 1 6 k(k+1)(k+2) = -\frac 1 {12} (-6 k (k+1)^2 + 2k(k+1) (2k+1)) 
        \\ & =  \lim_{x \to \frac 1 {k} -} f'(x) \le f'(\alpha) \le  \lim_{x \to \frac 1 {k + 1}+} f'(x) 
        \\ &= \frac 1 {12}(-6 k^2 (k+1) + 2k(k+1)(2k+1)) = -\frac 1 {6} k(k^2 - 1) \le 0.
    \end{align*}
    Since for a fixed integer $k$,
    $12^{-1}(-6 x k^2 (k+1)^2 + 2 k (k+1) (2k+1))$ is differentiable for all $x$,
    $\lim_{x \to \frac 1 {k} -} f'(x) = \partial_- f(x)$ and
    $\lim_{x \to \frac 1 {k} +} f'(x) = \partial_+ f(x)$.

    
\end{proof}

It will be important for us that
for a log-concave random variable
the value of the
concentration function
is determined by its variance up to a constant factor.
The next lemma follows from Theorem~1.1 of Bobkov, Marsiglietti and Melbourne~\cite{bobkovmarsigliettimelbourne} and
an improvement of the upper bound by Aravinda~\cite{aravinda}.
\begin{lemma}[\cite{aravinda,bobkovmarsigliettimelbourne}]\label{lem.logconcvariancenew}
    Let $X$ be an integer-valued log-concave random variable
    Then
    \[
        \frac 1 {\sqrt{1 + 12 \Var X}} \le \conc(X) \le \frac 1 {\sqrt{1 + \Var X}} \le \frac 1 {\sqrt{\Var X}}.
    \]
\end{lemma}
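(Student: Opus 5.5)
The plan is to reduce everything to the two cited results. The lemma is stated as a consequence of Theorem~1.1 of Bobkov, Marsiglietti and Melbourne~\cite{bobkovmarsigliettimelbourne}, together with Aravinda's sharpening~\cite{aravinda} of their upper bound. So I will not reprove these inequalities; I will check that the hypotheses match and that the constants come out in the stated form.

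For the lower bound I invoke Theorem~1.1 of~\cite{bobkovmarsigliettimelbourne} directly. For a log-concave distribution on $\mathbb{Z}$ it gives $\conc(X)^2 (1 + 12\Var X) \ge 1$, i.e.\ $\conc(X) \ge (1+12\Var X)^{-1/2}$. Heuristically, the extremal case is the uniform distribution on $k$ consecutive integers: there $\Var X = (k^2-1)/12$ (compare Lemma~\ref{lem.variance}), so $1+12\Var X = k^2$ and $\conc(X) = 1/k$, with equality.

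For the upper bound I use Aravinda's inequality $\conc(X) \le (1+\Var X)^{-1/2}$, valid for integer log-concave $X$. Two sanity checks:
\begin{itemize}
\item When $\Var X=0$ the bound gives $1$, which is correct.
\item For a geometric distribution with large mean the ratio tends to the right constant.
\end{itemize}
If one wanted a self-contained argument, the route I would follow goes in three steps. First, write $p_m=\conc(X)$ at a mode $m$. Second, use log-concavity to dominate the tails of $p$ on each side of $m$ by geometric sequences starting at $p_m$. Third, bound $\E(X-m)^2$ by the second moment of this two-sided geometric majorant, optimized under the constraint that the total mass is $1$. This yields $\Var X \le \E (X-m)^2 \le p_m^{-2}-1$.

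The final inequality $(1+\Var X)^{-1/2} \le (\Var X)^{-1/2}$ is trivial: $\Var X \ge 0$, so $1 + \Var X \ge \Var X$, and it is interpreted as $+\infty$ when $\Var X = 0$.

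The only real obstacle is the sharp constant $1$ in the upper bound; the original constant in~\cite{bobkovmarsigliettimelbourne} is weaker. The variational step comparing $p$ with the extremal two-sided geometric profile is exactly the content of~\cite{aravinda}, which is why I cite it rather than redo it.
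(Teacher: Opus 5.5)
Your main route is the same as the paper's. The paper gives no argument beyond citing Theorem~1.1 of \cite{bobkovmarsigliettimelbourne} for the lower bound and Aravinda's sharpened upper bound \cite{aravinda}, with the last inequality being trivial. Citing these, as you do, is exactly what is intended.

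However, the ``self-contained'' sketch you offer for the upper bound would not work, and you should not present it as a valid fallback. Its key step $\E(X-m)^2 \le p_m^{-2}-1$ is false. Take the geometric law $\pr(X=k)=(1-q)q^k$, $k\ge 0$, which has mode $m=0$ and $p_0=1-q$. Then
\[
\E X^2=\frac{q(1+q)}{(1-q)^2} > \frac{q(2-q)}{(1-q)^2}=p_0^{-2}-1 \quad \text{whenever } q>\tfrac12 ,
\]
for example $21>15$ at $q=\tfrac34$. Meanwhile $\Var X=q/(1-q)^2\le p_0^{-2}-1$ does hold.

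The sharp constant $1$ is governed exactly by this skewed case, where the mean sits far from the mode. So any argument has to control $\Var X$ itself, not the second moment about the mode. Correspondingly, the extremal profile is the one-sided geometric, not a two-sided one: the symmetric two-sided geometric only gives $\conc(X)=(1+2\Var X)^{-1/2}$.

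Your step of dominating the tails by geometric sequences with ratio $p_{m\pm1}/p_m$ also gives nothing for flat log-concave laws such as uniform ones. There that ratio equals $1$, and the majorant has infinite mass.
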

\medskip

Our next lemma says that the distribution of a log-concave integer random variable $X$ must be
almost flat in an interval of length around $o(\sqrt \Var X)$ that includes its mode. 
It will be used multiple times in this paper. 

\begin{lemma}\label{lem.logconcmode}
    Let $X$ be a random integer variable with a log-concave distribution.
    Assume $\pr(X=x_0) = \max_x \pr(X=x)$. Let $i$ be a positive integer. 
    If, for $\gamma \in [0,1)$ we have
    $\Var X \ge \frac {2 (\gamma+1) i^3  \pr(X=x_0) } {(1-\gamma)^3}$
    then
    \begin{equation}\label{eq.logconcmode}
        \max(\pr(X=x_0 - i), \pr(X=x_0+i)) \ge \pr(X=x_0) \gamma.
    \end{equation}
    In particular, (\ref{eq.logconcmode}) always holds for
    \[
        \gamma = 1 - \left(\frac {4 \pr(X=x_0)} {\Var X}\right)^{\frac 1 3} i \ge 1 - \frac {2^{\frac 2 3} i} {\sqrt{\Var X}}.
    \]
\end{lemma}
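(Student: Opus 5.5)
We argue by contradiction, using only log-concavity (concavity of $k \mapsto \log \pr(X=x_0+k)$ on the support) to show that if the distribution drops by a factor $\gamma$ on both sides within distance $i$ of the mode, then the variance is too small. Write $p = \pr(X=x_0)$. The case $\gamma = 0$ is trivial, so assume $\gamma \in (0,1)$. Suppose, contrary to (\ref{eq.logconcmode}), that both $\pr(X=x_0+i) < \gamma p$ and $\pr(X=x_0-i) < \gamma p$.

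The first step is a geometric tail bound. Take $k \ge i$ with $\pr(X = x_0+k) > 0$. Then $x_0+i$ lies in the support, since the support of a log-concave measure is an interval. Concavity of $\log \pr(X=\cdot)$ along the points $x_0, x_0+i, x_0+k$ gives
\[
\log \pr(X=x_0+k) \le \log p + \frac{k}{i}\bigl(\log \pr(X=x_0+i) - \log p\bigr).
\]
Hence $\pr(X=x_0+k) < p\,\gamma^{k/i} \le p\,\gamma^{\lfloor k/i\rfloor}$ for all $k \ge i$. For $0 \le k < i$ we use the trivial bound $\pr(X=x_0+k) \le p = p\gamma^0$. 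The same bounds hold for $k \le 0$ by symmetry.

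Next we bound the second moment about the mode. We use $\Var X \le \E (X-x_0)^2$ and split the range of $k$ into blocks $[ji,(j+1)i)$ for $j \ge 0$. On the $j$-th block, $\sum_{k=ji}^{(j+1)i-1} k^2 \le i \cdot ((j+1)i)^2 = i^3 (j+1)^2$. Therefore
\[
\sum_{k \ge 0} k^2 \pr(X=x_0+k) \le p\, i^3 \sum_{j\ge 0} (j+1)^2 \gamma^j = p\, i^3 \frac{1+\gamma}{(1-\gamma)^3}.
\]
The inequality is strict, because the term $k = i$ (which lies in block $j=1$ and has $i^2 \le (2i)^2$) satisfies $\pr(X=x_0+i) < \gamma p$ strictly. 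Adding the analogous bound for $k \le 0$ gives
\[
\Var X \le \E(X-x_0)^2 < \frac{2(\gamma+1) i^3 p}{(1-\gamma)^3},
\]
contradicting the hypothesis. This proves (\ref{eq.logconcmode}).

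For the ``in particular'' part, set $\gamma = 1 - (4p/\Var X)^{1/3} i$. If $\gamma \le 0$, the claim is trivial. If $\gamma \in (0,1)$, then $(1-\gamma)^3 = 4 i^3 p/\Var X \le 2(\gamma+1)i^3 p/\Var X$, since $2(\gamma+1) \ge 2$ and the left side only needs $2 \cdot 2 = 4$ at most $2(\gamma+1)\cdot 2$; precisely, $4 \le 2(1+\gamma)\cdot 2$ fails to be needed because we only require $\Var X \ge 2(\gamma+1)i^3p/(1-\gamma)^3$, and $2(\gamma+1) \le 4$ gives $2(\gamma+1)i^3p/(1-\gamma)^3 \le 4 i^3 p/(1-\gamma)^3 = \Var X$. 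So the hypothesis holds with this $\gamma$. The final inequality follows from $p = \conc(X) \le 1/\sqrt{\Var X}$ (Lemma~\ref{lem.logconcvariancenew}). This gives $(4p/\Var X)^{1/3} \le 4^{1/3} (\Var X)^{-1/2} = 2^{2/3}/\sqrt{\Var X}$.

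The only delicate point is the bookkeeping needed to obtain exactly the constant $2(\gamma+1)i^3/(1-\gamma)^3$. This comes from the crude block estimate together with the identity $\sum_{j \ge 0} (j+1)^2 \gamma^j = (1+\gamma)/(1-\gamma)^3$. The log-concavity step itself is routine.
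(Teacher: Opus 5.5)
Your proof is correct and follows essentially the paper's route. Both argue by contradiction, get geometric decay $p\gamma^{\lfloor k/i\rfloor}$ from log-concavity, bound $\E (X-x_0)^2$ block by block by $p\,i^3\sum_{j\ge 0}(j+1)^2\gamma^j = p\,i^3\frac{1+\gamma}{(1-\gamma)^3}$ on each side, and finish with $\Var X\le \E(X-x_0)^2$. The only difference is that you get the decay from the chord inequality, whereas the paper uses decay at multiples of $i$ plus unimodality.

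One fix: delete the garbled half-sentence in the ``in particular'' part asserting $4i^3p/\Var X \le 2(\gamma+1)i^3p/\Var X$. That inequality is false for $\gamma<1$. The correct justification is the one you end with: $2(\gamma+1)\le 4$.
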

\begin{proof}
    Write $p_0 := \pr(X=x_0)$.
    Suppose that (\ref{eq.logconcmode}) does not hold. Then
    \[
        \pr(X=x_0+i) < p_0 \gamma \quad \mbox{and} \quad \pr(X=x_0-i) < p_0 \gamma.
    \]
    Since $X$ is log-concave we get for non-zero integers $k$:
    \[
        \pr(X=x_0 + ki) < p_0 \gamma^k.
    \]
    Using unimodality:
    \[
        \E (X-x_0)^2 \mathbb{I}_{X-x_0 \ge 0} < p_0 \sum_{k=1}^{\infty} i (ki)^2 \gamma^{k-1} = \frac {p_0 i^3 (\gamma+1)} {(1-\gamma)^3}.
    \]
    and similarly for $\E (X-x_0)^2 \mathbb{I}_{X-x_0 < 0}$.
    Thus
    \[
        \E (X-x_0)^2 < \frac {2 (\gamma+1) i^3 p_0} {(1-\gamma)^3}.
    \]
    So
    \begin{align*}
        &  \Var X = \E (X- x_0 + (x_0 - \E X))^2 
        \\ & = \E (X- x_0)^2 - (x_0 - \E X)^2 
         <
        \frac {2 (\gamma+1) i^3 p_0} { (1-\gamma)^3},
    \end{align*}
    which is a contradiction.
    
    Using 
    the already proved part we have
    that (\ref{eq.logconcmode}) holds if
    \begin{align*}
        \Var X 
        \ge \frac {4 i^3 p_0} {(1-\gamma)^3}
        \ge \frac {2  (\gamma + 1) i^3 p_0} {(1-\gamma)^3},
    \end{align*}
    or
    \[
        \gamma \le 1 - \left(\frac {4 p_0} {\Var X}\right)^{\frac 1 3} i.
    \]
    Finally, the last inequality of (\ref{eq.logconcmode}) follows by using the upper bound from Lemma~\ref{lem.logconcvariancenew}.
\end{proof}

\medskip

A simple corollary of the last lemma shows that the terms of a sum of independent log-concave random variables which
don't contribute much to the variance of the sum do not affect the concentration much.

\begin{lemma}\label{lem.logconcdomination}
    Let $\epsilon > 0$. Let $X$ be a log-concave integer random variable,
    and let $Y$ be any integer random variable.
    If $\Var Y \le \epsilon \Var X$ then
    \[
        \conc(X+Y) \ge (1 - \Clogconcdomination \epsilon^{\frac 1 3}) \conc(X).
    \]
\end{lemma}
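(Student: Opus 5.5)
The plan is to combine the flatness of $X$ near its mode (Lemma~\ref{lem.logconcmode}) with Chebyshev's inequality for $Y$, taking $X$ and $Y$ independent as intended. Let $x_0$ be a mode of $X$ and $p_0=\pr(X=x_0)=\conc(X)$. If $\Clogconcdomination\epsilon^{1/3}\ge 1$ there is nothing to prove, so I assume $\epsilon$ is small.

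Fix a positive integer $i$, to be chosen later of order $\epsilon^{1/3}\sqrt{\Var X}$.
\begin{itemize}
\item By Lemma~\ref{lem.logconcmode}, one of $\pr(X=x_0\pm i)$ is at least $\gamma p_0$, where $\gamma = 1-2^{2/3} i/\sqrt{\Var X}$.
\item Assume it is $x_0+i$; the other case is symmetric. Since a log-concave law is unimodal with mode $x_0$, every point of the integer interval $[x_0, x_0+i]$ then carries mass at least $\gamma p_0$.
\item Let $c$ be an integer chosen so that $[c, c+i]$ contains all integers $y$ with $|y-\E Y| \le i/2 - 1$. Rounding costs at most a constant in the radius, and this is negligible when $i$ is large.
\item Evaluate at the target point $x_0+c+i$:
\[
\pr(X+Y = x_0+c+i) \ge \sum_{y\in[c,c+i]} \pr(Y=y)\,\pr(X = x_0+c+i-y) \ge \gamma p_0\, \pr(Y\in[c,c+i]).
\]
\item By Chebyshev, $\pr(Y\in[c,c+i]) \ge 1 - \Var Y/(i/2-1)^2$.
\end{itemize}

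Now write $i \approx 2t\sqrt{\Var X}$ and use $\Var Y \le \epsilon\Var X$. Up to rounding, the loss factor is at least
\[
(1 - 2^{5/3}t)\left(1-\frac{\epsilon}{t^2}\right) \ge 1 - 2^{5/3}t - \frac{\epsilon}{t^2}.
\]
Minimizing $a t + \epsilon t^{-2}$ with $a = 2^{5/3}$ gives $t = (2\epsilon/a)^{1/3}$. The minimum value is $\tfrac32 a^{2/3}(2\epsilon)^{1/3} = \tfrac32\cdot 2^{10/9}\cdot 2^{1/3}\epsilon^{1/3} = 3\cdot 2^{4/9}\epsilon^{1/3}$, which is exactly the constant $\Clogconcdomination$. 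This strongly suggests the intended argument is this one.

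The main obstacle is the discretization: $i$ must be an integer and the interval for $Y$ must consist of integers. When $\Var X$ is small, $2t\sqrt{\Var X}$ may be less than $1$, or the rounding losses may not be negligible. To get the exact constant rather than an asymptotic one, I would avoid the crude ``$i/2-1$'' step. Instead I would use that $\pr(X=x_0+j)\ge\gamma_j p_0$ holds with a $j$-dependent $\gamma_j$, and apply a one-sided or centred Chebyshev bound directly to $Y-c$ on a window of $i+1$ integers around $\E Y$, so that the effective radius is at least $i/2$.

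For the regime where $i=1$ already exceeds the optimal scale, I expect the bound to hold trivially or to follow with $i=1$. There $\gamma \ge 1-2^{2/3}/\sqrt{\Var X}$, and $\Var Y\le\epsilon \Var X$ is tiny, so a short case check should suffice. I would verify this case last.
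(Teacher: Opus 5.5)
Your skeleton is the paper's: flatness of $X$ on one side of a mode $m$ from Lemma~\ref{lem.logconcmode} plus unimodality, Chebyshev for $Y$, and optimization of $2^{2/3}a+4\epsilon/a^2$ (your $a=2t$). Your constant computation is correct. The gap is the discretization, which you flag but do not resolve, and your proposed repair does not reach the stated constant. With an integer flatness distance $i\ge 1$ and a $Y$-window of $i+1$ integers of radius $i/2$, the best you obtain is
\[
\conc(X+Y)\ge\Bigl(1-\min_{i\in\{1,2,\dots\}}\Bigl(\frac{2^{2/3}i}{\sqrt{\Var X}}+\frac{4\epsilon\Var X}{i^{2}}\Bigr)\Bigr)\conc(X).
\]
This minimum is in general strictly larger than $3\cdot 2^{4/9}\epsilon^{1/3}$. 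For example, if the real minimizer is $i^\ast=3/2$, the values at $i=1$ and $i=2$ exceed the real minimum by factors of about $1.19$ and $1.08$. For the same reason, when $\Var X$ is so small that the optimal scale is below $1$, taking $i=1$ gives a strictly weaker (possibly trivial) bound, so your deferred case check cannot be done this way. The $j$-dependent $\gamma_j$ do not help either, since Lemma~\ref{lem.logconcmode} may pick different sides of the mode for different $j$.

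The missing observation is to decouple the real window length from the integer flatness distance. Take a real $\ell=a\sqrt{\Var X}$.
\begin{itemize}
\item Apply Lemma~\ref{lem.logconcmode} at the single distance $\lfloor\ell\rfloor\le\ell$ (nothing is needed if $\lfloor\ell\rfloor=0$). With unimodality, on one side, say the right, every integer in $[m,m+\ell]$ has mass at least $(1-2^{2/3}a)\conc(X)$.
\item Take the target point $m+\lfloor\E Y+\ell/2\rfloor$. Every integer $y$ with $|y-\E Y|\le\ell/2$ satisfies $0\le\lfloor\E Y+\ell/2\rfloor-y\le\ell$, so it is matched with an integer of $[m,m+\ell]$.
\item Chebyshev at the real radius $\ell/2$ gives $\pr(|Y-\E Y|\le\ell/2)\ge 1-4\epsilon/a^2$.
\end{itemize}
Hence $\conc(X+Y)\ge(1-4\epsilon/a^2)(1-2^{2/3}a)\conc(X)$ for every real $a>0$. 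This yields exactly $3\cdot 2^{4/9}\epsilon^{1/3}$ with no rounding loss and no separate small-variance case. In your integer language: a window of $i+1$ consecutive integers captures every integer at distance strictly less than $(i+1)/2$ from $\E Y$, not just $i/2$, and $i=0$ must be allowed.
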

\begin{proof}
    By Chebyshev's theorem for any $a > 0$
    \[
        \pr(|Y-\E Y| \ge \frac a 2 \sqrt{\Var X}) \le \frac {4 \Var Y} {a^2 \Var X} \le \frac {4\epsilon} {a^2}.
    \]
    Let $m$ be a mode of $X$. By Lemma~\ref{lem.logconcmode} 
    either for all integers $i$ with $m \le i \le m+a \sqrt{ \Var X}$
    or for all integers $i$ with $m-a \sqrt{ \Var X} \le i \le m$ 
    we have
    \begin{align*}
        & \pr(X = i) 
        \ge \pr(X=m) \left(1 - \frac {\Clogconcvariancenewcol a \sqrt{\Var X}}{\sqrt{\Var X}}\right)
        = \pr(X=m) (1 - 
        \Clogconcvariancenewcol
        a).
    \end{align*}
    Without loss of generality we can assume that the inequality holds for all integer $i \in I_1$, $I_1 =[m, m+a \sqrt {\Var X}]$.
    A closed interval of length $l$ contains either $\lfloor l \rfloor$ or
    $\lfloor l \rfloor + 1$ integer points, it contains exactly $\lfloor l \rfloor+1$ integer points when it starts with an integer. 
    Thus the interval $I_1$ 
    contains at least as many integer points as the interval $I_2 = [\E Y - \frac a 2 \sqrt {\Var X}, \E Y + \frac a 2 \sqrt {\Var X}]$.
    
    Thus
    \begin{align*}
        & \conc(X+Y) \ge \pr(X + Y = m + \lfloor \E Y + \frac a 2 \sqrt {\Var X} \rfloor)
        \\ & \ge \pr(Y \in I_2) \min_{i \in I_1} \pr(X = i)
        \\ & \ge \pr(X=m) (1 - \frac {4\epsilon} {a^2}) (1 - \Clogconcvariancenewcol a) \\
        & \ge \pr(X=m) (1 - \frac {4\epsilon} {a^2} - \Clogconcvariancenewcol a).
    \end{align*}
    As the minimum of $f(x) = \frac b {x^2} + c x$ for $b, c > 0$ is $3 \cdot2^{-\frac 2 3} b^{\frac 1 3} c^{\frac 2 3}$,
    maximizing the right side of the last inequality over $a$
    we get that
    \begin{align*}
        & \conc(X+Y) \ge \pr(X=m) \left(1 - 3 \cdot 2^{-\frac 2 3} (4\epsilon)^{\frac 1 3} 
        (\Clogconcvariancenewcol)^{\frac 2 3}
        \right)
        \\ & \ge \pr(X=m) (1 - \Clogconcdomination \epsilon^{\frac 1 3}). 
    \end{align*}
\end{proof}

A simple technical corollary of the last lemma is:

\begin{lemma}\label{lem.few_dropped}
   Let $k$ and $K$ be positive integers and let $\delta \in (0,1)$.
   Let $\alpha \in (0,1]^n$.
   Let $(Y_1, \dots, Y_n)$ be a standard extremal sequence for $\alpha$,
   and suppose that $\alpha_i \ge K^{-1}$ for $i \in \{1, \dots, k\}$.
   Write $S_n = \sum_{i=1}^n Y_i$.
   If 
   \[
        \Var S_n \ge V_{k, K, \delta} := \Cfewdroppedsimpler k K^2
   \]
   then
   \begin{equation}\label{eq.few_dropped}
        \conc(Y_1 + \dots + Y_n) \ge (1-\delta)\conc(Y_{k+1} + \dots + Y_n)
   \end{equation}
\end{lemma}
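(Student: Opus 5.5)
Set $X = Y_{k+1} + \dots + Y_n$ and $Y = Y_1 + \dots + Y_k$, so $S_n = X + Y$ with $X$ and $Y$ independent. We will apply Lemma~\ref{lem.logconcdomination} with this $X$ and $Y$. Each $Y_i$ is standard extremal, so its law is a shift or reflection of some $\nu_{\alpha_i}$ and is log-concave. By Keilson--Gerber the convolution $X$ is log-concave, and $Y$ is an integer random variable, which is all the lemma asks of it. The lemma then gives $\conc(S_n) \ge (1 - \Clogconcdomination\, \epsilon^{1/3})\conc(X)$ whenever $\Var Y \le \epsilon \Var X$. It remains to choose $\epsilon$ so that $\Clogconcdomination\,\epsilon^{1/3} \le \delta$, and to check that the hypothesis $\Var S_n \ge \Cfewdroppedsimpler k K^2$ forces $\Var Y \le \epsilon \Var X$.

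\textbf{Bounding $\Var Y$.} For $i \le k$ we have $\alpha_i \ge K^{-1}$, so $\lfloor \alpha_i^{-1}\rfloor \le K$ and $Y_i$ is supported on an interval of at most $K+1$ integers. Lemma~\ref{lem.variance} gives $\Var Y_i \le \Var Y_{1/K}$ by monotonicity, and $\Var Y_{1/K} = (K^2-1)/12 \le K^2/12$. Alternatively one can use the crude range bound $\Var Y_i \le K^2/4$. By independence, $\Var Y \le k K^2/12$, or $\Var Y \le kK^2/4$ with the crude bound.

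\textbf{Choosing $\epsilon$ and closing.} By independence, $\Var X = \Var S_n - \Var Y \ge \Var S_n - kK^2/4$. Take $\epsilon = (\delta/\Clogconcdomination)^3 = \delta^3/(27\cdot 2^{4/3})$. We need $kK^2/4 \le \epsilon(\Var S_n - kK^2/4)$, which is equivalent to
\[
\Var S_n \ge \frac{kK^2}{4}\Bigl(1 + \frac{1}{\epsilon}\Bigr) = \frac{kK^2}{4}\Bigl(1 + \frac{27 \cdot 2^{4/3}}{\delta^3}\Bigr).
\]
Since $\delta < 1$, the right-hand side is at most $kK^2(1 + 27\cdot 2^{4/3})/(4\delta^3)$. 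Here $27\cdot 2^{4/3} \approx 68.04$, so this is about $17.3\, kK^2/\delta^3 \le 70kK^2/\delta^3$. With the sharper bound $K^2/12$ the margin is even larger.

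\textbf{Degenerate case.} If $\Var X = 0$, then the required inequality fails, but it cannot happen: it would give $\Var S_n = \Var Y \le kK^2/4 < 70kK^2/\delta^3$, contradicting the hypothesis. Hence the lemma applies and gives (\ref{eq.few_dropped}).

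\textbf{Main obstacle.} The only real content is confirming the constant bookkeeping (the value $3\cdot 2^{4/9}$ cubed) and that the log-concavity of the convolution is available. Both follow from results already stated.
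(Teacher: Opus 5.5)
Your proposal is correct and follows the paper's proof: both take $X = Y_{k+1}+\dots+Y_n$ (log-concave by Keilson--Gerber), bound $\Var(Y_1+\dots+Y_k)$ by a constant times $kK^2$, choose $\epsilon = (\delta/(3\cdot 2^{4/9}))^3$, and check that $\Var S_n \ge \frac{70}{\delta^{3}} kK^2$ forces $\Var Y \le \epsilon \Var X$, so that Lemma~\ref{lem.logconcdomination} applies. The only difference is that you use the sharper bound $K^2/4$ (or $K^2/12$) per summand where the paper uses the cruder $K^2$, which just gives you extra slack in the constant.
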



\medskip

\begin{proof}
    We have
    $\Var (\sum_{i=1}^k Y_i) \le k  K^2$.
    Let $\epsilon = \Clogconcdominationdelta$.
    Since  $(1+\epsilon^{-1}) \le \Cfewdroppedsimpler$
    we have that $\Var (\sum_{i=1}^n Y_i) \ge V_{k, K, \delta}$
    implies $\Var (\sum_{i=1}^n Y_i) \ge (1+\epsilon^{-1}) k K^2$ and
    \[
        \Var(\sum_{i=1}^k Y_i) \le \epsilon  \Var(\sum_{i=k+1}^n Y_i).
    \]
    So (\ref{eq.few_dropped}) follows by Lemma~\ref{lem.logconcdomination}.
    \end{proof}

\medskip

We will also need some properties of the optimal probability.

\begin{lemma}\label{lem.topt_nondec}
    $\topt(\alpha_1, \dots, \alpha_n)$ is non-decreasing in each of $\alpha_i \in (0, 1]$.
\end{lemma}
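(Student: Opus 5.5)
This is immediate from the definition of $\topt$ as a supremum over a feasible set: I will show that enlarging one constraint $\alpha_i$ enlarges the feasible set, so the supremum cannot decrease. Fix an index $i$ and values $\alpha_i \le \alpha_i'$ in $(0,1]$. Let $\alpha = (\alpha_1,\dots,\alpha_n)$ and let $\alpha'$ agree with $\alpha$ except that its $i$-th coordinate is $\alpha_i'$.

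Consider any sequence of probability measures $(\mu_j)$ on $\mathbb{Z}$ with $\conc(\mu_j) \le \alpha_j$ for every $j$. It also satisfies the constraints for $\alpha'$, since $\conc(\mu_i) \le \alpha_i \le \alpha_i'$ and the other constraints are unchanged. Hence the set over which the supremum defining $\topt(\alpha)$ is taken is contained in the set for $\topt(\alpha')$. The objective $\conc(*_{j=1}^n \mu_j)$ is the same functional on both sets, so $\topt(\alpha) \le \topt(\alpha')$.

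Both sets are nonempty, so the suprema are well defined. For instance, $\nu_{\alpha_j}$ satisfies $\conc(\nu_{\alpha_j}) = \alpha_j$, since $1 - \alpha_j\lfloor \alpha_j^{-1}\rfloor < \alpha_j$. There is no real obstacle here. The only care needed is that $\topt$ is defined with the inequality constraints $\conc(\mu_j)\le\alpha_j$ rather than equalities; with equality constraints monotonicity would be a genuine statement requiring a construction, but with the definition as given it is purely set inclusion.
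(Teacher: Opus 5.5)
Your argument is correct and takes a different, simpler route than the paper. Since $\topt$ is defined as a supremum over sequences with $\conc(\mu_j)\le\alpha_j$, raising one $\alpha_i$ only enlarges the feasible set, and that inclusion proves the lemma in two lines.

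The paper's proof is heavier. It fixes an arbitrary sequence with $\conc(\mu_i)=\alpha_i$ and takes any $\alpha_i'\ge\alpha_i$. Via Krein--Milman, it writes each $\mu_i$ as a mixture of extremal measures for $\alpha_i'$. Then, averaging over the other summands, it picks the mixture component that maximizes the concentration, one coordinate at a time. The result is a sequence of extremal measures with $\conc(\mu_i')=\alpha_i'$ whose convolution is at least as concentrated as the original.

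That buys a stronger statement than the lemma needs. It gives monotonicity also for the equality-constrained version of the problem, and it shows the supremum is attained by extremal measures with concentration exactly $\alpha_i'$. This is the form in which the paper phrases its later use of the lemma (``using Lemma~\ref{lem.topt_nondec} we can assume $\conc(X_i)=\alpha_i$'' at the start of the proof of Theorem~\ref{thm.tse}).

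For the lemma as stated, with the paper's inequality-constrained definition of $\topt$, your set-inclusion argument is sufficient and cleaner. You also correctly identified that the equality-constrained variant is where the real content lies.
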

\begin{proof}
    We omit the details as the proof, based on Krein-Milman theorem, see, e.g., proof of Theorem~1 in \cite{jk2021} or Theorem~1.5 of \cite{madimanmelbournexu2017}, is standard: for any sequence 
    of measures $\mu_1, \dots, \mu_n$ with $\conc(\mu_i) = \alpha_i$, $i \in \{1,\dots,n\}$ we
    can represent the $i$th measure as a mixture of extremal measures for $\alpha_i'$ (which are extreme points of 
    the convex set of measures with concentration function at most $\alpha_i'$), 
    where $\alpha_i'$ is any number such that $1 \ge \alpha_i' \ge \alpha_i$. Then taking the expectation over
    all random variables except the $i$-th one we pick the component of the mixture that maximizes the concentration function,
    and doing this for each $i$ obtain an sequence $\mu_1', \dots, \mu_n'$ of (extremal) measures with $\conc(\mu_i') = \alpha_i'$ for all $i$
    and $\conc(\mu_1' * \dots * \mu_n) \ge \conc(\mu_1 * \dots * \mu_n)$. 
\end{proof}

\medskip

The proof of monotonicity of the optimal value for 
standard extremal sequences is also simple, but with a different argument.
\begin{lemma}\label{lem.tse_nondec}
    $\tse(\alpha_1, \dots, \alpha_n)$ is non-decreasing in each of $\alpha_i \in (0, 1]$.
\end{lemma}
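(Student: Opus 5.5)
The plan is to change one coordinate at a time, keep the other summands fixed, and exploit unimodality of their convolution. Fix $i$ and $\alpha_i < \alpha_i' \le 1$, with all other coordinates unchanged. Let $(\mu_1,\dots,\mu_n)$ be an optimal standard extremal sequence for $\alpha$. A maximizer exists because, up to a common shift, which does not change $\conc$, there are only finitely many choices of signs.

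Let $w$ denote the law of $W=\sum_{j\ne i} Y_j$ with $Y_j\sim\mu_j$. By the Keilson--Gerber facts recalled above, $w$ is log-concave, hence unimodal. Then $\tse(\alpha)=\conc(W+Y_i)$. It suffices to produce $\mu_i'\in\SE(\alpha_i')$ with $\conc(W+Y_i')\ge \conc(W+Y_i)$, since then $\tse(\alpha')\ge\conc(W+Y_i')\ge\tse(\alpha)$.

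\textbf{Step 1: one interval $[\frac1{k+1},\frac1k]$.} First treat $\alpha_i,\alpha_i'$ both lying in $[\frac1{k+1},\frac1k]$ for one integer $k\ge1$.

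Fix $x$ with $\pr(W+Y_i=x)=\conc(W+Y_i)$. For $a\in[\frac1{k+1},\frac1k]$, a measure in $\SE(a)$ is a shift of $\nu_a$ or $-\nu_a$. It places mass $a$ on $k$ consecutive integers of a window $\{s,\dots,s+k\}$ and mass $1-ka$ on one endpoint of the window.

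Let $\{s,\dots,s+k\}$ be the window used by $\mu_i$. The key observation is that $j\mapsto w(x-j)$ is unimodal on this window, so its minimum over the window is attained at an endpoint. Call that endpoint $e$. Consider the measure $\mu^{(a)}\in\SE(a)$ on this window that puts the light atom at $e$. Then
\[
 \pr(W+Y^{(a)}=x)=a\sum_{j\in\{s,\dots,s+k\}\setminus\{e\}} w(x-j)+(1-ka)\,w(x-e).
\]
This is affine in $a$ with slope $\sum_{j\ne e}w(x-j)-k\,w(x-e)\ge0$, because $w(x-e)$ is the minimum over the window.

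I must also check that at $a=\alpha_i$ this value is at least $\conc(W+Y_i)$. The measure $\mu_i$ may have its light atom at the other endpoint $e'$. Passing from $\mu_i$ to $\mu^{(\alpha_i)}$ moves mass $\alpha_i-(1-k\alpha_i)\ge 0$ from $e$ to $e'$, and $w(x-e)\le w(x-e')$. So the probability at $x$ does not decrease.

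Hence $\conc(W+Y^{(\alpha_i')})\ge \pr(W+Y^{(\alpha_i')}=x)\ge \pr(W+Y^{(\alpha_i)}=x)\ge\conc(W+Y_i)$, which is the required inequality within one interval.

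\textbf{Step 2: chaining across the breakpoints $1/k$.} At $a=\frac1k$ the light atom has mass $0$, so the measure is uniform on $k$ points. This is simultaneously the right endpoint of the family for the window of size $k+1$ and the $\beta=0$ case of the family for the next interval up.

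Take the break points $\frac1k$ lying strictly between $\alpha_i$ and $\alpha_i'$. These cut $[\alpha_i,\alpha_i']$ into finitely many pieces, each contained in a single interval $[\frac1{k+1},\frac1k]$. Step 1 applies on each piece, always with the same fixed $W$: the measure obtained at the end of one piece is a valid starting measure in $\SE$ for the next piece. At each step I re-choose the maximizing point $x$. Composing these inequalities gives the claim for arbitrary $\alpha_i<\alpha_i'\le1$; the case $\alpha_i'=1$, a point mass, is included.

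\textbf{Main obstacle.} The main subtlety is the orientation issue addressed in Step 1. The monotone interpolation only works when the light atom sits at the endpoint of the window where $w(x-\cdot)$ is smallest. Unimodality of $w$, which here comes from log-concavity of the other standard extremal summands, is exactly what guarantees that this endpoint is the window minimum. This is the reason the argument is specific to standard extremal sequences, in contrast to the Krein--Milman argument used for Lemma~\ref{lem.topt_nondec}.
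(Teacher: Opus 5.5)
Your proof is correct and follows essentially the same route as the paper. In both, you fix the other summands and use unimodality of their log-concave convolution. Within each interval $[\frac1{k+1},\frac1k]$, the value at the peak is affine and non-decreasing in $\alpha$ once the light atom sits at the endpoint where the other summands' law is smaller, and one then chains across the breakpoints $\frac1k$. The only cosmetic difference is that the paper first moves the $i$-th measure to the globally best position (heavy atoms on the $k$ largest atoms of the other summands' law, light atom on the $(k+1)$-st) and compares via the mixture $p\nu_{-I}+(1-p)\nu_{-J}$, whereas you keep the existing window and maximizing point and simply reorient the light atom.
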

\begin{proof}
    Fix $\alpha_1, \dots, \alpha_{n-1} \in (0,1]$.
    it suffices to show 
    that for each positive integer $k$
    and every $\alpha', \alpha$ such that $\frac 1 {k+1} \le \alpha < \alpha' \le \frac 1 k$ we have $\tse(\alpha_1, \dots, \alpha_{n-1}, \alpha') \ge \tse(\alpha_1, \dots, \alpha_{n-1}, \alpha)$.

    Consider a standard extremal sequence of independent random variables $(Y_1, \dots, Y_{n-1}, Y)$ for $(\alpha_1, \dots, \alpha_{n-1}, \alpha)$
    such that 
    \[
        \conc(Y_1 + \dots + Y_{n-1} + Y) = \tse(\alpha_1, \dots, \alpha_{n-1}, \alpha).
    \]
    We can assume that the maximum atom of the sum is $0$.

    Write $\mu_{n-1}$ and $\mu$ for the laws
    of $Y_1 + \dots + Y_{n-1}$ and $Y$ respectively.

    Since each $Y_i$ is strongly unimodal, the convolution $\mu_{n-1}$ is also strongly unimodal \cite{keilsongerber}.
    Let $I$ be a set of $k$ largest atoms of $\mu_{n-1}$. By the unimodality
    we can assume it is an interval.
    Furthermore we can assume that a set $J$ of $k+1$ largest atoms of $\mu_{n-1}$
    is also an interval and that $|J \setminus I| = 1$.
    It follows that $\mu_{n-1} * \mu$ is maximized over the ``extended set'' of standard extremal measures $\mu$ for $\alpha$ when $\mu(\{i\}) = \alpha$ for $i \in -I$ and $\mu(\{b\}) = 1 - k \alpha$ for $b \in -(J \setminus I)$. Here the ``extended set'' of extremal measures for $\alpha$ consists of the union of $\SE(\alpha)$ and the uniform measures on $k+1$ integers, i.e. we include the case $\mu(\{b\}) = \frac 1 {k+1}$ when $\alpha = \frac 1 {k+1}$.

    Now consider replacing $\mu$ with a standard extremal measure $\mu'$ for $\alpha'$ such that $\mu'(\{i\}) = \alpha'$ for $i \in -I$ and $\mu'(\{b\}) = 1-k\alpha'$ for $b \in -(J \setminus I)$. We can express $\mu = p \nu_{-I} + (1-p) \nu_{-J}$ and $\mu' = p' \nu_{-J} + (1-p') \nu_{-J}$ where $1 \ge p' \ge p \ge 0$ and $\nu_A$ is the uniform measure on $A$.

    So
    \begin{align*}
        &(\mu_{n-1} * \mu)(\{0\})  = p (\mu_{n-1} * \nu_{-I})(\{0\}) + (1-p) (\mu_{n-1} * \nu_{-J})(\{0\})
        \\ & =  p' (\mu_{n-1} * \nu_{-I})(\{0\}) + (1-p') (\mu_{n-1} * \nu_{-J})(\{0\}) + (p - p') \Delta 
        \\ & = (\mu_{n-1} * \mu')(\{0\})  + (p-p') \Delta
    \end{align*}
where 
    \[
    \Delta = (\mu_{n-1} * \nu_{-I})(\{0\}) - (\mu_{n-1} * \nu_{-J})(\{0\}).
    \]
    Note that $\Delta$ is just difference between the average of the $k$ largest atoms of $\mu_{n-1}$ and the average of $k+1$ largest atoms of $\mu_{n-1}$, so it is non-negative. Thus
    \begin{align*}
        & \tse(\alpha_1, \dots, \alpha_{n-1}, \alpha') \ge  (\mu_{n-1} * \mu')(\{0\}) 
        \\ & \ge  (\mu_{n-1} * \mu)(\{0\}) = \tse(\alpha_1, \dots, \alpha_{n-1}, \alpha).
    \end{align*}
\end{proof}

We now proceed to give a few lemmas about the continuity of the optimal concentration function
of sums of independent random variables.
They will be necessary in the final proof of Theorem~\ref{thm.tse}.
We start with a simple lemma for the balanced case that is valid for large $\alpha$.
\begin{lemma}\label{lem.balanced_continuous}
    If $(\alpha_1, \dots, \alpha_{n-2})$ is a balanced sequence, 
    $\alpha, \alpha' \in [\frac 1 2,1]$, and $\alpha' = (1+\epsilon)\alpha$ for $\epsilon > 0$ then 
    \[
        \tbal(\alpha_1, \dots, \alpha_{n-2}, \alpha',\alpha') \le (1+8\alpha\epsilon) \tbal(\alpha_1, \dots, \alpha_{n-2}, \alpha, \alpha). 
    \]
\end{lemma}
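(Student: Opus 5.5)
The plan is to compute both sides explicitly by conditioning on the first $n-2$ summands. Let $(Y_1,\dots,Y_{n-2})$ be a balanced standard extremal sequence for $(\alpha_1,\dots,\alpha_{n-2})$ and put $W=\sum_{i=1}^{n-2}Y_i$. Extending it by a pair $X,-X'$, with $X,X'\sim\nu_\alpha$ independent, gives a balanced standard extremal sequence for $(\alpha_1,\dots,\alpha_{n-2},\alpha,\alpha)$. Doing the same with $\nu_{\alpha'}$ handles $(\dots,\alpha',\alpha')$.

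By Definition~\ref{def.balanced}, $\tbal$ does not depend on which balanced sequence is chosen and equals the probability at $0$. Hence both sides of the claimed inequality are $\pr(W+X-X'=0)$ for the respective parameter, with the same $W$.

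\textbf{Step 1: properties of $W$.} The law of $W$ is symmetric, since the sequence is balanced and each $Y_{j_i}\sim -Y_i$. It is log-concave, since each $\nu_{\alpha_i}$ is log-concave and log-concavity is preserved under convolution by Keilson--Gerber. So it is unimodal, and by symmetry $0$ is a mode. Write $p_j=\pr(W=j)$. Then $p_1=p_{-1}\le p_0$, and I set $d=p_0-p_1\ge 0$. If $n=2$ then $W\equiv 0$, so $p_1=0$ and everything below still applies.

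\textbf{Step 2: explicit formula.} For $a\in[\frac12,1]$, $\nu_a$ is Bernoulli with $\pr(0)=a$ and $\pr(1)=1-a$. Hence $Z=X-X'$ has
\[
\pr(Z=0)=a^2+(1-a)^2=1-2a(1-a), \qquad \pr(Z=\pm1)=a(1-a).
\]
Therefore
\[
g(a):=\pr(W+Z=0)=p_0\bigl(1-2a(1-a)\bigr)+2p_1a(1-a)=p_0-2a(1-a)\,d .
\]

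\textbf{Step 3: comparison.} I have
\[
g(\alpha')-g(\alpha)=2d\bigl(\alpha(1-\alpha)-\alpha'(1-\alpha')\bigr)=2d(\alpha'-\alpha)(\alpha'+\alpha-1).
\]
Using $\alpha'-\alpha=\alpha\epsilon$ and $\alpha'+\alpha-1\le\alpha\le1$, which holds because $\alpha'\le 1$, this gives $g(\alpha')-g(\alpha)\le 2d\alpha\epsilon$.

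For the lower bound on $g(\alpha)$, use $a(1-a)\le\frac14$:
\[
g(\alpha)\ge p_0-\tfrac d2=\tfrac{p_0+p_1}{2}\ge\tfrac d2 .
\]
Consequently $g(\alpha')\le g(\alpha)+4\alpha\epsilon\, g(\alpha)$, which is stronger than the claimed factor $1+8\alpha\epsilon$. If $d=0$, the difference is $0$ and there is nothing to prove.

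\textbf{Main obstacle.} The only point requiring care is Step 1: justifying that $0$ is a mode of $W$, i.e.\ that $p_1\le p_0$. This is where strong unimodality and log-concavity (Keilson--Gerber) together with the symmetry coming from balancedness are used. Everything after that is an elementary calculation.
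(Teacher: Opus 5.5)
Your proof is correct and follows essentially the paper's route. Both arguments condition on the balanced sum of the first $n-2$ terms, write the probability at $0$ explicitly through the three-point symmetric law of the added pair, and compare the increment with a lower bound on the base value; your bookkeeping even gives the sharper factor $1+4\alpha\epsilon$. The step you flag as the main obstacle ($p_1\le p_0$) is in fact unnecessary. Since $(\alpha'-\alpha)(\alpha'+\alpha-1)\ge 0$, a negative $d$ would make $g(\alpha')-g(\alpha)$ nonpositive anyway. This is why the paper can simply discard the $p_1$ term using $\mu'(\{1\})\le\mu(\{1\})$ and then bound the base value below by $\frac12 p_0$.
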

\begin{proof}
    Let $\mu$ and $\mu'$ be the laws of the sum of balanced standard extremal sequences for $(\alpha,\alpha)$ and $(\alpha',\alpha')$ respectively.
    Then $\mu$ and $\mu'$ are symmetric and unimodal with support on $\{-1,0,1\}$, $\mu(\{0\}) = \alpha^2 + (1-\alpha)^2$ and $\mu'(\{0\}) = (\alpha')^2 +  (1-\alpha')^2$, so
    \begin{align*}
        \mu'(\{0\}) - \mu(\{0\}) 
       = 4 \alpha \epsilon + 2 \epsilon^2 - 2 \epsilon
\le 4 \alpha \epsilon.
    \end{align*}

    Let $\mu_{n-2}$ be the law of the convolution of a balanced standard extremal sequence for $(\alpha_1, \dots, \alpha_{n-2})$.

    Then using symmetry and $\mu(\{1\}) \ge \mu'(\{1\})$
    \begin{align*}
        & (\mu_{n-2} * \mu')(\{0\}) - (\mu_{n-2} * \mu)(\{0\}) 
       \\ & = (\mu'(\{0\}) - \mu(\{0\})) \mu_{n-2}(\{0\}) +2 (\mu'(\{1\})-\mu(\{1\}) \mu_{n-2}(\{1\})
         \\ & \le 4 \alpha \epsilon \mu_{n-2}(\{0\})  
         \\ & \le 8 \alpha \epsilon \mu * \mu_{n-2}(\{0\}). 
    \end{align*}

    The last line follows since $(\mu * \mu_{n-2})(\{0\}) \ge (\alpha^2 + (1-\alpha)^2) \mu_{n-2}(\{0\}) \ge \frac 1 2 \mu_{n-2}(\{0\})$. 
\end{proof}

\medskip

\medskip

The next couple of lemmas are a bit more technical.
In particular, the next lemma shows that in the strongly balanced case there is a ``continuity for mid-sized'' $\alpha_i$: if we divide the interval $[0, 1]$ into a grid with step size $K^{-2}$, then we can approximate $t(\alpha_1, \dots, \alpha_n)$ by replacing
$\alpha_i$ that are not too large and not too small with the nearest value above $\alpha_i$ on the grid.

\begin{lemma} \label{lem.midsize_alpha_continuity}
    Let $(X_i, i \in \{1, \dots, m\})$ and $(X_i', i \in \{1, \dots, m\})$ be
    balanced standard extremal sequences of random variables
    for strongly balanced sequences
    $(\alpha_i, i \in\{1,\dots,m\})$ and $(\alpha_i', i \in \{1,\dots, m\})$ 
    respectively.
    Assume furthermore, that for each $i \in \{1, \dots, m\}$ there are integers $t_i$ such that
    \[
        \frac 1 K \le \alpha_i' \le  \alpha_i   = \frac {t_i} {K^2}  \le 1 - \frac 1 K, \quad  \alpha_i' \in \left[\frac {t_i-1} {K^2}, \frac {t_i} {K^2}\right].
    \]

    Let $Y$ be a symmetric log-concave integer random variable, independent of both the sequences. 

    Then 
    \[
        \pr(X_1' + \dots + X_m' + Y = 0) \ge \pr(X_1 + \dots + X_m + Y=0)(1-  \Cmidsizealphacontinuity  K^{-\frac 1 6}) 
    \]
\end{lemma}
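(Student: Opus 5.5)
The plan is to view the replacement of $\alpha_i$ by $\alpha_i'$ as a small random perturbation of the sum $S = X_1 + \dots + X_m$, and then argue as in the proof of Lemma~\ref{lem.logconcdomination}. Strong balance lets us pair each index $i$ with an index $j_i$ such that $\alpha_{j_i} = \alpha_i$ and $\alpha'_{j_i} = \alpha'_i$. We choose the pairing in the same way for both sequences. Then both $S$ and $S' = X_1' + \dots + X_m'$ are sums of independent symmetric log-concave pair terms $X_i + X_{j_i}$ (resp. $X_i' + X_{j_i}'$). Hence $W := S + Y$ is symmetric and log-concave with mode $0$, and $\pr(W = 0) = \conc(W)$.

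\textbf{Step 1 (coupling one variable).} Fix $i$ and let $k = \lfloor \alpha_i^{-1} \rfloor$. Since $\alpha_i - \alpha_i' \le K^{-2}$ and $\alpha_i \ge K^{-1}$, the two values either lie in a common interval $[\frac 1 {k+1}, \frac 1 k]$ or straddle a single point $\frac 1 k$. In the second case we pass through $\frac 1 k$ in two steps. Inside one interval we use the mixture representation from the proof of Lemma~\ref{lem.variance}, $\nu_\alpha = p\,\nu_{1/k} + (1-p)\,\nu_{1/(k+1)}$, where $p$ is affine in $\alpha$ with slope $k(k+1)$. Thus $p - p' \le k(k+1) K^{-2}$.

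We then couple $X_i'$ with $X_i$, keeping the same orientation of the standard extremal law. With probability at least $1 - (p - p')$ the two variables are equal. Otherwise the uniform component on $k$ points is replaced by the uniform component on $k+1$ points, which moves $X_i$ by at most $k+1$. Hence $D_i := X_i' - X_i$ satisfies
\[
\E D_i^2 \le (k+1)^3 K^{-2} \lesssim K^{-1}\,(1+k^2).
\]
On the other hand, $\Var X_i \gtrsim \min\bigl(k^2,\ \alpha_i(1-\alpha_i)\bigr)$, and $\alpha_i(1-\alpha_i) \ge \frac 1 {2K}$ because $\alpha_i \le 1 - K^{-1}$. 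Comparing the two bounds gives
\[
\E D_i^2 \le C K^{-1/2} \Var X_i .
\]
Here $K^{-1/2}$ is a deliberately lossy uniform bound; the balanced exponent $-\frac 1 6$ suggests that $\epsilon \asymp K^{-1/2}$ is what is used. Summing over $i$, with all $D_i$ having mean zero by pairwise symmetry, gives $\E D^2 \le C K^{-1/2} \Var S$ for $D = \sum_i D_i$.

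\textbf{Step 2 (lower bound).} We write $\pr(S' + Y = 0) = \pr(W + D = 0)$. The difficulty is that $D$ is not independent of $W$, so Lemma~\ref{lem.logconcdomination} cannot be quoted directly.

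\textbf{Step 3 (the main obstacle: removing the dependence).} To handle it, we first sample, independently of everything else, which coordinates switch mixture component. Conditionally on that choice $\sigma$, each $X_i'$ is $X_i$ with its $k$-uniform part replaced by a $(k+1)$-uniform part. Equivalently, $X_i' = U_i + E_i$, where $U_i$ is the common uniform on $k$ points and $E_i \in \{0, k\}$ or a small shift.

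So, conditionally on $\sigma$, $S'$ is the original independent structure plus independent bounded shifts on the set of switched coordinates. The total conditional variance of these shifts is controlled, via Chebyshev and Markov over $\sigma$, by $\epsilon \Var W$ outside an event of probability $O(\epsilon^{1/3})$.

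For each good $\sigma$ we apply the argument of Lemma~\ref{lem.logconcdomination}. By Lemma~\ref{lem.logconcmode}, $W$ is almost flat, up to a factor $1 - 2^{2/3} a$, on a window of length $a\sqrt{\Var W}$ on one side of $0$. Since the perturbation's law is symmetric after pairing, that side can be taken to face the shift. This gives a conditional lower bound
\[
\pr(W = 0)\,\bigl(1 - 4\epsilon a^{-2} - 2^{2/3} a\bigr).
\]
Optimizing $a$ and averaging over $\sigma$ yields the factor $1 - C K^{-1/6}$. The constant $\Cmidsizealphacontinuity$ comes from tracking the constants in Steps 1--3.

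I expect Step 3 to be the delicate part. There are two points to check: that conditioning on $\sigma$ preserves enough log-concavity, or at least unimodality about $0$, for Lemma~\ref{lem.logconcmode} to apply; and that the two-step crossing of a point $\frac 1 k$ does not spoil the $K^{-1}$ bound on switching probabilities.
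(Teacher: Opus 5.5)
There is a genuine gap in Step~3, and Step~3 is where the proof has to happen.

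\emph{What goes wrong.} Your switching set $\sigma$ comes from the decomposition $\nu_\alpha=p\,\nu_{1/k}+(1-p)\,\nu_{1/(k+1)}$.
\begin{itemize}
\item The event ``$i$ switches'' is not independent of $X_i$. On that event $X_i$ is forced to be uniform on $k$ points; off it, $X_i$ has a law different from $\nu_{\alpha_i}$.
\item Its probability is only bounded by $k(k+1)K^{-2}$, which is not small when $k\asymp K$. For example, take $\alpha_i=(K+1)/K^2$ and $\alpha_i'=1/K$, which the hypotheses allow for $K\ge 3$. Then $k=K-1$ and $p$ drops from $(K-1)/K$ to $0$, so almost every coordinate switches.
\end{itemize}
So the claim that, conditionally on $\sigma$, $S'$ is ``the original independent structure plus independent bounded shifts'' is false. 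The conditional base $W\mid\sigma$ differs from $W$ by about as much as the perturbation $\alpha_i\to\alpha_i'$ itself does. Bounding the conditional probability below by $\pr(W=0)$ is therefore the problem you started with. If you instead treat the whole of $X_i'$, $i\in\sigma$, as the perturbation, its variance is comparable to $\Var S$ and the Chebyshev step collapses.

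There is a second problem. Because you switch single coordinates rather than pairs, $W\mid\sigma$ is not symmetric. Its mode need not be at $0$, so the one-sided flatness of Lemma~\ref{lem.logconcmode} says nothing at the point $0$.

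Separately, in Step~1 the phrase ``moves $X_i$ by at most $k+1$'' only gives $\E D_i^2\le k(k+1)^3K^{-2}$. This is of order $\Var X_i$ when $k\asymp K$. In any case an unconditional bound on $D=S'-S$ cannot be used, since $D$ is never independent of the base.

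\emph{The missing idea.} You need a mixture whose indicator is independent of $X_i$ and fires rarely, uniformly in $k$. The grid condition gives $k'\in\{k,k+1\}$, and then $\nu_{\alpha'}\ge\frac{\alpha'}{\alpha}\nu_\alpha$ pointwise. Hence
\[
\nu_{\alpha'}=\frac{\alpha'}{\alpha}\nu_\alpha+\Bigl(1-\frac{\alpha'}{\alpha}\Bigr)\mu_{\alpha,\alpha'},
\]
with $\mu_{\alpha,\alpha'}$ supported on $\{k,k+1\}$. Couple $X_i'=\mathbb{I}_iX_i+(-1)^{i-1}(1-\mathbb{I}_i)Z_i$, with $\mathbb{I}_i$ independent of everything and
\[
\pr(\mathbb{I}_i=0)=1-\frac{\alpha_i'}{\alpha_i}\le\frac{1}{K^2\alpha_i}\le\frac1K .
\]
Keep a pair only if $\mathbb{I}_{2j-1}\mathbb{I}_{2j}=1$. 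Conditionally on these pair indicators:
\begin{itemize}
\item The kept part $Y+\sum_{\mathrm{kept}}(X_{2j-1}+X_{2j})$ has its original symmetric log-concave law. So flatness near $0$ is two-sided, and its value at $0$ is at least $\pr(X_1+\dots+X_m+Y=0)$.
\item The replaced pairs form an independent symmetric sum of variance $O\bigl(\sum_{\mathrm{replaced}}k_j^2\bigr)$.
\end{itemize}
Now apply Markov to $\sum_{\mathrm{replaced}}k_j^2$ at level $K^{-1/2}\Var S$; this also keeps the kept part's variance at least $(1-16K^{-1/2})\Var S$. Then Chebyshev, together with Lemma~\ref{lem.logconcmode} on the window $s\sqrt{\Var S}$ with $s=K^{-1/6}$, gives the factor $1-39K^{-1/6}$.
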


\begin{proof}
    We may assume $(\alpha_i)$ and $(\alpha_i')$ are non-increasing,
    $(-1)^{i-1} X_i \sim \nu_{\alpha_i}$ and $(-1)^{i-1} X_i' \sim \nu_{\alpha_i'}$.
    As they are strongly balanced, $m$ is even.
    Since the inequality is trivial for $K \le  \Cmidsizealphacontinuity^6$, we can assume
    $K > \Cmidsizealphacontinuity^6$.


    Let $\alpha,\alpha' \in [K^{-1}, 1-K^{-1}]$, $\alpha' \le \alpha = t K^{-2}$ for an integer $t$, $\alpha' \in [(t-1)K^{-2}, tK^{-2}]$. 
    Also write $k=\lfloor \alpha^{-1} \rfloor$ and $k' = \lfloor (\alpha')^{-1} \rfloor$. Since
    \[
        \min_{i, j \in \{1, \dots, K\}, i \ne j} \left|\frac 1 i - \frac 1 j\right| = \frac 1 {K (K-1)} > \frac 1 {K^2},
    \]
    there is at most one $i \in \{1, \dots, K\}$ such that $\frac 1 i \in [(t-1)K^{-2}, tK^{-2}]$ which means that $k' \in \{k, k+1\}$.

    It is easy to check that we can express 
    \begin{align*}
        \nu_{\alpha'} = \nu_{\alpha} \frac {\alpha'} {\alpha} +  \mu_{\alpha, \alpha'}  (1-\frac {\alpha'} \alpha)
    \end{align*}
    where $\mu_{\alpha, \alpha'}$ does not matter, say $\mu_{\alpha', \alpha} = \delta_k$ if $\alpha'=\alpha$, while for $\alpha' < \alpha$
    \begin{align*}
        \mu_{\alpha, \alpha'} =
        \begin{cases}
            \delta_k, & \mbox{if } k' = k \\
            x \delta_k + (1-x) \delta_{k+1}, &\mbox{if } k' = k+1,
        \end{cases}
    \end{align*}
    \[
        x = \frac {(k+1) \alpha' - \frac {\alpha'} \alpha} {1- \frac {\alpha'} \alpha}.
    \]
    When $k'=k+1$, $\alpha' \le \frac 1 {k+1}$, so $x \le 1$. As $\frac 1 {k+1} < \alpha \le \frac 1 k$, we have 
    $(k+1) \alpha' - \frac {\alpha'} \alpha = \alpha' ( (k+1) - \alpha^{-1}) > 0$, so $x > 0$. So $\mu_{\alpha, \alpha'}$ is 
    a probability measure.

    So for $i \in \{1,\dots, m\}$ we can assume a coupling
    \begin{align*}
        X_i' = \mathbb{I}_i X_i + (-1)^{i-1}(1-\mathbb{I}_i)Z_i, \quad \mathbb{I}_i \sim Bin(\frac {\alpha_i'} {\alpha_i}), \quad Z_i \sim \mu_{\alpha_i, \alpha_i'},
    \end{align*}
    where $(\mathbb{I}_i)$, $(Z_i)$ are sequences of independent random variables, independent of $(X_i)$ and $Y$.
    
    Now write $\Delta_i = (-1)^{i-1}(1-\mathbb{I}_i)Z_i$. We will group the sequences of random variables
    into consecutive pairs, so that the distribution of the sum of elements in each pair is symmetric.

    Thus we can write
    \begin{align*}
        & \sum_{i=1}^m X_i = \sum_{j=1}^{\frac m 2} P_j; \quad P_j = X_{2j-1} + X_{2j} \\
        & \sum_{i=1}^m X_i' = \sum_{j=1}^{\frac m 2} \mathbb{I}_j' P_j + (1-\mathbb{I}_j') R_j,
    \end{align*}
    where $\mathbb{I}_j' = \mathbb{I}_{2j-1} \mathbb{I}_{2j}$ and $R_j$ can be seen as a mixture (since $\alpha_{2j-1} = \alpha_{2j}$) 
    \begin{align*}
       R_j \sim \begin{cases}
           X_{2j-1} + \Delta_{2j}, \quad \mbox{with probability } \frac{\frac {\alpha_{2j-1}'} {\alpha_{2j-1}} (1 -\frac {\alpha_{2j-1}'}{\alpha_{2j-1}})} {1-(\frac{\alpha_{2j-1}'} {\alpha_{2j-1}})^2} \\
            \Delta_{2j-1} + \Delta_{2j},  \quad \mbox{with probability } \frac{(1 -\frac {\alpha_{2j-1}'}{\alpha_{2j-1}})^2} {1-(\frac{\alpha_{2j-1}'} {\alpha_{2j-1}})^2} \\
            \Delta_{2j-1} + X_{2j},  \quad \mbox{with probability } \frac{\frac {\alpha_{2j-1}'} {\alpha_{2j-1}} (1 -\frac {\alpha_{2j-1}'}{\alpha_{2j-1}})} {1-(\frac{\alpha_{2j-1}'} {\alpha_{2j-1}})^2}.
        \end{cases}
    \end{align*}
    With this coupling write $S_n = \sum_{i=1}^m X_i + Y$, $S_n' = \sum_{i=1}^m X_i' + Y$, $V_{\alpha_i} = \Var X_i$ and
    \[
        V = \Var S_n = \sum_{i=1}^m V_{\alpha_i} + \Var Y.
    \]
    Using Markov's inequality
    \[
        \pr(\sum_{j=1}^{\frac m 2} (1-\mathbb{I}_j') k_{2j-1}^2 \ge u V) \le \frac {\sum_{j=1}^{\frac m 2}\E (1-\mathbb{I}_{j}')  k_{2j-1}^2} {u V}.
    \]
    and
    \begin{align}
        &\E (1-\mathbb{I}_{j}')k_{2j-1}^2 = k_{2j-1}^2 (1-\left(\frac {\alpha_{2j-1}'} {\alpha_{2j-1}} \right)^2)  \nonumber
        \\ &\le  k_{2j-1}^2 (1-\left(\frac {\alpha_{2j-1} - K^{-2}} {\alpha_{2j-1}} \right)^2)  \le \frac {2 k_{2j-1}^2} {K^2 \alpha_{2j-1}} \le \frac {2 k_{2j-1}^2} {K}. \label{eq.cmrk1}
    \end{align}
     Meanwhile  $\Var (X_{2j-1} + X_{2j}) = 2\Var X_{2j-1}$. In the case $k_{2j-1} \ge 2$ we 
     have using Lemma~\ref{lem.variance} $\Var X_{2j-1} \ge \frac {k_{2j-1}^2 - 1} {12} \ge \frac {k_{2j-1}^2} {16}$,
    so
    \[
        \frac {\E ((1-\mathbb{I}_j') k_{2j-1}^2)} {\Var (X_{2j-1} + X_{2j})} \le \frac {16} K.
    \]
    In the case $k_{2j-1} = 1$ we have $\alpha_{2j-1} \ge \frac 1 2$ and $\Var X_{2j-1} = \alpha_{2j-1} (1-\alpha_{2j-1}) \ge \alpha_{2j-1} K^{-1}$.
    So using the penultimate inequality of (\ref{eq.cmrk1})
    \[
        \frac {\E ((1-\mathbb{I}_j') k_{2j-1}^2 )} {\Var (X_{2j-1} + X_{2j})} \le \frac {2 k_{2j-1}^2} {K^2 \alpha_{2j-1}} \cdot \frac{K} { 2 \alpha_{2j-1}}
        \le \frac 4 K \le \frac {16} K.
    \] 
    A bound that holds for the ratio of $j$th terms for each $j$, holds for the ratio of their sums. Thus for any $u > 0$ 
    \[
        \pr(\sum_{j=1}^{\frac m 2} (1-\mathbb{I}_j') k_j^2 \ge u V) \le \frac {16} {u K}.
    \]
    Let $A$ be the event that $\sum_{j=1}^{\frac m 2} (1-\mathbb{I}_j') k_j^2 < u V$ for $u = K^{-\frac 1 2}$, so that $\pr(\bar{A}) \le 16 K^{-\frac 1 2}$.
    
    Fix a sequence $I' = (I_1', \dots, I_{\frac m 2}')$ such that the event $A_{I'}$ defined by $\mathbb{I}_j' = I_j'$ for each $j$ implies the event $A$.

    Since $R_j$ and $X_{2j-1} + X_{2j}$ are symmetric and 
    \[
        -k_j \le R_j, X_{2j-1} +X_{2j} \le k_j,
    \]
    we have that
    \[
        \Var(\sum_j (1-I_j') R_j) \le \sum_{j=1}^{\frac m 2} (1-I_j')k_j^2 \le 16 K^{-\frac 1 2} \Var S_n
    \]
    and
    \begin{equation}\label{eq.var16}
        \Var \left(Y + \sum_j I_j' (X_{2j-1} + X_{2j}) \right) \ge \Var S_n (1 - 16 K^{-\frac 1 2}).
    \end{equation}
    Denote by $B$ the event that $|\sum (1-I_j') R_j| \le s \sqrt{ \Var S_n}$
    with a small positive constant $s$ to be specified later.
    By Chebyshev's bound $\pr(\bar{B}) \le  16 K^{-\frac 1 2} s^{-2}$.
    Now with $I'$ fixed $\sum_j (1-I_j') R_j$ and $\sum_j I_j' (X_{2j-1} + X_{2j})$ are both independent, and the 
    latter term is log-concave,
    with mode 0, so we get:
    \begin{align*}
        & \pr(S_n' = 0 | \mathbb{I}=I') = \pr(Y + \sum (I_j' (X_{2j-1} + X_{2j}) + (1-I_j') R_j) = 0) \\
        & \ge \pr(Y + \sum I_j' (X_{2j-1} + X_{2j}) = -\sum (1-I_j') R_j | B)  \pr (B)  \\
        & \ge \min_{|t|\le s \sqrt{ \Var S_n}}  \pr(Y + \sum I_j' (X_{2j-1} + X_{2j}) = t) (1 - 16 K^{-\frac 1 2}s^{-2}). 
    \end{align*}
    Write $S_n^{I'} := Y + \sum I_j' (X_{2j-1} + X_{2j})$. 
    Now by (\ref{eq.var16}) 
    if $K \ge 17^2$, which is true by an above assumption, then $\Var S_n^{I'}\ge 17^{-1} {\Var S_n}$,
    therefore by Lemma~\ref{lem.logconcmode}
    for $|t|\le s \sqrt{ \Var S_n}$
    \begin{align*}
        & \pr(S_n^{I'} = t) 
         \ge
        \pr(S_n^{I'}=0) (1 - \frac{\Clogconcvariancenewcol s \sqrt{\Var S_n}} {\sqrt{\Var S_n^{I'}}}) \\
        & \ge \pr(S_n=0) (1 - \Cmediumcontnew s) 
    \end{align*}
    So, combining all the previous bounds and expanding over all $I'$ where $A_{I'}$ holds 
    \begin{align*}
        & \pr(S_n' = 0) \ge \sum_{I'} \pr(S_n'=0 | \mathbb{I}'=I')  (1 - 16 K^{-\frac 1 2}s^{-2}) \pr(\mathbb{I}'=I') 
        \\ &\ge \pr(S_n=0)  (1 - 16 K^{-\frac 1 2}s^{-2}) (1 - \Cmediumcontnew \cdot s) \pr(A) \\
        & \ge \pr(S_n=0) (1 - 16 K^{-\frac 1 2}s^{-2}) (1 - \Cmediumcontnew \cdot s) (1 -  16 K^{-\frac 1 2}) \\
        & \ge \pr(S_n=0) (1 - (16 K^{-\frac 1 2}s^{-2} +  \Cmediumcontnew s +  16 K^{-\frac 1 2})) \\
        &   \ge \pr(S_n=0) (1- \Cmidsizealphacontinuity K^{-\frac 1 6})
    \end{align*}
    with $s = K^{-\frac 1 6}$.

\end{proof}

\medskip

The next lemma shows that $\tbal$ is continuous in the sense that those
 $\alpha_i$ which are of the form $\frac 1 {k_i}$ for $k_i$ a sufficiently large odd integer, can be replaced by $\frac 1 {k_i - 2}$ and the value of $\tbal$ can increase by only a small fraction.
 
\begin{lemma} \label{lem.balanced_continuity_large}
    Let $K$ be a positive integer.
    Let $(X_i', i \in \{1, \dots, n\})$ and $(X_i, i \in \{1, \dots, n\})$ be
    sequences of independent random variables
    where $X_i$ is distributed unifomly on $\{-\frac{k_i-1} 2, \dots, \frac {k_i - 1} 2\}$
    while $X_i'$ is distributed uniformly on $\{-\frac {k_i+1} 2, \dots, \frac {k_i+1} 2\}$
    and $k_i$, $k_i \ge K$, are odd integers.

    Let $Y$ be a symmetric log-concave integer random variable, independent of both the sequences. 

    Then 
    \[
         \pr(Y + \sum_{i=1}^n X_i + Y = 0)  \ge  \pr(Y + \sum_{i=1}^n X_i' = 0)  \ge  
         \pr(Y + \sum_{i=1}^n X_i = 0)  (1 -  \Cbalancedcontinuitylarge 
    K^{-\frac 1 5}).
    \]
\end{lemma}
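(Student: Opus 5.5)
The statement has a typo: the first term should read $\pr(Y + \sum_{i=1}^n X_i = 0)$. Under that reading there are two inequalities, and I will prove them separately.

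\emph{First inequality (monotonicity).} $X_i'$ is uniform on $k_i+2$ points and $X_i$ is uniform on $k_i$ points, both centred. Write the uniform measure on $\{-\frac{k_i+1}2,\dots,\frac{k_i+1}2\}$ as the mixture
\[
\frac{k_i}{k_i+2}\,\mathrm{Unif}\Bigl\{-\tfrac{k_i-1}2,\dots,\tfrac{k_i-1}2\Bigr\}+\frac{2}{k_i+2}\,\mathrm{Unif}\Bigl\{\pm\tfrac{k_i+1}2\Bigr\}.
\]
Replace the $X_i$ one at a time. Let $W$ be the sum of $Y$ and the other summands. It is symmetric, and it is log-concave since it is a convolution of symmetric log-concave laws, so it is unimodal with mode $0$. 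For the current summand, $\pr(W+X_i=0)$ is the average of the $k_i$ central atoms of $W$. $\pr(W+X_i'=0)$ is the average over $k_i+2$ atoms, and the two extra atoms are no larger than the central ones. Hence replacing $X_i$ by $X_i'$ does not increase the probability at $0$. Iterating over $i$ gives the first inequality.

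\emph{Second inequality.} I would follow the coupling argument of Lemma~\ref{lem.midsize_alpha_continuity}. Couple $X_i'=\mathbb{I}_iX_i+(1-\mathbb{I}_i)Z_i$, where $\mathbb{I}_i\sim \mathrm{Bin}(\frac{k_i}{k_i+2})$, $Z_i$ is uniform on $\{\pm\frac{k_i+1}2\}$, and everything is independent. Set $D=\sum_i(1-\mathbb{I}_i)(Z_i-X_i)$, so that $S'=S+D$ with $S=Y+\sum X_i$. Conditioning on $\mathbb{I}$ and on the $X_i$ with $\mathbb{I}_i=0$ is awkward. Instead, write $S'=Y+\sum_{i:\mathbb{I}_i=1}X_i+\sum_{i:\mathbb{I}_i=0}Z_i$ and condition on $\mathbb{I}=I$. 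The first part $T_I$ is symmetric log-concave with mode $0$. The second part $R_I$ is symmetric, independent of $T_I$, and satisfies
\[
\Var R_I=\sum_{i:I_i=0}\Bigl(\tfrac{k_i+1}2\Bigr)^2 .
\]
Now let $V=\Var S\ge \sum_i\frac{k_i^2-1}{12}$. The key estimate is
\[
\E\sum_i(1-\mathbb{I}_i)\frac{(k_i+1)^2}4=\sum_i\frac{2}{k_i+2}\cdot\frac{(k_i+1)^2}4\le \sum_i\frac{k_i}{2}\cdot\frac{(k_i+1)^2}{(k_i+2)k_i}\le \frac{C}{K}\,V,
\]
using $k_i\ge K$ and $k_i^2\lesssim 12\Var X_i$. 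By Markov, the event $A=\{\Var R_{\mathbb{I}}\le uV\}$ has $\pr(\bar A)\le \frac{C}{uK}$. On $A$ we have $\Var T_I\ge V-\Var R_I-\sum_{i:I_i=0}\Var X_i\ge(1-2u)V$.

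Conditionally on $I\in A$, Chebyshev gives $\pr(|R_I|>s\sqrt V)\le u s^{-2}$. Lemma~\ref{lem.logconcmode} (its symmetric form, since $T_I$ is symmetric with mode $0$, so both sides decay alike) gives $\pr(T_I=t)\ge \pr(T_I=0)(1-\Clogconcvariancenewcol\, s\sqrt{V/\Var T_I})$ for $|t|\le s\sqrt V$. It remains to compare $\pr(T_I=0)$ with $\pr(S=0)$. Since $S=T_I+\sum_{i:I_i=0}X_i$ and the added summands are symmetric with $T_I$ symmetric unimodal, $\pr(S=0)\le \pr(T_I=0)$, as the convolution averages atoms that are at most the central one. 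Combining,
\[
\pr(S'=0)\ge \pr(S=0)\Bigl(1-\tfrac{C}{uK}\Bigr)(1-us^{-2})(1-Cs).
\]
Optimize with $s=u^{1/3}$ and $u=K^{-3/5}$. The three error terms are then $CK^{-2/5}$, $K^{-1/5}$ and $CK^{-1/5}$, so all are $O(K^{-1/5})$. Tracking the constants should give $\Cbalancedcontinuitylarge$. The inequality is trivial for $K$ below the threshold where this bound is negative.

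The main obstacle is constant bookkeeping, specifically the Markov bound relating $\sum \frac{2}{k_i+2}\frac{(k_i+1)^2}{4}$ to $V/K$ with an explicit constant. The rest is a direct replay of the argument for Lemma~\ref{lem.midsize_alpha_continuity}.
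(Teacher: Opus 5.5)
Your proposal is correct and follows the paper's proof of the second inequality. Both use the same Bernoulli mixture coupling, a Markov bound on the replaced indices, Chebyshev on the perturbation $R_I$ (the paper's $\Delta_I$), Lemma~\ref{lem.logconcmode} for flatness of $T_I$ near its mode, and the comparison $\pr(T_I=0)\ge\pr(S=0)$; only the parametrization differs, and your choice $s=u^{1/3}$ leaves slack relative to $K^{-1/5}$. For the first inequality the paper simply cites Lemma~\ref{lem.cor3_improved}, while your one-summand-at-a-time averaging argument is a self-contained proof of the special case needed here.
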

\begin{proof}
    The first inequality follows by Lemma~\ref{lem.cor3_improved}. 

    We prove the second inequality by using a coupling.
    The second inequality is trivial for $K \le \Cbalancedcontinuitylarge^5$, so we can assume without loss of generality that $K > \Cbalancedcontinuitylarge^5$. 

    Denote $S_n = Y + \sum_{i=1}^n X_i$.
    Let $(\xi_i, i \in \{1, \dots, n\})$ be the sequence of independent Rademacher random variables, independent of $(X_i)$ and $Y$.
    Also let $(\mathbb{I}_i, i\in \{1,\dots,n\})$ be a sequence of independent Bernoulli random variables,
    independent of all other ones, with $\pr(\mathbb{I}_i = 1) = \frac 2 {k_i+2}$.

    Write $\Delta = \sum_{i=1}^n \xi_i \mathbb{I}_i \frac{k_i+1} 2$.
    Then $S_n'$ defined by
    \[
        S_n' = S_n - \sum_{i=1}^n \mathbb{I}_i X_i + \Delta
    \]
    is distributed as 
    $Y + \sum_{i=1}^n X_i'$.

    Fix $\epsilon > 0$ to be specified later.
    By Markov's  inequality
    \[
        \pr(\sum_{i} k_i^2 \mathbb{I}_i \ge \epsilon^2 \sum_{i} k_i^2) \le \frac {\E (\sum_{i} k_i^2 \mathbb{I}_i)} {\epsilon^2 \sum_{i} k_i^2} = \frac {2\sum_{i} k_i} {\epsilon^2 \sum_{i}k_i^2}  \le \frac 2 {\epsilon^2 K}.
    \]
    Let us fix $I_1, \dots, I_n \in \{0,1\}$ such that 
    \begin{equation}\label{eq.condI}
        \sum_{i} k_i^2 I_i < \epsilon^2 \sum_{i} k_i^2.
    \end{equation}

    Write $\Delta_I = \sum_{i} \xi_i I_i \frac{k_i+1} 2$ and $r = \left(\sum_{i} k_i^2\right)^{\frac 1 2}$.
    We have $(k_i + 1)^2/4 \le k_i^2$ for $i \in \{1, \dots, n\}$. Using (\ref{eq.condI}) and Chebyshev's inequality
    \[
        \pr(|\Delta_I| \ge \epsilon' r) \le \frac {\Var \Delta_I} {(\epsilon')^2 \sum_{i} k_i^2}
        = \frac {\sum_{i} {I_i (k_i+1)^2}} {4 (\epsilon')^2 \sum_{i} k_i^2}
        \le \frac {\epsilon^2} {(\epsilon')^2} \le \epsilon
    \]
    for $\epsilon' = \sqrt \epsilon$.

    Denote $S_n^I = S_n - \sum_{i} I_i X_i$.
    We have 
    \begin{align*}
        &\pr(S_n' = 0 | \mathbb{I}_1 = I_1, \dots, \mathbb{I}_n=I_n) = \pr(S_n^I =  - \Delta_I)
        \\ & \ge \pr(S_n^I =  - \Delta_I, |\Delta_I| < \epsilon' r)
        \\ & \ge \min_{|j'| \le \epsilon' r }\pr(S_n^I = - j') \pr(|\Delta_I| < \epsilon' r)
        \\ & \ge \min_{|j'| \le \epsilon' r }\pr(S_n^I =  - j') (1-\epsilon).
    \end{align*} 

    Since the all the independent terms of the sum have a log-concave distribution,  for $|j'| \le \epsilon' r$ we have using Lemma~\ref{lem.logconcmode}
    \begin{align*}
        \pr(S_n^I =  - j') & \ge \pr(S_n^I = 0) \left(1 - \frac{\Clogconcvariancenewcol \epsilon' r }{\sqrt{\Var S_n^I}} \right)
        \\ & 
        \ge \pr(S_n^I=0) \left (1 -  \frac {\Clogconcvariancenewcol \epsilon' r} { \sqrt{(1-2\epsilon^2)\Var S_n}} \right). 
    \end{align*}
    In the last inequality we used (\ref{eq.condI}), Lemma~\ref{lem.variance} and $k_i \ge K \ge 2$ for $i \in \{1,\dots,n\}$ to bound
    \[
        \Var S_n - \Var S_n^I = \sum_{i} I_i \frac{k_i^2 -1} {12} \le \sum_{i} I_i \frac{k_i^2} {12}
        \le 2 \epsilon^2 \Var S_n.
    \]
    So  for $|j'| \le \epsilon' r$ assuming $\epsilon \le \frac 1 2$
    \begin{align*}
        &\pr(S_n^I = - j') \ge \pr(S_n^I = 0) \left(1 - \frac {\Clogconcvariancenewcol \cdot \epsilon' r} { \sqrt{(1-2\epsilon^2) \Var S_n}}\right)
        \\ &\ge \pr(S_n^I = 0) \left(1 - \frac { \Clogconcvariancenewcol \epsilon' \sqrt{24 \Var S_n}} { \sqrt{(1-2\epsilon^2) \Var S_n}} 
 \right)
        \\ & \ge  \pr(S_n^I = 0) (1 - \Cbalancedcontinuitylargea \epsilon')
        \\ & \ge  \pr(S_n = 0) (1 - \Cbalancedcontinuitylargea \epsilon')
    \end{align*}

So 
\begin{align*}
    & \pr(S_n' = 0 | \mathbb{I}_1=I_1,\dots,\mathbb{I}_n=I_n) 
    \\ & \ge  \pr(S_n' = 0, \, \Delta_I < \epsilon' r | \mathbb{I}_1=I_1,\dots,\mathbb{I}_n=I_n)  
    \\ & =  \pr(S_n' = 0 | \mathbb{I}_1=I_1,\dots,\mathbb{I}_n=I_n, \Delta_I < \epsilon' r)  \pr(\Delta_I < \epsilon' r |  \mathbb{I}_1=I_1,\dots,\mathbb{I}_n=I_n)
    \\ & \ge \pr(S_n = 0) (1 - \Cbalancedcontinuitylargea  \epsilon' ) (1-\epsilon) 
    \\ &\ge \pr(S_n=0)(1-  \Cbalancedcontinuitylargea  \epsilon'  - \epsilon)
    \ge \pr(S_n= 0) (1- \Cbalancedcontinuitylargebr \sqrt{\epsilon}).
\end{align*}
The event $A = \{\sum_{i} k_i^2 \mathbb{I}_i < \epsilon^2 \sum_{i} k_i^2\}$ holds with probability at least $1- \frac 2 {\epsilon^2 K}$.
So by the already proved inequality, which holds for each realization of $(\mathbb{I}_1, \dots, \mathbb{I}_n)$ on  $A$
\begin{align*}
    & \pr(S_n' = 0) \ge \pr(S_n' = 0 | A) \pr(A) \ge \pr(S_n= 0) (1-  \Cbalancedcontinuitylargebr
    \sqrt{\epsilon})(1-\frac 2 {\epsilon^2 K}) 
    \\ &
    \ge \pr(S_n= 0) (1- \Cbalancedcontinuitylargebr 
    \sqrt{\epsilon} -\frac 2 {\epsilon^2 K}) \ge \pr(S_n = 0) (1 - \Cbalancedcontinuitylarge 
    K^{-\frac 1 5}),
\end{align*}
if we put $\epsilon = K^{-\frac 2 5}$.

\end{proof}


We will often use the following inequality, which refer to as \emph{the HLP rearrangement inequality}. 
See also a more general
Theorem~\ref{thm.madimanwangwoo} below. 

\begin{theorem}
  \label{thm.HLP}
  Let $X, Y, Z_1, \dots, Z_n$ be integer random variables.
  Suppose the symmetric
  decreasing arrangement $Z_i^*$ exists
  for each $i \in \{1, \dots, n\}$. Then
  \[
     \conc(X + Y + Z_1 + \dots + Z_n) \le \pr(X^+ +  {}^+Y + Z_1^* + \dots + Z_n^*=0).
  \]
\end{theorem}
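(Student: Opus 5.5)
We reduce the statement to the classical two-function and three-function rearrangement inequalities of Hardy, Littlewood and Pólya on $\mathbb{Z}$, applied inductively. First, by translating, it suffices to bound $\pr(X+Y+Z_1+\dots+Z_n = x)$ for every fixed $x \in \mathbb{Z}$. Absorbing the shift $-x$ into $X$ does not change $X^+$, since a rearrangement of a translate is the same rearrangement. So it suffices to show
\[
\pr(X+Y+Z_1+\dots+Z_n=0) \le \pr(X^+ + {}^+Y + Z_1^*+\dots+Z_n^*=0).
\]
Writing $f,g,h_1,\dots,h_n$ for the probability mass functions, the left side is a multilinear sum
\[
\sum_{a+b+c_1+\dots+c_n=0} f(a)g(b)h_1(c_1)\cdots h_n(c_n).
\]
This is the form treated by the discrete HLP inequality (Theorem 371--376 in \cite{HLP}, Gabriel's inequality).

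The key steps are as follows.

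\textbf{Step 1 (base case).} For the three-term case
\[
\sum_{a+b+c=0} f(a)g(b)h(c) \le \sum_{a+b+c=0} f^+(a)\,{}^+g(b)\,h^*(c),
\]
we invoke the HLP theorem directly. It states this for nonnegative finitely supported sequences, with the $+$, $^+$ and $*$ rearrangements as defined above. We extend it to summable sequences by truncating the supports and passing to monotone limits. The two-term case ($n=0$) is the elementary statement $\sum_a f(a)g(-a) \le \sum_a f^+(a)\,{}^+g(-a)$. Indeed, $a\mapsto {}^+g(-a)$ is ordered like $f^+$ ($0,1,-1,2,\dots$), so this is the ordinary rearrangement inequality for sums of products.

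\textbf{Step 2 (induction on $n$).} The key fact is that the convolution of symmetric decreasing (i.e.\ symmetric unimodal) measures on $\mathbb{Z}$ is again symmetric decreasing. We merge one $Z_i$ at a time. Set $W = Z_1+\dots+Z_{n-1}$ and apply the three-function inequality to $X$, $Y+W$, $Z_n$, after first reducing $W$ to $W^*$.

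The ordering matters here. Conditioning on $X$ and $Y$ and viewing the sum as a function of the independent parts, we replace each $Z_i$ by $Z_i^*$ one by one. At each stage we apply the three-function inequality with $h = $ law of $Z_i$, and with $f$, $g$ being the law of $X$ and the law of $Y$ plus the already-symmetrized $Z$'s. To keep $f^+$ and ${}^+g$ as the final forms, we use the stronger HLP statement: $\sum f g h \le \sum f^+\,{}^+g\,h^*$, together with the observation that if $h_0$ is already symmetric decreasing, then ${}^+(g * h_0) \succeq {}^+g * h_0$ in the relevant majorization sense. Concretely, we show that ${}^+g * h_0$ is itself "$^+$-ordered", i.e.\ its values are nonincreasing along $0,-1,1,-2,2,\dots$. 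This holds because convolving a $^+$-ordered sequence with a symmetric decreasing one preserves that order. This is exactly HLP's Lemma used in their proof of Theorem 376.

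\textbf{Step 3 (conclusion).} By Step 2, $W$ may be replaced by $Z_1^*+\dots+Z_{n-1}^*$, whose law is symmetric decreasing. We then absorb it into $Z_n^*$ as a single symmetric decreasing term, and finish with the three-function base case.

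The main obstacle is Step 2: verifying that symmetric decreasing convolutions preserve the $+$ / $^+$ orderings, so that the induction closes. Rather than reprove this, we would cite Theorem~\ref{thm.madimanwangwoo} of Madiman, Wang and Woo, or HLP's Theorem 376 directly, noting that it handles arbitrarily many symmetric decreasing factors.
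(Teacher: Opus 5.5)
With the citation of HLP's multi-factor theorem for finitely supported sequences (the paper cites it as Theorem~374), your proof coincides with the paper's, which extends to unbounded supports by conditioning. The other two routes in your write-up do not work.

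\emph{The alternative citation.} Theorem~\ref{thm.madimanwangwoo} is no substitute. It requires \emph{every} summand to be \#-log-concave, so it says nothing about arbitrary $X$, $Y$, $Z_i$.

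\emph{The induction in Steps~2--3.} It rests on a false step: you cannot replace $W=Z_1+\dots+Z_{n-1}$ by $W^*:=Z_1^*+\dots+Z_{n-1}^*$ while leaving $X,Y$ unrearranged. Take $n=2$, $X\equiv-5$, $Y\equiv Z_2\equiv 0$ and $Z_1$ uniform on $\{4,5,6\}$. Then $\pr(X+Y+Z_1+Z_2=0)=\frac13$ but $\pr(X+Y+Z_1^*+Z_2=0)=0$. Step~3 then uses $Z_n^*$ without ever justifying that replacement. Applying the base case to $(X,Y,W^*+Z_n)$ would produce $(W^*+Z_n)^*$, if it exists at all, not $W^*+Z_n^*$.

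The missing idea is that $W\to W^*$ must happen inside a concentration-domination statement in which $Y$ is rearranged at the same time. The fix runs as follows.
\begin{enumerate}
\item Apply the three-term case to $(X,\,Y+W,\,Z_n)$. This bounds the left side by $\sum_b\pr({}^+(Y+W)=b)\,\phi(b)$ with $\phi(b)=\pr(X^++Z_n^*=-b)$.
\item By the mirror image of your order-preservation observation, $\phi$ is nonincreasing along $0,-1,1,-2,2,\dots$.
\item Summation by parts along that order reduces the claim to $\conc_k(Y+W)\le\pr({}^+Y+W^*\in-I_k)$ for all $k$. This is exactly the induction hypothesis with $X$ replaced by a variable uniform on $k$ points.
\item This gives the bound $\sum_b\pr({}^+Y+W^*=b)\,\phi(b)=\pr(X^++{}^+Y+W^*+Z_n^*=0)$.
\end{enumerate}
So the comparison actually needed is $Y+W\lesspeaked{}^+Y+W^*$. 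The majorization you wrote, ${}^+(g*h_0)\succeq{}^+g*h_0$ with $h_0$ already symmetric, never moves $W$ to $W^*$. Read in the usual sense it also points the wrong way, since the rearranged side ${}^+g*h_0$ is the more concentrated one.

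\emph{A smaller point in Step~1.} Truncating the support of $h_i$ to $[-N,N]$ can destroy the existence of $h_i^*$; for example, take masses $\frac12,\frac14,\frac14$ at $0,5,100$. Keep instead the $2j+1$ largest atoms, whose symmetrization is $h_i^*\mathbf 1_{[-j,j]}\le h_i^*$. Then use monotonicity of rearrangements under pointwise domination and monotone convergence.
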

\begin{proof}
When the random variables have bounded support, this is a restatement of Theorem~374 of \cite{HLP}.
Otherwise, the same result follows by the bounded support case and conditioning.
\end{proof}
\medskip

We end this section with a simple result, similar to the ``balancing lemma'' of~\cite{jk2021} which
shows that Theorem~\ref{thm.tse} holds exactly in the strongly balanced case.

\begin{lemma}\label{lem.strongly_balanced}
  Let $1 \ge \alpha_1 \ge \dots \ge \alpha_n > 0$.
  If $(\alpha_i)$ is strongly balanced then $\topt(\alpha_1, \dots, \alpha_n) = \tbal(\alpha_1, \dots, \alpha_n)$.
\end{lemma}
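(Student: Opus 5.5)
The inequality $\topt \ge \tbal$ is immediate: a balanced standard extremal sequence is admissible in the supremum defining $\topt$, and its sum attains $\tbal$ at $0$. So the work is the upper bound $\topt(\alpha_1,\dots,\alpha_n) \le \tbal(\alpha_1,\dots,\alpha_n)$. I would prove it by a pairing argument combined with the Krein--Milman reduction and the HLP rearrangement inequality (Theorem~\ref{thm.HLP}).

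\emph{Step 1 (reduction to extremal measures).} Take any independent $X_1,\dots,X_n$ with $\conc(X_i)\le\alpha_i$. As in the proof of Lemma~\ref{lem.topt_nondec}, write each law as a mixture of extremal measures for $\alpha_i$ (Definition~\ref{def.extremal}), the extreme points of the convex set of laws with concentration at most $\alpha_i$. Fix the target atom and condition on all other summands; the probability is then affine in the law of $X_i$, so some extremal component does at least as well. Doing this for each $i$ in turn, we may assume every $X_i$ has an extremal law for $\alpha_i$.

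\emph{Step 2 (pairing).} Since $(\alpha_i)$ is strongly balanced, the indices split into pairs $\{i,j\}$ with $\alpha_i=\alpha_j=\alpha$. For such a pair let $X_i,X_j$ be extremal for $\alpha$, and let $Y,Y'$ be independent with $Y\sim\nu_\alpha$ and $Y'\sim-\nu_\alpha$. The key claim is that $X_i - X_j'$ and $Y + Y'$ are related by rearrangement, where $X_j'$ is an independent copy of $X_j$ placed so that the pair sum is symmetric. More directly, I would apply Theorem~\ref{thm.HLP} with $X=X_i$, $Y=X_j$ and no $Z$'s. This gives
\[
\conc(\text{pair sum} + \text{rest}) \le \pr\bigl(X_i^+ + {}^+X_j + W = 0\bigr),
\]
but here the rest $W$ is not yet rearranged. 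To handle this, apply Theorem~\ref{thm.HLP} once to all $n$ variables, grouping them as pair sums $Z_j = X_{2j-1}+X_{2j}$. The difficulty is that $Z_j^*$ need not exist, and $Z_j^*$ would not obviously be the balanced pair law anyway.

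\emph{Step 2, refined.} The better route is to rearrange the individual variables first, since each extremal law has a well-defined $\mu^+$ and ${}^+\mu$. Theorem~\ref{thm.HLP} as stated permits only one $X^+$ and one ${}^+Y$. I would therefore use Theorem~374 of \cite{HLP} in its pairwise form, iterating two variables at a time: for fixed others, the sum $X_i+X_j$ with $X_i$ replaced by $X_i^+$ and $X_j$ by ${}^+X_j$ is a symmetric decreasing law, namely the law of $Y+Y'$ shifted. Note that $(\nu_\alpha)^+$ and ${}^+(\nu_\alpha)$ are exactly reflections of each other, up to a shift, of standard extremal measures, because the extremal law has $k$ equal atoms and one small atom. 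Then $X_i^+ + {}^+X_j$ is symmetric unimodal, and so equals its own $Z^*$. This lets us treat each pair as a single $Z_j$ with an existing symmetric decreasing rearrangement.

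\emph{Step 3 (conclusion).} With $Z_j = X_{2j-1}+X_{2j}$, first bound $\conc(\sum X_i) \le \conc(\sum_j \tilde Z_j)$, where $\tilde Z_j$ is the pair after replacing it by $X^+ + {}^+X$. Then apply Theorem~\ref{thm.HLP} with all $\tilde Z_j$ as $Z$'s to conclude $\le \pr(\sum \tilde Z_j^*=0) = \tbal$.

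The main obstacle is the first inequality of Step 3: justifying the replacement of each pair jointly while the others stay arbitrary. Here I would use the two-function HLP inequality. Condition on everything except the pair and write the atom probability as $\sum_x \pr(X_i+X_j = x)\,g(x)$ for the law $g$ of the remaining sum. This is at most the same sum with $X_i+X_j$'s law replaced by a symmetric decreasing majorizer and $g$ by $g^*$. I would check that the law of $X_i+X_j$ is majorized, in the sense of partial sums of decreasingly ordered atoms, by that of $X_i^+ + {}^+X_j$, which I would verify directly from Gabriel/HLP's theorem for three sequences. Sequential application over the pairs then finishes the argument.
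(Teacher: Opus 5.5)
\textbf{The proposal has a genuine gap at the first inequality of Step 3.} Step 1 is fine. So is your observation that $X_i^+ + {}^+X_j$ has the law of a balanced pair $Y+Y'$ and, by Theorem~\ref{thm.HLP} with a uniform summand, dominates $X_i+X_j$.

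The problem is the inequality $\conc(\sum_i X_i)\le\conc(\sum_j \tilde Z_j)$. Since $\sum_j \tilde Z_j$ already has the law of the balanced sum, this inequality \emph{is} the lemma, and your pair-by-pair sketch does not prove it.

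First, you cannot replace one pair while leaving the remainder $W$ untouched. The intermediate inequality $\conc(X_i+X_j+W)\le\conc(X_i^+ + {}^+X_j+W)$ is false. Take $X_1,\dots,X_4$ i.i.d.\ uniform on $\{0,10\}$, which are extremal for $\alpha=\frac12$, with $(\frac12,\frac12,\frac12,\frac12)$ strongly balanced. Then $\conc(X_1+X_2+X_3+X_4)=\frac38$. But $X_1^+ + {}^+X_2$ has law $(\frac14,\frac12,\frac14)$ on $\{-1,0,1\}$, and $X_1^+ + {}^+X_2+X_3+X_4$ has all nine atoms distinct, so its concentration is only $\frac14$.

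Second, suppose you also rearrange the law $g$ of the remainder, as you write. Note that $g^*$ usually does not exist, though $g^+$ would serve. The remainder is then no longer a sum of independent pairs, so there is nothing left to iterate on. To pass from $\sum_x \tilde f(x)g^+(x)$ to $\pr(\tilde Z_1+\dots+\tilde Z_m=0)$ by Abel summation, you need $W\lesspeaked \tilde Z_2+\dots+\tilde Z_m$ at \emph{every} level $k$. That is a strengthened form of the lemma itself for the remaining pairs. You would then also need an inductive step showing that domination of non-symmetric laws by symmetric unimodal ones survives convolution at all levels $k$. Theorem~\ref{thm.HLP} cannot give this, since it allows only one $X^+$ and one ${}^+Y$. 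Lemma~\ref{lem.cor3_improved} cannot either, since it requires all measures to be symmetric. Your proposal does not supply this argument.

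\textbf{The missing idea is symmetrization.} The paper orders the indices so that paired ones are $2k-1,2k$ and writes $S=S_1+S_2$, where $S_1$ and $S_2$ have identical $\alpha$-sequences. By Cauchy--Schwarz, $\pr(S=0)\le\max\bigl(\pr(S_1-S_1'=0),\pr(S_2-S_2'=0)\bigr)$. Now $S_1-S_1'=\sum_i (X_i-X_i')$ is a sum of \emph{symmetric} summands $Z_i$. Their symmetric decreasing rearrangements exist, so Theorem~\ref{thm.HLP} rearranges all of them at once. One then checks $Z_i^*\lesspeaked Y_i-Y_i'$ with $Y_i,Y_i'\sim\nu_{\alpha_{2i}}$ and finishes with Lemma~\ref{lem.cor3_improved}. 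This step is exactly what removes the obstruction you ran into: pair sums $X_{2j-1}+X_{2j}$ generally have no symmetric decreasing rearrangement.
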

\begin{proof}
    Using a standard argument (see, e.g., proof of Lemma~\ref{lem.balanced_approx_optimal} below),
    we can assume that there
    exists a sequence of independent random variables $X_1, \dots, X_n$
    such that for each $i \in \{1, \dots, n\}$ the law of $X_i$ is extremal for $\alpha_i$ 
    and $S = \sum_i X_i$ satisfies $\conc(S) = \topt(\alpha_1, \dots, \alpha_n)$.

    We can
    split
    $S$ into $S_1 = \sum_{k=1}^{\frac n 2} X_{2k-1}$ and $S_2 = \sum_{k=1}^{\frac n 2} X_{2k}$. By the simple argument from the proof of Lemma~1 from \cite{jk2021}, 
    \begin{equation}\label{eq.balancing}
        \pr(S=0) \le \max(\pr(S_1 - S_1' = 0), \pr(S_2-S_2'=0))
    \end{equation} 
    where $S_1'$ and $S_2'$ are independent copies of $S_1$ and $S_2$ respectively.
 Without loss of generality we can assume $\pr(S=0) \le \pr(S_1 - S_1' = 0)$.
    Note that $S_1-S_1'$ can be written as $\sum_{i=1}^{\frac n 2} Z_i$ where $Z_i = X_i - X_i'$ and $X_i' \sim X_i$ 
    and $X_1, X_1', \dots, X_{\frac n 2}, X_{\frac n 2}'$ are independent.

    By the rearrangement inequality, i.e., Theorem~\ref{thm.HLP}, 
    $\pr(\sum Z_i = 0) \le \pr (\sum Z_i^*=0)$ where $Z_i^*$ is distributed according to the symmetric decreasing rearrangement of the distribution of $Z_i$. An argument from the proof of Theorem~1 of \cite{jk2021} based on the HLP rearrangement inequality (Theorem~\ref{thm.HLP}) shows that for each integer $k$ $\pr(Z_i^* \in [-k, k]) \le \pr(Y_i - Y_i' \in [-k, k])$ where $Y_i, Y_i' \sim \nu_{\alpha_{2i}}$ are independent.
    Finally, as in Theorem~1 of \cite{jk2021}, we can apply Corollary~3 of \cite{jk2021} (see Lemma~\ref{lem.cor3_improved} below) to get that $\pr(\sum Z_i^* = 0) \le \pr(\sum_{i=1}^{\frac n 2} (Y_i - Y_i') = 0)$. Note that $(Y_1, -Y_1', \dots, Y_{\frac n 2}, -Y_{\frac n 2}')$
    is a standard extremal balanced sequence for $(\alpha_i)$ as required.
\end{proof}

\subsection{\texorpdfstring{$\epsilon$}{ε}-dominated concentration functions}
\label{subsec.epsilon_less_peaked}

Denote $I_j = \{-\lfloor \frac {j-1} 2 \rfloor, \dots, \lceil \frac{j-1} 2 \rceil\}$.
Note that $I_j$ is the support of the uniform distribution $\nu_{\frac 1 j}^+$, since by 
Definition~\ref{def.extremal} $\nu_{\frac 1 j}$ is the uniform distribution on $\{0, \dots, j-1\}$.

\begin{definition}\label{def.lesspeaked}
Let $\mu_1$ and $\mu_2$ be finite measures on integers.
Let $\epsilon \ge 0$.
We write
\[
\mu_1 \lesspeakedeps \mu_2
\]
    and say \emph{the concentration function
    of $\mu_1$ is $\epsilon$-dominated by the concentration function of $\mu_2$}
if for each positive integer $j$
\[
    \conc_j(\mu_1) = \mu_1^+(I_j) \le (1+\epsilon) \conc_j(\mu_2) = (1+\epsilon)\mu_2^+(I_j).
\]
When the measures are probability measures, we extend the notation $\lesspeakedeps$
to random variables distributed according to them.
For exact domination (i.e., the case $\epsilon = 0$) we use notation $\lesspeaked$ 
to mean
$\lesspeaked_{0}$. 
\end{definition}


It is worth noting that the results of \cite{HLP,lev1998,madimanwangwoo2018} (see, for example, Theorem~\ref{thm.HLP} and Theorem~\ref{thm.madimanwangwoo} below) yield not only bounds for the concentration, but 
also statements about the \emph{exact domination} of $\sum_{i=1}^n X_i$ of independent integer random variables by a sum of simpler independent variables $\sum_{i=1}^n Y_i$. This is due to the fact that the bound for $\conc_k(\sum_{i=1}^n X_i)$ follows from the bound for $\conc(U_k + \sum_{i=1}^n X_i)$ where $U_k$ is a new independent random variable distributed uniformly on a set of $k$ integers.

Another exact result that follows from~\cite{jk2021} is the following.
\begin{lemma}\label{lem.cor3_improved}
   Let $\mu_1, \dots, \mu_n$ and $\mu_1', \dots, \mu_n'$ be two sequences of probability measures on $\mathbb{Z}$.
   If each measure is symmetric and unimodal and $\mu_i' \lesspeaked \mu_i$, $i \in \{1,\dots,n\}$ then
   \[
        *_{i=1}^n \mu_i' \lesspeaked *_{i=1}^n \mu_i.
   \]
\end{lemma}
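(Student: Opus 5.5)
We reduce to the case $n=2$ and then induct on the number of factors. For symmetric unimodal measures, $\mu^+ = \mu$, so $\conc_j(\mu) = \mu(I_j)$ when $j$ is odd. For even $j$ the value is $\mu(I_{j-1}) + \mu(\{\lceil (j-1)/2\rceil\})$, the extra point being one of the two symmetric end atoms. Hence $\mu' \lesspeaked \mu$ means exactly that $\mu'(I_j) \le \mu(I_j)$ for every $j$, and it is enough to control the odd $j$, i.e. the symmetric intervals $[-k,k]$. The even case then follows because the extra atom has mass $\frac12\bigl(\mu([-k,k]) - \mu([-k+1,k-1])\bigr)$, which is an average of the two neighbouring odd-interval values. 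Since the class of symmetric unimodal measures on $\mathbb{Z}$ is closed under convolution (this is classical, via Wintner's theorem or the HLP rearrangement argument), we can proceed by induction. We replace $\mu_1'$ by $\mu_1$ first, then $\mu_2'$ by $\mu_2$, and so on, using transitivity of $\lesspeaked$. It therefore suffices to prove the following: if $\nu$ is symmetric unimodal and $\mu' \lesspeaked \mu$ with both symmetric unimodal, then $\mu' * \nu \lesspeaked \mu * \nu$.

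For this two-factor step we use the layer-cake decomposition. A symmetric unimodal $\nu$ is a mixture, with nonnegative weights, of uniform measures $u_m$ on the symmetric intervals $[-m,m]$, and we may include the "even" intervals $[-m,m+1]$ if needed. Likewise, $\mathbf 1_{[-k,k]}$ convolved with $u_m$ is a symmetric unimodal (trapezoidal) function $g$ of $x$. Thus
\[
(\mu'*\nu)([-k,k]) = \sum_m w_m \sum_x \mu'(x)\, g_{k,m}(x),
\]
and $g_{k,m}$ is symmetric and non-increasing in $|x|$. We write $g_{k,m}$ itself as a nonnegative combination of indicators of symmetric intervals $[-r,r]$. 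Then $\sum_x \mu'(x) g_{k,m}(x)$ becomes a nonnegative combination of the values $\mu'([-r,r])$, each of which is at most $\mu([-r,r])$ by hypothesis. This gives $(\mu'*\nu)([-k,k]) \le (\mu*\nu)([-k,k])$. Combined with the reduction in the first paragraph, this yields the lemma, and this is precisely the content of Corollary~3 of \cite{jk2021}.

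The main obstacle is parity bookkeeping. The sets $I_j$ are asymmetric when $j$ is even, and the uniform components of $\nu$ may have even length, which makes $g$ symmetric about a half-integer rather than about $0$. We will handle this by noting that a symmetric measure on $\mathbb{Z}$ only gives rise to odd-length symmetric level sets. So every layer of $\nu$ is $u_m$ on $[-m,m]$, and $g_{k,m}$ is symmetric about $0$ and integer-supported. For even $j$ we use the averaging identity above, applied to $\mu'*\nu$ and $\mu*\nu$, which are again symmetric unimodal. Here we need that the sequence of odd-interval masses determines the even ones monotonically, and this holds because each even value is the average of two consecutive odd values.
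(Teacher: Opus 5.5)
Your proof is correct and has the same structure as the paper's. The paper cites Corollary~3 of \cite{jk2021} for the symmetric intervals, $\mu'(\{-k,\dots,k\}) \le \mu(\{-k,\dots,k\})$, and then handles the even-length sets $I_{2k}$ with exactly your averaging identity $\mu(I_{2k}) = \tfrac12\bigl(\mu(\{-k,\dots,k\}) + \mu(\{-k+1,\dots,k-1\})\bigr)$. The only difference is that your one-factor-at-a-time induction with the layer-cake (Abel summation) step proves that cited corollary yourself. Closure of symmetric unimodal measures on $\mathbb{Z}$ under convolution also follows directly from the same decomposition into uniform measures on $\{-m,\dots,m\}$, so you do not need Wintner's theorem, which is stated on $\mathbb{R}$.
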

\begin{proof}
Denote  $\mu = *_{i=1}^n \mu_i$, $\mu' = *_{i=1}^n \mu_i'$, $p_i' = \mu'(\{i\})$ and $p_i = \mu(\{i\})$. 
Corollary~3 of~\cite{jk2021} says that for any non-negative integer $k$
\[
       x_k':= \mu'(\{-k, \dots, k\}) \le x_k:=*_{i=1}^k \mu_i(\{-k, \dots, k\}).
\]
It remains to show $x_{k-1}' + p_k' = \mu'(I_{2k}) \le x_{k-1} + p_k = \mu(I_{2k})$ for each positive integer $k$.
But this follows by adding the inqualities $x_k' = x_{k-1}'+2p_k' \le x_k = x_{k-1} + 2p_k$ and $x_{k-1}' \le x_{k-1}$
and dividing by 2.
\end{proof}
\medskip

However, in this paper we will need to deal with situations where one or more
involved measures are non-symmetric,
and we will use approximate domination for that.
   
We call a number $m$ a median of a finite measure $\mu$
if $\int_{-\infty}^m d \mu \ge \frac 1 2 \int_{-\infty}^{\infty} d \mu$ 
and $\int_{m}^\infty d \mu \ge \frac 1 2 \int_{-\infty}^{\infty} d \mu$.
For a random variable $X \sim \mu$ this is a standard definition of median.

Before proving our results about $\epsilon$-dominated concentration functions, we need one nice and simple property of measures $\mu^+$. 

In the next lemma imagine an integer-valued measure on integers as a set of balls,
where the balls are given linear order which agrees with the linear order on $\mathbb{Z}$.
Think of the balls (each representing a mass unit of the measure) as being placed on an interval of integers
and stacked in columns, a ball to the left or below of a given ball is smaller.
We will work with a more fine-grained definition of median: 
we want to capture the ``middle ball'', not only the ``middle atom''.

Formally, let $\mu$ be an integer-valued measure with finite support in $\mathbb{Z}$,
assume $\mu$ is not identically 0.
Let $N = \sum_{x\in \mathbb{Z}} \mu(\{x\})$ and fix a non-decreasing function $f_{\mu^+}: I \to \supp(\mu^+)$
where $I = I_{\mu}$ is an interval in $\mathbb{Z}$ containing $N$ consecutive integers
such that
\begin{align}\label{eq.f_mu+}
    \mu^+(\{j\}) = |\{i \in I: f_{\mu^+}(i) = j\}|.
\end{align}
We take the median of an \emph{interval} of integers $\{a, \dots, b\}$ to be $\frac {a+b} 2$.
As $\{i: f_{\mu^+}(i) \in I_j\}$ is an interval, denote its median by $m_j$, $m_j = m_{j, f_{\mu^+}}$. 

    Let $m = m_{\mu^+}$ denote the median of the interval $I = I_{\mu^+}$, i.e., the range of $f_{\mu^+}$. Then both $f_{\mu^+}(\lfloor m \rfloor)$ and $f_{\mu^+}(\lceil m \rceil)$ are medians of $\mu^+$.
Also $f_{\mu^+}(\lfloor m_j \rfloor)$ and $f_{\mu^+}(\lceil m_j \rceil)$ are medians
of the measure $\mu^+$ restricted to $I_j$.

\begin{lemma}\label{lem.nested_medians}
    Let $\mu$, $f_{\mu^+}$ and $\{m_j\}$ be as above.

    Then 0 is a median of $\mu^+$, $0 = f_{\mu^+}(\lfloor m \rfloor) = f_{\mu^+}(\lfloor m_1 \rfloor) = f_{\mu^+}(\lfloor m_2 \rfloor) = \dots$ and the medians have a nestedness property:
    \[
        m_3 \in [m_1, m_2], \quad m_4 \in [m_3, m_2] \quad m_5 \in [m_3, m_4] \quad m_6 \in [m_4, m_5], \dots
    \]
    equivalently $m_1 \le m_3 \le \dots \le m \le \dots \le m_4 \le m_2$.
\end{lemma}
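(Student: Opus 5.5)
The proof is a direct analysis of the ordering of the sets $I_j$ and of how the intervals $J_j := \{i \in I : f_{\mu^+}(i) \in I_j\}$ grow. First observe that $I_1 = \{0\}$, $I_2 = \{0,1\}$, $I_3 = \{-1,0,1\}$, $I_4=\{-1,\dots,2\}$, and in general $I_{j+1}$ is obtained from $I_j$ by adding one point: the point $\lceil \frac{j-1}2\rceil+1$ on the right when $j$ is odd, and the point $-\lfloor\frac{j-1}2\rfloor-1$ on the left when $j$ is even. Since $f_{\mu^+}$ is non-decreasing, $J_j$ is an interval, and $J_{j+1}$ is obtained from $J_j$ by appending a block of $\mu^+(\{x\})$ consecutive integers on the right (if $j$ odd) or on the left (if $j$ even), where $x$ is the added point. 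The block may be empty if the atom is zero. Write $J_j = \{a_j,\dots,b_j\}$, so $m_j = \frac{a_j+b_j}2$, and put $c_j := \mu^+(I_{j+1}\setminus I_j)$, the mass of the new point.

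The key inequality comes from the defining order of $\mu^+$: the masses $c_0 := \mu^+(\{0\}), c_1 = \mu^+(\{1\}), c_2=\mu^+(\{-1\}), c_3 = \mu^+(\{2\}),\dots$ are non-increasing. A right extension by $c_j$ shifts the median by $+c_j/2$, and a left extension shifts it by $-c_j/2$. Hence $m_{j+1} - m_j = (-1)^{j+1} c_j/2$, with signs alternating $+,-,+,-,\dots$ and magnitudes non-increasing. This is exactly the structure of an alternating series with decreasing terms, whose partial sums are nested: $m_1 \le m_3 \le m_5 \le \dots$ and $m_2 \ge m_4 \ge \dots$, with every odd partial sum at most every even one.

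Concretely, by induction one shows $m_{j+2}$ lies between $m_j$ and $m_{j+1}$. Indeed, $m_{j+2}-m_{j+1}$ has the sign opposite to $m_{j+1}-m_j$ and magnitude $c_{j+1}/2 \le c_j/2$, so the step does not overshoot $m_j$. This yields the chain $m_1\le m_3\le\dots\le m_4\le m_2$. Once $I_j$ exhausts the finite support of $\mu^+$, the value $m_j$ stabilizes and equals $m$, the median of $I$, so $m$ sits between all odd- and even-indexed $m_j$.

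It remains to show $f_{\mu^+}(\lfloor m_j\rfloor) = 0$ for all $j$, and likewise for $m$. We have $J_1 = f_{\mu^+}^{-1}(0)$, a block of $c_0 \ge 1$ integers; since $\mu$ is nonzero and integer valued, its largest atom is placed at $0$. By the nesting, $m_j \in [m_1, m_2]$ for all $j$, and $m_2 = m_1 + c_1/2$. Since $J_1=\{a_1,\dots,b_1\}$, the point $\lfloor m_j\rfloor$ lies in $[\lfloor m_1\rfloor, \lfloor m_1 + c_1/2\rfloor]$. We need $\lfloor m_1 + c_1/2\rfloor \le b_1$, i.e. $\frac{a_1+b_1+c_1}{2} < b_1+1$, which is equivalent to $c_1 < b_1-a_1+2 = c_0+1$. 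This holds because $c_1\le c_0$. Also $\lfloor m_1\rfloor \ge a_1$. Hence $f_{\mu^+}(\lfloor m_j\rfloor)=0$, and the same holds for $m$. Finally, $0$ is a median of $\mu^+$ because $f_{\mu^+}(\lfloor m\rfloor)$ is one, as noted before the lemma.

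The only delicate point is this floor and boundary bookkeeping: checking that the non-strict inequality $c_1 \le c_0$ suffices, which it does since the required inequality $c_1 < c_0+1$ is strict on the right-hand side. The case where atoms vanish, giving empty blocks, is harmless because the corresponding steps are zero.
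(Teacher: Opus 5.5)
Your proof is correct and follows the paper's argument essentially step by step. The median increments $\pm c_j/2$ alternate in sign with non-increasing magnitudes, which gives the nesting, and the bound $m_2 = m_1 + c_1/2 < b_1 + 1$ (from $c_1 \le c_0$) places every $\lfloor m_j \rfloor$, and hence $\lfloor m \rfloor$, inside $f_{\mu^+}^{-1}(\{0\})$. Your appeal to the remark preceding the lemma for ``$0$ is a median'' makes explicit what the paper leaves implicit.
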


\medskip

\begin{proof}
    Write  $\{a, \dots, b\}=f^{-1}_{\mu^+}(\{0\})$
    for some integers $a \le b$. 
    By the definition of a median of an interval, $m_1 = \frac {a+b} 2$ and $f_{\mu^+}(\lfloor m_1 \rfloor) = 0$.


    Extending an interval to the right (left) by $l$ elements increases (decreases) its median by $\frac l 2$.
    So for $k\ge2$ we have $m_k = m_{k-1} + \frac {(-1)^{k}} 2 \mu^+(\{(-1)^{k} \lfloor \frac k 2 \rfloor \})$.
    As $m_k - m_1$ is a sum of a sequence with monotone non-increasing absolute values and alternating signs,
    the claim about nestedness follows.

    For $m_2$ we have $m_2 = \frac {a + b} 2  + \frac 1 2 \mu^+(\{1\}) \le b + \frac 1 2$ since by definition $\mu^+(\{1\}) \le \mu^+(\{0\}) = b-a+1$. So $f_{\mu^+}(\lfloor m_2 \rfloor) =0$.

    By the nestedness property and monotonicity of $f$ we have $f(\lfloor m_j \rfloor) = 0$ for all $j$, and since $m_j = m$ for sufficiently large $j$ the same is true for $m$.
\end{proof}

\medskip

Now we will use Lemma~\ref{lem.nested_medians} to prove our main tool in this section. Given
a random variable $X$ and an event $A$ with $\pr(A) > 0$ let $\mathcal{L}(X|A)$
denote the conditional distribution of $X$ given $A$.

\begin{lemma}\label{lem.less_peaked_coupling}
    Let $\mu$ and $\mu'$ be probability measures with rational probabilities
    and finite supports on $\mathbb{Z}$.

    Assume that $\mu'$ is symmetric and unimodal
    and $\mu \lesspeakedeps \mu'$ for some $\epsilon \ge 0$.

    Then there is a coupling of $Z \sim \mu^+$ and $X'\sim\mu'$ and
    an event $A$ with $\pr(A) \ge \frac 1 {1+\epsilon}$ such that
    $\mathcal{L}(Z|A) = \mu^+$ and
    \begin{align*}
            0 \le X' \le Z  \mbox{ or }  Z-1 \le X' \le 0 \quad \mbox{conditioned on $A$} 
    \end{align*}
    Furthermore, if $\pr(\bar{A}) > 0$ then 
    $\mathcal{L}(Z|\bar{A}) = \mu^+$ and
    \begin{align*}
        \mbox{$Z$ and $X'$ are independent} \quad \mbox{conditioned on $\bar{A}$.}
    \end{align*}
\end{lemma}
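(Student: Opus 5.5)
Clearing denominators makes all masses integers: choose $N$ so that $N\mu$ and $N(1+\epsilon)\mu'$ are integer-valued (if $\epsilon$ is irrational, first handle the rational part via the same mixture trick below). We then use the ``balls'' picture of Lemma~\ref{lem.nested_medians}. Split $\mu^+$ as a mixture $\mu^+ = \frac{1}{1+\epsilon}\mu^+ + \frac{\epsilon}{1+\epsilon}\mu^+$. The second component becomes the event $\bar A$, on which we sample $Z\sim\mu^+$ and $X'\sim\mu'$ independently. This already gives the required conditional laws on $\bar A$ and $\pr(\bar A)=\epsilon/(1+\epsilon)$. The whole task is to couple $Z\sim\mu^+$ with $X'\sim\mu'$ on $A$ so that $X'$ lies between $0$ and $Z$, allowing a shift of one to the left when $Z\le 0$.

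To build the coupling on $A$, compare the two nested families of intervals. Order the atoms of $\mu^+$ as $0,1,-1,2,-2,\dots$; by the definition of $\mu^+$ this is the order of non-increasing mass, and $\mu^+(I_j)=\conc_j(\mu)\le(1+\epsilon)\mu'(I_j)$. Since $\mu'$ is symmetric unimodal, $\mu'(I_j)=\conc_j(\mu')$. Write $\tilde\mu=(1+\epsilon)\mu'$, a measure of total mass $1+\epsilon\ge 1$ that dominates $\mu^+$ on every $I_j$. We transport mass by a monotone (quantile) matching \emph{separately on each side}: the positive atoms of $\mu^+$ in decreasing order $z=\dots,3,2,1$ are matched, from the outside in, to the mass of $\tilde\mu$ on $\{\dots,2,1,0\}$, and similarly on the negative side. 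The key claim is that the domination $\mu^+(I_j)\le\tilde\mu(I_j)$ for all $j$, combined with symmetry of $\tilde\mu$ and the interlacing of $I_j$, forces the following. Consider the tail $\mu^+$-mass of $\{z\ge k\}$ as an upper-right piece and the tail $\tilde\mu$-mass of $\{x\ge k\}$. Because $\mu^+$ total mass $\le\tilde\mu$ total mass and $\mu^+(I_{2k-1})\le\tilde\mu(I_{2k-1})$, we get $\mu^+(\mathbb{Z}\setminus I_{2k-1})\ge 1-\tilde\mu(I_{2k-1})$. Here is the subtle point: the inequality goes the wrong way for tails, since $\tilde\mu$ has extra mass $\epsilon$. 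We therefore do not use all of $\tilde\mu$. Instead we pick a submeasure $\rho\le\tilde\mu$ of total mass $1$ that is as concentrated as possible: remove the $\epsilon$ surplus from the outermost atoms, i.e.\ take $\rho(I_j)=\min(1,\tilde\mu(I_j))$. This is still majorized in the sense that $\rho(I_j)\ge\mu^+(I_j)$ for all $j$. We then match the tails of $\mu^+$ to those of $\rho$: $\rho$-tails are $\le$ $\mu^+$-tails on each side. Lemma~\ref{lem.nested_medians} ensures the splitting point (the median ball) of $\mu^+$ sits at $0$. The side-by-side quantile coupling then pushes every positive ball $z$ to some $x\in[0,z]$ and every ball with $z\le 0$ to some $x\in[z-1,0]$. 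The $-1$ slack comes from the asymmetry of $I_{2k}$, which contains $k$ but not $-k$, and from the half-ball shift of the median.

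Finally we convert $\rho$ back to $\mu'$. Since $\rho\le(1+\epsilon)\mu'$, we have $\frac{1}{1+\epsilon}\rho\le\mu'$. Taking the coupling on $A$ with weight $\frac1{1+\epsilon}$, and on $\bar A$ independent $X'$ distributed as the remaining mass normalized, the marginal of $X'$ is exactly $\mu'$. The lemma asks for $X'\sim\mu'$ independent of $Z$ on $\bar A$, so the leftover $\mu'-\frac{1}{1+\epsilon}\rho$ must be proportional to $\mu'$. The fix is to instead take $A$ with probability exactly $\frac{1}{1+\epsilon}$ and $\rho=\mu'$ whenever $\mu^+(I_j)\le\mu'(I_j)$ can be arranged. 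In general we enlarge $\bar A$: choose the largest $p\ge\frac1{1+\epsilon}$ such that $p\mu^+\lesspeaked\mu'-(1-p)\mu'=p\mu'$ up to tail adjustments. I expect this bookkeeping, making the leftover exactly $\propto\mu'$ while keeping the tail-domination needed for the monotone matching, to be the main obstacle.
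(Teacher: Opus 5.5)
Your construction on $A$ fails, because your $\rho$ is the wrong submeasure. You truncate $\tilde\mu=(1+\epsilon)\mu'$ along the order $0,1,-1,2,-2,\dots$, so $\rho(I_j)=\min(1,\tilde\mu(I_j))$. This $\rho$ is skewed to the right. The domination $\rho(I_j)\ge\mu^+(I_j)$ only compares the two tails \emph{together}, and your step ``$\rho$-tails are $\le$ $\mu^+$-tails on each side'' is false.

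A counterexample: take $\mu'$ uniform on $\{-1,0,1\}$, $\epsilon=\frac12$, and $\mu=\mu^+=\frac14\delta_{-1}+\frac12\delta_0+\frac14\delta_1$. Then $\mu\lesspeaked_{1/2}\mu'$ and $\rho(I_j)\ge\mu^+(I_j)$ for every $j$, yet $\rho=\frac12\delta_0+\frac12\delta_1$. No coupling of $\mu^+$ with this $\rho$ has the required property: both $Z=0$ and $Z=-1$ force $X'=0$, which would need mass $\frac34>\rho(\{0\})$.

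The missing idea is to use the \emph{symmetric central} part of $\mu'$, which is what the paper does:
\begin{itemize}
\item List the $N$ balls of $\mu'$ in increasing order and pick a ball $U$ uniformly.
\item Let $A$ be the event that $U$ is among the middle $K=N/(1+\epsilon)$ balls.
\item On $A$, set $X'=f'(U)$ and $Z=f(U)$, where $f$ lists the $K$ balls of $N(1+\epsilon)^{-1}\mu^+$ in increasing order on those same middle indices. This is a single monotone coupling.
\end{itemize}
Symmetry of this submeasure is what converts two-sided information into one-sided information. A ball with $f(i)=\frac{j-1}2\ge 0$ lies in $f^{-1}(I_j)$, whose median is $\le\frac12$ by Lemma~\ref{lem.nested_medians}. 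Hence $f^{-1}(I_j)\supseteq\{-i+1,\dots,i\}$. The set $(f')^{-1}(I_j)$ is symmetric about $\frac12$ and at least as large, so it contains this interval too. A ball at $-\frac{j-1}2$ instead uses that the median of $f^{-1}(I_{j+1})$ is $\ge\frac12$, and then passes to $I_{j+2}$; this is where the slack $-1$ comes from. Only odd $j$ are used in the domination. That matters, because the symmetric truncation can violate $\rho(I_j)\ge\mu^+(I_j)$ for even $j$.

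The ``main obstacle'' you end with comes from misreading the statement. The lemma only requires $\mathcal{L}(Z\mid\bar A)=\mu^+$ and that $Z$ and $X'$ be conditionally independent given $\bar A$. It does not require $\mathcal{L}(X'\mid\bar A)=\mu'$. That stronger demand would force $\mathcal{L}(X'\mid A)=\mu'$, which is impossible in general. For instance, with $\mu=\delta_0$, $\mu'$ uniform on $\{-1,0,1\}$ and $\epsilon=2$, we have $Z=0$ on $A$, hence $X'\in\{-1,0\}$ there. Your proposed repair, $p\mu^+\lesspeaked p\mu'$, is just the case $\epsilon=0$. Your own intermediate construction already meets the statement: put the normalized leftover $\mu'-\frac1{1+\epsilon}\rho$ on $\bar A$, independent of a fresh $Z\sim\mu^+$. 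In the paper, $X'$ on $\bar A$ is simply $f'(U)$ over the outer balls, and $Z$ is an independent fresh copy of $\mu^+$.
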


\begin{proof}
    Since $\mu'$ and $\mu$ have rational probabilities and finite supports, we can assume
    without loss of generality that $\epsilon$ is rational.
    Thus we can express each mass of $\mu$, $(1+\epsilon)^{-1}\mu$ and $\mu'$ 
    as rational numbers $\frac k N$ where $N$ is a fixed positive even integer. 
    Thus $N \int (1+\epsilon)^{-1} d \mu = K$ for some integer $K \le N$, we can also assume $K$ is even.

    Fix a function $f = f_{N (1+\epsilon)^{-1} \mu^+}$ as in (\ref{eq.f_mu+}) from $\{-\frac K 2 + 1, \dots, \frac K 2\}$ to $\mathbb{Z}$.

    Similarly fix a non-decreasing function $f' = f_{N \mu'}$ (note here the notation has nothing to do with derivatives) from $\{-\frac {N} {2} + 1, \dots, \frac {N} {2}\}$ to $\mathbb{Z}$
    such that $N \mu' (\{j\}) = |\{i: f'(i) = j\}|$ for each $j$.

    Let $U$ be uniformly distributed on $\{-\frac N 2+1, \dots, \frac N 2\}$.
    Define $X' :=f'(U)$ and note that $f'(U) \sim \mu'$.
    Let $A$ be the event that $U \in \{-\frac K 2 + 1, \dots, \frac K 2\}$.
    Let $Z$ equal to $f(U)$ if $A$ occurs
    and let $Z$ be a (conditionally) independent random variable (given $U$) distributed as $\mu^+$ if $\bar{A}$ occurs.
    Then $Z \sim \mu^+$ both conditioned on $A$, and on $\bar{A}$ (if $\pr(\bar{A}) > 0$), and therefore also unconditionally. 
    Let us prove that the coupling $(Z, X')$ has the stated ``stochastic domination'' type property.


    We imagine the elements of the domain of $f$ and $f'$ as `balls' placed on $\mathbb{Z}$. The position of $i$ under $f$ will be just $f(i)$ and similarly for $f'$. Let $m, m_1, m_2, \dots$ be the medians as in Lemma~\ref{lem.nested_medians} defined for the function $f$.

    Let $i$, $i \in \{1, \dots, \frac K 2\}$.
    Since $f$ is non-decreasing and $m=\frac 1 2$,
    $f(i) \ge f(\lfloor m \rfloor) = 0$.
    Let the odd integer $j$ be defined by $f(i) = \frac {j-1} 2$. 
    Thus, with respect to $f$, the ball $i$ sits on the right boundary of the symmetric interval $I_j$.

    By Lemma~\ref{lem.nested_medians}, $m_j \le m = \frac 1 2$.
    Thus $f^{-1}(I_j)$ has at least as many balls $i'$ with $i' \le 0$
    than balls $i'$ with $i' \ge 1$.
    So $-i+1 \in f^{-1}(I_j)$.
    Now $(f')^{-1}(I_j)$ is symmetric around $\frac 1 2$ and 
    by the assumption of the lemma $2i \le |f^{-1}(I_j)| = N(1+\epsilon)^{-1}\mu^+(I_j) \le |(f')^{-1}(I_j)|=N\mu'(I_j)$.
    Hence $-i+1, i \in (f')^{-1}(I_j)$ and 
    \[
        0 \le f'(i) \le \frac {j-1} 2 = f(i).
    \]

    Let us now consider a ball $-i+1$ with $i \in \{1, \dots, \frac {K} 2\}$.
    Its position under $f$ is $-\frac {j-1} 2$ for some positive odd $j$.

    Again by Lemma~\ref{lem.nested_medians}, the median $m_j$ of $f^{-1}(I_{j})$
    is at most $\frac 1 2$ while the median $m_{j+1}$ of $f^{-1}(I_{j+1})$ is at least $\frac 1 2$. Hence $-i+1,i \in f^{-1}(I_{j+1}) \subseteq f^{-1}(I_{j+2})$.
    By a similar argument as for a ball with a positive index 
    it follows that $-i+1, i \in (f')^{-1}(I_{j+2})$. So
    \[
        f(-i+1)-1 = -\frac {j - 1} 2 - 1 \le f'(-i) \le 0.
    \]
\end{proof}



We now need two lemmas on using the relation $\lesspeakedeps$ for individual terms
to bound the concentration function of their sum. Some assumptions in them
such as log-concavity and
(\ref{eq.peakednessl1var}) could potentially be relaxed. 

\begin{lemma}\label{lem.peakednessl1}
    Let $m \in \{0,1,\dots\}$ and let $X \sim \mu_X$, $Y_1, \dots, Y_m$, $Y_i \sim \mu_{Y_i}$ and $Z \sim \mu_Z$
    be independent random variables on integers, such that a symmetric decreasing rearrangement
    $Y_i^*$ exists for $1 \le i \le m$.


    Suppose further $Z$ and $Y_1^*, \dots, Y_m^*$ are 
    log-concave and $Z$ is symmetric.
%
%
    If $\mu_X \lesspeakedeps \mu_Z$ for some $\epsilon \in (0,1)$ and
    \begin{equation}\label{eq.peakednessl1var}
        \min(\Var Z,  \Var \sum Y_i^*)   \ge \Cpeakednessloneb.
    \end{equation}
    Then
    \[
        X+Y_1 + \dots + Y_m \lesspeaked_{4\epsilon} Z+Y_1^* + \dots + Y_m^*.
    \]
\end{lemma}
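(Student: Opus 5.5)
Fix a positive integer $j$. Let $U_j$ be an independent random variable, uniform on a set of $j$ consecutive integers. Then
\[
\conc_j(X+\sum Y_i)=\conc(X+\sum Y_i+U_j),
\]
and the analogous identity holds on the right-hand side. Our goal is therefore to show that for every $j$
\[
\conc\Bigl(X+\sum_{i} Y_i+U_j\Bigr)\le (1+4\epsilon)\,\pr\Bigl(Z+\sum_{i} Y_i^*+U_j^+=0\Bigr).
\]
Here the right-hand side equals $\conc_j(Z+\sum Y_i^*)$, because $Z+\sum Y_i^*$ is symmetric and log-concave, hence unimodal.

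\textbf{Step 1 (rearrangement).} Apply Theorem~\ref{thm.HLP} with $X$ in the role of the $X$-term, $U_j$ in the role of ${}^+Y$, and the $Y_i$ as the symmetrizable terms. This gives
\[
\conc\Bigl(X+\sum Y_i+U_j\Bigr)\le \pr\Bigl(X^+ + {}^+U_j + \sum Y_i^*=0\Bigr).
\]
Set $W=\sum Y_i^*$; it is symmetric and log-concave. The task now reduces to comparing $X^+$ with $Z$ inside the sum $X^++{}^+U_j+W$.

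By approximating we may assume $\mu_X$ and $\mu_Z$ have rational masses and finite supports. (The approximation costs an arbitrarily small factor, which the final constant absorbs.)

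\textbf{Step 2 (coupling).} Apply Lemma~\ref{lem.less_peaked_coupling} with $\mu=\mu_X$ and $\mu'=\mu_Z$. This produces a coupling of $X^+$ and $Z$ together with an event $A$ such that $\pr(A)\ge 1/(1+\epsilon)$. On $A$ we have $0\le Z\le X^+$ or $X^+-1\le Z\le 0$; on $\bar A$ the two variables are independent, each with its own law. Writing $T={}^+U_j+W$, we split
\[
\pr(X^++T=0)=\pr(X^++T=0,A)+\pr(X^++T=0,\bar A).
\]

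For the $\bar A$ part, $X^+$ is independent of $T$, so the term is at most $\pr(\bar A)\conc(T)$. Since $\pr(\bar A)\le\epsilon$ and $\conc(T)\le\conc(W)$, the $\bar A$ part is at most $\epsilon\,\conc(W)$.

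For the $A$ part, note that $T$ is log-concave, being a convolution of log-concave laws. Its distribution is nearly symmetric about $0$: it is unimodal with mode in $\{-1,0\}$, depending on the parity of $j$. On $A$, $Z$ lies between $0$ and $X^+$, shifted by at most one unit. So we would like to say that $\pr(T=-x)$ is essentially monotone in $|x|$ along the direction in which $Z$ is "closer to the mode" than $X^+$. That would give
\[
\pr(X^++T=0\mid \cdot)\lesssim \pr(Z+T=0\mid\cdot),
\]
up to a one-step error and an exact symmetrisation loss.

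\textbf{Step 3 (the obstacle).} The main difficulty is exactly this one-unit shift, together with the asymmetry of ${}^+U_j$. The monotonicity of $T$ around its mode is only approximate, so each comparison $\pr(T=-x)\le\pr(T=-z)$ for $|z|\le|x|$ (up to $\pm1$) has to be paid for. Lemma~\ref{lem.logconcmode} is the tool for this: it shows that neighbouring atoms of a log-concave law differ relatively by $O\bigl(\conc(T)^{1/3}(\Var T)^{-1/3}\bigr)$, and by Lemma~\ref{lem.logconcvariancenew} this is $O\bigl((\Var W)^{-1/2}\bigr)$. Assumption (\ref{eq.peakednessl1var}) is chosen precisely so that this is at most a small multiple of $\epsilon$, say $\epsilon/4$ times a constant. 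The conditions are $\Var W\gtrsim\epsilon^{-4}$ and $\Var Z\gtrsim\epsilon^{-4}$, the latter also controlling the comparison against $\conc(Z+T)$.

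Finally we need to convert $\epsilon\conc(W)$ into $O(\epsilon)\pr(Z+W+{}^+U_j=0)$. For this we use Lemma~\ref{lem.logconcdomination}-type reasoning, or Lemma~\ref{lem.logconcmode} applied to $Z+W$: when $\Var Z$ is comparable to or larger than $\Var W$, adding $Z$ could reduce the concentration significantly, so here I would instead bound the $\bar A$ term by $\epsilon\,\conc(Z+T)$ directly. To do this, I would realise $\bar A$ as a mixture in which $X^+$ is independent, and observe that on $\bar A$ the law of $X^+$ restricted is $\lesspeaked_\epsilon$-controlled. Collecting the factors $(1+\epsilon)$ from $\pr(A)$, $\epsilon$ from $\bar A$, and $\epsilon$ from the smoothing errors yields the constant $1+4\epsilon$.
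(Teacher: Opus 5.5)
\textbf{There is a genuine gap.} Your opening matches the paper: the HLP rearrangement, then the coupling of Lemma~\ref{lem.less_peaked_coupling}, then the split on $A$. But the two estimates that actually carry the proof are either wrong or missing.

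\textbf{The smoothness claim in Step~3 is false.} Step~3 rests on the claim that neighbouring atoms of the log-concave law of $T={}^+U_j+W$ differ by a relative amount $O((\Var W)^{-1/2})$. Lemma~\ref{lem.logconcmode} does not say this. It only says that, near a mode $x_0$, one of $\pr(T=x_0\pm i)$ is at least $(1-2^{2/3}i(\Var T)^{-1/2})\conc(T)$. It gives no uniform control of ratios of consecutive atoms. In the tails of a log-concave law the ratio $\pr(T=x+1)/\pr(T=x)$ can be arbitrarily small, e.g.\ $0$ at the end of a bounded support. A symptom is your own scaling: a relative error $O((\Var W)^{-1/2})\le\epsilon$ would need only $\Var W\gtrsim\epsilon^{-2}$, not the $\epsilon^{-4}$ in (\ref{eq.peakednessl1var}).

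\textbf{The missing idea is a bad set with geometric decay.} Let $p_x=\pr(T=-x)$ and isolate the set of $x\ge0$ with $p_{x+1}\le(1+\epsilon)^{-1}p_x$.
\begin{itemize}
\item Moving inward towards the mode $0$ of $T$ is free by unimodality. Off the bad set, the one-unit outward shift allowed by the coupling costs only a factor $1+\epsilon$.
\item On the bad set, unimodality gives that the $k$-th bad point has $T$-mass at most $\conc(T)(1+\epsilon)^{-(k-1)}$. So $T$ puts total mass at most $2\epsilon^{-1}\conc(T)$ there.
\item This contributes only an additive error $2\epsilon^{-1}\conc(X)\conc(T)\le 4\epsilon^{-1}\conc(Z)\conc(T)$, using $\conc(X)\le(1+\epsilon)\conc(Z)$, which is the case $j=1$ of $\mu_X\lesspeakedeps\mu_Z$.
\item Only now does the variance hypothesis enter. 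Apply Lemma~\ref{lem.logconcmode} to both $Z$ and $T$: each has an interval of length of order $\sqrt{\Var}$ around its mode on which it keeps at least half its maximal mass. Hence $\conc(Z+T)\ge 8\epsilon^{-2}\conc(Z)\conc(T)$, and the additive error is at most $\epsilon\,\conc(Z+T)$.
\end{itemize}
This comparison of a product of concentrations with the concentration of the convolution is why both $\Var Z$ and $\Var W$ must be of order $\epsilon^{-4}$.

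\textbf{Your handling of $\bar A$ does not close.} The bound $\epsilon\conc(W)$ is useless, as you note. Bounding the $\bar A$-term by $\epsilon\conc(Z+T)$ ``directly'' would need a crude version of the very statement being proved, so it is circular as stated. The paper uses absorption instead. Conditioned on $\bar A$, $X^+$ still has law $\mu_X^+$, and $T$ is independent of the coupling. Therefore
\[
\pr(X^++T=0,\bar A)=\pr(\bar A)\,\pr(X^++T=0)\le\tfrac{\epsilon}{1+\epsilon}\,\pr(X^++T=0),
\]
and this is moved to the left-hand side, costing only a factor $1+\epsilon$.

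\textbf{A minor point.} $\conc_j(S)$ is $j$ times $\pr(S+U=0)$ with $U$ uniform on the negative of an optimal $j$-set. That set need not be an interval, and the factor $j$ is needed; HLP then replaces $U$ by ${}^+U$. Your identity $\conc_j(S)=\conc(S+U_j)$ with consecutive $U_j$ is not correct as written.
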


\begin{proof}
    Using standard arguments we may assume, without loss of generality,
    that $X, Z, Y_1, \dots, Y_m$ have rational probabilities and finite supports.

    Let $k$ be any positive integer. Let $U_k^+$ be a uniform random variable on $I_k$, independent from all other
    random variables. By 
    Theorem~\ref{thm.HLP}
    \begin{align*}
        \conc_k(X + Y_1 + \dots + Y_m) &\le k \conc({}^+X + Y_1^* + \dots + Y_m^* + U_k^+) \\
        & = k \pr({}^+X + Y_1^* + \dots + Y_m^* + U_k^+=0).
    \end{align*}
    Write $Y = Y_1^* + \dots + Y_m^*$.
    Using \cite{keilsongerber}, $Y$, $Z$ and $Y+U_k^+$ are all (strongly) unimodal and log-concave.
    Furthermore, 0 is a mode for each of them.

    Consider a coupling between $X^+$ and $Z$ as in Lemma~\ref{lem.less_peaked_coupling}, and
    let $A$ be the event defined in that lemma.

    Write $p_j = \pr(Y + U_k= -j)$.
    Let $j_1, j_2, \dots$ be the increasing sequence of all integer $j \ge 0$ which satisfy  $p_{j+1} \le (1+\epsilon)^{-1} p_{j}$. Let $J_B = \{-j_k\}$.

    According to Lemma~\ref{lem.less_peaked_coupling}, $X^+$ satisfies on the event $A$
    \[
        X^+ -1 \le Z \le 0 \mbox{ or } 0 \le Z \le X^+.
    \]
    Since ${}^+X \sim -X^+$ we can assume $X^+$ is defined so that  ${}^+X = -X^+$, so
    \begin{equation}\label{eq.coupledxz}
        {}^+X \le -Z \le 0 \mbox{ or } 0 \le -Z \le {}^+X + 1.
    \end{equation}
    Whenever $A$ occurs and ${}^+X = j$ with $j < 0$,
    we have $j \le -Z \le 0$. Thus, for any possible realization $j'$ of $-Z$ on this event,
    we have $j \le j' \le 0$, and because of unimodality of $Y + U_k^+$
    \[
        \pr(Y + U_k^+=-j') \ge \pr(Y+U_k^+ = -j).
    \]
    Similarly if $A$ occurs then for any realization $(j, j')$ of ${}^+X$ and $-Z$
    if $j \ge 0$ then $0 \le j' \le j+1$ and 
    \[
        \pr(Y + U_k^+=-j') \ge \pr(Y+U_k^+ = -j-1) \ge (1+\epsilon)^{-1} \pr(Y+U_k^+ = -j) \mathbf{1}_{j \not \in J_B}.
    \]
    Expanding
    over the possible values $(j, j')$ that the coupled pair $({}^+X,Z)$ can take while satisfying (\ref{eq.coupledxz}) and applying our bounds above we get
    \begin{align*}
        & \pr({}^+X + Y + U_k^+=0 \cap A \cap \bar{B})
        \\ & = \sum_{(j,j')} \pr(^+X = j \cap -Z = j' | A) \pr(Y + U_k = -j) \mathbf{1}_{j \not \in J_B} 
        \\ & \le (1+\epsilon) \sum_{(j,j')} \pr(^+X = j \cap -Z = j'|A) \pr(Y + U_k = -j')  \mathbf{1}_{j \not \in J_B} 
        \\& \le (1+\epsilon)\pr(-Z + Y + U_k^+=0 \cap A \cap \bar{B}).
    \end{align*}
    From this we get 
    \begin{align*}
        &\pr({}^+X + Y + U_k^+=0) - \pr({}^+X + Y + U_k^+=0 \cap \bar{A}) 
        \\ &- \pr({}^+X + Y + U_k^+=0 \cap B) 
        \le (1+\epsilon)\pr(- Z + Y+ U_k^+=0) .
    \end{align*}
    What is the probability that the event 
    $B$ 
    occurs? 
    Since
    $Y+U_k$ is unimodal with a mode at $0$ we have $p_{j_k} \le p_{j_1} (1+\epsilon)^{-(k-1)} \le q (1+\epsilon)^{-(k-1)}$ where $q=\conc(Y + U_k^+)$.
    Thus
    \begin{equation}\label{eq.peakednessl1barB}
        \pr(B) \le \frac q {1-(1+\epsilon)^{-1}} = \frac {(1+\epsilon) q} \epsilon \le 2 \epsilon^{-1} q.
    \end{equation}

    So 
    \[
         \pr({}^+X + Y + U_k^+=0 \cap B)  \le \pr({}^+X + Y + U_k^+ = 0 | B) \pr(B) \le 2\epsilon^{-1}\conc({}^+X)q.
    \]
    We also have by Lemma~\ref{lem.less_peaked_coupling} that $\pr(\bar{A}) \le \frac {\epsilon} {1+\epsilon}$
    and conditioned on $\bar{A}$, ${}^+X$ retains the same distribution.
    So
    \[
        \pr({}^+X + Y + U_k^+=0 \cap \bar{A}) \le \frac \epsilon {1+\epsilon} \pr({}^+X + Y + U_k^+=0)  
    \]
    Hence
   \begin{align*}
       &\pr({}^+X + Y + U_k^+=0) (1 - \frac \epsilon {1+\epsilon}) - 2 \epsilon^{-1} \conc({}^+X)\conc(Y+U_k^+)
        \\ &\le (1+\epsilon)\pr(- Z + Y +U_k^+=0)
   \end{align*}
   or
   \begin{align*}
       &\pr({}^+X + Y + U_k^+=0) \le (1+\epsilon)^2 \pr(- Z + Y +U_k^+=0) 
       \\ & + 2(1+\epsilon) \epsilon^{-1} \conc(^+X)\conc(Y+U_k^+)
       \\& \le (1+3\epsilon) \pr(- Z + Y +U_k^+=0)  + 4 \epsilon^{-1} \conc({}^+X)\conc(Y+U_k^+).
   \end{align*}
   Using unimodality 
   and Lemma~\ref{lem.logconcmode}
   for a random variable $R$ where $R = Z$ or $R = Y+U_k^+$ there is an interval $I$ of at least
   $\Cpeakednesslone \sqrt {\Var R}$ integers containing a mode of $R$ such that for any $i \in I$
\begin{align*}
    & \pr(R=i) 
    \ge \conc(R) \left(1 - \frac {\Clogconcvariancenewcol |I|} {\sqrt{\Var R}} \right)
    \ge \frac 1 2 \conc(R) = \frac 1 2 \pr(R=0).
\end{align*}
    Aligning the intervals for each of the random variables $Z$ and $Y + U_k$ and using the condition (\ref{eq.peakednessl1var}) we see that
    \begin{align*}
        &\conc(-Z + Y + U_k^+) 
        \\ & \ge \frac 1 4 (\Cpeakednesslone)^2 \sqrt{\min(\Var(Z), \Var (Y+U_k^+))} \conc(Z) \conc(Y+U_k^+) 
        \\ & \ge  8 \epsilon^{-2} \conc(Z) \conc(Y+U_k^+).
    \end{align*}
    Thus
    \[
        4\epsilon^{-1} \conc({}^+X)\conc(Y+U_k^+) \le  4\epsilon^{-1} (1+\epsilon)\conc(Z)\conc(Y+U_k^+) \le  \epsilon \conc(-Z + Y + U_k^+)
    \]
    and
   \begin{align*}
       &\pr({}^+X + Y + U_k^+=0) \le (1 + 4\epsilon)\pr(- Z + Y + U_k^+=0).
   \end{align*}

%
%
\end{proof}

The next lemma allows to approximate the sum of two random variables by the sum of the respective
$\epsilon$-dominating random variables.

\begin{lemma}\label{lem.peakednessl2}
    Let $\epsilon \in (0, \frac 1 2)$ and define $V_0 = V_0(\epsilon) := \Cpeakednessltwo$. 
    Let $X \sim \mu_X$, $Y \sim \mu_Y$ be indepedent random variables on integers.
    Let $X' \sim \mu_X'$ and $Y' \sim \mu_Y'$ be independent symmetric log-concave integer random variables. 
    If
    \[
        \mu_X \lesspeakedeps \mu_X', \quad \mu_Y \lesspeakedeps \mu_Y', \quad \Var X' \ge V_0 \quad\mbox{and}\quad \Var Y' \ge V_0
    \]
    then
    \[
        \conc(X+Y) \le (1+20\epsilon)\conc(X'+Y').
    \]
\end{lemma}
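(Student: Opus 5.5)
The plan is to run the argument of Lemma~\ref{lem.peakednessl1} twice, once for each variable, with the other variable playing the role of the log-concave ``smoothing'' term. Assume, by the usual approximation, that all measures have rational masses and finite supports. By Theorem~\ref{thm.HLP},
\[
\conc(X+Y)\le \pr({}^+X + Y^+ = 0).
\]
We now want to replace ${}^+X$ by $X'$ and $Y^+$ by $Y'$, losing a factor of about $(1+O(\epsilon))$ at each replacement.

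\emph{First step: replace $X$ by $X'$.} Apply the coupling of Lemma~\ref{lem.less_peaked_coupling} to $\mu_X\lesspeakedeps\mu_X'$; $\mu_X'$ is symmetric and unimodal because it is log-concave and symmetric. Set ${}^+X=-X^+$. Conditioning on $Y^+$ would bring in a non-log-concave weight, so the argument has to be ordered differently. We first move $X$ using the test function $j\mapsto \pr(W=-j)$, where $W$ is log-concave, symmetric and unimodal with mode $0$. To make this possible, carry out the $Y$ replacement first. By Lemma~\ref{lem.peakednessl1} with $m=0$, applied with $Z=X'$ (a symmetric log-concave variable), we get the statement ``for every symmetric log-concave $W$ of large variance, $\pr({}^+X+W=0)\le(1+4\epsilon)\pr(X'+W=0)$''. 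This follows from the proof of that lemma with $Y+U_k^+$ replaced by $W$; only the unimodality of $W$ at $0$ and the variance lower bound were used there. The same statement holds for $Y$ and $Y'$.

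\emph{Second step: chaining.} Write
\[
\pr({}^+X+Y^+=0)=\sum_j \pr({}^+X=j)\,\pr(Y^+=-j).
\]
Apply the $Y$-version with $W$ a uniform $U_k^+$ shift. More precisely, we need $\conc$ and not just domination, so use the layer-cake decomposition of the distribution of ${}^+X$, which is $0$-centred and ``$+$''-ordered. Each level set $\{j:\pr({}^+X=j)\ge s\}$ is an interval of the form $I_k$ or $-I_k$. Hence ${}^+X$ is a mixture of uniform variables $U^{(k)}$ on $\pm I_k$. For each such uniform variable, HLP together with the $Y$ replacement (this is exactly what Lemma~\ref{lem.peakednessl1} with $m=0$ controls through $\conc_k$) gives
\[
\pr(U^{(k)}+Y^+=0)\le \pr(U^{(k)}+Y'=0)+\text{(boundary error)},
\]
where the off-by-one shift between $I_k$ and $-I_k$ is absorbed by Lemma~\ref{lem.logconcmode}. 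Summing over levels yields $\pr({}^+X+Y^+=0)\le(1+4\epsilon)\pr({}^+X+Y'=0)$, up to small errors. Now $Y'$ is symmetric log-concave with $\Var Y'\ge V_0$, so the $X$ replacement gives $\le(1+4\epsilon)^2\pr(X'+Y'=0)$. Since $X'+Y'$ is symmetric log-concave, this equals $(1+4\epsilon)^2\conc(X'+Y')$.

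\emph{Error bookkeeping and the main obstacle.} The error terms are of the form $4\epsilon^{-1}\conc(X)\conc(Y')$, as in the proof of Lemma~\ref{lem.peakednessl1}. As there, they are bounded by $\epsilon\,\conc(X'+Y')$ by aligning the near-flat intervals near the modes, of length $\gtrsim\sqrt{\Var}$, given by Lemma~\ref{lem.logconcmode}. This requires $\epsilon^{-2}\lesssim\sqrt{\min(\Var X',\Var Y')}$, which corresponds to $V_0\asymp\epsilon^{-4}$. Reaching the smaller threshold $\Cpeakednessltwo$ needs a sharper handling of the bad set $B$, and this is the main obstacle. The first thing to try is to avoid the geometric-ratio bad set altogether. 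In the first step, when the comparison point $j'$ lies between $j$ and $j+1$, bound $\pr(W=-j-1)/\pr(W=-j)$ directly by log-concavity. For $|j|\le c\epsilon\sqrt{\Var W}$ Lemma~\ref{lem.logconcmode} makes this ratio $\ge 1-\epsilon$, and for larger $|j|$ Chebyshev's inequality on $X'$ bounds the remaining mass by $\Var X'/(\epsilon^2\Var W)$-type terms. Because the roles of $X$ and $Y$ are symmetric, this gives errors $O(\epsilon^{-1}/\sqrt{V_0})$, and $V_0\asymp\epsilon^{-2}$ suffices. Collecting $(1+4\epsilon)^2$ together with a few additive $O(\epsilon)$ terms then gives $1+20\epsilon$ for $\epsilon<\frac12$.
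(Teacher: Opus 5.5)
There is a genuine gap: the proposal does not prove the statement with $V_0=\Cpeakednessltwo$.

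\textbf{Where it fails.} Your final replacement of ${}^+X$ by $X'$ against $W=Y'$ is the argument of Lemma~\ref{lem.peakednessl1}. That argument needs $\min(\Var X',\Var Y')\ge\Cpeakednessloneb$, i.e.\ order $\epsilon^{-4}$, and the repair you sketch does not bring this down.
\begin{itemize}
\item Chebyshev's inequality controls the tail of $X'$, not of ${}^+X$. The $\epsilon$-domination bounds the top-$k$ masses of $X$ only from above, so ${}^+X$ may be arbitrarily spread out.
\item Even if you use the tail of the coupled copy of $X'$, the leftover term $\Var X'/(\epsilon^2\Var W)$ is scale invariant. It is of order $\epsilon^{-2}$ when $\Var X'\asymp\Var Y'$, so it is not small and does not improve as $V_0$ grows.
\item Swapping the roles of $X$ and $Y$ cannot help, since one of the two variance ratios is at least $1$.
\end{itemize}

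\textbf{The same obstacle in the paper.} Your diagnosis of the obstacle is correct, and it applies to the paper's proof as well. The paper runs the coupling of Lemma~\ref{lem.less_peaked_coupling} twice. First it replaces $Y^+$ by $Y'$ against ${}^+X$, which uses only unimodality of ${}^+X$; so the non-log-concave weight you worried about is no obstacle. Then it does the same with the roles swapped, leaving an additive error $40\epsilon^{-1}\conc(X')\conc(Y')$. It absorbs this error by bounding $\pr(X'+Y'=0)$ below by a multiple of $\min(\Var X',\Var Y')\,\conc(X')\conc(Y')$. But the window $|i|\le\Cpeakednesslone\sqrt{\min(\Var X',\Var Y')}$ contains only $O(\sqrt{\min(\Var X',\Var Y')})$ integers. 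For centred binomials of variance $V_0$, the ratio $\pr(X'+Y'=0)/(\conc(X')\conc(Y'))$ is about $\sqrt{\pi V_0}\asymp\epsilon^{-1}$, not $\epsilon^{-2}$. So that route, too, only justifies order $\epsilon^{-4}$.

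\textbf{The missing idea.} It is your own layer-cake step, taken literally. After the initial HLP step, let $a_1\ge a_2\ge\dots$ be the sorted masses of ${}^+X$. Then ${}^+X$ is the mixture $\sum_k k(a_k-a_{k+1})\,\mathrm{Unif}(-I_k)$, and exactly
\[
\pr(\mathrm{Unif}(-I_k)+Y^+=0)=\tfrac1k\conc_k(Y)\le\tfrac{1+\epsilon}{k}\conc_k(Y')=(1+\epsilon)\,\pr(\mathrm{Unif}(-I_k)+Y'=0),
\]
because $\pr(Y'\in I_k)=\conc_k(Y')$ for symmetric unimodal $Y'$. This involves no boundary error, no Lemma~\ref{lem.logconcmode}, and no Lemma~\ref{lem.peakednessl1} with $m=0$. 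The latter could not be applied anyway, since its variance hypothesis fails for the empty sum; also, your displayed inequality at that point dropped the factor $1+\epsilon$. Summing over $k$ gives $\pr({}^+X+Y^+=0)\le(1+\epsilon)\pr({}^+X+Y'=0)$.

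For the second replacement, write the symmetric unimodal $Y'$ as a mixture of uniforms on $\{-j,\dots,j\}=I_{2j+1}$. Then use $\pr({}^+X\in I_{2j+1})=\conc_{2j+1}(X)\le(1+\epsilon)\pr(X'\in I_{2j+1})$. This gives $\pr({}^+X+Y'=0)\le(1+\epsilon)\pr(X'+Y'=0)$, hence $\conc(X+Y)\le(1+\epsilon)^2\conc(X'+Y')$ with no variance hypothesis at all. So the lemma holds for every $V_0$, and log-concavity is needed only to make $X'$ and $Y'$ symmetric unimodal about $0$.
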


\begin{proof}
    Using standard arguments we may assume, without loss of generality,
    that $\mu_X, \mu_Y, \mu_X', \mu_Y'$ have rational probabilities and finite supports.

    We again apply 
    the HLP rearrangement inequality,
    Theorem~\ref{thm.HLP},
    and get
    \[
        \conc(X + Y) \le \conc({}^+X + Y^+) = \pr({}^+X + Y^+=0).
    \]
    We apply Lemma~\ref{lem.less_peaked_coupling} to couple $(Y^+, Y')$
    and denote by $A_Y$ the event $A$ from that lemma.
    Note that ${}^+X$ is unimodal
    with a mode at 0 -- we will use this fact multiple times.
    Define the set 
    $\mathcal{B}_X = \{i \in \mathbb{Z}:  i \ge 0,\, \pr({}^+X=i+1) < (1+\epsilon)^{-1}\pr({}^+X=i)\}$
    and 
    the event $B_Y = \{Y^+ \in -\mathcal{B}_X\}$. 

    Let us split the probability space into a few cases. To avoid technical trivialities,
    for two events $C$ and $D$ we define $\pr(C | D) := 0$ whenever $\pr(D) = 0$ in this proof.

    \emph{Case 1: $A_Y$ holds.}
    We define the following events and use Lemma~\ref{lem.less_peaked_coupling} and the definition of $\mathcal{B}_X$ and $B_Y$ to make some observations.
            \begin{itemize}
                \item $A_> := A_Y \cap \{Y^+ > 0\} \cap \{ 0 \le Y' \le Y^+\}$. Note that on this event $Y' \le Y^+$ and so by the unimodality of $^+X$ we have $\pr({}^+X + Y^+=0 |A_>) \le \pr({}^+X + Y'=0 | A_>)$.
                \item $A_\le := A_Y \cap \{Y^+ \le Y' \le 0\}$. Again by the unimodality of ${}^+X$  $\pr({}^+X + Y^+=0 |A_\le) \le \pr({}^+X + Y'=0 | A_\le)$.
                \item $A_{\bar{B}_Y} := A_Y \cap \{Y' < 0\} \cap \{Y' = Y^+ - 1\} \cap \bar{B}_Y$. We have $\pr({}^+X + Y^+=0 |A_{\bar{B}_Y}) \le (1+\epsilon)\pr({}^+X + Y'=0 | A_{\bar{B}_Y})$.
                \item $A_{B_Y}:=A_Y \cap \{Y' < 0\} \cap \{Y' = Y^+ - 1\} \cap B_Y$. We can bound  $\pr(\{{}^+X + Y^+=0\} \cap A_{B_Y}) \le \pr({}^+X + Y^+=0 | {}^+X \in \mathcal{B}_X)\pr({}^+X \in \mathcal{B}_X) \le \conc(Y^+) \pr({}^+X \in \mathcal{B}_X)\le \epsilon^{-1} \conc(Y^+) \conc({}^+X) \le  2\epsilon^{-1} (1+\epsilon)^2 \conc(X') \conc(Y')$. Here to get the second to last inequality we used an analogous argument as in (\ref{eq.peakednessl1barB}) while in the last inequality we used the condition of the lemma.
            \end{itemize}
    Note that these events are disjoint and
    \[
     A_Y = A_> \cup A_\le \cup A_{\bar{B}_Y} \cup A_{B_Y}.
    \]

    \emph{Case 2: $\bar{A}_Y$ holds.} By  Lemma~\ref{lem.less_peaked_coupling} $\pr(\bar{A}_Y) \le 1 - \frac 1 {1+\epsilon} = \frac {\epsilon} {1+\epsilon} \le \epsilon$. Furthermore that lemma says that the event $\bar{A}_Y$ does not affect the distribution of $Y^+$. So 
     $\pr({}^+X + Y^+ = 0 | \bar{A}_Y) \pr(\bar{A}_Y) \le \epsilon \pr({}^+X + Y^+ = 0)$. 

\medskip

For Case 1 we get:
    \begin{align*}
        &\pr(\{{}^+X + Y^+ = 0\} \cap A_Y) = \pr({}^+X + Y^+ = 0 | A_>)\pr(A_>) 
        \\ & + \pr({}^+X + Y^+ = 0 | A_\le)\pr(A_\le) 
      + \pr({}^+X + Y^+ = 0 | A_{\bar{B}_Y})\pr(A_{\bar{B}_Y}) 
        \\ & + \pr(\{{}^+X + Y^+ = 0\} \cap A_{B_Y})
        \\ & \le \pr({}^+X + Y' = 0 | A_>)\pr(A_>) + \pr({}^+X + Y' = 0 | A_\le)\pr(A_\le) 
        \\  & + (1+\epsilon)\pr({}^+X + Y' = 0 | A_{\bar{B}_Y})\pr(A_{\bar{B}_Y}) 
        \\ & + \pr(\{{}^+X + Y^+ = 0\} \cap A_{B_Y})
        \\ & \le (1+\epsilon) \pr(\{{}^+X + Y' = 0\} \cap A_Y) +  2\epsilon^{-1} (1+\epsilon)^2 \conc(X') \conc(Y').
    \end{align*}

Combining the bounds in Case 1 and Case 2 
we get:
\begin{align*}
    &  \pr({}^+X + Y^+ = 0) =  \pr(\{{}^+X + Y^+ = 0\} \cap A_Y) +  \pr(\{{}^+X + Y^+ = 0\} \cap \bar{A}_Y) 
    \\ & \le    (1+\epsilon) \pr(\{{}^+X + Y' = 0\} \cap A_Y) +  2\epsilon^{-1} (1+\epsilon)^2 \conc(Y') \conc(X') 
    \\ & + \epsilon  \pr({}^+X + Y^+ = 0). 
\end{align*}
So
\begin{align*}
    &  (1-\epsilon) \pr({}^+X + Y^+ = 0) \le(1+\epsilon) \pr(\{{}^+X + Y' = 0\} \cap A_Y) 
    \\ & +  2\epsilon^{-1} (1+\epsilon)^2 \conc(Y') \conc(X') 
   \\ &  \le(1+\epsilon) \pr({}^+X + Y' = 0) +  2\epsilon^{-1} (1+\epsilon)^2 \conc(Y') \conc(X'). 
\end{align*}
and using $\epsilon \in (0, \frac 1 2)$
\begin{align*}
    &  \pr({}^+X + Y^+ = 0) \le \frac {1+\epsilon}{1-\epsilon} \pr({}^+X + Y' = 0) +  2\frac{\epsilon^{-1} (1+\epsilon)^2}{1-\epsilon} \conc(X') \conc(Y'). 
    \\ &  \le(1+4\epsilon) \pr({}^+X + Y' = 0) +  10 \epsilon^{-1} \conc(X') \conc(Y'). 
\end{align*}

Let us now bound $\pr({}^+X + Y') = 0$. We could apply Lemma~\ref{lem.peakednessl1} for this. On the other hand ${}^+X \sim -X^+$ so we can use symmetry and apply the first part of this proof to $^+X$ and get
\[
    \pr({}^+X + Y' = 0)  \le  (1+4\epsilon) \pr(X' + Y' = 0) +  10 \epsilon^{-1} \conc(X') \conc(Y'). 
\]
Therefore
\begin{align*}
    &\pr({}^+X + Y^+ = 0)  \le (1+4\epsilon)^2 \pr(X' + Y' = 0) + 40 \epsilon^{-1} \conc(X') \conc(Y') 
    \\ &\le (1+16 \epsilon) \pr(X'+Y'=0)  + 40 \epsilon^{-1} \conc(X') \conc(Y').
\end{align*}

By 
Lemma~\ref{lem.logconcmode} and symmetry
for a random variable $R$ where $R = X'$ or $R = Y'$ 
if $i$ is an integer with $|i| \le \Cpeakednesslone \sqrt {\Var R}$ then
\begin{align*}
    & \pr(R=i) 
    \ge \conc(R) \left(1 -  \frac {\Clogconcvariancenewcol |i|} {\sqrt{\Var R}} \right)
    \ge \frac 1 2 \conc(R) = \frac 1 2 \pr(R=0).
\end{align*}
So if $\Var X', \Var Y' \ge \Cpeakednessltwo$ then 
\begin{align*}
    &  \pr(X' + Y'=0) \ge \frac 1 {2^2} (\Cpeakednesslone)^2 \left(\sqrt {\min(\Var X', \Var Y')}\right)^2 \conc(X') \conc(Y')
    \\ & \ge 10 \epsilon^{-2} \conc(X') \conc(Y')
\end{align*}
and the claim follows.
\end{proof}

\subsection{Combining the proofs for the balanced case}
\label{sec.combining}

To state the next lemma we will use the notation introduced in Definition~\ref{def.balanced}.
The reader should not be scared of an additional parameter $K$: in our proof we only apply this lemma with a particular value of $K$ (which depends on $\delta$).

\begin{lemma}\label{lem.balanced_approx_optimal}
    For each $\delta \in (0,1)$ and each positive integer $K$ 
    there is a constant $V_{\delta, K}$ such that the following holds.

    Let $1 \ge \alpha_1 \ge \dots \ge \alpha_m \ge \frac 1 K > \alpha_{m+1} \ge \dots \ge \alpha_n > 0$.
    
    Assume that 
    \begin{itemize}
        \item $(\alpha_1, \dots, \alpha_m)$ is strongly balanced;
        \item if $\frac 1 K \le \alpha_i \le \frac 1 2$ then $\alpha_i = \frac {j_i} {K^2}$, where $j_i$ is an integer;
        \item if $\alpha_i < \frac 1 K$ (i.e., $i \ge m+1$) then $\alpha_i^{-1}$ is an odd integer, so $(\alpha_1, \dots, \alpha_n)$ is balanced.
    \end{itemize}
    For $\alpha \in (0,1]$, denote by $Y_\alpha$ a random variable distributed according to $\nu_\alpha$. If
    \[
        V:=\sum_i \Var Y_{\alpha_i} > V_{\delta, K}
    \]
    then
    \[
        \topt(\alpha_1, \dots, \alpha_n) \le (1+\delta) \tbal(\alpha_1, \dots, \alpha_n).
    \]
\end{lemma}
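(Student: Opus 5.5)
We begin with the standard extreme-point reduction from the proof of Lemma~\ref{lem.topt_nondec}. By the Krein--Milman argument there, $\topt(\alpha_1,\dots,\alpha_n)$ is attained, up to an arbitrarily small error, by independent $X_1,\dots,X_n$ whose laws are extremal for $\alpha_i$. Each such law is a rearrangement of $\nu_{\alpha_i}$, possibly after rational approximation.

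We then split the sum into three parts:
\begin{itemize}
\item $S_{big}=\sum_{i\le m} X_i$, the terms with $\alpha_i\ge 1/K$;
\item $S_{unif}=\sum_{i>m,\,\alpha_i^{-1}<L} X_i$, the small terms with moderately large support;
\item $S_{huge}$, the remaining terms, with $\alpha_i^{-1}\ge L$.
\end{itemize}
Here $L=L(\delta)$ is chosen large. For $i>m$ the law of $X_i$ is uniform on a set of $k_i=\alpha_i^{-1}$ points, with $k_i$ odd. Such a measure has a symmetric decreasing rearrangement, namely the uniform law $Y_{\alpha_i}^*$ on a symmetric interval.

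The strategy is to replace each block by its balanced counterpart while losing only a factor $1+O(\delta)$. The block $S_{big}$ is strongly balanced, so Lemma~\ref{lem.strongly_balanced} together with its proof shows that $S_{big}$ is exactly dominated, $S_{big}\lesspeaked T_{big}$. Here $T_{big}$ is the sum of a balanced standard extremal sequence for $(\alpha_1,\dots,\alpha_m)$, which is symmetric and log-concave. The domination in the $\conc_j$ sense, not just in $\conc$, comes from adding an independent uniform $U_j$, as remarked after Definition~\ref{def.lesspeaked}.

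The terms with $i>m$ are handled by the HLP inequality, Theorem~\ref{thm.HLP}. It lets us replace them by their symmetric decreasing rearrangements $Y_{\alpha_i}^*$, which are exactly the balanced extremal variables. The remaining task is to combine the non-symmetric block with the symmetric ones.

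Several cases arise, according to which blocks carry a non-negligible share of the variance $V$. We choose a threshold $\eta=\eta(\delta)$ and apply Lemma~\ref{lem.logconcdomination} and Lemma~\ref{lem.few_dropped} to discard any block contributing variance at most $\eta V$.
\begin{itemize}
\item Discarding is safe for the balanced comparison sum, since it loses only a factor $1-O(\eta^{1/3})$.
\item For the $X$-side no discarding is needed. We bound $\conc(S)\le\conc$ of a sub-sum, since adding an independent variable never increases $\conc$.
\end{itemize}

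When both $T_{big}$ and $\sum_{i>m}Y_{\alpha_i}^*$ have variance at least $\eta V$, we apply Lemma~\ref{lem.peakednessl1} with $X=S_{big}$, $Z=T_{big}$ and the $Y_i$ the small terms. This gives $\conc(S)\le(1+4\epsilon)\tbal$, provided both variances exceed $\Cpeakednessloneb$; that holds once $V_{\delta,K}$ is large.

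When one part dominates, the argument splits again.
\begin{itemize}
\item If the big part dominates, we use $\conc(S)\le\conc(S_{big})\le\conc(T_{big})$. Lemma~\ref{lem.logconcdomination} then shows that appending the small balanced terms costs little.
\item If the small terms dominate, HLP alone gives $\conc(S)\le\conc({}^+S_{big}+\sum Y^*_{\alpha_i})$. We then use the log-concave flatness from Lemma~\ref{lem.logconcmode}, via $\conc(A+B)\le\conc(B)$, compared with $\conc(T_{big}+B)\ge(1-O(\eta^{1/3}))\conc(B)$.
\end{itemize}

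The main obstacle I anticipate is that Lemma~\ref{lem.peakednessl1} needs the non-symmetric block to be only $\epsilon$-dominated, with $\epsilon$ small. More delicately, the precise case split must respect the hypothesis (\ref{eq.peakednessl1var}): both variances must be large in absolute terms, not merely relative to $V$. This is where the quantization of $\alpha_i$ on the grid $K^{-2}$ and the bound $\alpha_i\ge 1/K$ matter. They ensure the variance of each $X_i$ with $i\le m$ is at most about $K^2$, so that the set of possible $(\alpha_1,\dots,\alpha_m)$ types is finite, with at most $K^2$ values. This makes a compactness-type choice of $V_{\delta,K}$ possible.

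I would first try to make every threshold depend only on $\delta$ and $K$. I would set $\epsilon=\delta/8$ and $\eta$ from Lemma~\ref{lem.logconcdomination} with $\Clogconcdomination\eta^{1/3}\le\delta/8$. I would then take $V_{\delta,K}$ larger than $\Cpeakednessloneb/\eta$ and $\Cfewdroppedsimpler K^3$, and finally check that the three cases cover everything.
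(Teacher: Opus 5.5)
\textbf{Gap: where the $\epsilon$-domination comes from.} Your outline matches the paper's: reduce to extremal $X_i$, discard low-variance blocks via Lemma~\ref{lem.logconcdomination}, and glue the non-symmetric block to the uniform block with Lemma~\ref{lem.peakednessl1}. Your degenerate cases are also fine. The gap is the hypothesis you feed into Lemma~\ref{lem.peakednessl1}. You claim that Lemma~\ref{lem.strongly_balanced}, plus adding an independent uniform $U_j$, gives the exact domination $S_{big}\lesspeaked T_{big}$. It does not.

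The remark after Definition~\ref{def.lesspeaked} works for the HLP, Lev and Madiman--Wang--Woo inequalities because their hypotheses survive appending a uniform summand. The hypothesis of Lemma~\ref{lem.strongly_balanced} does not: $(\alpha_1,\dots,\alpha_m,1/j)$ contains $1/j$ an odd number of times, so the lone $U_j$ has no reflected partner. Concretely, the Cauchy--Schwarz balancing step only gives
\[
\conc(S_{big}+U_j)\le\bigl(\pr(S_1-S_1'=0)\,\pr(S_2+U_j-S_2'-U_j'=0)\bigr)^{1/2}.
\]
Take the model case: an even split, Gaussian approximations, $\sigma^2=\Var T_{big}$ and $u^2=\Var U_j$. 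The right side then behaves like $(2\pi)^{-1/2}\bigl(\sigma^2(\sigma^2+2u^2)\bigr)^{-1/4}$, while $\pr(T_{big}+U_j^+=0)$ behaves like $\bigl(2\pi(\sigma^2+u^2)\bigr)^{-1/2}$. By AM--GM the former is larger by a constant factor whenever $u\asymp\sigma$, i.e.\ exactly for $j\asymp\sqrt V$. Lemma~\ref{lem.strongly_balanced} together with log-concave flatness only covers $j\ll\sqrt V$ and $j\gg\sqrt V$. As a sanity check: if your claim held, Lemma~\ref{lem.less_peaked_medium} would follow with $\epsilon=0$ at no cost, yet the paper devotes all of Part~II to it.

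\textbf{What is missing.} The missing ingredients are Lemmas~\ref{lem.less_peaked_large} and~\ref{lem.less_peaked_medium}, and using them changes the case structure. They cover $\alpha_i\ge\frac12$ and $\frac1K\le\alpha_i\le\frac12$ separately, so the paper splits your $S_{big}$ into $S_A$ and $S_B$, giving three blocks. When all three blocks carry variance at least $f_\delta V$, the paper proceeds as follows.
\begin{itemize}
\item It gets $S_A\lesspeaked_{\delta/80}S_A'$ from Lemma~\ref{lem.less_peaked_large}, which uses Theorem~\ref{thm.madimanwangwoo} plus Berry--Esseen in the range $j\asymp\sqrt V$.
\item It folds $S_C$ in with Lemma~\ref{lem.peakednessl1}, obtaining $S_A+S_C\lesspeaked_{\delta/20}S_A'+S_C'$.
\item It gets $S_B\lesspeaked_{\delta/20}S_B'$ from Lemma~\ref{lem.less_peaked_medium}.
\item It merges the two with Lemma~\ref{lem.peakednessl2}.
\end{itemize}
Cases where some block is empty or has variance below $f_\delta V$ use the same two domination lemmas together with Lemma~\ref{lem.logconcdomination}. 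The grid $K^{-2}$ and the bound $\alpha_i\ge\frac1K$ are not a compactness device at this level. They are what makes the finitely-many-types arguments inside the proof of Lemma~\ref{lem.less_peaked_medium} work: inverse Littlewood--Offord, the multivariate local limit theorem, and Odlyzko--Richmond.
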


Recall that $\tbal(\alpha_1, \dots, \alpha_n) = \pr(Y_1 + \dots + Y_n = 0)$
    where $Y_1, \dots, Y_n$ is a balanced 
    standard extremal sequence of independent random variables for $\alpha_1, \dots, \alpha_n$.
The above lemma is precise up to factor $1+\delta$ because $t(\alpha_1, \dots, \alpha_n) \ge \tbal(\alpha_1, \dots, \alpha_n) = \pr(Y_1 + \dots + Y_n=0)$.

The proof of this lemma will consist of proving $\epsilon$-dominated results for two of the three regimes of $\alpha_i$
and combining these results.

\begin{lemma}\label{lem.less_peaked_large}
    For each $\epsilon \in (0,1)$ there is a constant $V_\epsilon$ such that the following holds.
    If the sequence $\alpha = (\alpha_1, \dots, \alpha_n)$ is balanced,
    $1 \ge \alpha_1  \ge \alpha_2 \ge \dots \ge \alpha_n \ge \frac 1 2$
    and $\sum_i \Var Y_{\alpha_i} > V_\epsilon$ 
    then for any sequence $X_1, \dots, X_n$
    of independent random integer variables where for each $i$ 
    $X_i$ is extremal for $\alpha_i$, we have
    \[
        X_1 + \dots + X_n \lesspeakedeps Y_1 + \dots + Y_n
    \]
    where $(Y_1, \dots, Y_n)$ is a standard extremal balanced sequence for $\alpha$.
\end{lemma}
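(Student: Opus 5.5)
Since $\alpha_i \ge \frac 1 2$, every measure extremal for $\alpha_i$ is a two-point law (or a point mass when $\alpha_i=1$). So $X_i = a_i + d_i B_i$, where $B_i$ is Bernoulli with parameter $p_i = 1-\alpha_i$, $a_i \in \mathbb{Z}$ and $d_i \in \mathbb{Z}\setminus\{0\}$. Terms with $\alpha_i = 1$ are constants: they shift the sum and change no $\conc_j$, so I discard them. The plan is to compare both sides with a discretized Gaussian of the same variance. The key point is that both the ``gap-free'' unbalanced sum and the balanced sum have the same variance $V=\sum_i p_i(1-p_i)=\sum_i \Var Y_{\alpha_i}$, and both are Poisson binomial, hence log-concave.

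\textbf{Step 1 (remove the gaps).} I will show
\[
\conc_j\Big(\sum_i X_i\Big) \le \conc_j\Big(\sum_i B_i\Big) \quad \text{for every } j,
\]
that is, $\sum X_i \lesspeaked \sum B_i$. For two-point summands this is exactly the Madiman--Wang--Woo inequality (Theorem~\ref{thm.madimanwangwoo} below, applied to $\sum X_i + U_j$ with $U_j$ uniform on $j$ consecutive integers, as explained after Definition~\ref{def.lesspeaked}). I may also replace $d_i$ by $-d_i$ where convenient, since this is only a reflection plus a shift. On the other side, a balanced standard extremal sequence pairs equal $\alpha$'s with opposite signs. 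Hence $\sum Y_i$ is, up to a shift, $T = \sum_{\text{pairs}} (B_i + B_i'')$ with $B_i'' \sim \mathrm{Bernoulli}(1-p_i)$. So $T$ is again a Poisson binomial law, with parameter multiset $\{p_i,1-p_i\}$, and it is symmetric. The task therefore reduces to proving $S:=\sum_i B_i \lesspeakedeps T$.

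\textbf{Step 2 (Gaussian comparison).} Both $S$ and $T$ are sums of independent Bernoullis with total variance $V$. I will use a local limit theorem with an explicit rate for such sums: Berry--Esseen gives Kolmogorov distance $O(V^{-1/2})$, and a local version gives $\sup_x |\pr(S=x) - \varphi_V(x-\E S)| = O(V^{-1})$, and likewise for $T$. Let $G_j$ be the Gaussian mass of the best window of $j$ consecutive integers, which is the window centred at the mean. I then split into three ranges of $j$.
\begin{itemize}
\item[(i)] For $j \le \eta\sqrt V$, the local bound gives $\conc_j(S) \le G_j + Cj/V$. Since $G_j \ge cj/\sqrt V$, this yields $\conc_j(S) \le G_j(1+C'/\sqrt V)$, and similarly $\conc_j(T) \ge G_j(1-C'/\sqrt V)$.
\item[(ii)] For $\eta\sqrt V \le j \le M\sqrt V$, Berry--Esseen gives absolute error $O(V^{-1/2})$. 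Here $G_j$ is bounded below by a constant depending on $\eta$, so the relative error is small.
\item[(iii)] For $j \ge M\sqrt V$, Chebyshev gives $\conc_j(T) \ge 1 - 4/M^2$. Since $\conc_j(S)\le 1$, this is at least $(1+\epsilon)^{-1}\conc_j(S)$ once $M$ is large.
\end{itemize}
Choosing $M=M(\epsilon)$, then $\eta$, then $V_\epsilon$ large gives $\conc_j(S) \le (1+\epsilon)\conc_j(T)$ for all $j$. One detail is that the best window of $T$ is the centred one, so the ``best window'' step costs nothing on the $T$ side.

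\textbf{Main obstacle.} The delicate part is range (i): I need a local limit theorem with error $o(V^{-1/2})$ that is uniform over all Poisson binomial laws, including ones where many $p_i$ are tiny. If the $O(V^{-1})$ local bound is not available to cite, I would first try a different route. Lemma~\ref{lem.logconcmode} shows that the log-concave laws $S$ and $T$ are flat to within relative error $O(j/\sqrt V)$ on windows of length $j$ near their modes. This reduces range (i) to comparing the maximal atoms $\conc(S)$ and $\conc(T)$, which needs only relative error $o(1)$. That in turn follows from the local CLT for Poisson binomial laws with error $o(V^{-1/2})$, together with Lemma~\ref{lem.logconcvariancenew}.
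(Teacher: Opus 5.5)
Your proposal is correct. For windows with $j\asymp\sqrt V$ and $j\gg\sqrt V$ it is essentially the paper's argument: Theorem~\ref{thm.madimanwangwoo} removes the gaps, Berry--Esseen with $m_3\le V$ applies where the Gaussian mass is of constant order, and a Chebyshev-type bound (which the paper packages as Lemma~\ref{lem.logconcdomination}) handles long windows.

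The genuine difference is the short-window range. The paper never compares $S$ with a Gaussian there. It uses three ingredients:
\begin{itemize}
\item the trivial bound $\conc_k(S)\le k\conc(S)$;
\item the exact inequality $\conc(S)\le\conc(S')$ from the balancing Lemma~\ref{lem.strongly_balanced} (a balanced sequence with all $\alpha_i\ge\frac12$ is strongly balanced once the point masses $\alpha_i=1$ are discarded);
\item Lemma~\ref{lem.logconcdomination} applied to $S'+U_k'$, which gives $\conc_k(S')\ge(1-\frac\epsilon2)k\conc(S')$ whenever $\Var U_k'\le c_\epsilon V$.
\end{itemize}
So the paper needs only a global CLT. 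Your main route and your fallback both need a local limit theorem for Poisson binomial laws with error $o(V^{-1/2})$, uniform in the parameters.

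That theorem does hold. From $|\E e^{itS}|\le e^{-2Vt^2/\pi^2}$ on $[-\pi,\pi]$ and a third-order cumulant expansion near $t=0$, one gets $\sup_x|\pr(S=x)-\varphi_V(x-\E S)|=O(1/V)$. So your argument closes once you supply this. Your fallback, however, could have dispensed with it altogether by citing Lemma~\ref{lem.strongly_balanced}, which applies even to the original $X_i$ before your Step~1.

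What the paper's route buys is reuse. The reduction to $c_\epsilon V\le\Var U_k'\le c_\epsilon^{-1}V$ is recycled verbatim in the proof of Lemma~\ref{lem.less_peaked_medium}. There the summands cannot be made gap-free, and no local CLT for $S$ is available. Your route buys independence from the balancing lemma.

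There are two small imprecisions.
\begin{itemize}
\item $\conc_j$ is a supremum over arbitrary $j$-sets, so the auxiliary $U_j$ must be uniform on an arbitrary $j$-set, not on consecutive integers. Its $\#$-version is the consecutive one, so Theorem~\ref{thm.madimanwangwoo} still applies.
\item Flipping the sign of a single $d_i$ is not a reflection of the sum. Theorem~\ref{thm.madimanwangwoo} actually returns a Poisson binomial law with parameters $p_i$ or $1-p_i$ according to orientation. That law has the same variance $V$, which is all your Step~2 uses.
\end{itemize}
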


\begin{lemma}\label{lem.less_peaked_medium}
    For each $\epsilon \in (0,1)$ and each integer $K$ 
    there is a constant $V_{\epsilon,K}$ such that the following holds.
    Suppose the sequence $\alpha = (\alpha_1, \dots, \alpha_n)$ is 
    strongly 
    balanced,
    $\frac 1 2 \ge \alpha_1  \ge \alpha_2 \ge \dots \ge \alpha_n \ge \frac 1 K$,
    and for each $i$ we have $\alpha_i = \frac {j_i} {K^2}$, where $j_i$ is an integer.
    
    If $\sum_i \Var Y_{\alpha_i} > V_{\epsilon,K}$ then for any sequence $X_1, \dots, X_n$
    of independent random integer variables where for each $i$ 
    $X_i$ is extremal for $\alpha_i$, we have
    \[
        X_1 + \dots + X_n \lesspeakedeps Y_1 + \dots + Y_n
    \]
    where $(Y_1, \dots, Y_n)$ is a standard extremal balanced sequence for $\alpha$.
\end{lemma}

Although the two lemmas above look very similar, the proof of 
Lemma~\ref{lem.less_peaked_large}
 is relatively easy using the tools
contained in this paper and is given below in this section,
while the proof of Lemma~\ref{lem.less_peaked_medium} 
takes half of our paper and requires using several major probability theorems.

\medskip

\begin{proofof}{Lemma~\ref{lem.balanced_approx_optimal}}
    Let $X_1, \dots, X_n$ be a sequence of independent integer random
    variables such that $\conc(X_i) \le \alpha_i$. We want to bound $\conc(S)$ for $S = X_1 + \dots + X_n$.
    By a well established argument, see, for example, Lemma~2 and proof of Theorem~1 in
    \cite{jk2021}, we can assume that $X_i$ is extremal for $\alpha_i$ for each $i$.

    As in the proof of Theorem~\ref{thm.tse} we will group the terms of $S$,
    but in this proof we will use slightly simpler splits. Let $A = \{i: \alpha_i \ge \frac 1 2\}=\{1, \dots, a\}$,
    let $B = \{i: \frac 1 2 > \alpha_i \ge \frac 1 K\}=\{a+1, \dots, m\}$ and let $C = \{i: \alpha_i < K^{-1}\} = \{m+1, \dots, n\}$. We write $S_A$, $S_B$ and $S_C$ for sums of the $X_i$
    over each index set $A$, $B$ and $C$, so that $S = S_A + S_B + S_C$.
    Let $(Y_1, \dots, Y_n)$ be a standard extremal balanced sequence for $\alpha$
    and denote $S_A' = \sum_{i \in A} Y_i$,  $S_B' = \sum_{i \in B} Y_i$,  $S_C' = \sum_{i \in C} Y_i$
    and $S' = S_A' + S_B' + S_C'$. 

    To prove the lemma we need to show that there exists $V_{\delta, K}$ such that if $\Var S' \ge V_{\delta, K}$ then $\conc(S) \le (1+\delta) \conc(S')$. 

    We note that in a special case when $A \cup B$ is empty, i.e., $C = \{1, \dots, n\}$, exact domination $S \lesspeaked S'$ follows by 
    the rearrangement inequality, Theorem~\ref{thm.HLP}.

    First consider the case when the set $C$ is empty, i.e., $A \cup B =\{1,\dots,n\}$. 
    In this case $m=n$ and $(\alpha_i)$ is strongly balanced,
    so by Lemma~\ref{lem.strongly_balanced},
    we have an exact inequality, i.e. with $\delta=0$ and $V_{\delta} = 0$.


    Now consider the case when $B$ is empty. 
    Let $V_{\frac \delta 4}'$ be the constant from  Lemma~\ref{lem.less_peaked_large} applied with $\epsilon = \frac \delta 4$.  
    Let $c_\delta$ be the constant from the right side of (\ref{eq.peakednessl1var})
    where we substitute $\frac \delta 4$ for $\epsilon$, i.e., $c_\delta = \Cpeakednesslonevardelta$.
    Let $c_{\delta}' := \max(c_\delta, V_{\frac \delta 4}')$.
    First assume that 
    \begin{equation}\label{eq.c_delta_K_prime}
       \min(\Var S_A', \Var S_C') < c_{\delta}'.
    \end{equation}
    Consider the case $\Var S_A' < c_{\delta}'$.
    If $\Var S_C' > (\frac\delta {2 \cdot \Clogconcdomination})^{-3} c_{\delta}'$
    ere $c_2 = \Clogconcdomination$ 
    then by
    Lemma~\ref{lem.logconcdomination} (applied with $Y=S_A'$, $X=S_C'$ and  $\epsilon = (\frac\delta {2 \cdot \Clogconcdomination })^3$) $\pr(S_A' + S_C' = 0) \ge (1- \frac \delta 2) \pr(S_C'=0)$.
    Thus $\conc(S) \le \conc(S_C) \le \conc(S_C') = \pr(S_C' = 0) \le (1-\frac \delta 2)^{-1} \pr(S_A' + S_C'=0) \le (1+\delta) \pr(S_A' + S_C' = 0)$. 
    Here the second inequality follows from 
    Theorem~\ref{thm.HLP},
    as noted above.
    Thus in this case the claim of the lemma holds for 
    $V_{\delta, K} \ge V_{\delta}^{\circ,1} := (1+(\frac{\delta} {2 \cdot \Clogconcdomination})^{-3})c_{\delta}'$. 
    Analogous reasoning shows that $V_{\delta} \ge V_{\delta}^{\circ,1}$ suffices in the case $\Var S_C' < c_{\delta}'$,
    in this case we use inequality $\conc(S_A) \le \conc(S_A')$ which follows by 
    Lemma~\ref{lem.strongly_balanced}.
    Now assume that (\ref{eq.c_delta_K_prime}) does not hold.
    Then by 
    Lemma~\ref{lem.less_peaked_large} 
    we have
    $S_A \lesspeaked_{\frac \delta 4} S_A'$.
    Applying 
    Lemma~\ref{lem.peakednessl1} with $X=S_A$
    and $Z=S_A'$ and $(Y_1, \dots, Y_m) = (X_{m+1}, \dots, X_n)$
    we get
    $S_A + S_C \lesspeaked_{\delta} S_A' + S_C'$ and in particular $\pr(S_A + S_C=0) \le (1+\delta) \pr(S_A' + S_C'=0)$.
 
    Now consider the case when $A$ is empty. In this case by 
    Lemma~\ref{lem.less_peaked_medium} there is a constant $V_{\frac \delta 4, K}''$
    such that if $\Var(S_B') \ge V_{\frac \delta 4, K}''$ then 
    $S_B \lesspeaked_{\frac \delta 4} S_B'$.
    Using the same argument as for the case when $B$ is empty we get that the claim holds
    for $V_{\delta,K} \ge V_{\delta,K}^{\circ,2} := (1+(\frac\delta {2 \cdot  \Clogconcdomination })^{-3}) \max(V_{\frac \delta 4, K}'', c_\delta)$.

    We have proved the claim in the case when at least one of the sets $A$, $B$ or $C$ is empty.
    And in this particular case we have shown that taking  $V_{\delta,K} \ge V_{\delta,K}^\circ := \max(V_{\delta}^{\circ,1}, V_{\delta,K}^{\circ,2})$ is enough.
    Our next step is to prove the claim in the case when the variance of any of the 3 sums $S_A'$, $S_B'$ or $S_C'$
    is much smaller than $V$.

    So suppose $\Var S_A' < f_\delta V$ for a small $f_\delta \in (0,1)$ defined below.
    By the already proved part we have that if $\Var(S_B' + S_C') > V_{\frac \delta 4, K}^\circ$,
    which is implied by $V > (1-f_\delta)^{-1} V_{\frac \delta 4, K}^\circ$, then
    \[
        \conc(S) \le \conc(S_B+S_C) \le (1 + \frac {\delta} 4) \pr(S_B' + S_C' = 0).
    \]
    As $\Var S_A' \le \frac {f_\delta} {1-f_\delta} \Var (S_B' + S_C')$ we have by Lemma~\ref{lem.logconcdomination}
    that
    \[
        \pr(S' = 0) \ge (1 -  \Clogconcdomination \left( \frac {f_\delta} {1-f_\delta} \right)^{\frac 1 3}) \pr(S_B' + S_C'=0).
    \]
    So if we choose $f_\delta$ such that 
    \[
        \Clogconcdomination \left( \frac {f_\delta} {1-f_\delta} \right)^{\frac 1 3} = \frac \delta 4
    \]
    (or $f_\delta = \Cbalancedapproxoptimalfdeltasolved$)
    then combining the previous two bounds, we have that if 
    $V > (1-f_\delta)^{-1} V_{\frac \delta 4, K}^\circ$ then
    \[
        \conc(S) \le (1 + \frac {\delta} 4) \pr(S_B' + S_C' = 0) \le  (1 + \frac {\delta} 4) (1-\frac \delta 4)^{-1} \pr(S'=0)
        \le (1+\delta) \pr(S'=0).
    \]

    We can treat the cases $\Var S_B' < f_\delta V$ and $\Var S_C' < f_\delta V$ similarly.

    So the case where $\Var S_A', \Var S_B', \Var S_C' \ge f_\delta V$ remains (recall that $f_\delta = \Theta(\delta^3)$).
    For this case we first apply Lemma~\ref{lem.less_peaked_large} to $S_A$ with $\epsilon = \frac {\delta} {80}$.
    It shows that there exists a constant $V_{\frac \delta {80}}'$ such that 
    if $\Var S_A' \ge f_\delta V \ge  V_{\frac \delta {80}}'$, or
    \begin{equation}\label{eq.balanced_approx_final_prea}
    V \ge f_\delta^{-1} V_{\frac \delta {80}}'
    \end{equation}
    then $S_A \lesspeaked_{\frac \delta {80}} S_A'$.
    When this holds we can apply
    Lemma~\ref{lem.peakednessl1} with $\epsilon = \frac {\delta} {80}$, $X = S_A$, $Z=S_A'$ and $Y_j = X_{m+j}$, $j \in \{1, \dots, |C|\}$.
    It shows that as long as
    \begin{equation}\label{eq.balanced_approx_final_a}
        V \ge  f_\delta^{-1} \Cbalancedapproxoptimalfinala
    \end{equation}
    we have
    \[
        \mu_{S_A + S_C} \lesspeaked_{\frac \delta {20}} \mu_{S_A' + S_C'}.
    \]
    By Lemma~\ref{lem.less_peaked_medium} there is $V_{\frac \delta {20}, K}''$ such that 
    if $\Var S_B' \ge f_\delta V \ge  V_{\frac \delta {20}, K}''$, or
    \begin{equation}\label{eq.balanced_approx_final_b}
        V \ge f_\delta^{-1} V_{\frac \delta {20}, K}''
    \end{equation}
    then $S_B \lesspeaked_{\frac \delta {20}} S_B'$.
    We can now apply Lemma~\ref{lem.peakednessl2} to $S_A+S_C$ and $S_B$ with $\epsilon = \frac \delta {20}$:
    if 
    \begin{equation}\label{eq.balanced_approx_final_c}
        V \ge f_\delta^{-1}  \Cbalancedapproxoptimalfinalb 
    \end{equation}
    then
    \[
        \conc(S_A + S_C + S_B) \le (1 + \delta) \conc(S_A'  + S_C' + S_B').
    \]
    Finally, we can define $V_{\delta, K}$ to be the maximum of
    $V_{\delta,K}^\circ$ and the right sides
    of equations (\ref{eq.balanced_approx_final_prea}), (\ref{eq.balanced_approx_final_a}), (\ref{eq.balanced_approx_final_b})
    and (\ref{eq.balanced_approx_final_c}).
\end{proofof}

\medskip

An integer random variable $X$ with support $\{x_1, \dots, x_N\}$ and
probabilities $\{p_1, \dots, p_N\}$ is called \emph{\#-log-concave}
if the ``squeezed version of $X$'', the
random variable with support $\{-\lfloor \frac {N-1} 2 \rfloor, \dots, \lceil \frac {N-1} 2 \rceil\}$ and probabilities
$\{p_1, \dots, p_N\}$ is log-concave, see \cite{madimanwangwoo2018}. We denote
the latter random variable by $X^\#$.
\begin{theorem}[Theorem~1.4 of~\cite{madimanwangwoo2018}]
\label{thm.madimanwangwoo}
    Let $X_1, \dots, X_n$ be independent integer \#-log-concave random variables.
    Then
    \[
    X_1 + \dots + X_n  \lesspeaked X_1^\# + \dots + X_n^\#.
    \]
\end{theorem}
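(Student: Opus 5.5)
The plan is to follow the approach of \cite{madimanwangwoo2018} and reduce the statement to a ``removing gaps'' inequality of Lev type~\cite{lev1998}, combined with a layer-cake decomposition. I first reduce to the case where all $X_i$ have finite support and rational probabilities, by truncation and approximation. Next I reduce domination to concentration at a point. Given $k$, let $U_k$ be uniform on $\{0,\dots,k-1\}$ and independent of everything else. Then $\conc_k(\sum X_i)\le k\,\conc(U_k+\sum X_i)$. On the other side, $U_k$ is log-concave and $U_k^\#=U_k$, so $\sum X_i^\#+U_k$ is log-concave. Hence $k\,\conc(U_k+\sum X_i^\#)=\conc_k(\sum X_i^\#)$, since for a unimodal law the $k$ largest atoms form an interval. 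So it suffices to prove
\[
\conc(X_1+\dots+X_n)\le\conc(X_1^\#+\dots+X_n^\#)
\]
for independent \#-log-concave variables, with $U_k$ treated as one more summand.

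For this pointwise inequality I use the layer-cake representation. Write $p_i$ for the law of $X_i$ and $A_i(t)=\{x:p_i(x)\ge t\}$, so that
\[
p_i=\int_0^\infty |A_i(t)|\,u_{A_i(t)}\,dt ,
\]
where $u_A$ denotes the uniform law on $A$. Because $X_i$ is \#-log-concave, the $p_i$-values listed in the order of the support are unimodal. Hence the level sets of the squeezed law $p_i^\#$ are nested intervals $J_i(t)$ with $|J_i(t)|=|A_i(t)|$. Moreover, $A_i(t)$ consists of consecutive points of $\supp(X_i)$, in the same positions in which $J_i(t)$ sits inside $\supp(X_i^\#)$. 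Expanding the convolution multilinearly gives
\[
\pr\Big(\sum X_i=x\Big)=\int\!\cdots\!\int\prod_i|A_i(t_i)|\,\Big(*_i\,u_{A_i(t_i)}\Big)(\{x\})\,dt_1\cdots dt_n .
\]
Lev's theorem~\cite{lev1998} (uniform summands) bounds each integrand by $\conc(*_i\,u_{J_i(t_i)})$. The same expansion holds for the $X_i^\#$ with $J_i$ in place of $A_i$.

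The main obstacle is alignment. For a \emph{fixed} $x$ the integrand is bounded by a conc that is attained at a point depending on $(t_1,\dots,t_n)$, namely near the centre of $\sum_i J_i(t_i)$. The bound therefore does not directly give $\pr(\sum X_i^\#=x^\#)$ for a single $x^\#$. Log-concavity of the $p_i^\#$ is what rescues this. I would try to strengthen Lev's inequality to a ``positional'' version: for sets $A_i$ that are consecutive blocks inside ordered supports, the representation count of $x$ is bounded by that of a point $x^\#$ defined by the \emph{same} relative positions in the squeezed sums. To get this, I would argue by induction on $n$, compressing one summand at a time. At each step the partial sum of already-squeezed variables is log-concave by Keilson--Gerber~\cite{keilsongerber}, hence unimodal. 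Sliding the remaining gaps of $\supp(X_i)$ together only moves mass towards the mode of that log-concave partial sum, so the probability at the appropriate point does not decrease.

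If the positional induction fails, the fallback is to carry out the compression via a discrete Gabriel-type inequality from~\cite{HLP} applied to the ordered sequences $(p_i(x_1),\dots,p_i(x_N))$. Here the log-concave partial sum plays the role of the symmetric decreasing function in the HLP rearrangement inequality (Theorem~\ref{thm.HLP}).
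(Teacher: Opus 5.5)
The paper does not prove this statement; it imports it from \cite{madimanwangwoo2018}. Judged on its own, your proposal correctly identifies the real difficulty, alignment, but does not get past it. There is a genuine gap at the crux.

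\textbf{The alignment step.} The ``positional Lev'' inequality is never formulated. In particular the target point $x^\#$ is not defined, and for the layer-cake integral to close up it would have to be independent of $(t_1,\dots,t_n)$. The induction you propose for it fails. When you compress $X_j$, it is convolved with the already squeezed summands and also with the not-yet-squeezed ones, so its environment is not log-concave. Compressing one summand against an arbitrary independent remainder can strictly \emph{decrease} concentration. For $X_1,X_2$ uniform on $\{0,10\}$,
\[
\conc(X_1+X_2)=\tfrac12,\qquad \conc(X_1^\#+X_2)=\tfrac14,\qquad \conc(X_1^\#+X_2^\#)=\tfrac12 .
\]
The same happens with the uniform blocks inside your layer cake, so no stepwise monotonicity holds.

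An induction that keeps only the majorization of the partial sum also fails. Let $S$ and $W$ have masses $(0.4,0.5,0.1)$ and $(0.5,0.4,0.1)$ on $\{0,1,2\}$, and let $X$ have masses $(0.9,0.1)$ on $\{0,1\}$. Then $S\lesspeaked W$ and $W$ is log-concave, yet $\conc(S+X)=0.49>0.45=\conc(W+X^\#)$. So the relative positions of the atoms of all summands must be tracked simultaneously.

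\textbf{Log-concavity is never used.} Inside your layer cake the ``already squeezed'' summands are uniform on intervals, which are log-concave whatever $p_i$ is. Your argument therefore uses only unimodality of the $p_i^\#$, and never the hypothesis that actually matters.

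\textbf{The missing idea.} The proof in \cite{madimanwangwoo2018} is Sperner-theoretic, and this is where log-concavity enters.
\begin{itemize}
\item Order $\supp(X_i)$ as a chain $C_i$ weighted by the law of $X_i$. Give $\prod_i C_i$ the product order and the product weight.
\item For each $x$, the set $\{y:\sum_i y_i=x\}$ is an antichain, because $y\le y'$ with $y\ne y'$ forces $\sum y_i<\sum y_i'$. Hence a union of $k$ such level sets is a $k$-family.
\item The rank of $y$ is the sum of the positions of the $y_i$ in their chains. Its law is that of $\sum_i X_i^\#$ up to a shift.
\item By the product theorem of Harper and of Hsieh--Kleitman, a product of chains with log-concave weights has the normalized matching (LYM) property, hence is strongly Sperner.
\item So every $k$-family weighs at most the sum of the $k$ largest rank levels. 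This gives $\conc_k(\sum X_i)\le\conc_k(\sum X_i^\#)$ for all $k$ at once.
\end{itemize}

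\textbf{The reduction to $k=1$.} The Sperner route makes this reduction unnecessary, and as written it is wrong anyway. With $U_k$ uniform on an interval, $k\,\conc(U_k+S)$ only controls the mass of intervals. For $S$ uniform on $\{0,10\}$ and $k=2$ it gives $\frac12<1=\conc_2(S)$. The fix is to take $U$ uniform on $-A$ for an arbitrary $k$-set $A$. This $U$ is still \#-log-concave, with $U^\#$ uniform on an interval.
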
 

\begin{proofof}{Lemma~\ref{lem.less_peaked_large}}
    Let $S = \sum_{i=1}^n X_i$
    and $S' = \sum_{i=1}^n Y_i$. 
    It suffices to show that there exists $V_\epsilon$ such that 
    if $\Var S' \ge V_\epsilon$ then
    for each non-negative integer $k$ 
    \begin{align}\label{eq.less_peaked_large_claim}
        \sup_{U_k} \conc(S + U_k = 0) \le (1+\epsilon) \pr(S' + U_k' = 0),
    \end{align}
    where the supremum is taken over all uniform distributions on $k$ atoms in $\mathbb{Z}$ and $U_k'$ is the uniform distribution on the interval $I_k$,
    and $I_k$ is ``the maximally centered integer interval of length $k$''
    defined in Section~\ref{subsec.epsilon_less_peaked}; alternatively $U_k'$
    can be defined as $U_k' \sim U_k^+$.
    Note that since each $\alpha_i \ge \frac 1 2$ and $(\alpha_i)$ is balanced,
    it must be strongly balanced.


 
    By Lemma~\ref{lem.logconcdomination}, if 
    $\Var U_k' \le c_\epsilon \Var S'$ where 
    \begin{equation}\label{eq.c_epsilon}
        c_\epsilon := 
        \left(\frac {\epsilon} {2 \cdot \Clogconcdomination}\right)^3
    \end{equation}
    then
    \[
        \pr(S' + U_k' = 0) = \conc(S' + U_k') \ge (1 - \frac{\epsilon} 2) \pr(S'=0).
    \]
    Thus in this case
    \[
        \conc(S+U_k) \le \conc(S) \le \conc(S') \le (1-\frac \epsilon 2)^{-1} \pr(S' + U_k' = 0) \le (1+\epsilon)\pr(S'+U_k'=0).
    \]
    Here the second equation is obtained by the ``balancing lemma'', Lemma~\ref{lem.strongly_balanced},
    which also proves the claim with $\epsilon=0$ in the case $k=1$.

    We can apply a similar reasoning in the case where $V := \Var S' \le c_\epsilon \Var U_k'$. 
    
    Thus it remains to prove 
    that there is $V_\epsilon$ such that if $V \ge V_\epsilon$, $k \ge 2$ and
    \[
        c_\epsilon V  \le \Var U_k' \le c_\epsilon^{-1} V
    \]
    then (\ref{eq.less_peaked_large_claim}) holds.

     Recall that $\Var U_k' = \frac {k^2 - 1} {12}$. For $k\ge 2$ we have
    $\frac {k^2} {24} \le \Var U_k' \le \frac {k^2} {12}$. 
    Returning 
    from (\ref{eq.less_peaked_large_claim}) to the original definition of $\lesspeaked_\epsilon$
    and recalling 
    that $S'$ is symmetric and unimodal,
    we need to prove 
    that there exists a constant $V_\epsilon$ such that whenever $V \ge V_\epsilon$ we have
    \begin{equation}\label{eq.less_peaked_large_to_show_clt}
        \sup_{A \subset \mathbb{Z}, \, |A| = k} \pr(S \in A) \le (1+\epsilon) \pr(S' \in I_k)
    \end{equation}
    for each $k$ such that
    \begin{equation}\label{eq.less_peaked_large_k}
        \sqrt {12 c_\epsilon V}  \le k \le \sqrt{24 c_\epsilon^{-1} V}.
    \end{equation}
    For this case we will use Theorem~\ref{thm.madimanwangwoo} and a central limit theorem (Berry--Esseen).
    By the first theorem, since $X_1, \dots, X_n$ and $U_k$ are $\#$-log-concave
    \[
        \conc(X_1 + \dots + X_n + U_k) \le \conc(X_1^\#  + \dots + X_n^\# + U_k^\#).
    \]
    Therefore we can assume further that $(X_1, \dots, X_n)$ is a standard extremal sequence
    and $U_k = U_k'$.
    With this assumption either $X_i - b_i$ or $-X_i + b_i$ for some constant $b_i$ is distributed as $Y_i$, so $\Var S = \Var S' = V$.

    Each $X_i$ has support on two subsequent integers. By shifting if necessary we can assume
    a largest atom is at $0$ and the other atom is either $1$ or $-1$.

    Let
    \[
        m_3 = \sum \E|X_i- \E X_i|^3.
    \]
    The Berry--Esseen theorem states that if 
    \[
        m_3 \le \delta (\Var S)^{\frac 3 2}
    \]
    then
    \begin{equation}\label{eq.berry_esseen}
          |\pr(\frac {S - \E S} {\sqrt{\Var S}} \le z) - \Phi(z)| \le \delta. 
    \end{equation}
    As (with our assumption) $|X_i - \E X_i| \le 1$ we have $\E |X_i - \E X_i|^3 \le \Var X_i$, so 
    $m_3 \le V = \Var S$ and 
    \[
        \frac {m_3} {V^{\frac 3 2}} \le \frac 1 {\sqrt V} 
    \]
    so (\ref{eq.berry_esseen}) holds with any $\delta$ such that $\delta \ge \frac 1 {\sqrt V}$; we will assume this is the case. 

    With our assumptions $S$ is log-concave and unimodal, so on the left side of (\ref{eq.less_peaked_large_to_show_clt}) 
    it is enough to take the supremum over intervals $A$.
    In this case it is also easy to see that the maximum exists.
    Let $A_0 = \{a, \dots, b\}$, $b-a=k-1$, be an interval that achieves the maximum.

    Let $\eta$ be the standard normal measure. From (\ref{eq.berry_esseen}) and the symmetry and unimodality of $\eta$ we have
    \begin{align*}
        &  \pr(S \in [a,b]) \le 2 \delta + \Phi( \frac {b - \E S} {\sqrt{V}}) -  \Phi( \frac {a - 1 - \E S} {\sqrt{V}}). 
        \\ & \le \sup_x \eta([x, x +  \frac {b-a + 1} {\sqrt{V}}   ]) + 2 \delta
        \\ & \le \eta([-\frac {k} {2 \sqrt{V}}, \frac {k} {2 \sqrt{V}}]) + 2 \delta.
    \end{align*}
    Similarly 
    \begin{align}\label{eq.less_peaked_large_normal}
        & \pr(S' \in I_k) \ge \eta([- \frac k 2  V^{-\frac 1 2},  \frac k 2  V^{-\frac 1 2}]) - 2 \delta.
    \end{align}
    So
    \begin{align}
        &  \pr(S \in [a,b]) 
        \le \pr(S' \in I_k) + 
        4 \delta. \label{eq.less_peaked_large_normal_final}
    \end{align}

    The first inequality of (\ref{eq.less_peaked_large_k}) and $\eta([-x, x]) \ge 2x \frac 1 {\sqrt {2 \pi}} \exp(-x^2/2)$
    also imply that 
    \[
        \eta([- \frac k 2  V^{-\frac 1 2},  \frac k 2  V^{- \frac 1 2}]) \ge \eta([-\frac 1 2\sqrt {12 c_\epsilon}, \frac 1 2\sqrt {12 c_\epsilon}]) \ge \frac {\sqrt {12 c_\epsilon}} {\sqrt{2 \pi}}\exp(- 3 c_\epsilon / 2). 
    \]
   As $\epsilon \le 1$, by the definition (\ref{eq.c_epsilon}) we have
   $\exp(- 3 c_\epsilon / 2) \ge \frac 1 2$, so
   \[
       \eta([- \frac k 2 V^{-\frac 1 2},  \frac k 2 V^{- \frac 1 2}]) \ge  \frac {\sqrt {12 c_\epsilon}} {2 \sqrt{2 \pi}}.
   \]
   Assume that 
   \begin{equation}\label{eq.less_peaked_large_normal_delta}
       \delta \le \frac {\epsilon} {8} \cdot \frac {\sqrt {12 c_\epsilon}} {2 \sqrt{2 \pi}}.
   \end{equation}
   Then, since $(1 - \frac \epsilon 4)^{-1} \le 2$, (\ref{eq.less_peaked_large_normal}) implies
   \[
       \pr(S' \in I_k) \ge (1 - \frac \epsilon 4)  \eta([- \frac k 2  V^{-\frac 1 2},  \frac k 2  V^{-\frac 1 2}]) 
   \]
   and
   \[
       \delta \le  \frac {\epsilon} {8} \pr(S' \in I_k) (1 - \frac \epsilon 4)^{-1} \le \frac {\epsilon} 4  \pr(S' \in I_k).
   \]
   Putting this into (\ref{eq.less_peaked_large_normal_final}) completes the proof if all of the used assumptions can be satisfied.
  
   The assumption (\ref{eq.less_peaked_large_normal_delta}) and the assuption $\delta \ge \frac 1 {\sqrt V}$ can both
   be satisfied if 
   \[
       \frac 1 {\sqrt V} \le \frac {\epsilon} {8} \cdot \frac {\sqrt {12 c_\epsilon}} {2 \sqrt{2 \pi}}
   \]
   or
\[
    V = \Var S' \ge V_\epsilon :=
\Clesspeakedmedium \epsilon^{-5} \ge
    16^2 \epsilon^{-2} \frac {2 \pi} {12 c_\epsilon}
   = \left(\frac {\epsilon} {8} \cdot \frac {\sqrt {12 c_\epsilon}} {2 \sqrt{2 \pi}}\right)^{-2}.
\]
\end{proofof}

\subsection{Entry point: Proof of Theorem~\ref{thm.tse}}
\label{subsec.entry_point}


As mentioned above, 
we complete the proof of
Lemma~\ref{lem.balanced_approx_optimal}
by proving Lemma~\ref{lem.less_peaked_medium} in  
Part II (Section~\ref{sec.nontrivial}). 
In this section 
we show how Theorem~\ref{thm.tse} follows from the results stated above.

Given a sequence $x = (x_1, \dots, x_n)$ and a set of indices $I \subseteq \{1,\dots,n\}$, $I = \{i_1, \dots, i_m\}$, $i_1 < \dots < i_m$, we define a sequence $x_I  = (x_{i_1}, \dots, x_{i_m})$.


\medskip

\begin{proofof}{Theorem~\ref{thm.tse}}
    Using Lemma~\ref{lem.topt_nondec} we can assume that we have random integer variables $X_1, \dots, X_n$ with 
$\conc(X_1) = \alpha_1, \dots, \conc(X_n)=\alpha_n$.

Using our notation, it suffices to prove the following:
for each $\delta \in (0, \frac 1 4)$ there exists
$V_0 = V_0(\delta)$ such that
if $(Y_1, \dots, Y_n)$ is a standard extremal sequence of independent random variables 
for $(\alpha_1, \dots, \alpha_n) \in (0,1]^n$ 
and $\Var \sum_{i=1}^n Y_n \ge V_0$ then 
    \begin{equation}
    \tse(\alpha) \le \topt(\alpha) \le \tse(\alpha) (1 + \delta).
    \end{equation}


We will assume that $1 \ge \alpha_1 \ge \dots \ge \alpha_n > 0$.
Fix a large enough positive integer $K$, $K \ge K_\delta$, where $K_\delta$ will be specified later.
Define $\epsgrid = K^{-2}$ and $\epsilon_1 = K^{-1}$.

For each $n$ we will define
sequences  $\beta'$ and $\beta''$ of length $n$ such that $\beta' \le \alpha \le \beta''$ (the comparison is coordinate-wise). At the same time we will define a set $I =\{i_1, \dots, i_{n'}\} \subseteq \{1,\dots,n\}$ of all but at most a constant number of indices. Our $\beta'$ and $\beta''$ will be ``almost'' balanced:
the sequences $\alpha' = \beta_I'$ and  $\alpha'' = \beta_I''$  will be balanced and $|I| = n' = n - O_{\delta}(1)$
    will be satisfied. Furthermore both $\alpha'$ and $\alpha''$ will satisfy the conditions of Lemma~\ref{lem.balanced_approx_optimal} with the integer $K$.

Below we will prove that with our construction the following holds.
\begin{itemize}
    \item If
    $\Var S_n' \ge \Ctsealphaapprox$  
    for the sum $S_n'$ of a sequence of standard extremal random
    variables for $\alpha$ then with our choice of $I$
    \begin{align}\label{eq.tse_alpha_approx}
        \tse(\alpha) \ge \tse(\alpha_I) (1 - \frac \delta 5).
    \end{align}

\item If $K \ge \CtwobalancedKbound \delta^{-6}$ then\footnote{We used sympy for automatic calculation of constants, so some of them may be formatted in a slightly unusual way.} 
        \begin{align}\label{eq.two_balanced}
            \tbal(\alpha'')(1- \frac \delta 5) \le \tbal(\alpha').
        \end{align}
    \item For a sequence $\gamma \in (0,1]^r$ let $V_\gamma := \sum_{i=1}^r \Var Y_{\gamma_i}$ be the variance of the sum of a standard extremal sequence of random variables for $\gamma$. If $\Var S_n' = V_{\alpha} \ge 2K^5$ then
        \begin{align}\label{eq.varianceok}
            V_{\alpha''} \ge V_\alpha  (1 - \frac {10} K).
        \end{align}
\end{itemize}
    Using (\ref{eq.tse_alpha_approx}), Lemma~\ref{lem.tse_nondec} and (\ref{eq.two_balanced}) 
\begin{align*}
  &  \tbal(\alpha'')(1-\frac \delta 5)(1-\frac \delta 5) \le
    \tbal(\alpha')(1-\frac \delta 5) \le \tse(\alpha')(1-\frac \delta 5)
    \\ & \le \tse(\alpha_I)(1-\frac \delta 5) \le \tse(\alpha).
\end{align*}
    We can now apply Lemma~\ref{lem.balanced_approx_optimal} with $\frac \delta 5$ and $K_\delta$, defined as the smallest integer $K$ such that (\ref{eq.two_balanced}) is satisfied. This lemma shows that if $V_{\alpha''} \ge V_{\frac \delta 5, K_\delta}$ then $\topt(\alpha'') \le (1 + \frac \delta 5) \tbal(\alpha'')$.


Combining the last condition with the conditions of (\ref{eq.tse_alpha_approx}) and (\ref{eq.two_balanced}),
    and  using Lemma~\ref{lem.topt_nondec} and
 (\ref{eq.varianceok})
we finally get that if
    \[
        \Var S_n' \ge V_0(\delta) := \max\left(\frac {K_\delta} {K_\delta-10} V_{\frac \delta 5, K_\delta}, \Ctse\right)
    \]
    then
    \[
    \topt(\alpha) \le \topt(\beta'') \le \topt(\alpha'') \le \frac {(1+\frac \delta 5)} {(1- \frac \delta 5) (1-\frac \delta 5)} \tse(\alpha)\le \tse(\alpha)(1+\delta).
\]

 
It remains to define the sequences $\beta'$ and $\beta''$ and prove (\ref{eq.tse_alpha_approx}), (\ref{eq.two_balanced}) and (\ref{eq.varianceok}).  
For a start we shall assume that $K > 2$ so that $\epsgrid < \frac 1 4$.

The indices 
$j \in \{1, \dots, a\} =: A$ will be all those indices
where $\alpha_j \ge 1 - \epsilon_1$.
The indices $j \in \{a+1, \dots, a+b\} =: B$ will be those indices where
$1-\epsilon_1 > \alpha_j \ge \epsilon_1$. Finally,
the indices $j \in \{a+b+1, \dots, n\}=:C$ will be those where $\alpha_j < \epsilon_1$. Any of the sets $A$, $B$ or $C$ can be empty.
We write $S_n = S_A + S_B + S_C$ where $S_A = \sum_{i \in A} X_i$, and similarly for $S_B$
and $S_C$.

We first define a 
sequence $\beta''$, $\beta'' = (\beta_i'', i \in \{1, \dots, n\})$ with $1 \ge \beta_i'' \ge \alpha_i$.

\begin{itemize}
    \item For $i \in A$, if $i$ is odd then $\beta_i''=\alpha_i$, while if $i$ is even then $\beta_i'' = \alpha_{i-1}$.
    \item For $i \in B$, we will round $\alpha_i$ to the nearest level above on the $\epsgrid$-grid: $\beta_i'' := \lceil \frac {\alpha_i} {\epsgrid} \rceil \epsgrid$.
    \item For $i \in C$ we do the following. Let $\lceil\lceil x \rceil \rceil$ denote the smallest odd integer greater or equal to $x$.
        We set $\beta_i'' = (\lceil \lceil \alpha_i^{-1} \rceil \rceil - 2)^{-1}$. \label{p.ceilceil}
\end{itemize}

The 
sequence $(\beta_i', i \in \{1, \dots, n\})$ dominated by $(\alpha_i)$, i.e. with $0 < \beta_i' \le \alpha_i$ for each $i$, will be defined as follows:

\begin{itemize}
    \item For $i \in A$ if $i$ is odd and $i < a$ then $\beta_i'=\alpha_{i+1}$, otherwise, let $\beta_i' = \alpha_{i}$. 
    \item For $i \in B$, we will round $\alpha_i$ to the nearest level below on the $\epsgrid$-grid: $\beta_i' := \lfloor \frac {\alpha_i} {\epsgrid} \rfloor \epsgrid$.
    \item For $i \in C$ we set $\beta_i' = \lceil \lceil \alpha_i^{-1} \rceil \rceil^{-1}$. 
\end{itemize}

Let us now see how to convert $\beta'$ and $\beta''$ into balanced sequences
by dropping a constant number of elements.
If $|A|$ is odd, drop the element with the highest index from $A$, and if any level set in $B$, $\{i \in B: \beta_i'' = j \epsgrid\}$, $j \in \mathbb{Z}$, has an odd size, we likewise drop the element with the highest index from that level. 
Let $I = \{i_1, \dots, i_{n'}\} \subseteq \{1, \dots, n\}$, $i_1 < \dots < i_{n'}$ be the indices of non-dropped elements.
As there at most $\epsgrid^{-1}$ levels $\{j \epsgrid: j \in Z\}$ an element $\alpha_i$ with $i \in B$ can be rounded to,
the total number of dropped elements is at most $\epsgrid^{-1}+1$, so $n' \ge n - \epsgrid^{-1}-1$. 

\medskip

\begin{proofof}{(\ref{eq.tse_alpha_approx})}
    Let $Y_1, \dots, Y_n$ be a sequence of standard extremal random variables for $\alpha$ such that $\conc(\sum_{i\in I} Y_i) =  \tse(\alpha_I)$.

By our construction, each $\alpha_i$ for $i \in \{1,\dots,n\} \setminus I$ satisfies 
    $\alpha_i \ge \epsilon_1 = K^{-1}$.
Recall that $|\{1,\dots,n\} \setminus I| \le \epsgrid^{-1} + 1 = K^2+1$. 

Denote $S = \sum_{i=1}^n Y_i$ and $S_I' = \sum_{i \in I} Y_i$. 
By Lemma~\ref{lem.few_dropped} if $\Var S \ge V_{K^2 + 1, K, \frac \delta 5}$
then $\conc(S) (1-\frac \delta 5) \ge \conc(S_I')$.
As
\begin{align*}
V_{K^2+1, K, \frac \delta 5} = \Cfewdroppedsimpler (K^2+1) K^2 \le \Ctsealphaapprox, 
\end{align*}
it suffices that $\Var S$ is at least the right side of the above inequality.

\end{proofof}

\begin{proofof}{(\ref{eq.two_balanced})}
    Partition $\{1,\dots,n'\}$ into sets $A_I$, $B_I$ and $C_I$ depending
    on which of the sets $A$, $B$ and $C$ the $j$th element of $\alpha''$ (or $\alpha'$) originally belonged to:
    let $\bar{I} := \{i_1,i_2,\dots, i_{n'}\}$ and
    let 
    $A_I := \{j: i_j \in A\}$, $B_I := \{j: i_j \in B\}$ and $C_I := \{j: i_j \in C\}$.
    Let $a = |A_I|$ and $b=|A_I| + |B_I|$. 
    Note that by our construction $\alpha_{A_I \cup B_I}'$ and $\alpha_{A_I \cup B_I}''$ are strongly balanced. 

 
    We will use respectively Lemma~\ref{lem.balanced_continuous}, Lemma~\ref{lem.midsize_alpha_continuity}
    and Lemma~\ref{lem.balanced_continuity_large} for each part.
    We can apply each lemma to one of the parts $A_I$, $B_I$ or $C_I$ at a time, replacing
    each coefficent $\alpha_i'$ in that part to the coefficient $\alpha_i''$,
    while the other two parts are held fixed.
    Let us fix one particular permutation of these applications that leads from $\alpha'$ to $\alpha''$, for example
    \[
        \alpha' \xrightarrow{\mbox{Lemma~\ref{lem.balanced_continuous}}} \alpha^{(1)}  \xrightarrow{\mbox{Lemma~\ref{lem.midsize_alpha_continuity}}} \alpha^{(2)} \xrightarrow{\mbox{Lemma~\ref{lem.balanced_continuity_large}}} \alpha''.
    \]

Specifically, if $A_I \ne \emptyset$, by Lemma~\ref{lem.balanced_continuous} 
\[
    \frac {\tbal(\alpha_1'', \dots, \alpha_a'', \alpha_{a+1}', \dots, \alpha_{n'}')} 
    {\tbal(\alpha_1', \dots, \alpha_{n'}')} \le \prod_{i \in A_I} (1 + \Cbalancedcontinuous h_i)
\]
    where $h_i = \alpha_i'' - \alpha_i'$ and we assume $A_I = \{1, \dots, a\}$. 
    As $1 \ge \alpha_1' \ge \dots \ge \alpha_a' \ge 1-\epsilon_1$ and by the definition for $i \in A_I$
    \[
        \alpha_i'' - \alpha_i' = 
        \begin{cases} 
            &\alpha_i - \alpha_{i+1}, \mbox{if $i$ is odd}
            \\ & \alpha_{i-1} - \alpha_{i}, \mbox{if $i$ is even,}
        \end{cases}
    \]
    we have $\sum_{i \in A_I} h_i = (\alpha_1 - \alpha_2) + (\alpha_1 - \alpha_2) + (\alpha_3 - \alpha_4) + (\alpha_3 - \alpha_4) + \dots \le 2\sum_{i \in \{1,\dots,|A_I|-1\}} (\alpha_{i+1} - \alpha_{i}) = 2(\alpha_1 - \alpha_a) \le 2 \epsilon_1$.

    Since clearly $K > \Cbalancedcontinuousfourtimes$ or $\epsilon_1 < \frac 1 {\Cbalancedcontinuousfourtimes}$, we get using simple bounds
\begin{align*}
    \frac {\tbal(\alpha_1'', \dots, \alpha_a'', \alpha_{a+1}', \dots, \alpha_{n'}')}
    {\tbal(\alpha_1', \dots, \alpha_{n'}')} & \le \prod_{i \in A_I} (1 + \Cbalancedcontinuous h_i)
    \\ &\le
    e^{\Cbalancedcontinuous \sum_{i \in A_I} h_i} \le e^{\Cbalancedcontinuoustwice \epsilon_1} \le 1+ \Cbalancedcontinuousfourtimes \epsilon_1 \le (1- \Cbalancedcontinuousfourtimes \epsilon_1)^{-1}.
\end{align*}

If $B_I\ne \emptyset$, we can assume $B_I = \{a+1, \dots, b\}$. 
By Lemma~\ref{lem.midsize_alpha_continuity} as long as 
$K > \Cmidsizealphacontinuity^6$ 
\begin{align*}
    \frac {\tbal(\alpha_1'', \dots, \alpha_b'', \alpha_{b+1}', \dots, \alpha_{n'}')}
    {\tbal(\alpha_1'', \dots, \alpha_a'', \alpha_{a+1}', \dots, \alpha_{n'}')} 
    \le (1-\Cmidsizealphacontinuity K^{-\frac 1 6})^{-1} =  (1-\Cmidsizealphacontinuity \epsilon_1^{\frac 1 6})^{-1}.
\end{align*}

Finally, if $C_I \ne \emptyset$ we can assume $C_I = \{b+1, \dots, n'\}$. For $i \in C_I$ we have $\alpha_i'' \le \frac 1 {K-1}$ ($\le \frac 1 K$ if $K$ is odd).
Recal that sums of balanced sequences of standard extremal random variables are symmetric and log-concave, therefore they are unimodal. 
Applying Lemma~\ref{lem.balanced_continuity_large}, since clearly $K > \Cbalancedcontinuitylargeapp^5$ we get
\begin{align*}
    \frac {\tbal(\alpha_1'', \dots, \alpha_{n'}'')}
    {\tbal(\alpha_1'', \dots, \alpha_b'', \alpha_{b+1}', \dots, \alpha_{n'}')} 
    \le (1-  \Cbalancedcontinuitylarge 
    (K-1)^{-\frac 1 5})^{-1} 
    \le
    (1 -   \Cbalancedcontinuitylargeapp 
    \epsilon_1^{\frac 1 5})^{-1}.
\end{align*}

Multiplying the 3 inequalities, we obtain
\begin{align*}
    &\frac {\tbal(\alpha'')}
    {\tbal(\alpha')} \le 
    \frac 1 {(1- \Cbalancedcontinuousfourtimes \epsilon_1)   (1-\Cmidsizealphacontinuity \epsilon_1^{\frac 1 6})  (1 -   \Cbalancedcontinuitylargeapp 
    \epsilon_1^{\frac 1 5})}
\end{align*}
or
\begin{align*}
    &\frac {\tbal(\alpha')}
    {\tbal(\alpha'')} 
    \ge 1- \Cbalancedcontinuousfourtimes \epsilon_1 - \Cmidsizealphacontinuity \epsilon_1^{\frac 1 6} -   \Cbalancedcontinuitylargeapp 
    \epsilon_1^{\frac 1 5}
    \\ & \ge 1 - \Ctwobalanced 
    \epsilon_1^{\frac 1 6} 
    \\ & \ge 1 - \frac \delta 5.
\end{align*}
The last inequality holds, since $K \ge \CtwobalancedKbound \delta^{-6}$ implies that
$ \Ctwobalanced \epsilon_1^{\frac 1 6} \le \frac \delta 5$.
\end{proofof}

\medskip

\begin{proofof}{(\ref{eq.varianceok})}
    Let $k_i = \lfloor \alpha_i^{-1} \rfloor$.
    By Lemma~\ref{lem.variance}
    we have
    \[
      \Var Y_{\alpha_i} \ge  \frac {k_i^2 -1} {12}.
    \]
    By the properties
    shown in Lemma~\ref{lem.variance} we get that $f(x) = \Var Y_x$ for $Y_x \sim \nu_x$ satisfies
    $\partial_+ f(x) \ge -\frac {k_i (k_i+1) (k_i+2)} 6$ for any $x \in [\alpha_i,1)$
    and since $f$ is continuous and piecewise differentiable, $\Var Y_{\alpha_i + h} \ge \Var Y_{\alpha_i} -\frac {h k_i (k_i+1) (k_i+2)} 6$ for any $h \in [0,1-\alpha_i]$.
    For $i \in B$ we have $k_i \le K$, so if $k_i \ge 2$ we have 
    \begin{align*}
        & \Var Y_{\beta''_i} \ge \Var Y_{\alpha_i}  - \frac {\epsgrid k_i (k_i+1) (k_i+2)} {6} 
        \\ &\ge \Var Y_{\alpha_i} (1 - \frac {12 \epsgrid k_i (k_i+1) (k_i+2)} {6 (k_i^2 - 1)})
        \\& \ge \Var Y_{\alpha_i} (1 - \frac {2 k_i(k_i+2)} {K^2(k_i-1)})
        \ge \Var Y_{\alpha_i} (1 - \frac 8 K).
    \end{align*}
    Similarly if $i \in B$ and $k_i=1$ then by Lemma~\ref{lem.variance} $\partial_+ f(\alpha_i) \ge -1$,
 $\Var Y_{\alpha_i} \ge \frac 1 4 \epsilon_1 (1-\epsilon_1) \ge \frac 1 8 \epsilon_1$ and
    \begin{align*}
        & \Var Y_{\beta''_i} \ge \Var Y_{\alpha_i} - \epsgrid \ge  \Var Y_{\alpha_i} (1-  \frac {8\epsgrid} {\epsilon_1}) 
        = \Var Y_{\alpha_i} (1-  \frac 8 K). 
    \end{align*}
    If $i \in A$ then $\alpha_i \ge \frac 1 2$ and  writing $h_i = \beta_i'' - \alpha_i$ we have
    \begin{align*}
        & \Var Y_{\alpha_i} -  \Var Y_{\beta_i''} = \frac 1 4 (\alpha_i (1-\alpha_i) - (\alpha_i + h_i) (1-\alpha_i - h_i)) 
        \\ & 
        = h_i \frac {2\alpha_i - 1} 4 + \frac{h_i^2} 4 \le  \frac {h_i} 2.
    \end{align*}
    Therefore using the definition of $\beta''$
    \[
        \sum_{i \in A} \Var Y_{\alpha_i}  -   \sum_{i \in A} \Var Y_{\beta_i''} \le
        \frac 1 2 (\sum_{i\in A} h_i) \le \frac {\epsilon_1} 2 \le \epsilon_1.
    \]
    Finally, if $i \in C$ then for some odd $t_i$, $t_i \ge K$
    \begin{align*}
        & \frac {\Var Y_{\beta''_i}} {  \Var Y_{\alpha_i}} \ge \frac {(t_i-2)^2 - 1} {t_i^2 - 1} 
        = \frac {t_i - 3} {t_i + 1}
        = 1 - \frac 4 {t_i + 1} \ge 1 - \frac 4 K.
    \end{align*}
    Applying the above bounds to $V_{\beta''} = \sum_{i=1}^n \Var Y_{\beta''_i}$ we get
    \[
        V_{\beta''} \ge  {V_\alpha} (1 - \frac 8 K) - \epsilon_1.
    \]
    Similarly as in the proof of (\ref{eq.tse_alpha_approx}), since $|\{1,\dots,n\} \setminus I| \le K^2 + 1$
    and for $i \in I$ we have $\Var X_i \le K^2$, we get $V_{\beta''} - V_{\alpha''} \le  2K^4$. 
    Thus, as long as $2 K^4 \le \frac 1 K V_{\alpha}$ or $V_\alpha \ge 2 K^5$,
    \[
        V_{\alpha''} \ge V_{\beta''} - 2 K^4 \ge {V_\alpha}  (1 - \frac 8 K) - \epsilon_1 - 2 K^4
        \ge V_\alpha (1-\frac {10} K).
    \]

\end{proofof}

\end{proofof}

\bigskip

\begin{proofof}{Corollary~\ref{cor.general}}.
In the proof of his Lemma~3, Ushakov~\cite{ushakov} shows the following.

Let $\mathbb{H}$ be a separable Hilbert space (over the reals). 
Let $X_1, \dots, X_n$ be independent random elements taking values in $\mathbb{H}$.
Then there exist independent integer random variables $Z_1, \dots, Z_n$
such that $\conc(Z_1) = \conc(X_i)$ 
for $i \in \{1, \dots, n\}$ and 
$\conc(X_1 + \dots + X_n) \le \conc(Z_1 + \dots + Z_n)$.

We note that the statement of Lemma~3 in \cite{ushakov} 
includes a uniform bound
assumption $\conc(X_1)$, $\dots$, $\conc(X_n) \le \alpha \in (0,1]$,
however, the actually proved claim mentioned above allows to replace it with a weaker assumption
$\conc(X_i) \le \alpha_i \in (0,1]$ for each $i$. 


Combining this with Theorem~\ref{thm.tse} completes the proof.
\end{proofof}

\section{Part II: Proving Lemma~\ref{lem.less_peaked_medium}}
\label{sec.nontrivial}

In this part we prove Lemma~\ref{lem.less_peaked_medium} stated in Section~\ref{sec.combining} of Part I. Recall that the notation that is used to state the lemma is defined in Section~\ref{sec.definitions} and Section~\ref{subsec.epsilon_less_peaked}.

For a matrix $A$, $A \in \mathbb{R}^{m \times n}$ we will denote by $\sigma_1(A), \dots, \sigma_{\min(m,n)}(A)$
its singular values sorted in the 
non-increasing
order. If $m=n$, we will denote by $\lambda_1(A), \dots, \lambda_{n}(A)$
the eigenvalues of $A$, ordered 
non-increasingly by their magnitude.

By $a \pm b$ we will mean the interval $[a-b, a+b]$.
For $x \in \mathbb{R}$, $x$ rounded to the nearest integer will be denoted by $\round{x}$. More precisely, we define
$\round{x} = \lfloor x + \frac 1 2 \rfloor$. If $z \in \mathbb{R}^d$ we define $\round{z} = (\round{z_1}, \dots, \round{z_d})$.
We use bold capital letters like $\bf X$ to denote random vectors in $\mathbb{R}^d$ with $d \ge 2$, while single-dimensional random variables are denoted by regular letters like $X$.

In Sections~\ref{sec.precise}--\ref{sec.odlyzko_richmond} we introduce the main results that will be used, and in Section~\ref{subsec.lpm} we provide the proof of Lemma~\ref{lem.less_peaked_medium}.

\subsection {Approximation by discretized multivariate normal distribution}
\label{sec.precise}

Recall that the total variation distance $d_{TV}({\bf X}, {\bf Y})$
between two random vectors in $\mathbb{R}^d$ is defined as $\sup_A |\pr ({\bf X} \in A) - \pr({\bf Y} \in A)|$, where $A$
ranges over the Borel sets of $\mathbb{R}^d$. If ${\bf X}$ and ${\bf Y}$ are discrete with values in $S$, then
$d_{TV}({\bf X},{\bf Y}) = \sum_{x \in S} \frac 1 2 |\pr({\bf X} = x) - \pr({\bf Y} =x)|$.

In this section we state a result which
says that the distribution of a sum of bounded iid random vectors with values in $\mathbb{Z}^d$
tends in \textit{total variation} to what one would expect from the Central Limit Theorem: an appropriate
discretized multivariate normal random distribution.

Even though this type of CLT seems to be quite a fundamental property of
lattice random vectors and the topic has more than a solid body of
literature from various schools, an accessible theorem is surprisingly
difficult to locate.

Our first attempt to do this was using the book of Bhattacharya and Ranga Rao \cite{bhattacharyarao2010}, see Corollary~22.3 on p. 237 which is loosely based on a multivariate local limit theorem by Bikelis~\cite{bikelis1969} proved using 
the so-called
Edgeworth expansion method. While these results give extremely precise asymptotics, 
their statement is a bit technical and the edge case that concerns convergence in total variation with rate $o(1)$ is included only in  \cite{bhattacharyarao2010} without a full proof. 


Therefore we will use the following much more recent result proved using Stein's method, 
which we restate here in a bit more self-contained form and a slightly adapted notation.
\begin{theorem}[Theorem~4.2 of Barbour, Luczak and Xia~\cite{barbourluczakxia2018}]
    \label{thm.llt_stein}
    Let ${\bf Y}_i$, $1 \le i \le m$ be independent $\mathbb{Z}^d$-valued vectors with
    means $\mu_i$ and covariance matrices $\Sigma_i$. Let $\mu = \sum_{i=1}^m \mu_i$ and
    $\Sigma = \sum_{i=1}^m \Sigma_i$.
    Let $\gamma_i=\E \|{\bf Y}_i -\mu_i\|^3$.
    Define $u_i= \min_{1 \le j \le d} \{1-d_{TV}({\bf Y}_i, {\bf Y}_i+e^{(j)})\}$, where $e^{(j)} \in \mathbb{Z}^d$
    is the $j$th coordinate vector. 
    Let ${\bf S}_m = {\bf Y}_1 + \dots + {\bf Y}_m$, let ${\bf X} \sim N(\mu, \Sigma)$ and let $\round{\bf X}$ be
    $\bf X$ rounded to the nearest integer vector, that is, for any $x \in \mathbb{Z}^d$, $\pr(\round{\bf X} = x) = \pr({\bf X} \in x + [-\frac 1 2, \frac 1 2)^d)$.
    Then
    \[
        d_{TV}({\bf S}_m, \round{\bf X}) \le C d^{\frac 7 2} \ln m (L + (\frac d m)^{\frac 1 2}) \sqrt{\frac m {\tilde{s}_m}}
    \]
    where 
    \[
        L = \frac {m^{- \frac 1 2} \chi} {{\rm Tr}(\frac 2 m \Sigma)^{\frac 3 2}},
    \]
    \[
        \chi = \E \|{\boldsymbol \xi}\|^3, \quad \quad \tilde{s}_m = \sum_{i=1}^m u_i - \max_{i \in \{1, \dots, m\}} u_i,
    \]
    ${\boldsymbol \xi} = {\bf Y}_K' - {\bf Y}_K$ for 
    ${\bf Y}_i'$ an independent copy of ${\bf Y}_i$ for $1 \le i \le m$, $K$ is a uniformly random element of $\{1, \dots, m\}$ such that ${\bf Y}_1, \dots, {\bf Y}_m, {\bf Y}_1', \dots, {\bf Y}_m', K$ are independent and we assume that $\tilde{s}_m > 0$. 
    Furthermore, $C$ depends only on $\lambda_{min}(m^{-1}\Sigma)$,  $\lambda_{max}(m^{-1}\Sigma)$ and  $d^{-1}{\rm Tr}(m^{-1}\Sigma)$.
\end{theorem}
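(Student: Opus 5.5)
This is Theorem~4.2 of~\cite{barbourluczakxia2018}, and in the paper it is only quoted. If I had to reprove it, I would follow the Stein-method route its form suggests: a smooth-metric CLT, combined with a discrete smoothness estimate for ${\bf S}_m$ that comes from the quantities $u_i$.

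\emph{Step 1: smooth approximation.} Centre so that $\mu=0$ and work with the Ornstein--Uhlenbeck Stein equation for $N(0,\Sigma)$:
\[
{\rm Tr}(\Sigma \nabla^2 g)(w)-w\cdot\nabla g(w)=h(w)-\E h({\bf X}).
\]
Its solution has the semigroup representation $g=-\int_0^\infty (P_t h-\E h({\bf X}))\,dt$. For test functions $h$ that are lattice-smoothed indicators, the third derivatives of $g$ blow up like $t^{-1/2}$ at small $t$. I would handle this by cutting the integral at $t\approx 1/m$: the piece below the cutoff is controlled by the smoothness of ${\bf S}_m$ from Step~2, and integrating across the intermediate range produces the factor $\ln m$. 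For the main term I would use the exchangeable pair obtained by replacing ${\bf Y}_K$ by ${\bf Y}_K'$ with $K$ uniform. This is exactly why $\chi=\E\|{\boldsymbol\xi}\|^3$ and the normalisation ${\rm Tr}(\frac 2m\Sigma)^{3/2}$ appear, so the Taylor remainder is of order $L$. The constant $C$ inherits its dependence on $\lambda_{\min}(m^{-1}\Sigma)$, $\lambda_{\max}(m^{-1}\Sigma)$ and $d^{-1}{\rm Tr}(m^{-1}\Sigma)$ from the derivative bounds on $g$. The $(d/m)^{1/2}$ term accounts for the discretisation error from rounding ${\bf X}$.

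\emph{Step 2: discrete smoothness.} I would show that for each coordinate direction,
\[
d_{TV}({\bf S}_m,{\bf S}_m+e^{(j)})\le C\,\tilde s_m^{-1/2}.
\]
Each ${\bf Y}_i$ overlaps its shift by $e^{(j)}$ with mass at least $u_i$, so it can be written as a mixture containing a component ${\bf Z}_i+\varepsilon_i e^{(j)}$, where $\varepsilon_i\sim{\rm Bernoulli}(1/2)$ and the mixing weight is at least $u_i$. Summing these independent lazy steps gives a Binomial-type component, and a Mineka coupling bounds the shift distance by $O(1/\sqrt{\sum u_i})$. The maximal $u_i$ is subtracted so that the bound still holds after removing a single summand, which the exchangeable-pair argument requires.

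\emph{Step 3: from smooth distance to total variation.} I would write $\pr({\bf S}_m\in A)-\pr(\round{\bf X}\in A)$ using a test function smoothed over a lattice box. The smoothing error is then controlled by repeated unit shifts, one for each of the $d$ coordinates, via Step~2. The factors of $d$ accumulated across coordinates and from the norms of the Stein solutions combine into $d^{7/2}$, and the smoothness factor contributes $\sqrt{m/\tilde s_m}$.

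I expect Step~3, together with the small-$t$ truncation in Step~1, to be the main obstacle. The difficulty is making the smoothness bound feed into the Stein remainder without losing more than a $\ln m$ factor, and keeping the $d$-dependence explicit. My first attempt would be a direct estimate of the third difference of $\E g({\bf S}_m+\cdot)$ through the Step~2 coupling, rather than trying to bound derivatives of $g$ uniformly.
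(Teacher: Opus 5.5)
Your opening sentence is the paper's entire treatment. Theorem~\ref{thm.llt_stein} is quoted from \cite{barbourluczakxia2018} without proof, and the paper only checks that its hypotheses hold where it is applied: full-rank covariance and $\tilde s_m>0$. It gets these by passing to the minimal lattice $\mathbb{Z}^{r'}$ and grouping $D$ summands so that ${\bf 0}$ and every $e^{(j)}$ are atoms. It also works around its uncertainty about how $C$ depends on the spectral parameters by a finiteness argument.

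In fact the application needs much less than the quoted bound. In the proof of Lemma~\ref{lem.less_peaked_medium} the grouped summands ${\bf Z}_i''$ are i.i.d.\ and bounded, their law comes from a finite list, and ${\bf 0},e^{(1)},\dots,e^{(r')}$ are among their atoms. Moreover, every error only has to be small compared with $\conc_k(S')\ge\sqrt{12}\,c_\epsilon/7$, which is a constant. So the classical uniform local limit theorem for i.i.d.\ lattice vectors, together with a Chebyshev tail bound, already gives $d_{TV}({\bf S}'',\round{{\bf Z}})\to0$ uniformly over that list. A reproof of the quantitative statement is not needed.

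Your reproof sketch takes a different route from Barbour, Luczak and Xia. They stay on the lattice throughout: their Stein operator is the generator of a Markov jump process on $\mathbb{Z}^d$, and differences of its solution are bounded by couplings. You instead use the continuous Ornstein--Uhlenbeck equation. Your Step~2 is correct and is exactly how $u_i$ and $\tilde s_m$ enter. A component ${\bf Z}_i+\varepsilon_i e^{(j)}$ of weight at least $u_i$ can indeed be split off. The binomial (Mineka) coupling then gives $d_{TV}({\bf S}_m,{\bf S}_m+e^{(j)})=O((\sum_i u_i)^{-1/2})$, and the leave-one-out version accounts for the subtraction of $\max_i u_i$.

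As a proof, however, the sketch omits the part that carries the theorem. Step~1 is internally inconsistent. A $t^{-1/2}$ singularity is integrable, so it needs no cutoff and produces no $\ln m$; a logarithm from a cutoff at $t\approx 1/m$ requires a $t^{-1}$ singularity.

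The actual mechanism runs as follows.
\begin{itemize}
\item Rounding is free, because $\pr(\round{{\bf X}}\in A)=\pr({\bf X}\in A+[-\frac12,\frac12)^d)$ exactly. So the test functions are (smoothed) indicators of unions of unit cubes.
\item For these, $\|\nabla^3 g\|_\infty$ is of order at least $1/m$, already for a parity-type set. The exchangeable-pair remainder, which is $m\chi$ times an average of $\nabla^3 g$, is therefore only $O(\chi)$ up to logarithms, and this does not decay.
\item One has to trade one derivative for a lattice difference of the law of the sum with one summand removed, which gains $\tilde s_m^{-1/2}$.
\item The two derivatives left on the Gaussian kernel contribute $(mt)^{-1}$ for $1/m\le t\le 1$. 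Integrating this is where $\ln m$ comes from, giving $\chi\,\tilde s_m^{-1/2}\ln m$. This matches $L\sqrt{m/\tilde s_m}\,\ln m$ up to the trace normalisation.
\end{itemize}
Making the derivative-for-difference trade rigorous is delicate, because the remainder is evaluated at the non-lattice points ${\bf S}_m+\theta\boldsymbol\xi$, so derivatives cannot simply be replaced by differences. That is your unresolved Step~3.

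Likewise, the factor $d^{7/2}$ and the claimed dependence of $C$ are asserted rather than derived. So is the $(d/m)^{1/2}$ term, and your attribution of it to rounding conflicts with the exact identity above.
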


While it is not stated explicitly in \cite{barbourluczakxia2018}, we can see that the
quantities in this theorem are well defined only when $\tilde{s}_m > 0$ and the rank of $\Sigma$ is $d$.

\subsection{Projections of a smooth multivariate distribution}
\label{sec.projections}

Another result that we will need in the proof of Lemma~\ref{lem.less_peaked_medium} is the following.

\begin{lemma}\label{lem.proj_coeffs}
    For any integer $d$, $d \ge 2$ 
    and a positive $\gamma$
    there
    are positive reals $\delta_0 =\delta_0(d, \gamma)$,
    $R_1=R_1(d,\gamma)$
    and an integer 
    $L_0 = L_0(d, \gamma)$
    such that the following holds.

    Let ${\bf X}$ be a random vector in $\mathbb{R}^d$ and let $\round{{\bf X}}$ 
    be
    obtained by rounding each component of ${\bf X}$ to the nearest integer. 

    Let $q = \sup_{e: \|e\|=1} \conc(\langle e, {\bf X} \rangle, 1)$, that is $q$ is
    the supremum of the mass of ${\bf X}$ concentrated (strictly) between any two hyperplanes at distance one from each other.


    Let $R_0 >0$ and suppose that in the open ball $\{x: \|x\| < R_0\}$ the random vector ${\bf X}$ has
    a positive and differentiable density $f$ (that is $\pr({\bf X} \in A) = \int_{A} f(x) d x_1 \dots d x_d > 0$ for any Borel set $A \subseteq \{x: \|x\| < R_0\}$) such that
    \begin{equation}\label{eq.delta0_gradient}
        \sup_{ \|x\| < R_0 } \|\nabla \ln f(x) \| \le \delta_0
    \end{equation}
    Then for any vector $a \in \mathbb{R}^d$ either
    \begin{equation}\label{eq.proj_conc}
        \conc(\langle a, \round{{\bf X}} \rangle) = \max_{x \in \mathbb{Z}} \pr(\langle a, \round{{\bf X}} \rangle = x) \le \gamma q + \pr\left(\|{\bf X}\| \ge R_0 - R_1\right)
    \end{equation}
    or there is $\kappa \in \mathbb{R}$, $\kappa \ne 0$ such that $\kappa a \in \{-L_0, \dots, L_0\}^d$. 
\end{lemma}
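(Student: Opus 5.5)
The plan is to split according to whether $a$ is ``close to a short integer direction'' or not, and in the generic case to reduce the lattice count to a slab-type concentration of ${\bf X}$. Write $\round{{\bf X}} = {\bf X} + {\bf E}$ with ${\bf E} \in [-\frac12,\frac12)^d$ the rounding error. Normalize $a$ so that $\|a\|$ is fixed; the event $\langle a, \round{{\bf X}}\rangle = x$ is a union over lattice points $z \in \mathbb{Z}^d$ lying on the hyperplane $H_x = \{z : \langle a,z\rangle = x\}$ of the cubes $z + [-\frac12,\frac12)^d$. The key point is that if $a$ is not proportional to a short integer vector, then the lattice points on $H_x$ are sparse inside any bounded region. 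More precisely, their cubes occupy only a small fraction of a thin slab around $H_x$.

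\textbf{Step 1 (Diophantine dichotomy).} By Dirichlet/pigeonhole and compactness, I would show the following. For every $\gamma$ and $d$ there are $L_0$ and $R_1$ such that if no multiple $\kappa a$ lies in $\{-L_0,\dots,L_0\}^d$, then in every ball of radius $R_1$ the number of lattice points on any single hyperplane $H_x$ is at most $\eta R_1^{d-1}$ for a prescribed small $\eta=\eta(d,\gamma)$. The intuition is that a hyperplane containing many lattice points in a ball of radius $R_1$ contains $d-1$ independent integer vectors of length $O(R_1)$, so its normal is proportional to their generalized cross product, an integer vector of norm $O(R_1^{d-1})$. Choosing $L_0 \gg R_1^{d-1}$ rules this out. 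Conversely, if the points do not span $d-1$ dimensions, they lie in a lower-dimensional affine set, and their count in the ball is $O(R_1^{d-2}) \ll \eta R_1^{d-1}$ once $R_1$ is large.

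\textbf{Step 2 (local comparison with slabs).} Cover $\{\|x\| < R_0 - R_1\}$, whose complement contributes at most $\pr(\|{\bf X}\|\ge R_0-R_1)$, by balls $B$ of radius $R_1$. By (\ref{eq.delta0_gradient}), on each such ball $f$ varies by a factor at most $e^{2\delta_0 R_1}$, which is $\le 2$ if $\delta_0 \le (2R_1)^{-1}\ln 2$. Hence $\pr(\round{{\bf X}}\in H_x, {\bf X}\in B)$ is at most $2\max_B f$ times the number of lattice points of $H_x$ near $B$, which is at most $2\eta R_1^{d-1}\max_B f$. The mass of $B$ in a slab of width $1$ through the centre of $B$ parallel to $H_x$ is at least $c_d R_1^{d-1} \min_B f \ge c_d R_1^{d-1}\max_B f/2$. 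So on each ball the lattice contribution is at most $(4\eta/c_d)$ times the slab mass.

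\textbf{Step 3 (summing).} Summing over a suitable family of balls, I would use a bounded-overlap tiling by cubes of side $\sim R_1$, intersecting these cells with the same unit-width slab $\{y: |\langle \hat a, y\rangle - x/\|a\|| < \frac12 + \sqrt d\}$. This gives $\pr(\langle a,\round{\bf X}\rangle = x, \|{\bf X}\|<R_0-R_1) \le C_d\eta\cdot \conc(\langle \hat a,{\bf X}\rangle, 1+2\sqrt d) \le C_d'\eta\, q$. Choosing $\eta$ so that $C_d'\eta \le \gamma$ yields (\ref{eq.proj_conc}). The slab has to be slightly fattened because of rounding, and the cost is covering it by $O(\sqrt d)$ unit slabs.

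The main obstacle is Step 1, namely making the quantitative ``few lattice points on a hyperplane unless its normal is a short integer vector'' statement uniform in the position $x$ and the ball centre. I would handle it via the spanning/cross-product argument above, with care in the degenerate case where the lattice points in the ball span an affine subspace of dimension less than $d-1$.
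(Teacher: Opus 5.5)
Your proof is correct in outline, and its Diophantine step takes a genuinely different route from the paper. The paper fixes an odd $N_0\ge\max(8d,1/\epsilon)$ and splits according to whether some discrete cube $A_z$ of side $N_0$ contains more than $\epsilon N_0^{d-1}$ integer solutions of $\langle a,x\rangle=x_0$. If one does, pigeonholing the projections onto $I\setminus\{i,j\}$ gives two solutions whose restrictions to $I$ differ only in coordinates $i$ and $j$. Hence every ratio $a_j/a_{i_0}$ is a rational with numerator and denominator below $N_0$, and clearing denominators gives an $L_0$ of size $e^{O(N_0)}$; the case of at most one nonzero coordinate is handled separately.

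Your version is already a complete proof as sketched, and needs no Dirichlet or compactness input. If $H_x$ has $d-1$ affinely independent lattice points in a ball of radius $R_1$, the generalized cross product and Hadamard's inequality give a nonzero integer multiple of $a$ with entries at most $(2R_1)^{d-1}$. Otherwise the points lie in an affine subspace of dimension at most $d-2$, and a packing bound gives at most $(2R_1+1)^{d-2}$ of them. This yields $L_0=O_d(\gamma^{-(d-1)})$ rather than exponential in $1/\gamma$.

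For the comparison step, the paper stays discrete. It tiles the slab around the hyperplane by $T'$-boxes aligned with $a$, and assigns each box at least $N_0^{d-1}$ lattice points (the roundings of its $T'$-cube centres). It then bounds $p_B/q_B$ via the log-Lipschitz condition and sums using disjointness of the $S_B$ inside a slab of half-width $\sqrt d$. You compare lattice counts with continuous slab volume directly, which avoids that bookkeeping.

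Step~3 as written needs repair. Suppose you compare each axis-parallel cube cell of side $\sim R_1$ with the slab mass inside that same cell. This fails for cells that the slab meets only near a corner: there the slab volume in the cell can be arbitrarily small, while the lattice contribution from the cell is a comparable fraction of it. So the cellwise ratio need not be small.

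Keep the geometry of Step~2 instead. Take balls of radius $\rho$ (say $\rho=R_1/2>\sqrt d$) centred at a maximal $\rho/2$-separated subset of $H_x\cap\{\|y\|\le R_0-R_1+\sqrt d/2\}$. Then:
\begin{itemize}
\item every ${\bf X}$ with $\|{\bf X}\|<R_0-R_1$ and $\round{{\bf X}}\in H_x$ lies within $\sqrt d/2$ of $H_x$, and hence in one of the balls;
\item all balls lie in $\{\|y\|<R_0\}$;
\item each ball meets the \emph{same} open unit-width slab around $H_x$ in volume at least $c_d\rho^{d-1}$;
\item each point lies in at most $5^{d-1}$ balls.
\end{itemize}
Applying Step~1 at radius $\rho+\sqrt d$ and summing gives (\ref{eq.proj_conc}). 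The slab mass is then bounded directly by $q$, so no fattening of the slab is needed.
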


\begin{proof}
%
    Fix $a \in \mathbb{R}^d$. Define $I =\{i: a_i \ne 0\}$ and $d':= |I|$. It suffices to consider the case $d' \ge 2$, as the second alternative in the conclusion is trivially
    true in the case $d'=0$ or $d'=1$.

    Assume $x_0 \in \mathbb{R}$ maximizes the left side of the inequality in (\ref{eq.proj_conc}). 
    We first use an argument\footnote{This argument can also be seen as a simple  inverse Littlewood--Offord result for fixed length sequences.} from \cite{borisov}
    to bound the density of solutions $x \in \mathbb{Z}^d$ to
    \begin{equation}\label{eq.dioph1}
        \langle a, x \rangle = x_0.
    \end{equation}
    Let $\epsilon$ be a positive small constant that depends just on $\gamma$ and $d$ to be specified later.
    Let $N_0$ be the smallest odd integer that $N_0 \ge 8 d$ and $N_0^{d'-2} \le \epsilon N_0^{d'-1}$ (equivalently, $N_0 \ge \frac 1 \epsilon$). 

    For $z\in\mathbb{Z}^d$ 
    define a hypercube $A_{z} = z + \{-\frac {N_0-1} 2, \dots, \frac {N_0-1} 2\}^{d}$.
    Suppose 
    $A_{z}$ has more than $\epsilon N_0^{d-1}$ solutions to
    (\ref{eq.dioph1}).

    For $x \in \mathbb{R}^d$, $k \in \{1,\dots,d\}$
    and $A = \{a_1, \dots, a_k\} \subseteq \{1, \dots, d\}$, $a_1 < \dots < a_k$ let $x_A$ denote the vector $(x_{a_1}, \dots, x_{a_k})$. For $A=\emptyset$ let $x_A$ be the empty vector.

    Consider the set 
    $S_I: \{x_I: x \in A_z \mbox{ and $x$ is a solution of (\ref{eq.dioph1})} \}$.
    We have
    $|S_I| > \epsilon N_0^{d'-1}$, otherwise the total number of solutions
    would be at most $\epsilon N_0^{d'-1} N_0^{d-d'} \le \epsilon N_0^{d-1}$.



    Fix distinct $i, j \in I$. Write $I_{ij} = I \setminus \{i, j\}$. 
    Clearly $|S_{I_{i,j}}| \le |N_0^{d'-2}|$.
    Combining this with $|S_I| > \epsilon N_0^{d'-1} \ge N_0^{d'-2}$ and using the pigeonhole principle we get that there are two solutions $x', x'' \in A_{z}$ such that $x'_{I_{ij}}=x''_{I_{ij}}$ and $x_I' \ne x_I''$.


    Putting them into (\ref{eq.dioph1}) and subtracting we get $a_i (x_i' - x_i'') + a_j (x_j' - x_j'') = 0$.
    Using also that $a_i, a_j \ne 0$ we get $x_i' - x_i'', x_j'-x_j'' \in \{-N_0+1, \dots, N_0-1\} \setminus \{0\}$.

    Fixing $i_0 \in I$ and applying this for each pair $(i_0, j)$, $j \in I$, $j \ne i_0$, we get that each coefficient $a_j$, $j \in I \setminus {i_0}$ satisfies
    $a_j = \frac {q_j} {p_j} a_{i_0}$, $p_j, q_j \in \{-N_0+1, \dots, N_0-1\} \setminus \{0\}$,
    thus each coefficient $a_j$ multiplied by
    $\kappa = \frac {\mathrm{lcm}(2, \dots, N_0-1)} {a_{i_0}}$ 
    is an integer with absolute value at most 
   $(N_0 -1)!$. Alternatively one could use 
   \[
       \mathrm{lcm}(2, \dots, N_0-1) \le N_0^{|\{p \in \{2, \dots, N_0\}: p \text{ prime}\}|}=N_0^{O(\frac {N_0} {\ln N_0})}
   \]
   to show that $L_0$ can be chosen of order $e^{O(N_0)}$.

    It remains to consider the case where 
    each hypercube $A_z$ has at most $\epsilon N_0^{d-1}$ solutions to (\ref{eq.dioph1}).

    Recall the rounding notation $\round{z}$ for $z \in \mathbb{R}^d$ means that $\round{z_i} = \lfloor \frac {z_i + 1} 2 \rfloor$, $i \in \{1, \dots, d\}$.
    Fix an orthonormal basis $e_1', \dots, e_{d-1}'$ of the linear subspace associated with the hyperplane (\ref{eq.dioph1}). Fix some solution $t_0 \in \mathbb{R}^d$ to (\ref{eq.dioph1}), for example $t_0 = \frac {a x_0} {\|a\|^2}$. Consider the tiling $T$ of the hyperplane with hypercubes of side length $\sqrt{d}$ and aligned to $e_1', \dots, e_{d-1}'$ with centers
$x_C=t_0 + \sqrt{d}\sum_{j=1}^{d-1} i_j e_j'$ where $i_1, \dots, i_{d-1} \in \mathbb{Z}$. Note that $x_C$ belongs to the hyperplane in $\mathbb{R}^d$ defined also by (\ref{eq.dioph1}) but without restricting the coordinates of $x$ to integers. 
    Define a new tiling $T'$ in $\mathbb{R}^d$ by taking the Cartesian product of each hypercube in $T$ with the 
    orthogonal
    segment 
    $s = \{x e_d': -\frac {\sqrt d} 2 \le x \le \frac {\sqrt d} 2\}$ 
    where
    $e_d'=\frac {a} {\|a\|}$.  We call the (continuous, closed) hypercubes of this tiling \emph{$T'$-cubes}. Let $\mathcal{T}$ be their union, i.e., $\mathcal{T} = \{x \in \mathbb{R}^d, \langle a, x \rangle \in [x_0-\frac {\sqrt d} 2, x_0+\frac {\sqrt{d}} 2]\}$. 

    We will now construct a map $\mathbb{Z}^d \cap \mathcal{T} \to T'$.  
    Let $x_C$ be the center of a $T'$-cube $C$. We will first map the point $\round{x_C} \in \mathbb{Z}^{d}$ to $C$ for each $C \in T'$. 
    This is well defined: the distance of two points that round to the same  $z \in \mathbb{Z}^d$ must be less than $\sqrt d$ while the centers of two different $T'$-cubes are at least $\sqrt{d}$ apart. Also note that $\|x-\round{x}\| \le \frac {\sqrt d} 2$ so $\round{x_C} \in C$.
    For any $C \in T'$ we will also map each interior point of $C$ with integer coordinates to $C$: there is no ambiguity as the $T'$-cubes have pairwise disjoint interiors.
    Finally we map any other point of  $\mathbb{Z}^d \cap \mathcal{T}$ falling on the exterior of one or more $T'$-cubes to one of these $T'$-cubes arbitrarily.


    We can partition the $T'$-cubes to internally disjoint hyper-rectangles, which we call  \emph{$T'$-boxes},  of side lengths 
    \[
        \sqrt{d} N_0, \dots, \sqrt{d} N_0,\sqrt{d}
    \]
    aligned to $e_1', \dots, e_{d-1}'$ and $\frac{a}{\|a\|}$ respectively.
    That is, each $T'$-box is formed from a hyperrectangle of $N_0^{d-1}$ $T'$-cubes which has only one ``layer'' in the direction $a$.

    Now consider the tiling of $\mathbb{Z}^d$ by $A_z$, $z = (N_0 i_1, \dots, N_0 i_d)^T$, $i_1, \dots, i_d \in \mathbb{Z}$. Let us call the elements (discrete hypercubes) of this tiling \emph{$A$-cubes}. 
    Assume first that at least one
    $T'$-box contained in
    the interior of the closed ball $B_{R_0 - \frac {\sqrt{d}} 2}$ exists,
    and consider a particular such $T'$-box $B$. 
    We denote the set of all points in $B \cap \mathbb{Z}^d$ mapped to a $T'$-cube in $B$ by $S_B$.

Since 
for 
each center $x$ of a $T'$-cube belonging to $B$ we have that $\round{x} \in S_B$
and $\round{x}$ is mapped to a unique $T'$-cube of $B$,
    \[
        |S_B| \ge N_0^{d-1}.
    \]
    Let us upper bound the number of (integer) solutions
    of (\ref{eq.dioph1}) that belong to $S_B$. The diameter of $B$ is $\sqrt{(d-1) (\sqrt{d} N_0)^2 + (\sqrt{d})^2} = \sqrt{(d(d-1) N_0^2 + d} < d N_0$. Thus a circumscribed ball around $B$ is contained in a hypercube in $\mathbb{R}^d$ of side length $< d N_0$ aligned to the standard coordinate axes. This hypercube touches at most $\Clpmhip$
    $A$-cubes. By our assumption thus $S_B$ contains at most
    \[
        \epsilon \Clpmhip N_0^{d-1} 
    \]
    integer solutions to (\ref{eq.dioph1}). Let $q_B = \pr(\round{{\bf X}} \in S_B)$
    and let $p_B = \pr(\{\round{{\bf X}} \in S_B\} \cap \{\mbox{$\round {{\bf X}}$ is a solution}\})$.
    Then $q_B > 0$ since $f$ is positive in $B$ and
    \begin{align*}
        &\frac {p_B} {q_B} \le \frac {\max_{x\in S_B}{\pr(\round{{\bf X}} = x)}} {\min_{x\in S_B}{\pr(\round{{\bf X}} = x)}} \times \frac {|\{x \in S_B\} \cap \{\mbox{$x$ is a solution}\}|} {|S_B|}
        \\ & \le \epsilon \Clpmhip \frac {\max_{x\in S_B}{\pr(\round{{\bf X}} = x)}}  {\min_{x\in S_B}{\pr(\round{{\bf X}} = x)}}  . 
    \end{align*}
    We will bound the last ratio using our 
  ``log-Lipschitz continuity'' 
    condition (\ref{eq.delta0_gradient}).
  By the mean value theorem for a function $g$, differentiable in an open set containing the path  $\{x+th: \, t \in [0,1]\}$ we have
  \begin{equation}\label{eq.mvt}
      |g(x + h) - g(x)| \le \|h\| \max_{z \in \{x+th: \, t \in [0,1]\}} \|\nabla g(z)\|.
  \end{equation}
  Applying this for $g(x)=\ln f(x)$ and using our condition we get $f(x'') \le f(x') \exp(\delta_0 \|x'' - x'\|)$
  for any points $x', x''$ in $B + [-\frac 1 2,  \frac 1 2]^d$.
  The diameter of the last set is at most $d N_0 + \sqrt{d} \le d (N_0 + 1)$.

  It follows that for any $z', z'' \in S_B$,
  by integrating over 
  $z' + [-\frac 1 2,  \frac 1 2]^d$ and $z'' + [-\frac 1 2,  \frac 1 2]^d$ respectively,
  and using  (\ref{eq.delta0_gradient}) and (\ref{eq.mvt})
  we have
  \[
      \frac {\pr(\round{{\bf X}} = z'')} {\pr(\round{{\bf X}} = z')} \le \exp(\delta_0 d (N_0+1))
  \]
  so
  \[
      \frac {p_B} {q_B} \le \epsilon \Clpmhip \exp(\delta_0 d (N_0+1)).
  \]
  Let us now set $\epsilon = \frac \gamma {e \Clpmhip \lceil 2 \sqrt d \rceil}$,
  $N_0 = \lceil \lceil \max(8d, \epsilon^{-1}) \rceil \rceil$ (here we used the notation $\lceil\lceil x \rceil \rceil$ for the least odd integer at least $x$, which is also used on p. \pageref{p.ceilceil}) and 
  $\delta_0 =  \frac {1} {d (N_0+1)}$.
  With these parameters we get $p_B \le \frac { \gamma q_B} {\lceil 2\sqrt {d} \rceil}$. 

  Let $\mathcal{B}$ be the set of all $T'$-boxes
  contained in the interior of $B_{R_0-\frac{\sqrt{d}}2}$. Let $\mathcal{S}$ be the set of integer solutions to (\ref{eq.dioph1}).
  A $T'$-box $B$ that contains a point of distance at least $R_0-\frac{\sqrt{d}}2$ from the origin has all points
  of distance at least $R_0 - {\rm diam}(B) - \frac{\sqrt{d}}2 \ge R_0 - N_0 d -\frac{\sqrt{d}}2$ from the origin.
  Note that for a 1-dimensional random variable $Y$ we have $\conc(Y, t) \le \lceil t \rceil \conc(Y,1)$.
  All the $T'$-boxes are contained in $\{x: |\langle a, x \rangle - x_0| \le \frac {\sqrt{d}} 2\}$
  so any point that rounds to an integer vector in a $T'$-box is at distance at most $\frac {\sqrt{d}} 2 + \frac {\sqrt{d}} 2  =\sqrt{d}$ from the hyperplane in $\mathbb{R}^d$ defined by (\ref{eq.dioph1}).
  Therefore, regardless if $\mathcal{B}$ is empty or not, we have
  \begin{align*}
      &    \pr(\round{{\bf X}} \in \mathcal{S}) \le \sum_{B \in \mathcal{B}} p_B +   \pr(\|[{\bf X}]\| \ge R_0 - N_0 d - \frac{\sqrt{d}}2) 
      \\ & \le \sum_{B: B \in \mathcal{B}} \frac {\gamma} {\lceil 2 \sqrt d \rceil} q_B  +   \pr(\|{\bf X}\| \ge R_0 - N_0 d-\frac{\sqrt{d}}2 - \frac {\sqrt{d}} 2) 
      \\ & \le \frac {\gamma} {\lceil 2 \sqrt d \rceil} \pr(|\langle a, {\bf X} \rangle - x_0| \le \sqrt{d})   +   \pr(\|{\bf X}\| \ge R_0 - N_0 d - \sqrt{d}) 
      \\ & \le \gamma q  +  \pr(\|{\bf X}\| \ge R_0 - N_0 d - \sqrt{d}). 
  \end{align*}
  We can set $R_1 = N_0 d + \sqrt{d}$.
  This completes the proof.
\end{proof}

\subsection {Applying inverse Littlewood--Offord}
\label{sec.inverse}

In this section we introduce a theorem from the \emph{inverse Littlewood--Offord theory} and use it to derive one of the key lemmas necessary for the proof of Lemma~\ref{lem.less_peaked_medium}.

We adapt the next definition from \cite{taovu2010}. Let $G$ be an additive group. A symmetric generalized arithmetic progression in $G$, or a symmetric GAP, is a quadruplet ${\bf A} = (A, M, g, r)$, where the \emph{rank} $\operatorname{rank}({\bf A}) = r$ is a non-negative integer, the \emph{dimensions} $M = (M_1, \dots, M_r)$ are an $r$-tuple of positive reals, the \emph{generators} (steps) $g = (g_1, \dots, g_r)$ are an $r$-tuple of elements of $G$, and $A \subseteq G$ is the set
\[
    A = \left\{ \sum_{i=1}^r j_i g_i : j_i \in [-M_i, \dots, M_i] \cap \mathbb{Z} \quad \forall i \in \{1, \dots, r\} \right\},
\]
Like \cite{nguyenvu2011, taovu2010} we may abuse notation and write $A$ for ${\bf A}$.
For any $t > 0$, also following \cite{taovu2010}, 
we define the \emph{dilate} $t {\bf A}$ of ${\bf A}$ to be the GAP ${\bf A}_t := (A_t, t M, g, r)$ formed by dilating all the dimensions by $t$. 
We say that $\bf{A}$ is \emph{proper} if all the elements $j_1 g_1 + \dots + j_r g_r$, $j_i \in \{-\lfloor M_i\rfloor, \dots,  \lfloor M_i\rfloor\}$ $\forall i$, are distinct. 
We define the \emph{volume} of $\bf A$ to be $\operatorname{vol}({\bf A}) := \prod_{i=1}^r(2\lfloor M_i \rfloor + 1)$. Note that $|A| \le \operatorname{vol}({\bf A})$, with equality if and only if $\bf A$ is proper.

The next claim follows almost directly from the definition.
For two sets $A$ and $B$ from an additive group $G$ we denote their \emph{sumset} by $A+B = \{x+y: x \in A, y \in B\}$.
\begin{prop}\label{prop.gap_union}
    Let $A_1$ and $A_2$ be two symmetric GAPs
    of rank $r_1$ and $r_2$ respectively and volume $v_1$ and $v_2$ respectively.
    Then their sumset $A_1 + A_2$ can be represented by a symmetric GAP of rank $r_1 + r_2$ and volume $v_1 v_2$.
\end{prop}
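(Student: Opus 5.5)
The plan is to write down the natural concatenated GAP and check its rank and volume directly from the definitions. Write $A_1$ with dimensions $M = (M_1, \dots, M_{r_1})$ and generators $g = (g_1, \dots, g_{r_1})$. Write $A_2$ with dimensions $M' = (M_1', \dots, M_{r_2}')$ and generators $g' = (g_1', \dots, g_{r_2}')$. Define the candidate GAP
\[
{\bf A} = \bigl(A, (M_1, \dots, M_{r_1}, M_1', \dots, M_{r_2}'), (g_1, \dots, g_{r_1}, g_1', \dots, g_{r_2}'), r_1 + r_2\bigr),
\]
where $A$ is the set of all sums $\sum_{i=1}^{r_1} j_i g_i + \sum_{i=1}^{r_2} j_i' g_i'$ with $j_i \in [-M_i, M_i] \cap \mathbb{Z}$ and $j_i' \in [-M_i', M_i'] \cap \mathbb{Z}$. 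This is a symmetric GAP by definition, and its rank is $r_1 + r_2$.

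Next I check that $A = A_1 + A_2$ as sets. Any element of $A$ splits as $x + y$, where $x = \sum j_i g_i \in A_1$ and $y = \sum j_i' g_i' \in A_2$. Conversely, given $x \in A_1$ and $y \in A_2$, choose representations of each and concatenate the coefficient tuples; this exhibits $x + y$ as an element of $A$. The coefficient ranges are independent in the two blocks, so this is just the distributivity of the sumset over the defining sums.

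Finally, the volume follows immediately from the product formula:
\[
\operatorname{vol}({\bf A}) = \prod_{i=1}^{r_1} (2\lfloor M_i \rfloor + 1) \prod_{i=1}^{r_2} (2\lfloor M_i' \rfloor + 1) = v_1 v_2.
\]

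No step is a genuine obstacle. The only point that needs care is that properness is not claimed and is not needed. The representation is a GAP in the quadruplet sense, and its volume is defined formally via the dimensions, not via $|A|$. So $|A_1 + A_2|$ may be strictly smaller than $v_1 v_2$ without contradicting the statement.
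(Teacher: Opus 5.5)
Your proposal is correct and follows the same route as the paper: concatenate the generator and dimension tuples, observe that the resulting set is exactly $A_1 + A_2$, and read off rank $r_1 + r_2$ and volume $v_1 v_2$ from the product formula. Your remark that properness is neither claimed nor needed is accurate: volume is defined from the dimensions, so $|A_1+A_2|$ may be smaller than $v_1 v_2$.
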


\begin{proof}
   The sumset is exactly the symmetric GAP with generators obtained by concatenating the generator tuples of $A_1$ and $A_2$
   and dimensions obtained by concatenating the dimension tuples of $A_1$ and $A_2$.
\end{proof}

\medskip

We will need the following particular version of the inverse Littlewood--Offord theorem:

\begin{theorem}[Theorem~2.1 of Nguyen and Vu~\cite{nguyenvu2011}]
    \label{thm.nguyenvu2011}
Let $\varepsilon < 1$ and $C$ be positive constants. Let
    $V = \{v_1, \dots, v_n\}$ be a multiset from a torsion-free group $G$. Let
$X_1, \dots, X_n$ be independent Rademacher random variables $(\pr(X_i=1) = \pr(X_i=-1) = \frac 1 2)$.
Assume that
\[ \rho(V):=\conc(v_1 X_1 + \dots + v_n X_n) \ge n^{-C}.\]
Then, there exists a proper symmetric GAP $A$ of rank $r = O_{C,\varepsilon}(1)$ that contains all but at most $\varepsilon n$ elements of $V$ (counting multiplicity), where
    \[|A| = O_{C,\varepsilon} (\rho(V)^{-1} n^{-\frac r 2}).\]
\end{theorem}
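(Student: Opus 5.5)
Since this is Theorem~2.1 of~\cite{nguyenvu2011}, which the paper imports rather than proves, I would reproduce the Fourier-analytic argument of Nguyen and Vu. The plan is as follows.

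\emph{Reduction to a finite cyclic group.} The subgroup of $G$ generated by $v_1, \dots, v_n$ is finitely generated and torsion-free, hence isomorphic to $\mathbb{Z}^D$. A linear map $\mathbb{Z}^D \to \mathbb{Z}$ that is a Freiman isomorphism of sufficiently high order on a large box containing all $v_i$ reduces the problem to $G = \mathbb{Z}$. I would then embed into $\mathbb{Z}_p$ for a prime $p$ much larger than $\sum |v_i|$. This embedding preserves both $\rho(V)$ and GAP structure at the relevant scales, and it makes Fourier analysis exact.

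\emph{Level sets of the Fourier transform.} Write $\rho = \pr(\sum v_i X_i = x)$. By Fourier inversion and $|\cos(\pi z)| \le \exp(-c\|z\|^2)$, where $\|z\|$ is the distance to the nearest integer, we get
\[
\rho \le \frac 1 p \sum_{\xi \in \mathbb{Z}_p} \exp\Bigl(-c \sum_{i} \|v_i \xi / p\|^2\Bigr).
\]
For $m \ge 1$ define the level sets $T_m = \{\xi : \sum_i \|v_i\xi/p\|^2 \le m\}$. The hypothesis $\rho \ge n^{-C}$ together with a dyadic decomposition in $m$ gives some $m = O_C(\log n)$ with $|T_m| \gtrsim \rho\, p\, e^{cm/2}$. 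The triangle inequality in $\|\cdot\|$ gives $T_m + \dots + T_m$ ($k$ copies) $\subseteq T_{k^2 m}$, while trivially $|T_{k^2m}| \le p$. Hence iterated sumsets of $T_m$ grow only polynomially in $k$. Applying a Freiman-type inverse theorem (I would try the ``long range'' version used by Tao--Vu, or Green--Ruzsa in $\mathbb{Z}_p$) shows that $T_m$ is densely contained in a proper GAP $Q$ of bounded rank.

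\emph{Dualizing and counting.} I would select the $(1-\varepsilon)n$ indices $i$ for which $\sum_{\xi \in T_m}\|v_i\xi/p\|^2$ is smallest. By averaging, each such $v_i$ has small $\|v_i \xi/p\|$ for most $\xi \in Q$, and a duality lemma converts this into membership of $v_i$ in a dual proper symmetric GAP $A$ of rank $r = O_{C,\varepsilon}(1)$. The size bound $|A| = O(\rho^{-1} n^{-r/2})$ would come from comparing $|A|$ with $p/|Q|$ and inserting the lower bound on $|T_m|$. The factor $n^{-r/2}$ reflects the fact that $\sum v_i X_i$ spreads over about $\sqrt n$ steps in each of the $r$ directions, which is a Gaussian volume comparison.

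\emph{Main obstacle.} I expect the hardest step to be the Freiman/duality step, with exact bookkeeping of the exponent $r/2$ and of properness. Pure Freiman bounds lose too much, so I would first try the Nguyen--Vu trick of passing to a sub-multiset and replacing $T_m$ by its smoothed version $T_m \cap (-T_m)$ before dualizing.
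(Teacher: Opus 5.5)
The paper does not prove this statement; it imports it from Nguyen and Vu and uses it only as a black box in Lemma~\ref{lem.inv_lo}, so your sketch has to stand on its own. Your reduction to $\mathbb{Z}_p$, the level-set bound $|T_m|\gtrsim\rho p e^{cm/2}$ with $m=O_C(\log n)$, and the averaging step are fine. The Freiman step, however, fails.

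From $kT_m\subseteq T_{k^2m}$ and $|T_{k^2m}|\le p$ you only get $|kT_m|\le p\lesssim\rho^{-1}|T_m|$. This holds for \emph{every} subset of $\mathbb{Z}_p$ of density at least $\rho$, for instance a random one, and a random set lies in no bounded-rank GAP of size $O(|T_m|)$. So ``polynomial growth in $k$'' carries no information here:
the long-range inverse theorem is a torsion-free statement and is false in $\mathbb{Z}_p$, where $kT_m$ wraps around.
Freiman-type theorems in $\mathbb{Z}_p$ need $|2T_m|=O(|T_m|)$, whereas you only know $|2T_m|\le|T_{4m}|\le p$.

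The natural repair is to pigeonhole over the levels $T_{4^jm}$ until you find bounded doubling. This may land at a level $m'=n^{\Omega(1)}$. Dualizing there gives thresholds of order $\sqrt{m'/n}$ and at best a GAP of size about $\rho^{-1}(m'/n)^{r/2}$, which is a polynomial loss against $\rho^{-1}n^{-r/2}$.

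Your dualization step is also incomplete. Smallness of $\|v_i\xi/p\|$ on average over $T_m$ says nothing about ``most of $Q$'' unless $T_m$ fills almost all of $Q$. Also, $T_m$ is already symmetric, so $T_m\cap(-T_m)=T_m$ smooths nothing.

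The missing idea is to do the additive combinatorics on the physical side, on $V$, rather than on the level set.

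Let $V'$ be your sub-multiset of size at least $(1-\varepsilon)n$ with $\sum_{\xi\in T_m}\|v\xi/p\|^2\le\frac{m}{\varepsilon n}|T_m|$ for $v\in V'$. By the triangle inequality and Cauchy--Schwarz, every sum $a$ of at most $k=\lfloor\sqrt{\varepsilon n/(200m)}\rfloor$ elements of $V'$ has $\sum_{\xi\in T_m}\|a\xi/p\|^2\le|T_m|/200$.

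For such $a$, the inequality $\cos(2\pi x)\ge 1-2\pi^2\|x\|^2$ gives $\mathrm{Re}\sum_{\xi\in T_m}e^{2\pi i a\xi/p}\ge|T_m|/2$. Parseval then yields $|k(V'\cup\{0\})|\le 4p/|T_m|=O(\rho^{-1}e^{-cm/2})$. If you take your embeddings to be Freiman isomorphisms of order $n\ge k$, this bound also holds in $G$.

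Since $k\ge n^{1/2-o(1)}$ and $\rho^{-1}\le n^C$, the set $X=V'\cup\{0\}$ satisfies $|kX|\le k^{O(C)}|X|$ in a torsion-free group. The long-range inverse theorem (the tool Nguyen and Vu actually use) then puts $X$ in a proper symmetric GAP of rank $r=O(C)$ and size
$O(k^{-r}|kX|)=O_{C,\varepsilon}(\rho^{-1}n^{-r/2}m^{r/2}e^{-cm/2})=O_{C,\varepsilon}(\rho^{-1}n^{-r/2})$.

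So the exponent $r/2$ comes from the factor $k^{-r}$, with $m^{r/2}$ absorbed by $e^{-cm/2}$. It does not come from a Gaussian volume comparison.
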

In this paper we only use $G=\mathbb{R}$. We
will apply the theorem even in a simpler case where $V \subset \mathbb{Z}$,
however the generators of the resulting GAP may still be in $\mathbb{R}$ in general.

The next lemma is our main application. 

\begin{lemma}\label{lem.inv_lo}
    Let $N$ be a fixed positive integer, $c > 0$ and $\epsilon > 0$.
    Then there exist $r=r(N, c, \epsilon)$ and  $v=v(N, c, \epsilon)$ such that the following holds.
    
    For any sequence $X_1, \dots, X_n$ of independent random variables on $\mathbb{Z}$
    such that $\pr(X_i = j) = \frac {a_{i,j}} N$ where $a_{i,j} \in \mathbb{Z}$, 
    $\conc(X_i) = \max_{j} \frac{a_{i,j}} N \le \frac 1 2$
    and $\conc(X_1 + \dots + X_n) \ge \frac c {\sqrt n}$
    there
    exists a symmetric GAP ${\bf A}$
    of rank at most 
    $r$ and volume at most $v$
    such that all but at most $\epsilon n$ of random variables $X_i$
    have their support contained in ${\bf A}$.
\end{lemma}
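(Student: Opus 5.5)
Our plan is to reduce the statement to Theorem~\ref{thm.nguyenvu2011} by representing each $X_i$ as a mixture that contains a Rademacher step. Since $\conc(X_i) \le \frac 1 2$ and all probabilities are multiples of $\frac 1 N$, each $X_i$ has at least two atoms. More precisely, the $N$ ``balls'' of $X_i$ (each of mass $\frac 1 N$) can be perfectly matched into $N/2$ pairs with the two balls of every pair on distinct integers; this is possible exactly because no atom carries more than half of the mass. If $N$ is odd, we first replace $N$ by $2N$, which changes nothing.

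With such a matching, we write $X_i = c_i + w_i \xi_i$, where the following holds. We choose a uniformly random pair $\{x,y\}$ from the matching of $X_i$ and set $c_i = \frac{x+y}2$, $w_i = \frac{y-x}2 \ne 0$, and $\xi_i$ a Rademacher variable independent of everything else. Then $w_i \in \frac12\mathbb{Z}$ with $|w_i| \le \frac12 \operatorname{diam}\supp X_i$, and the pair $(c_i,w_i)$ takes at most $N/2$ values.

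Conditioning on all $(c_i,w_i)$, we get
\[
\frac c{\sqrt n} \le \conc\Big(\sum X_i\Big) \le \E\, \conc\Big(\sum_i w_i \xi_i\Big).
\]
Hence, for a family of choices of probability at least $\frac c{2\sqrt n}$, we have $\rho(w) \ge \frac c{2\sqrt n} \ge n^{-1}$ for large $n$; small $n$ are trivial by taking $\mathbf{A}$ to contain all supports, which gives bounded rank. For such a realization, Theorem~\ref{thm.nguyenvu2011} with $C=1$ and a small $\varepsilon'$ gives a proper symmetric GAP of rank $O(1)$ and size $O(\rho^{-1} n^{-r/2}) = O(1)$ containing all but $\varepsilon' n$ of the $w_i$.

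The main obstacle is that this controls only the widths $w_i$ of one random pair per variable, whereas we need the whole support of $X_i$ inside a bounded GAP. We intend to handle this by using all pairs. We replace $X_i$ by $X_i - X_i'$, where $X_i'$ is an independent copy, which is symmetric, and use $\conc(S-S') \ge \conc(S)^2$; this is not enough directly, since it gives only $c^2/n$. Instead, we apply the conditioning above with many different matchings. For each unordered pair of distinct support points $x,y$ of $X_i$ there is a valid matching using it, and every pair is selected with probability at least $\frac 1 N$.

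A Markov or averaging argument over the random choice then shows the following. For all but $\varepsilon n$ indices $i$, every difference $y-x$ of support points (doubled, so that it is an integer) lies in one of boundedly many GAPs $A_1,\dots,A_t$ obtained from Theorem~\ref{thm.nguyenvu2011} for $t = O_{N,\epsilon}(1)$ random realizations. Here $t$ is chosen so that with positive probability each pair of each good index is sampled at least once in a realization where the conclusion holds. Taking the sumset of $A_1,\dots,A_t$ via Proposition~\ref{prop.gap_union} gives a single GAP $A$ of bounded rank and volume containing all differences within $\supp X_i$.

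Finally, since the support of $X_i$ has at most $N$ points, we get $\supp X_i \subseteq x_i + N A$ for the shifts $x_i$. Working with the shifted variables $X_i - x_i$, which is harmless because the later application only uses supports up to translation via concentration, or alternatively adding one generator per shift class, we reach the claimed $\mathbf{A}$ with $r$ and $v$ depending only on $(N,c,\epsilon)$. The delicate point to verify is the union bound across realizations: the exceptional sets of size $\varepsilon' n$ from each application must be combined with $\varepsilon' = \epsilon/t$.
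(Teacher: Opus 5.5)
\textbf{There is a genuine gap} at the step where you pass from $\E\,\conc(\sum_i w_i\xi_i)\ge c/\sqrt n$ to ``good'' realizations. Using only $\rho\le 1$, you get that realizations with $\rho(w)\ge \frac{c}{2\sqrt n}$ have probability at least $\frac{c}{2\sqrt n}$, which tends to $0$. With only this bound, $t$ independent realizations may contain a good one with probability only about $tc/(2\sqrt n)$. Covering every pair of most indices would then force $t$ of order $\sqrt n$, and the sumset of the $A_j$ would have unbounded rank.

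The missing ingredient is Erd\H{o}s's upper bound. Since every $w_i\neq 0$, every realization satisfies $\rho(w)\le \binom{n}{\lfloor n/2\rfloor}2^{-n}\le n^{-1/2}$. Hence $c/\sqrt n\le \E\rho\le \pr(\rho\ge \frac{c}{2\sqrt n})\,n^{-1/2}+\frac{c}{2\sqrt n}$, so the good event has probability at least $c/2$, a constant. This is exactly how the paper obtains $\pr(G_W)\ge 2c/3$ in (\ref{eq.G_W2}).

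Two further steps need repair. First, it is not true that every pair of distinct support points occurs in some admissible matching. For masses $\frac12,\frac14,\frac14$ (with $N=4$), pairing the two $\frac14$-atoms leaves two balls on the same point. You only need the sampled pairs to form a connected graph on $\supp X_i$, which gives $\supp X_i\subseteq x_i+(N-1)A$. The paper arranges this via Lemma~\ref{lem.connected_decomposition} and spanning trees $T_i$.

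Second, your covering step with independent realizations needs a lower bound on $\pr(W_i=e \text{ and the realization is good})$, not only on each event separately. This does hold: for $T$ independent of a Rademacher $\xi$, $\frac12\conc(T)\le\conc(T+w\xi)\le\conc(T)$. So forcing one coordinate to a given pair costs at most a factor $2$ in $\rho$. After that, a union bound over pairs and Markov over indices work, together with discarding the union of the $t$ exceptional sets (taking $\varepsilon'=\epsilon/(2t)$).

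The paper sidesteps this correlation issue with a greedy process. At each step it picks one good configuration that also samples at least $\epsilon n/(4N)$ still-open tree edges (by Chernoff). It then applies Theorem~\ref{thm.nguyenvu2011} once, closing at least $\epsilon n/(8N)$ edges, so it stops after at most $16N^2/\epsilon$ steps.

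Your remark on shifts is right in spirit, but ``one generator per shift class'' would not give bounded rank. The statement genuinely needs a normalization such as $0\in\supp X_i$: for $X_i$ uniform on $\{2i,2i+1\}$, all hypotheses hold, but a GAP of volume $v$ contains at most $v/2$ of the supports. That normalized case is what the paper's argument actually proves; its closing remark about adding the generator $\sum_i m_i$ does not place the individual supports in the GAP. It is also the only case Lemma~\ref{lem.less_peaked_medium} uses, where $0\in\supp X_i$ is assumed before the lemma is invoked.
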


We will need the next simple lemma for the proof.

\begin{lemma}\label{lem.connected_decomposition}
    Let $\mu$ be a probability measure with support $V = \{x_1, \dots, x_k\} \subset \mathbb{R}$ and probabilities of the form $\mu(\{x_i\}) = \frac {a_i} N$, where $N, a_1, \dots, a_k \in \{1,2,\dots\}$. Suppose $N$ is even and $\conc(\mu) \le \frac 1 2$. Then $\mu$ can be represented as
    \[
        \mu = \sum_{i=1}^m w_i \mu_i; \quad w_i = \frac {a_i'} N > 0; \quad a_i' \in \{1,2,\dots\}
    \]
    where each $\mu_i$ is a uniform distribution on two distinct points in $V$
    and furthermore the graph $G = (V, E)$, where $E=\{\supp(\mu_i), i \in \{1, \dots, m\}\}$, is connected.
\end{lemma}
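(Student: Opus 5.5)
We prove Lemma~\ref{lem.connected_decomposition} by a greedy pairing argument on the multiset of ``mass units''. Multiply everything by $N$: the measure $N\mu$ assigns integer weight $a_i$ to $x_i$, the total weight is $N$ (even), and $\conc(\mu)\le\frac12$ means $a_i\le \frac N2$ for every $i$. A decomposition as required is the same as a multigraph on $V$ (loops forbidden) in which vertex $x_i$ has degree exactly $a_i$. Indeed, each edge $\{x,y\}$ with multiplicity $a'$ contributes $\frac{a'}{N}\cdot\frac12$ to each endpoint. Then we put $w=\frac{2a'}{N}$, so that $w$ is still of the form (positive integer)$/N$. More precisely, we seek a loopless multigraph with degree sequence $(a_1,\dots,a_k)$. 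Grouping parallel edges then gives the $\mu_i$ with weights $w_i=\frac{2\cdot\text{mult}}{N}$, and the sum of the weights is $\frac{2\cdot(N/2)}{N}=1$.

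\textbf{Existence of a loopless multigraph.} It is classical that a loopless multigraph with degree sequence $(a_i)$ exists iff $\sum a_i$ is even and $\max a_i\le \frac12\sum a_i$. Both conditions hold here. We prove the sufficiency directly by induction on $\sum a_i$. If $\sum a_i>0$, pick the two largest degrees $a_p\ge a_q$ (with $q\ne p$; this exists since $a_p\le \frac N2<N$). Join $x_p$ and $x_q$ by an edge and decrease both degrees by one. Parity is preserved. We check that the max condition survives: after the step the total is $N-2$. If some other $a_r$ were now greater than $\frac{N-2}{2}$, then $a_r\ge \frac N2$, hence $a_p,a_q\ge \frac N2$, and the total would be at least $\frac{3N}{2}>N$, a contradiction. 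The vertices $p,q$ themselves satisfy $a_p-1\le\frac{N-2}{2}$.

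\textbf{Connectivity.} This is the step needing care. Suppose the multigraph $G$ obtained is disconnected, with components $G_1,G_2$. If some component contains a cycle, or any edge of multiplicity $\ge2$, take such an edge $uv$ in $G_1$ and any edge $xy$ in $G_2$. Perform the standard 2-switch: replace $uv,xy$ by $ux,vy$. Degrees are preserved, no loops are created (the vertices are distinct, lying in different components), and the number of components decreases, because $uv$ was not a bridge, or the parallel copy keeps $u,v$ connected. If every component is a simple tree, then $G$ is a forest. Even then we can merge two components by a 2-switch as long as one of the two chosen edges is non-bridge. If all edges are bridges and we switch two tree edges $uv\in G_1$, $xy\in G_2$, the result $ux,vy$ reconnects $\{u\text{-side},x\text{-side}\}$ and $\{v\text{-side},y\text{-side}\}$. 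The number of components is unchanged but cycles do not appear. This forest case is the main obstacle. It is handled by counting: a forest on $k$ vertices with $c\ge2$ components has $\frac N2=k-c$ edges. But every $a_i\ge1$, and we may first choose the representation to have as few components as possible. In the forest case, one instead joins a leaf-edge in $G_1$ and an edge in $G_2$ differently. Honestly, a cleaner route I would try first is to build the graph connected from the start. Sort the vertices by $a_i$ decreasing, first place a spanning path or star-like tree using the $k-1$ edges, and then pair up the remaining degrees by the greedy argument above. This requires checking $\sum a_i\ge 2(k-1)$, which fails only in the degenerate cases where many $a_i=1$, and those cases are covered by a direct star construction centered at a high-degree vertex. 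Iterating the 2-switch reductions, or using this direct construction, yields a connected $G$, which completes the proof.
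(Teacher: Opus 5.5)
Your existence argument (greedily joining the two vertices of largest remaining degree) is correct. The connectivity step, however, has a genuine gap, and it cannot be closed inside your framework.

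By requiring $x_i$ to have degree exactly $a_i$, you force every edge to carry weight $2/N$. Hence every admissible multigraph has exactly $N/2$ edges. A connected graph on $k$ vertices needs at least $k-1$ edges, and $N/2<k-1$ is perfectly possible.

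Take $\mu$ uniform on four points, with $N=4$ and $a_i=1$. The only loopless multigraph with degree sequence $(1,1,1,1)$ is a perfect matching, which is disconnected. No sequence of 2-switches can fix this, and neither can a ``star construction'': a star would need a centre of degree $3$.

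This is also why your forest case resists every 2-switch. If some multigraph with these degrees is a forest with $c\ge2$ components, then $N/2=k-c<k-1$, so no connected multigraph with these degrees exists at all. Your hint ``every $a_i\ge1$'' only gives $N/2\ge k/2$, not $N/2\ge k-1$.

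Yet the lemma does hold for this $\mu$: take the 4-cycle with each edge of weight $\frac14$. This uses weights $a'/N$ with $a'$ odd, which your reformulation excludes.

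The missing idea is to use the full freedom $w_i=a_i'/N$ with $a_i'$ arbitrary, i.e.\ edges of weight $\frac1N$ that contribute $\frac1{2N}$ to each endpoint. The paper does this as follows.
\begin{enumerate}
\item Start from the decomposition obtained by pairing the $i$th and $(i+N/2)$th mass units in left-to-right order; all its weights have the form $2a'/N$.
\item In each component $C_j$, lower the weight of one edge $\{y_j,z_j\}$ by $\frac1N$.
\item Add weight-$\frac1N$ edges $\{y_l,z_{(l+1)\bmod t}\}$, which form a cycle through all components.
\end{enumerate}

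Alternatively, your own method works after doubling the degrees. Seek a loopless multigraph with degree sequence $(2a_1,\dots,2a_k)$ and give each edge weight $\frac1N$. It exists by your greedy argument, since $2a_i\le N=\frac12\sum_i 2a_i$, and it has $N\ge k$ edges. If it had $c\ge2$ components, they could not all be trees, since a forest would have only $k-c\le k-2<N$ edges. So some component contains a non-bridge edge, and your 2-switch with an edge of another component reduces the number of components. Every component has an edge, because all degrees are at least $2$. Iterating yields a connected $G$.
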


\begin{proof}
    We obtain the decomposition using the idea from Lemma~3.1 of Juškevičius~\cite{tj2024} and then show how to modify it so that the corresponding graph is connected. More precisely, we think of $\mu$ as $N$ units of mass $\frac 1 N$, where $a_i$ units are placed on the point $x_i$ of the real line. Let these units listed from left (smallest $x_i$) to right (largest $x_i$), ordering the units placed on the same point arbitrarily, be $o_1, \dots, o_N$. For $i \in \{1, \dots, N\}$ let $o_i$ be placed on the point $x_{j_i}$, $j_i \in \{1, \dots, k\}$.
    For $i \in \{1, \dots, N\}$ denote by $\mu_i^0$ the uniform distribution on $\{x_{j_i}, x_{j_i + \frac N 2}\}$. Clearly $x_{j_i} \ne x_{j_i +\frac N 2}$, because otherwise we would have more than $\frac N 2$ units placed on a single point which would mean $\conc(\mu) > \frac 1 2$. Also $\mu = \frac 2 N \sum_{i=1}^{\frac N 2} \mu_i^0$. Collect the weights of the distributions in $\{\mu_i^0\}$ having the same support sets. We get
    \begin{align*}
        &\mu =\sum_{i=1}^{m'} w_i' \mu_i' \, \mbox{ for some } m' \le \frac N 2, 
        \,
        w_i'= \frac {2 a_i'} N
        \mbox{ and }
        a_1',\dots,a_{m'}' \in \{1,2,\dots\} 
    \end{align*}
    where $\mu_1', \dots, \mu_{m'}'$ are now uniform 2-point distributions with \emph{distinct} supports in $V$.

    Now consider the graph $G' = (V, \{\supp(\mu_i'), i \in \{1, \dots, m'\}\})$. Let $C_0$, $\dots$, $C_{t-1}$ be its connected components and for each $j \in \{0, \dots, t-1\}$ fix an edge $e_j = \{y_j, z_j\}$ in $C_j$. For $i \in \{1, \dots, m'\}$ define $\mu_i = \mu_i'$ and weights
 \[
w_i = \begin{cases}
    \frac{2a_i' - 1}{N}, & \text{if $\supp(\mu_i')$ is one of the edges $\{e_j\}$,} \\
\frac{2a_i'}{N}, & \text{otherwise.}
\end{cases}
\]
    For $l \in \{0, \dots, t-1\}$ introduce
    \[
        \mu_{m'+l+1} = \nu_{\{y_l, z_{(l+1) \bmod t}\}}, \quad w_{m' + l + 1} = \frac 1 N
    \]
    where $\nu_A$ denotes the uniform probability measure on the set $A$.
    Let $m=m' + t$. We claim that
    \[
        \mu = \sum_{i=1}^m w_i \mu_i  
    \]
    is a decomposition as required.
    Indeed $(\sum_{i=1}^m w_i \mu_i)(\{x\}) = (\sum_{i=1}^{m'} w_i' \mu_i')(\{x\})$ for any $x$ that does not belong to any edge $e_j$.
    If $x \in e_j$ for some $j$ then the total probability mass at $x$ also does not change as weight $\frac 1 N$ 
    is moved from an existing edge in $G'$ incident to $x$ to a newly added edge connecting $x$ with a vertex in another component. 

    The weights $\{w_i, i \in \{1, \dots, m\}\}$ are as required by construction.
    As $w_i \ge \frac 1 N > 0$ for all $i$, each  distribution corresponding to an edge of $G'$ remains in the final decomposition. The edges $\supp(\mu_{m' + l + 1})$, $l\in \{0,\dots,t-1\}$ 
    make up a cycle that visits each component in $\{C_j\}$, so the resulting graph $G$ is connected. 
\end{proof}
\medskip

\begin{proofof}{Lemma~\ref{lem.inv_lo}}
    Let $S = X_1 + \dots + X_n$.
    Suppose first that $a_{i, 0} > 0$ for $i \in \{1, \dots, n\}$.
   

    For each $i \in \{1, \dots, n\}$ fix a decomposition of the distribution $\mu_i$ of $X_i$ obtained by Lemma~\ref{lem.connected_decomposition};
    let the resulting decomposition be $\mu_i = \sum_{j=1}^{k_i} \frac {t_{i,j}} {2 N} \mu_{i,j}$, $t_{i,j} \in \mathbb{Z}$, $t_{i,j} > 0$. Here, in order to apply the lemma, we represented all probabilities $\frac{a_{i,j}} N$ as $\frac{2 a_{i,j}} {2N}$ to ensure that the denominator is even.

    We call a tuple of $n$ integers $\omega$ a 
    \emph{configuration} if $\omega_i \in \{1, \dots, k_i\}$ for each $i \in \{1, \dots, n\}$. 
    Write $p_\omega = (2N)^{-n} \prod_{i=1}^n {t_{i,\omega_i}}$. 
    Denote by $S_{\omega}$ the sum of
    independent random variables $Y_1, \dots, Y_n$ where for each $i$ we have $Y_i \sim \mu_{i,\omega_i}$.

    Define a random variable $S_W$ as follows.
    \begin{enumerate}
        \item Draw a random configuration $W$ according to the distribution $\{p_\omega\}$.
        \item Given $W = \omega$, draw a value distributed as $S_{\omega}$.  
    \end{enumerate}  
    We have $S_W \sim S$.
    Denote $q(\omega) := \conc(S_\omega) =\max_x \pr(S_\omega = x)$.
    Let $G_W$ be the event that $q(W) \ge \frac c {3 \sqrt n}$. Let  $x_0 \in \mathbb{Z}$
    satisfy $\pr(S=x_0) = \conc(S)$. Then
    \begin{align}\nonumber
         \conc(S) & = \E \E(\mathbb{I}_{S_W=x_0} | W) \le \E q(W) = \E q(W) \I_{G_W} + \E q(W) \I_{\bar{G}_W}
        \\ &  
        \le \pr(G_W) \sup_\omega q(\omega) + \frac c {3 \sqrt n} \le \frac {\pr(G_W)} {\sqrt n} + \frac c {3 \sqrt n}. \label{eq.G_W}
    \end{align}
    The last inequality is obtained as follows. By the well-known result of Erd\H{o}s, see, e.g.~\cite{jk2021}, $q(\omega) \le \frac {\binom n {\lfloor \frac n 2 \rfloor}} {2^n}$. 
    A simple proof of an upper bound
    $\binom {2n} n \le \frac {4^n} {\sqrt {\pi n}}$ for any positive integer $n$ is found, e.g., in \cite{elkies}.
    Thus for even $n$ we have $q(\omega) \le \frac {\sqrt 2} {\sqrt {\pi n} } \le n^{-\frac 1 2}$.
    Using Pascal's identity we can easily see that 
    $q(\omega)  \le n^{-\frac 1 2}$ 
    also for odd $n$.
    
    So from the assumption of the Lemma $\conc(S) \ge \frac c {\sqrt n}$ and (\ref{eq.G_W}) we get
    \begin{equation}\label{eq.G_W2}
        \pr(G_W) 
        \ge \frac {2 c } 3.
    \end{equation}
    Note, see also \cite{tj2024}, that since $\conc(X_i) \le \frac 1 2$ we have $\conc(S) \le \sup_{\omega} q(\omega) \le n^{-\frac 1 2}$. Thus the assumption of the lemma $\conc(S) > \frac c {\sqrt n}$ can only be satisfied with $c \le 1$. 


    Consider the distributions $\mu_{i,j}$ in the decomposition of $\mu_i$. As the mixing weights are multiples of $\frac 1 {2N}$, there are at most $2N$ components in this decomposition.
    Furthermore, by Lemma~\ref{lem.connected_decomposition}, 
    the supports of 2-point distributions $\mu_{i,j}$ in the decomposition of $\mu_i$ form a connected graph $G_i$ on $\supp (\mu_i)$.
    For each $i$ fix a spanning tree $T_i$ of $G_i$.
    So $V(T_i) = V(G_i) = \supp(X_i)$, so by the assumption of the lemma $|V(T_i)| \le N$.

    By the assumption of the lemma $0 \in \supp(\mu_i)$ for each $i$.
    Suppose we can prove that there is a set $\mathcal{A}=\{{\bf A}_1,\dots, {\bf A}_t\}$ of symmetric GAPs
    of volume at most $v_1$, rank at most $r_1$ and integer dimensions which satisfies the following:
    for a subset $I$ of at least $1-\epsilon n$ indices $i$, for every edge $ab \in T_i$ with $i \in I$
    we have that $b-a$ belongs to some GAP in $\mathcal{A}$. (Notice that if a symmetric GAP contains $a-b$ then 
    it also contains $b-a$.)
    Then for each index $i \in I$ all the points in the support of $\mu_i$
    belong to the set 
    \begin{equation}\label{eq.sumset_gap}
        (N-1) {\bf A}_1 + (N-1) {\bf A}_2 + \dots + (N-1) {\bf A}_t.
    \end{equation}
    To see this, 
    consider the path $P = x_1\dots x_m$ on $T_i$ from $0$ to any $x \in \supp(\mu_i)$,
    so that $x_1=0$ and $x_m=x$. Recall that by the assumptions of the lemma $0 \in \supp(\mu_i)$. 
    Then $x = \sum_{j=1}^{m} (x_{j} - x_{j-1})$, i.e. $x$ is a sum of at most $N-1$ elements 
    each from a
    symmetric
    GAP in $\mathcal{A}$. Since our dimensions are integer, 
    the underlying set of the dilate $(N-1) {\bf A}_i$ 
    is the iterated sumset:
    \[
        \underbrace{A_i + \cdots + A_i}_{N-1 \text{ times}}.
    \]
    Also $(N-1) A_i$ has the same rank as $A_i$ and volume at most $(N-1)^{r_1} v_1$. 
    Using Proposition~\ref{prop.gap_union},
    the set (\ref{eq.sumset_gap}) 
    can be represented by a symmetric (but not necessarily proper) GAP $\bf A$ of rank at most $r_1 t$ and volume at most $(N-1)^{r_1t} v_1^t$.

    Let us 
    prove that such a set $\mathcal{A}$ with constant-sized $r_1, v_1$ and $t$ exists.

    We describe a sequential process to pick $t$, 
    $t \le t_0:=\frac {16 N^2} {\epsilon}$
    configurations $\omega_1, \dots, \omega_t$
    and
    GAPs ${\bf A}_1, \dots, {\bf A}_t$ with the required property.
    We use a simple ``probabilistic method'' argument.


    Let $l$ be a positive integer. Suppose we have already chosen configurations $\omega_1, \dots, \omega_{l-1}$
    and GAPs ${\bf A}_1, \dots, {\bf A}_{l-1}$. At step $l$ we need to choose the $l$th configuration $\omega_l$ and GAP ${\bf A}_l$.

    For $i \in \{1, \dots, n\}$ we will call an edge $e = ab \in T_i$ 
    \emph{open before step $l$}, if $b-a$ is not contained in any GAP in ${\bf A}_1, \dots, {\bf A}_{l-1}$. 
    Let $o_{l-1}$ be the number of indices $i \in \{1, \dots, n\}$ 
    such that $T_i$ has any open edge before step $l$.
    Of course, $o_0=n$. 
    Assume that $T_i$ has at least one open edge before step $l$. Take a random configuration $W$ according to $\{p_{\omega}\}$.
    The probability that $\supp(\mu_{i,W_i})$ is an open edge in $T_i$ before step $l$
    is at least $\min_j \frac {t_{i,j}} {2N} \ge \frac 1 {2N}$. 
    Let $Z_l = Z_l(W)$ be the random number of indices
    $i$ where an open edge of $T_i$ is sampled (i.e. $\supp(\mu_{i,W_i})$ is open in $T_i$ before step $l$). 
    Suppose $o_{l-1} \ge \epsilon n$.
    Then by a Chernoff-type bound, see, Theorem~2.3(c) of~\cite{cmcd98}, $\pr(Z_l \ge \frac {\epsilon n} {4 N}) \ge 1 - \exp(-\frac {\epsilon n} {16 N})$.
    Thus the probability that $G_W$ holds and $Z_l \ge \frac {\epsilon n} {4 N}$ is at least $\frac {2 c } 3 - \exp(-\frac {\epsilon n} {16 N})$ and,
    assuming 
    \begin{equation}\label{eq.lpm_n_more}
        n >  n_0(N,c,\epsilon) := 16 N \epsilon^{-1} \ln \frac 3 {2 c},
    \end{equation}
    this probability is positive and a configuration $\omega_l$ on which both of the events hold exists.
   
    Let us fix a particular such $\omega_l = (\omega_{l,1}, \dots, \omega_{l, n})$.
    For $i \in \{1, \dots, n\}$ denote by $\{a_i, b_i\}$, where  $a_i, b_i \in \mathbb{Z}$, $a_i=a_i(l)$, $b_i=b_i(l)$, the support of the two-point distribution $\mu_{i, \omega_{l,i}}$. Then $S_{\omega_l} = \sum_{i=1}^n \frac {a_i + b_i} 2 +  \sum_{i=1}^n \frac {b_i - a_i} 2 \tau_i$
    where $\tau_1, \dots, \tau_n$ are independent Rademacher random variables.
    Define $n' := \lceil \frac {9 n} {c^2}\rceil$. Fix a small $\epsilon' \in (0,1)$.
    Since $G_{W}$ holds on $W = \omega_l$ we have $\conc(S_{\omega_l}) = q(\omega_l) \ge \frac c {3 \sqrt n} \ge (n')^{-\frac 1 2}$.
    Pad the sequence $(\frac {b_i - a_i} 2 \tau_i, i \in \{1, \dots, n\})$ with $n'-n$ new independent Rademacher random variables, each with multiplier 0, to get a sequence of length $n'$.

    Applying Theorem~\ref{thm.nguyenvu2011} to the sum of the padded sequence with $\varepsilon = \epsilon'$, $C=\frac 1 2$ and
    $n = n'$ we get that all but at most $\epsilon' n'$ elements $(\frac {b_1 - a_1} 2, \dots, \frac {b_n-a_n} 2, 0, \dots 0)$ (equivalently, of the same sequence multiplied by 2) belong to a symmetric GAP of
    a bounded rank and
    size
    at most 
    $r_1=r_1(\epsilon')$.
    It is easy to see that such a GAP is contained in a symmetric GAP ${\bf A}_l$ of rank $r_1$, all dimensions 1, and volume
    $v_1=v_1(\epsilon') = 3^{r_1}$. (If $n$ is large enough we have even $r_1=1$ by the theorem, however we will not use this here.) 

    We choose $\epsilon'$ small enough such that $\epsilon' n' \le  \frac {\epsilon n} {8 N}$ for all $n$ (since, as we noted above, it must be $c \le 1$, using (\ref{eq.lpm_n_more}) we can take $\epsilon'=\frac {\epsilon c^2} {144N}$).
    Then we get that among  at least $\frac {\epsilon n} {4N}$ indices $i$
    with an open edge $a_i b_i$ in $T_i$ before step $l$
    we can find at least  $\frac {\epsilon n} {4N} - \epsilon' n' \ge \frac {\epsilon n} {8N}$ ones where $b_i - a_i$ belongs to 
    ${\bf A}_l$. 

    Initially there are in total at most $2 n N$ open edges over all $i \in \{1, \dots, n\}$. 
    At each step we reduce the number of total open edges by at least $\frac {\epsilon n} {8 N}$. So after some number $t$,
    $t \le t_0$, steps we must have
    that the total number of open edges before step $t+1$ is less than $\epsilon n$, which implies $o_t < \epsilon n$ as required and we can stop.

    Finally, for $n < n_0(N, c, \epsilon)$,  $\sum_{i=1}^n X_i$ always belongs to the sumset of 
    $n$ sets of size at most $N$, so it is a subset of a symmetric GAP of rank at most $r_0 = N n_0(N, c, \epsilon)$ 
    and volume at most $3^{r_0}$.
    Setting $r(N, c, \epsilon) = \max(r_0, r_1 t_0)$ and $v(N, c, \epsilon) = \max(3^{r_0}, (N-1)^{r_1 t_0} v_1^{t_0})$ completes the proof
    in the case when $a_{i, 0} > 0$ for each $i$.

    Finally, for the general case let $m_i = \min (\supp(X_i))$ apply the above proof for the sequence $(X_i - m_i)$ to get
    a gap $\bf A$ and notice that we can cover the translate $A + \sum_{i=1}^n m_i$ of the underlying set $A$ by
    adding a new generator $\sum_{i=1}^n m_i$ with dimension 1, thus increasing the rank by at most 1 and the volume at most 3 times.

\end{proofof}

\subsection{A theorem of Odlyzko and Richmond}
\label{sec.odlyzko_richmond}

Here we present the last significant result that we will need in our proof in Section~\ref{subsec.lpm}. 

Let $\mu$ be a distribution on integers with $|\supp(\mu)| \ge 2$. We call $s\in\mathbb{Z}$ its \emph{maximum span} if the support of $\mu$
is contained in $\{i_0 + i s: i \in \mathbb{Z}\}$ for some $i_0 \in \mathbb{Z}$ and there is no other such $(i_0', s')$
with $s' > s$. Note that if $A$ is the support of $\mu$ and $x \in A$ then $s = \gcd(\{z-x: z \in A \setminus\{x\}\})$.
In the case where $\mu$ is supported on a single point, define the maximum span to be $\infty$.

Answering an open question initiated by R\'enyi in 1977, Odlyzko and Richmond proved 
the following result on $n$-fold convolution of a distribution on a finite set of integers with maximum span 1.

\begin{theorem}[Theorem~2 of Odlyzko and Richmond~\cite{odlyzkorichmond}]\label{thm.odlyzko_richmond}
 If $\{p_j\}$ is a discrete distribution with $p_j = 0$ for $j < 0$ and $j > d$, while $p_0 > 0, p_d > 0$, and
    \[
        \gcd\{j: p_j \ne 0\} = 1,
    \]
    then for any $\delta > 0$ there is an $n_0^{OR} = n_0^{OR}(\delta)$ such that if $a_{k,n}$ denotes the value of the
    $n$-fold convolution $\{p_j\}^{*_n}$ at $k$ then for $n \ge n_0^{OR}$,
    \[
        a_{k,n}^2 \ge a_{k-1,n} a_{k+1,n}
    \]
    for $\delta n \le k \le (d-\delta) n$.
\end{theorem}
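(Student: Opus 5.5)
We plan to prove the theorem by exponential tilting (the saddle point method) followed by a sufficiently precise, uniform local limit theorem. Write $P(z)=\sum_{j=0}^d p_j z^j$, so that $a_{k,n}=[z^k]P(z)^n$.

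\textbf{Tilting.} For $\rho>0$ define the tilted distribution $p_j^{(\rho)}=p_j\rho^j/P(\rho)$. Its mean $m(\rho)=\rho P'(\rho)/P(\rho)$ is continuous and strictly increasing, with $m(0+)=0$ and $m(\infty)=d$; the limits use $p_0,p_d>0$. Hence for $x\in[\delta,d-\delta]$ the equation $m(\rho)=x$ has a unique solution, and these solutions lie in a compact interval $[\rho_-,\rho_+]\subset(0,\infty)$ depending only on $\delta$.

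Fix $k$ with $\delta n\le k\le(d-\delta)n$, and let $\rho$ solve $m(\rho)=k/n$. Writing $q_{j,n}$ for the $n$-fold convolution of $p^{(\rho)}$ at $j$, we have
\[
a_{j,n}=P(\rho)^n\rho^{-j}q_{j,n}.
\]
Because $\rho^{-2k}=\rho^{-(k-1)}\rho^{-(k+1)}$, the tilting factors cancel in the ratio, and we must show
\[
q_{k,n}^2\ge q_{k-1,n}q_{k+1,n},
\]
where $k$ is exactly the mean of $q_{\cdot,n}$. The point is to use one $\rho$ for all three indices $k-1,k,k+1$.

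\textbf{Local expansion.} The tilted law has support $\{j:p_j\neq0\}$, which has gcd $1$, so $|\phi_\rho(t)|<1$ for $t\in[-\pi,\pi]\setminus\{0\}$. By continuity in $\rho$ and compactness, $\sup_{\rho\in[\rho_-,\rho_+]}\sup_{\eta\le|t|\le\pi}|\phi_\rho(t)|<1$ for each $\eta>0$. Moreover the variance $\sigma^2(\rho)$ and all moments are bounded above and below uniformly in $\rho$. Using the Fourier inversion formula, we will derive an Edgeworth-type local expansion to second order, uniform in $\rho$ and in $|h|\le1$:
\[
q_{k+h,n}=\frac{1}{\sqrt{2\pi n\sigma^2}}\Big(1-\frac{h^2}{2n\sigma^2}+\frac{c_1(\rho)h}{n}+\frac{c_0(\rho)}{n}+O(n^{-3/2})\Big).
\]
The expansion is taken at exact mean $k$, so there is no shift $\theta$. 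The $n^{-1/2}$ term involves the Hermite polynomial $H_3(h/\sqrt{n\sigma^2})$, which is $O(n^{-3/2})$ for bounded $h$. Consequently the $h$-dependent terms of order at least $n^{-3/2}$ are absorbed into the remainder, and the terms linear in $h$ cancel in the product $q_{k-1,n}q_{k+1,n}$.

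\textbf{Conclusion.} Substituting the expansion gives
\[
\frac{q_{k-1,n}q_{k+1,n}}{q_{k,n}^2}=1-\frac{1}{n\sigma^2}+O(n^{-3/2}).
\]
This is less than $1$ once $n\ge n_0^{OR}(\delta)$, because $\sigma^2$ is bounded above on $[\rho_-,\rho_+]$. That gives the claim.

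\textbf{Main obstacle.} The hardest step is the second-order local expansion with a remainder of $O(n^{-3/2})$ relative to the main term, holding \emph{uniformly} over the compact family of tilts. Its Gaussian second difference is only of order $1/n$, so a cruder local limit theorem with $o(n^{-1/2})$ relative error would not suffice.

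We will split the inversion integral into $|t|\le n^{-1/2}\log n$ and the rest. On the outer range, the uniform bound on $|\phi_\rho|$ (and $|\phi_\rho(t)|\le e^{-ct^2}$ near $0$) makes the contribution exponentially small. On the inner range, we Taylor expand $\log\phi_\rho$ to fifth order, with coefficients controlled uniformly in $\rho$, and integrate term by term.
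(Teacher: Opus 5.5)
Your proof is correct, modulo one misstatement that does not affect the conclusion.

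\textbf{Comparison with the paper.} The paper does not prove this statement. It imports it as Theorem~2 of Odlyzko and Richmond and uses it as a black box, so your proposal is a self-contained alternative to the citation rather than a variant of an argument in the paper. Your route is exponential tilting to the saddle point $\rho$ with $\rho P'(\rho)/P(\rho)=k/n$, then Fourier inversion and a uniform second-order local expansion. In probabilistic language this is the saddle-point analysis of $[z^k]P(z)^n$ on the circle $|z|=\rho$.

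\textbf{Why the argument works.} The decisive device is using a single tilt for $k-1,k,k+1$. The factors $P(\rho)^n\rho^{-j}$ then cancel exactly, and the question reduces to the Gaussian second difference $-1/(n\sigma^2(\rho))$. You correctly see that this requires control of relative errors at order $1/n$, uniformly for $\rho$ in the compact interval $[\rho_-,\rho_+]$. In fact a relative error $o(n^{-1})$ suffices once the $1/n$ terms are known to be affine in $h$; your $O(n^{-3/2})$ is more than enough.

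\textbf{The slip.} $H_3(x)=x^3-3x$ at $x=h/\sqrt{n\sigma^2}$ is $O(n^{-1/2})$, not $O(n^{-3/2})$. Its linear part is exactly what produces your term $c_1(\rho)h/n$, with $c_1=-\kappa_3/(2\sigma^4)$. Your displayed expansion already contains this term, and in the ratio it contributes only $-c_1^2/n^2$. So the final formula
\[
\frac{a_{k-1,n}a_{k+1,n}}{a_{k,n}^2}=1-\frac{1}{n\sigma^2(\rho)}+O(n^{-3/2})
\]
stands.

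\textbf{What each approach buys.} The citation keeps the paper short. However, $n_0^{OR}$ then depends on the underlying distribution in an unspecified way. This is why the paper must argue that only finitely many distributions of $\langle \kappa B^T a, {\bf Z}_1'\rangle$ can occur and take a maximum of the thresholds. Your argument gives explicit strict log-concavity with margin of order $1/(n\sigma^2)$. It also gives a threshold depending only on the quantities you track: the bounds on $\rho$, the lower bound on $\sigma^2$, moment bounds, and $\sup_{\eta\le|t|\le\pi}|\phi_\rho(t)|<1$. The price is carrying out the uniform Edgeworth expansion in full.
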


\subsection{Proof of Lemma~\ref{lem.less_peaked_medium}}
\label{subsec.lpm}

\begin{proofof}{Lemma~\ref{lem.less_peaked_medium}}
   For the beginning of the proof we follow the steps of the proof
   of Lemma~\ref{lem.less_peaked_large}.
    We define $S = \sum_{i=1}^n X_i$, 
    $S' = \sum_{i=1}^n Y_i$, $V=\Var S'$ 
   and,
   using an analogous argument as in the proof of Lemma~\ref{lem.less_peaked_large},
    conclude 
    that it suffices to prove that there is a constant $V_{\epsilon, K}$ such that whenever $V \ge V_{\epsilon, K}$,
   (\ref{eq.less_peaked_large_to_show_clt}) holds for each $k$ satisfying (\ref{eq.less_peaked_large_k}),
   where $c_\epsilon$ is defined in (\ref{eq.c_epsilon}).

   So let $k$ be as in (\ref{eq.less_peaked_large_k}).
    Suppose $\conc_k(S) \ge (1+\epsilon) \conc_k(S')$. Using Lemma~\ref{lem.logconcvariancenew} and recalling that $\Var U_k' = \frac {k^2 - 1} {12}$ for 
    $U_k' \sim \nu_{\frac 1 k}^+$ we get
    \begin{align*}
        \conc_k(S') & =  k \pr(S' + U_k' = 0)
        \ge \frac {k} {\sqrt{12(\Var S' + \Var U_k') + 1}}
        \\ & \ge \frac {k} {\sqrt{12 (1 + 2 c_{\epsilon}^{-1}) V + 1}}
        \ge \frac {k} {\sqrt{36 c_{\epsilon}^{-1} V  + 1}} \ge \frac k {\Cnineteen} c_\epsilon^{\frac 1 2} V^{- \frac 1 2}.
    \end{align*}
    Here we
    used
    that $c_\epsilon^{-1} \ge 1$, see (\ref{eq.c_epsilon}),
    and assumed
    without loss of generality that 
    $V \ge 1$. 
    Thus
    \begin{align}\label{eq.lpmsqn}
        k \conc(S) \ge \conc_k(S) \ge \conc_k(S') \ge \frac {k} {\Cnineteen} c_\epsilon^{\frac 1 2} V^{- \frac 1 2} \ge \frac { k \sqrt{12 c_\epsilon}} {\Cnineteen K \sqrt{n}}.
    \end{align}
    In the last inequality we used $V \le n \max_i \Var Y_i \le n\frac {K^2 - 1} {12} \le \frac {K^2 n} {12}$ which follows from the assumptions of the lemma and Lemma~\ref{lem.variance}.
Similarly,
since by the assumptions of the lemma it must be
$K \ge 2$, we have $V \ge \frac {K^2 n} {24}$. 
Note that these bounds for $V$ and (\ref{eq.less_peaked_large_k}) imply
\begin{align}\label{eq.lpm_k}
    K \sqrt{2^{-1} c_\epsilon n} \le  k \le  K \sqrt{2 c_\epsilon^{-1} n}.
\end{align}
    Using (\ref{eq.less_peaked_large_k}) and 
    the last two inequalities in (\ref{eq.lpmsqn}) 
    \begin{align}\label{eq.lpm_Q_lower}
        \conc_k(S) \ge \conc_k(S') \ge \frac{\sqrt{12} c_\epsilon} {\Cnineteen} \quad \mbox{and}\quad \conc(S) \ge  \frac{\sqrt{12 c_\epsilon}} {\Cnineteen K \sqrt n}.
    \end{align}

    As the concentration function is invariant to translations, we can assume without loss of generality that $0 \in \supp (X_i)$ for each $i$.
    We can now apply Lemma~\ref{lem.inv_lo} to $S$ with $N = K^2$, 
    $c = \frac{\sqrt{12 c_\epsilon}} {\Cnineteen K}$
    and $\epsilon = \epsilon'$, where $\epsilon' \in (0,1)$ will be specified below, to get that all but
    at most $\epsilon'$ random variables in $X_1, \dots, X_n$ have their support
    contained in a symmetric GAP $\bf A$ of rank $r$ and volume $v$, such that
    \begin{equation}\label{eq.lpm_r_v}
        r\le r_{K, \epsilon, \epsilon'} \quad \mbox{and}\quad v \le v_{K, \epsilon, \epsilon'}
    \end{equation}
    where on the right of each inequality is a constant that depends just on $K, \epsilon$ and $\epsilon'$.

    Let $J$ denote those $i$ for which the support of $X_i$ 
    is contained in
    $\bf{A}$.


    Given positive integers $l$ and $d$ where $l$ is even, define 
    $\mathcal{C}_l^d = \{-\frac {l} 2, \dots, \frac {l} 2\}^d$, 
    the discrete cube  in $\mathbb{Z}^d$ with side length $l$
    centered at $\bf 0$.

    Let $M = \max_{i \in \{1, \dots, r\}} 2 M_i$, where $M_1, \dots, M_r$ are the dimensions of the symmetric GAP $\bf A$,
    which we can assume to be integer.
    Note that $M + 1 \le \prod (2 M_i + 1) = v$, so $M$ is bounded by a constant:  $M \le v \le v_{K, \epsilon, \epsilon'}$. (Below in the proof there will be a number of bounds with $v$ and/or $r$ on the right. In all cases the expressions will be non-decreasing in $r$ and $v$ so we implicitly assume that (\ref{eq.lpm_r_v}) is applied in the last step.)

Thus we can represent each distribution $X_i$ with $i \in J$ as:
\[
    X_i = \langle a, {\bf X}_i \rangle
\]
    where ${\bf X}_i$ is a random vector with values in
    $\mathcal{C}_M^r$,
    $a$ is the vector ($r$-tuple) of generators of the GAP $\bf{A}$,
    and, without loss of generality, $a \in \mathbb{R}^r \setminus \{{\bf 0}\}$. 
    
    For each $i \in J$ and $y \in \mathbb{Z}$ if there are several atoms $x$ of ${\bf X}_i$ such that $\langle a, x\rangle = y$, then we can put all the mass to one of them.
    Similarly we can move all the mass from atoms $x$ such that $\langle a, x \rangle = 0$ to the zero vector~$\bf 0$.
    After this modification we can assume that each ${\bf X}_i$ has an ``extremal distribution'' inside $\mathcal{C}_M^r$:
    there are $\lfloor \alpha_i^{-1} \rfloor$ atoms in $\mathcal{C}_M^r$ with mass $\alpha_i$
    and one different atom in the same set with the rest of the mass $1 - \lfloor \alpha_i^{-1} \rfloor \alpha_i$
    and ${\bf 0} \in \supp({\bf X}_i)$.

    Observe that there is a finite number 
    \begin{equation}\label{eq.lpm_possible_discretized}
        N = N(\epsilon, \epsilon', K) \le (M+1)^{rK^2}
    \end{equation}
    of possible distributions on the cube 
    $\mathcal{C}_{M}^r$
    with our 
    quantized
    masses of the form $\frac j {K^2}$ (place $K^2$ masses of $K^{-2}$ on one of $(M+1)^r$ points).
    Denote these distributions $\mu_1, \dots, \mu_N$
    and suppose there are $k_i$ random vectors ${\bf X}_i$, $i \in J$ which have distribution $\mu_i$,
    so that $\sum_{i=1}^N k_i = |J| \ge (1-\epsilon') n$.

%
    We would now like to apply the multivariate local limit theorem, Theorem~\ref{thm.llt_stein} from Section~\ref{sec.precise} to $\sum_{i \in J} {\bf X}_i$.
    However, this theorem works only if $d_{TV}({\bf X}_i, {\bf X}_i + e^{(j)})$
    is bounded away from 1 for each coordinate vector $e^{(j)}$. We need to consider the possibility that the support of ${\bf X}_i$ belongs to some proper subgroup of $\mathbb{Z}^r$.

    To 
    simplify the setup,
    we will use the following observation:
    since the number $N$ of different possible distributions is constant, it is easy to group almost all random vectors
    in $\{{\bf X}_i, i \in J\}$ into constant-sized groups (blocks) so that the sum of each group has the same distribution.

    Let us make this more specific. Fix a large integer $b$, to be specified below.
    Create a single group by taking $a_i = \lfloor \frac {b k_i} {|J|}  \rfloor$ of elements with distribution $\mu_i$ for
    each $i \in \{1, \dots, N\}$.
    For each distribution $\mu_i$ we can find at least $a_i \lfloor \frac {|J|} b \rfloor$ indices $j \in J$ with ${\bf X}_j \sim \mu_i$. So we can form 
    $\lfloor \frac {|J|} b \rfloor$ of groups of indices in $J$, such that each group has $a_1$ indices
    $j$ with ${\bf X}_j \sim \mu_1$, $a_2$ indices $j$ with ${\bf X}_j \sim \mu_2$, $\dots$, and $a_N$ indices $j$ with  ${\bf X}_j \sim \mu_N$.

    Let $b'$ be the total number of elements in a single group. We have
    \begin{equation}\label{eq.lpm_b'}
        b \ge b' = \sum_{i=1}^N a_i = \sum_{i=1}^N \lfloor \frac {b k_i} {|J|}  \rfloor \ge \sum_{i=1}^N (\frac {b k_i} {|J|} - 1)  = b - N. 
    \end{equation}
    Let $J'$, $J' \subseteq J \subseteq \{1, \dots, n\}$, be the set of indices of all the $b' \lfloor \frac {|J|} b \rfloor$  grouped random variables.
    We have
    \begin{equation}\label{eq.lpm_J'}
        |J'| 
        \ge (b-N) (\frac {|J|} b - 1) \ge |J| - b - \frac {|J| N} b \ge |J| - n(\frac b n + \frac N b).
    \end{equation}

    Let $\delta' = \delta'(\epsilon, K)$, $\delta' \in (0,\frac 1 4)$ be a constant to be defined below. We will require that the constants $\epsilon'$, $b = b(\delta', M, r, N)$ and the variance $V$ satisfy
    \begin{equation}\label{eq.lpm_eps}
        \epsilon' \le \frac {\delta'} 3, \quad \frac N b \le \frac {\delta'} 3 \quad\mbox{and}\quad V \ge \frac 1 4 K^2 b / \delta'. 
    \end{equation}
    In particular, we set
    \[
        \epsilon' := \frac {\delta'} 3 \quad\mbox{and}\quad b := \left \lceil \frac {3 N} \delta \right \rceil,
    \]
    so that the first two inequalities of (\ref{eq.lpm_eps}) are satisfied.

    Recall that $V = \Var \sum_{i=1}^n Y_i \le n \frac {K^2 -1} {12}$.
    Thus for any $V_0$, $V \ge V_0$ implies
    \begin{equation}\label{eq.lpm_nV0}
        n 
        \ge \frac{12} {K^2} V
        \ge \frac{12} {K^2} V_0.
    \end{equation}
    Thus (\ref{eq.lpm_nV0}) and the last inequality in (\ref{eq.lpm_eps}) imply that $b/n \le \frac {\delta'} 3$.
    Thus by (\ref{eq.lpm_J'}) and (\ref{eq.lpm_eps}) 
    \begin{equation} \label{eq.lpm_delta'}
        |J'| \ge n - \epsilon' n - n(\frac N b + \frac b n) \ge n (1-\delta'),
    \end{equation}
    so the number $m$ of groups 
    satisfies 
    \begin{equation}\label{eq.lpm_m}
        m = \frac {|J'|}  {b'} = \lfloor \frac {|J|} {b}  \rfloor \ge \frac {n(1-\delta')} {b'} \ge \frac {n (1-\delta')} b.
    \end{equation}
    Also observe that (\ref{eq.lpm_b'}) and (\ref{eq.lpm_eps}) imply that 
    \begin{equation}\label{eq.lpm_b_half}
        b \ge b' \ge \frac b 2.
    \end{equation}



    Let $\mu = \mu_1^{*_{a_1}} * \dots * \mu_N^{*_{a_N}}$, that is, $\mu$ is the distribution
    of the sum of $b' = a_1 + \dots + a_N$ random vectors ${\bf X}_j$ with indices from any fixed group. We then have that 
    \[
        \sum_{i \in J'} \langle a, {\bf X}_i \rangle = \sum_{j=1}^{m} \langle a, {\bf Z}_j \rangle 
    \]
    where 
    ${\bf Z}_1, \dots, {\bf Z}_m \sim \mu$ are i.i.d.
    Importantly, $\mu$ is still a distribution on a constant discrete cube
    $\mathcal{C}_{b M}^r$. 


    Let $L'$ be the \emph{integer span} (the set of all possible linear combinations with integer coefficients)
    of the support of $\mu$. 
    Let $r'$ be the dimension of the linear span of $L'$. Obviously, $1 \le r' \le r$.
    We say that $\mu$ \emph{has minimal lattice} $\mathbb{Z}^r$ if $L' = \mathbb{Z}^r$.
    Suppose this is not the case, that is $L' \subsetneq \mathbb{Z}^r$.
    By the lattice basis theorem, 
    see, e.g., Theorem 2.2.2 of \cite{matousek} or Theorem~21.1 of \cite{bhattacharyarao2010},
    $L'$ is a \emph{lattice} generated by linearly independent vectors $x_1, \dots, x_{r'} \in L'$, that is
    $L' = \{B z: z \in \mathbb{Z}^{r'}\}$ for some $r \times r'$ integer matrix $B$ of rank $r'$.
    As $z \mapsto Bz$ is injective for $z \in \mathbb{Z}^{r'}$, 
    we can define a measure $\mu'$ on $\mathbb{Z}^{r'}$ by setting
    $\mu'(\{z\}) = \mu(\{Bz\})$ for each $z \in \mathbb{Z}^{r'}$.
    So we have $\mathbf{Z}_i \sim B \mathbf{Z}_i'$ where $\mathbf{Z}_1', \dots,\mathbf{Z}_m'$ are i.i.d. with distribution $\mu'$.
    


    Lemma~\ref{lem.korkinzolotarev} below shows that the basis $B$ can be chosen so that 
    $\supp(\mu')$ is contained in the discrete cube $\mathcal{C}_{M'}^{r'}$ of $\mathbb{Z}^{r'}$
    with side length 
    \begin{equation}\label{eq.lpm_M'}
        M' = 2 \left \lceil r^{r-1} \left(\frac {\sqrt{r} bM} 2\right)^{2r-1} \right \rceil.
    \end{equation}
    We fix such $B$.
    We can write
    \[
        \sum_{i \in J'} \langle a, {\bf X}_i \rangle = \sum_{i=1}^m \langle a, B {\bf Z}_i' \rangle = \langle B^T a, \sum_{i=1}^m {\bf Z}_i' \rangle
    \]
 where now $\sum_{i=1}^m {\bf Z}_i'$ is a sum of $m$ i.i.d. bounded random vectors in $\mathbb{Z}^{r'}$
 and $\mathbb{Z}^{r'}$ \emph{is} their minimal lattice.

Our next main step will be to use Theorem~\ref{thm.llt_stein} to approximate $\sum_{i=1}^m \mathbf{Z}_i'$ by a standard normal random vector. This theorem gives a non-trivial conclusion only if the total variation distance between a single random summand and the same summand shifted by any
coordinate vector is bounded away from 1. To ensure this property, we will group once more.

Let $A$ be the $r' \times s$ matrix containing all the $s$ vectors in the support of $\mu'$
as columns.
Let $e^{(i)}$, $i \in \{1, \dots, r'\}$ be the $i$th coordinate vector of $\mathbb{Z}^{r'}$.
Since the support of $\mu'$ generates $\mathbb{Z}^{r'}$, there is at least one 
solution
$x \in \mathbb{Z}^s$
to $A x = e^{(i)}$ and $A$ has rank $r'$.
Also we must have $s > r'$, otherwise, since $\bf{0}$ is one of the vectors in the support of $\mu'$, the subspace generated by the support would have dimension less than $r'$.
The main result of Borosh, Flahive, Rubin and Treybig \cite{bfrt1989}
says that in this case there exists a solution $x$ with $|x_i|$ bounded by
the maximum of the absolute values of $r' \times r'$ minors of the augmented matrix $(A | e^{(i)})$.
In our case, using the Leibniz formula, this maximum is at most $r! (M')^r$.
Thus for each $i \in \{1, \dots, r'\}$, $e^{(i)}$ belongs to the support of the sum of $D$, $D = s r! (M')^r \le r! (M'+1)^{2r}$, independent random variables with distribution $\mu'$. 

Let $\mu''$, $\mu'' = \mu'^{*_D}$, be the $D$-fold convolution of $\mu'$, let $m' = \lfloor \frac m  D \rfloor$ and let 
${\bf Z}_1'', \dots, {\bf Z}_{m'}''$ be i.i.d. with distribution $\mu''$. $\mu''$
is contained in the discrete cube $\mathcal{C}_{M''}^{r'}$, 
where
\begin{equation}\label{eq.lpm_M''}
    M'' = D M'. 
\end{equation}
Using 
(\ref{eq.lpm_m}) we have that
\[
    m' \ge \frac m D - 1 \ge \frac {n (1- \delta')} {b' D} - 1 = \frac n {b' D} (1 - \delta' - \frac {b' D} n).
\]
Thus, using (\ref{eq.lpm_b_half}), 
if we require that 
\begin{equation}\label{eq.lpm_D}
    \frac {b D} n \le \delta' \quad \mbox{ or }\quad n \ge (\delta')^{-1} b D
\end{equation}
then 
\begin{equation}\label{eq.lpm_m'}
    m' \ge \frac n {b' D} (1-2\delta')
\end{equation}
and
the size of the set $J'' \subseteq J'$ of all the indices of the original random vectors ${\bf X}_i$ that are grouped to the $m'$ newly formed random vectors satisfies $|J''| = m' b' D \ge n (1-2\delta')$.

Furthermore, since by the assumption of the lemma and our construction the minimal non-zero probability of the random variables ${\bf X}_i$ is at least $K^{-1}$, if $x$ belongs to the support of $\mu''$ then $\mu''(\{x\}) \ge K^{-b D}$.

We shall now apply Theorem~\ref{thm.llt_stein} to the sum ${\bf S}''$ defined by ${\bf S}'' = \sum_{i=1}^{m'} {\bf Z}_i'' = \sum_{i=1}^{D m'} {\bf Z}_i'$.
Let us first consider the quantity $u_1 = \min_{1 \le j \le r'} \{1-d_{TV}({\bf Z}_1'', {\bf Z}_1''+e^{(j)})\}$.
By the above assumptions and arguments for any
$\j \in \{1, \dots, r'\}$
\[
   p=\pr({\bf Z}_1'' = e^{(j)}) \ge K^{-b D}  \mbox { and }   q=\pr({\bf Z}_1''+e^{(j)} = e^{(j)}) = \pr({\bf Z}_1'' = {\bf 0}) \ge K^{-b D}.
\]
Thus

\begin{align*}
    & d_{TV}({\bf Z}_1'', {\bf Z}_1'' + e^{(j)}) = \sum_{x \in {\mathbb{Z}^{r'}}} \frac 1 2 | \pr({\bf Z}_1'' = x) -  \pr({\bf Z}_1'' + e^{(j)}=x)|
    \\ &
    \le \frac 1 2 (1 - p) + \frac 1 2 (1-q) + \frac 1 2 |p - q| = 1 + \frac {\max(p,q)  - \min(p,q) - p - q} 2
    \\ &
    = 1 - {\min(p,q)} \le 1 - K^{- b D}.
\end{align*}
Thus $u_1$ from Theorem~\ref{thm.llt_stein} satisfies $u_1 \ge K^{-b D}$,
so that $\tilde{s}_m \ge (m'-1) K^{-b D}$.
Also clearly $\chi$ from that theorem satisfies 
$\chi \le (r^{\frac 1 2} M'')^3$.

Let $\LPMSigma$ be the covariance matrix of 
${\bf Z}_1''$.
As each entry in $\LPMSigma$ 
is bounded by $(M'')^2$, we have 
\begin{equation}\label{eq.lpm_sigma_max}
    \sigma_1(\LPMSigma) = \|\LPMSigma\| \le (M'')^2 r
\end{equation}
Let us derive some lower bound 
on its smallest eigenvalue. Since the probabilities of ${\bf X}_i$
are of the form $\frac i {K^2}$ for $i \in \mathbb{Z}$, by our construction 
the probabilities $\mu''(\{x\})$ and the covariances
are of the form  $\frac i {K^{2b'D}}$ for $i \in \mathbb{Z}$. Thus $K^{2b'D} \LPMSigma$
is an integer matrix. Trivially, its column lengths are bounded by $K^{2b'D}\sqrt{r'} (M'')^2$.
As $\LPMSigma$ is symmetric and positive definite,
its eigenvalues equal singular values.
So by Lemma~\ref{lem.singular_value_lower_bound} the smallest eigen-/singular value satisfies
\begin{equation}\label{eq.lpm_lambda_min}
    \lambda_{r'}(\LPMSigma) = \sigma_{r'}(\LPMSigma) 
\ge (K^{2bD}(M'')^2 r)^{-(r-1)}.
\end{equation}


The trace of $\LPMSigma$ is equal to the sum of its eigenvalues / singular values. Thus it is 
at least $r'\lambda_{r'}(\LPMSigma)$, and, trivially, ${\rm Tr}(\LPMSigma) \le r (M'')^2$.
But since ${\bf Z}_1''$ has values in $\mathbb{Z}^{r'}$ and by the assumptions of the lemma and our construction $\conc({\bf Z}_i'') \le \conc(X_i) \le \alpha_i$, we have 
a better lower bound ${\rm Tr}(\LPMSigma) = \E \sum_{i=1}^{r'}({\bf Z}_{1,i}'')^2 \ge \pr({\bf Z}_1'' \ne {\bf 0})
\ge 1-\conc({\bf Z}_1'') \ge 1 - \max \alpha_i \ge \frac 1 2$. This implies that $L$ in Theorem~\ref{thm.llt_stein} satisfies 
$L \le (m')^{-\frac 1 2} \chi$.

Let now ${\bf Z} \sim N(m' \E{\bf Z}_1'', m' \LPMSigma)$ and recall the notation $\round{\bf Z}$ for rounding coordinatewise.
Note that if $m' \ge 2$ then $\sqrt {\frac {m'} {m'-1}} \le 2$.

Combining these estimates, Theorem~\ref{thm.llt_stein} gives for $m' \ge 2$ 
\begin{align}\label{eq.conclusion_llt}
    d_{TV}({\bf S}'', \round{{\bf Z}}) \le 2 C r^{\frac 7 2} \ln m' \frac {(M'')^3  r^{\frac 3 2}+ r^{\frac 1 2}} {(m')^{\frac 1 2}} K^{\frac {b D} 2}.
\end{align}
Here by the remark of the theorem, $C$ depends ``only'' on $\lambda_{r'}(\LPMSigma)$, $\lambda_{1}(\LPMSigma)$ 
and $(r')^{-1} {\rm Tr}(\LPMSigma)$. We have shown that all of these quantities are bounded above and away from zero by constants 
that depend only on $r$, $M''$, $K$, $b$ and $D$, but ultimately just on $K$, $\epsilon$, $\epsilon'$ and $\delta'$.
The remark could be interpreted in two ways:
{
\renewcommand{\labelenumi}{(\arabic{enumi})}
\begin{enumerate}
    \item As a function of smallest, the largest eigenvalue of $\LPMSigma$ and the trace of $(r')^{-1}\LPMSigma$, $C$ is well-behaved, for example, continuous at each positive parameter vector.
    \item The above assumption does not hold.
\end{enumerate}
}
Based on the wording of Theorem~1.1 of \cite{barbourluczakxia2018}, the interpretation (1) seems to be the correct one. In this case we get that (\ref{eq.conclusion_llt}) holds uniformly for any choice of $\{X_i\}$ satisfying the assumptions of the lemma
with a common constant that depends just on $K$, $\epsilon$, $\epsilon'$ and $\delta'$.

If, however, the correct interpretation is (2), we can still obtain the same conclusion and find a constant $C$ that works for each choice of distributions, but in a less explicit way. 
Since by (\ref{eq.lpm_possible_discretized}) there 
is a constant number $N$ of possible distributions for each ${\bf X}_i$ with
quantized
probabilities $\frac i {K^2}$, $i \in \mathbb{Z}$ in the cube $\mathcal{C}_{M}^r$ 
there is also a constant (at most $N^b$) distributions $\mu'$ that can be constructed
from a convolution of $b'$ such distributions.
Applying (\ref{eq.conclusion_llt}) to each of those distributions we can take the largest constant $C$
for given parameters $K$, $\epsilon$, $\epsilon'$ and $\delta'$.

In any case recalling $\delta' \in (0, \frac 1 4)$ 
for each $K$, $\epsilon$, $\epsilon'$, $\delta'$ we can choose 
constants 
$c''$ 
and
$V_0$, $V_0 \ge 1$, such that if $V \ge V_0$ then (a) (\ref{eq.lpm_eps}) is satisfied 
and 
(b) (\ref{eq.lpm_D}) is satisfied (using  (\ref{eq.lpm_nV0})) and so $m' \ge 2$,
and
\begin{align}\label{eq.conclusion_llt2}
    d_{TV}({\bf S}'', \round{{\bf Z}}) \le c'' n^{-\frac 1 2}\ln n.
\end{align}



 Let $A \subset \mathbb{R}$ be such that $|A| = k$ and $\conc_k(\langle B^T a, {\bf S}'' \rangle) = \pr( \langle B^T a, {\bf S}'' \rangle \in A)$. 
 Then (\ref{eq.conclusion_llt2}) 
 implies that  
 if $V \ge V_0$ then
\begin{align}\label{eq.lpm_standard_coupling}
    \conc_k(\langle B^T a, {\bf S}'' \rangle) = \pr( \langle B^T a, {\bf S}'' \rangle \in A) =  \pr( \langle B^T a, \round{\bf Z} \rangle \in A) \pm \frac {c'' \ln n} {\sqrt n} .
\end{align}
 We are now going to apply Lemma~\ref{lem.proj_coeffs} with 
 ${\bf X} = {\bf Z} - \round{\E {\bf Z}}$,
 $d=r'$ and $\gamma$ defined below. Thus 
 $\mu_0 = \E {\bf X} = \E {\bf Z} - \round{\E {\bf Z}}$ 
satisfies $\|\mu_0\| \le \frac 1 2 \sqrt{r'}$ and the distribution of $\round{\bf X}$
is just a translate of the distribution of $\round{\bf Z}$ by an integer vector $-\round{\E {\bf Z}}$.
We pick the following $\gamma$ for Lemma~\ref{lem.proj_coeffs}:
\begin{equation}
\label{eq.lpm_gamma}
    \gamma := 
    \frac {c_\epsilon^{\frac 3 2} \sqrt{3 \pi}} {\Cnineteen \sqrt{2} K \sqrt{b D}} (K^{2bD}(M'')^2 r)^{-\frac {r-1} 2}.
\end{equation}
Using $\delta' < \frac 1 4$, (\ref{eq.lpm_b_half}), (\ref{eq.lpm_m'}) and (\ref{eq.lpm_lambda_min}), we get the following bound that will be used later:
\begin{equation}\label{eq.lpm_gammaleq}
    \gamma \le \frac 1 2 \frac{\sqrt{12} c_\epsilon} {\Cnineteen K \sqrt{2 c_\epsilon^{-1}}} \frac {\sqrt{2 \pi m' \lambda_{r'}(\LPMSigma)}} {\sqrt{n}}.
\end{equation}

Since ${\bf X} \sim N(\mu_0, m'\LPMSigma)$ the density $p_{\bf X}$ of ${\bf X}$
satisfies for $x \in \mathbb{R}^{r'}$
\begin{align*}
    \|\nabla p_{\bf X}(x)\| =  \|-(m')^{-1}\LPMSigma^{-1}(x - \mu_0)\| & \le (m')^{-1}\lambda_{r'}(\LPMSigma)^{-1}\|x - \mu_0\|
    \\ & \le (\|x\| + \frac {\sqrt{r'}} 2) (m')^{-1} \lambda_{r'}(\LPMSigma)^{-1}.
\end{align*}
Thus, using (\ref{eq.lpm_lambda_min}), (\ref{eq.delta0_gradient}) is satisfied for $f=p_{\bf X}$ whenever
\begin{equation} \label{eq.xbound}
    R_0 \le  (K^{2bD} (M'')^2 r)^{-(r-1)} \delta_0 
    m' - \frac 1 2 \sqrt{r}.
\end{equation}
Using Lemma~\ref{lem.normal_length_tail} and $r' \le r$ 
we get as long as $r \le \frac {t} {16 \sigma_{1}(m' \LPMSigma)}$ 
\[
    \pr(\|{\bf X} - \mu_0\|^2 \ge t) \le \exp(-\frac {t} {4 \sigma_{1}(m' \LPMSigma)}). 
\]
Let $R = \frac 1 2 \sqrt {r} + (m')^{\frac 1 2 + \frac c 2}$ with
$c = \frac 1 2$ (any $c \in (0, 1)$ would work as well). 
Suppose $m'$ is large enough that 
\begin{equation}\label{eq.lpm_m'_chi2}
    (m')^{c} \ge  16 r^2 (M'')^2.
\end{equation} 
Then by (\ref{eq.lpm_sigma_max}) and (\ref{eq.lpm_m'_chi2})
$(m')^{1 + c} \ge 16 r \sigma_1(m' \LPMSigma)$, so using 
Lemma~\ref{lem.normal_length_tail} and (\ref{eq.lpm_sigma_max})
\begin{align}\label{eq.lpmsR}
    & \pr(\|{\bf X}\| \ge R) \le 
    \pr(\|{\bf X} - \mu_0\| \ge R - \|\mu_0\|) \le
    \pr(\|{\bf X} - \mu_0\| \ge R - \frac 1 2\sqrt{r}) \le \nonumber 
    \\  &  \pr(\|{\bf X} - \mu_0\|^2 \ge (m')^{1 + c}) \le
    \exp(-\frac {(m')^{1+c} } {4 \sigma_{1}(m' \LPMSigma)})
    \le \exp(-\frac {(m')^{c} } {4 (M'')^2 r}).
\end{align}

Let us now upper bound the quantity $q$ for our application of Lemma~\ref{lem.proj_coeffs}. Recall that $q = \sup_{e: \|e\|=1} \conc(\langle e, {\bf X} \rangle, 1)$.

The projection $\langle e, {\bf X} \rangle$ is a normal random variable.
The concentration of $\langle e, {\bf X} \rangle$ is maximized by minimizing its variance, 
and the minimum variance is equal to $\lambda_{r'}(m' \LPMSigma) = m' \lambda_{r'}(\LPMSigma)$. 
 
 Thus $q = \conc(Z, 1)$ where $Z \sim N(\zeta, m' \lambda_{r'} (\LPMSigma))$ for some $\zeta \in \mathbb{R}$, so
 \[
     q = \frac 1 {\sqrt{2 \pi m' \lambda_{r'}(\LPMSigma)}} 
     \int_{\zeta - \frac 1 2}^{\zeta + \frac 1 2} \exp^{-\frac {(t-\zeta)^2} {2 m' \lambda_{r'}(\LPMSigma)}} dt
     \le \frac 1 {\sqrt{2 \pi m' \lambda_{r'}(\LPMSigma)}}.
 \]

Recalling (\ref{eq.lpm_k}) and (\ref{eq.lpm_Q_lower}) and (\ref{eq.lpm_gammaleq}) this means 
\begin{equation}\label{eq.lpm_gqk}
    \gamma k q \le \frac 1 2 \conc_k(S').
\end{equation}
Let $R_1 = R_1(r',\gamma)$ be the constant from Lemma~\ref{lem.proj_coeffs}. 
Set $R_0 := R + R_1 = (m')^{\frac 3 4} + \frac 1 2 \sqrt r +  R_1$.
Then there is $n_1:=n_1(K, \epsilon, \epsilon', \delta')$ such that (\ref{eq.xbound}) and (\ref{eq.lpm_m'_chi2}) hold for all $n \ge n_1$.

We now apply Lemma~\ref{lem.proj_coeffs} with ${\bf X}$, $r'$, $\gamma$, $R_0$ and consider the two possible outcomes.

If we have $\conc(\langle a', \round{\bf X}\rangle) \le \gamma q + \pr(\|{\bf X}\| \ge R_0 - R_1)$ for 
$a' = B^T a$
then
by $\delta' < \frac 1 4$, (\ref{eq.lpm_k}),  (\ref{eq.lpm_b_half}), (\ref{eq.lpm_m'}),  (\ref{eq.lpmsR}) and (\ref{eq.lpm_gqk}) 
if $n \ge n_1$ then
\begin{align*}
    & \conc_k(\langle B^T a,  \round{\bf X}\rangle) \le k\conc(
\langle B^T a,  \round{\bf X}\rangle
    ) \le \gamma k q + k \pr(\|{\bf X}\| \ge R_0 - R_1) 
    \\ & \le \frac 1 2 \conc_k(S') + K \sqrt{2 c_\epsilon^{-1} n} \exp\left(-\frac {\sqrt{n} } {4 (M'')^2 r \sqrt{ 2 b D}}\right).
\end{align*}
Recalling (\ref{eq.lpm_Q_lower}) and 
(\ref{eq.lpm_standard_coupling})
we see that there is $n_2 = n_2(K, \epsilon, \epsilon', \delta') \ge n_1$ 
such that
$\conc_k(S) \le  \conc_k(\langle B^T a,  \round{\bf X}\rangle) + c'' n^{- \frac 1 2} \ln n \le  \frac 3 4 \conc_k(S')$
if $n \ge n_2$ and $V \ge V_0$,
which is a contradiction to our initial assumption that $\conc_k(S) \ge (1+\epsilon) \conc_k(S')$.

Thus if 
\begin{equation}\label{eq.nVlarge}
n \ge n_2 \mbox{ and } V \ge V_0,
\end{equation}
the first 
alternative
of Lemma~\ref{lem.proj_coeffs} cannot occur and we assume that the second 
must hold,
i.e.,
there is $\kappa \in \mathbb{R}$ and a positive integer $L_0 = L_0(r', \gamma)$ such that
\[
    \kappa B^T a \in \{-L_0, \dots, L_0\}^{r'}.
\]
So, writing $S''' := \sum_{i \in J''} \kappa X_i =  \langle \kappa B^T a, {\bf S''} \rangle$, we have
\begin{align}\label{eq.lpm_S'''}
    & \conc_k(\langle B^T a, {\bf S''} \rangle) = 
    \conc_k(S'''). 
\end{align}
Until the final paragraph of the proof we will assume (\ref{eq.nVlarge}) holds.
Recalling the definition of ${\bf S''}$, 
and that of its random summands, we note that $S'''$ can be viewed 
\begin{itemize}
    \item either as a sum of $|J''|=b' m' D$ independent random variables $\kappa X_i$, 
    \item or as a sum of $m'D$ i.i.d. random variables $\langle \kappa B^T a, \mathbf{Z}_i'\rangle$ contained in a symmetric integer interval of length at most $r L_0 M'$, 
    \item or a sum of $m'$ i.i.d. random variables $\langle \kappa B^T a, \mathbf{Z}_i''\rangle$  contained in a symetric integer interval of length at most $r L_0 M''$.
\end{itemize}
$\mathbf{Z}_i'$  and $\kappa B^T a$ are integer, so their inner product is also integer. By our assumption $0 \in \supp(X_i)$ for each $X_i$, thus if some $\kappa X_j$, $j \in J''$ was not integer, the random variables  $\langle \kappa B^T a, \mathbf{Z}_i'\rangle$ would not be integer, thus $\kappa X_j$, must be integer 
(and also an extremal random variable for $\alpha_i$) 
for all $j \in J''$. 

Thus our main achievement resulting from Section~\ref{sec.precise} and Section~\ref{sec.projections} was the following:
we started with a sum of $n$ independent extremal integer random variables and we reduced the general case to one where each integer random variable has a \emph{bounded} support, and (after grouping into constant size groups) has the same distribution.


Let $s$ be the maximum span (see Section~\ref{sec.odlyzko_richmond}) of $U+V$, where $U$ and $V$ are independent bounded integer random variables with finite maximum spans $s_U$ and $s_V$. 
We claim that $s=\gcd(s_U, s_V)$.
Without loss of generality we may assume that
the smallest value in the support of $U$ and $V$, and thus also of $U + V$,
is $0$. 
Let us first prove $s$ divides both $s_U$ and $s_V$. 
Suppose $s$ does not divide $s_U$. Let $a = j s_U$
be an element of $\supp(U)$ that is not a multiple of $s$ (if every element of the support of $U$ was a multiple of $s$ then 
$s_U \frac {s}{\gcd(s,s_U)}$ would be a larger span than $s_U$). Then both $0$ and $a$ are in the support of $U+V$,
contradicting the fact that $s$ is a span of $U+V$.
Now we can express each value in the support of $U$ and $V$ by integer multiples of $\gcd(s_U, s_V)$, so it must be 
the maximum span of $U+V$.

The  above claim implies that the maximum span $s$ is the same for both $S'''$ and each of its $m'D$ i.i.d. summands.
It also implies that each of the 
$b' D m'$
original distributions that make up $S'''$
have maximum spans divisible by $s$ (as $\alpha_i \le \frac 1 2$ there
are no distributions supported on a single point). 
Since $0 \in \supp(X_i)$ for each $i \in J''$, this means that $\frac{\kappa} {s} X_i$ is
integer, so without loss of generality we can further assume that $s=1$ (by using $\kappa'=\frac \kappa s$
in place of $\kappa$).

Thus Theorem~\ref{thm.odlyzko_richmond} can be applied to $S''' = \sum_{i=1}^{D m'} \langle \kappa B^T a, {\bf Z}_i' \rangle$ with
$n=D m'$,
$\delta = \frac {b'} 4$ and an appropriate $d$, $0 < d \le r L_0 M'$, to obtain an integer $n_0^{OR}$ such that
the conclusion of the theorem
holds whenever $D m' \ge n_0^{OR}$. Namely, after shifting and unshifting if necessary the theorem yields
\begin{equation}\label{eq.lpm_odlyzko_richmond}
    \pr(S''' = i)^2 \ge \pr(S''' = i+1) \pr(S''' = i-1)
\end{equation}
for any $i \in \mathbb{Z}$ with $\min \supp(S''') + \frac {b'} 4 D m' \le i \le \max \supp(S''') - \frac{b'} 4 D m'$.

Note, however, that
$n_0^{OR}$ may depend on the distribution of $\langle \kappa B^T a, {\bf Z}_1' \rangle$.

As noted above $\langle \kappa B^T a, {\bf Z}_1' \rangle$ is contained in a symmetric integer interval $I$ of length at most $r L_0 M'$. The same holds for each of its $b'$ random summands $\kappa X_i$, as each of them has $0$ in the support.
Furthermore, $\kappa X_i$ have 
quantized
probabilities of the form $\frac {j} {K^2}$, $j \in \mathbb{Z}$. Thus similarly as in (\ref{eq.lpm_possible_discretized}), the number of possible different distributions for $\langle \kappa B^T a, {\bf Z}_1' \rangle$ is constant, for example, it does not exceed $(r L_0 M' + 1)^{b K^2}$.

Applying Theorem~\ref{thm.odlyzko_richmond} with each such possible distribution that has $s=1$ we can pick as $n_0^*=n_0^*(\epsilon, \epsilon', K, \delta)$ the maximum of resulting $n_0^{OR}$ among the applications. 

While $n_0^*$ can be extremely large, it is still a constant that depends just on $K$, $\epsilon$, $\epsilon'$ and $\delta'$ and (\ref{eq.lpm_odlyzko_richmond}) holds whenever 
(\ref{eq.nVlarge}) holds
and $D m' \ge n_0^*$, or, see (\ref{eq.lpm_b'}), (\ref{eq.lpm_m'}),
whenever both
(\ref{eq.nVlarge})
and 
$n \ge 2 b n_0^*$ hold,
which we also assume till the end of the proof.

Using a 
Hoeffing
bound we get that $\pr(|S''' - \E S'''| \ge \frac {b' D m'} {4}) \le 2 \exp(-c'''n)$ for a positive constant $c''' = c'''(K,\epsilon,\epsilon',\delta')$
(more specifically, using (\ref{eq.lpm_b'}), (\ref{eq.lpm_m'}) we can take
$c''' =\frac {bD} {32 (r L_0 M'')^2}$).
Note that, as each of 
independent summand
$\kappa X_i$ is integer-valued and satisfies $\conc(\kappa X_i) = \conc(X_i) \le \frac 1 2$ we have
\[
    \min(\supp(S''')) + \frac {b' D m'} 2 \le \E S''' \le \max(\supp(S''')) -  \frac {b' D m'} 2.
\]
Thus
(\ref{eq.lpm_odlyzko_richmond}) holds for
each integer $i$ such that
$i \in \E S''' \pm \frac {b' D m'} {4}$. 
%
%
So the distribution of $S'''$ is unimodal when restricted to this interval.
For any $k$ 
as in (\ref{eq.lpm_k})
\begin{equation}\label{eq.lpm_chernoff}
    \conc_k(S''') \le \max_{x \in \E S''' \pm \frac {bm'D} 4} \pr(S''' \in (x-\frac k 2,x+ \frac k 2]) + 2\exp(-c'''n). 
\end{equation}
The random variables $|\langle \kappa B^T a, {\bf Z}_i' \rangle|$ are bounded by $r M' L_0$. So their third moments are bounded by $(r M' L_0)^3$. Meanwhile, as
$\kappa X_i$ are integer and
$\conc(\kappa X_i) \le \frac 1 2$ we have $\Var S''' \ge \frac 1 4 b' D m'$.
Therefore 
we can apply the Berry--Esseen theorem to bound 
\begin{align}
    & \pr(S''' \in (x-\frac k 2, x+\frac k 2]) \le \eta \left((\frac {x - \E S''' - \frac k 2} {\sqrt{\Var S'''}}, \frac {x- \E S'''  + \frac k 2  } {\sqrt{\Var S'''}}]\right) \nonumber
    \\& + \frac {2 (r M' L_0)^3 D m'} {(\frac 1 4 b' D m')^{\frac 3 2}} 
    \le   \eta \left( (\frac {-k} {2 \sqrt{\Var S'''}}, \frac {k} {2\sqrt{\Var S'''}}] \right) + \frac {c''''} {\sqrt{n}} \label{eq.lpmbe},
\end{align}
where $\eta$ is the standard normal measure and 
$c'''' = \frac {2 \cdot \Clpmbee (r M' L_0)^3 } {b}$,
again see  (\ref{eq.lpm_b'}) and (\ref{eq.lpm_m'}), and recall that $\delta' < \frac 1 4$. 
The first term is maximized by minimizing $\Var S'''$.
With our setup this is achieved precisely by taking $S''' = \sum_{i\in J''} Y_{i}$,
as by
Corollary~1.3 of~\cite{ak2025}
$Y_i$ has minimum variance
among all extremal random variables for $\alpha_i$ for each $i \in J''$.
To finish the proof, it remains to choose $\epsilon'$ and $\delta'$ so that
the effect of
\[
    \sum_{i\in \{1, \dots,n\} \setminus J''}^n Y_{i}
\]
to the sum of $k$ largest probability masses of $\sum_{i=1}^n Y_i$
is negligible.
Note that $V + \Var U_k' \ge V \ge \frac n 4$ and recall (\ref{eq.lpm_m'}) which shows that $|\{1,\dots,n\} \setminus J''| \le 2\delta n$.
Set 
\[
\delta' := \frac {\epsilon^3} {2 \cdot 70 \cdot 4^4 K^2}.
\]
This implies
\[
\frac n 4  \ge \frac {70 \cdot 4^3}{\epsilon^3} 2 \delta' n K^2.
\]
Therefore, applying Lemma~\ref{lem.few_dropped} we get
\[
    \conc_k(\sum_{i=1}^n Y_i) \ge (1-\frac {\epsilon} 4)  \conc_k(\sum_{i \in J''} Y_{i}). 
\]
 By (\ref{eq.lpm_Q_lower}) there is a constant $n_3$, $n_3 = n_3(K, \epsilon, \epsilon', \delta')$ such that
\begin{align*}
   &\conc_k(S')^{-1} (c'' n^{-\frac 1 2} \ln n + 2 \exp(-c'''n)  +  c''''n^{-\frac 1 2}) 
\\ & \le  \frac { {\Cnineteen}} {\sqrt{12} c_\epsilon} ( c'' n^{-\frac 1 2} \ln n +   2 \exp(-c'''n) + c''''n^{-\frac 1 2}) \le \frac \epsilon 4
\end{align*}
 for all $n \ge n_3$.
Thus, since $\epsilon \in (0,1)$ we get  by (\ref{eq.lpm_standard_coupling}), (\ref{eq.lpm_S'''}), (\ref{eq.lpm_chernoff}) and (\ref{eq.lpmbe})
that if $n \ge \max(n_2, n_3, 2 b n_0^*)$ and $V \ge V_0$ then
\begin{align*}
    &\conc_k(S) \le \conc_k(\sum_{i\in J''} Y_{i}) + c'' n^{-\frac 1 2} \ln n
   + 2 \exp(-c'''n) + c'''' n^{-\frac 1 2} 
    \\& \le  (1-\frac {\epsilon} 4)^{-1} \conc_k(S') + c'' n^{-\frac 1 2} \ln n + 2\exp(-c'''n) +  c'''' n^{-\frac 1 2} 
    \\ & \le \left(1 + \frac \epsilon 2 +  \frac {\epsilon} 4\right) \conc_k(S')
    \le (1+\frac{3 \epsilon} 4) \conc_k(S') 
\end{align*}
which is a contradiction to the original assumption $\conc_k(S) \ge (1+\epsilon) \conc_k(S')$. Finally, note that 
(\ref{eq.lpm_nV0}) implies $n \ge \max(n_2,n_3, 2bn_0^*)$ when 
\[
V \ge V_{\epsilon, K} := \max\left(V_0, \frac {K^2 \max(n_2,n_3, 2bn_0^*))} {12}\right).
\]
\end{proofof}

\medskip

\textbf{Acknowledgment}. 
I thank Tomas Juškevičius for proposing to work on this problem, for the motivation, and for helpful discussions.

\phantomsection

\newpage

\appendix

\section{Appendix}
\label{sec.appendix}

Here we list some auxiliary lemmas.

Note that by the Cauchy--Schwarz 
inequality we have for any $x \in \mathbb{R}^d$
\begin{equation}\label{eq.cauchy_schwarz}
    \|x\|_1 = {\sum_{i=1}^d |x_i|} \le \sqrt{d} \|x\|.
\end{equation}

We will use the next almost trivial lower bound for singular values of an integer matrix.

\begin{lemma}
    \label{lem.singular_value_lower_bound}
    Let $A$ be an $m \times n$ integer matrix of full column rank (so $n \le m$).
    Let $R > 0$.
    Suppose
    that $\|v_i\| \le R$ for each $i \in \{1, \dots, n\}$,
    where $v_i$ is the $i$th column vector of $A$.
    Then the smallest singular value of $A$ satisfies
    \[
       \sigma_n(A) \ge (\sqrt n R)^{-(n-1)}.
    \]
\end{lemma}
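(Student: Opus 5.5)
We have to bound $\sigma_n(A)$ from below for an $m\times n$ integer matrix $A$ of full column rank whose columns satisfy $\|v_i\|\le R$. The approach is to relate $\sigma_n(A)$ to the eigenvalues of the Gram matrix $G=A^TA$ and then use integrality of $\det G$.

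First, $G$ is an $n\times n$ integer matrix, and it is positive definite because $A$ has full column rank. Its eigenvalues are $\sigma_1(A)^2\ge\dots\ge\sigma_n(A)^2>0$. The key observation is that $\det G$ is a positive integer, so $\det G\ge 1$. Hence
\[
\sigma_n(A)^2 \;\ge\; \frac{1}{\prod_{i=1}^{n-1}\sigma_i(A)^2}\;\ge\; \sigma_1(A)^{-2(n-1)}.
\]

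Next we need an upper bound on $\sigma_1(A)=\|A\|$ in terms of the column norms. For a unit vector $x$, the triangle inequality gives $\|Ax\|\le\sum_i|x_i|\,\|v_i\|\le R\|x\|_1$. By (\ref{eq.cauchy_schwarz}), $\|x\|_1\le\sqrt n\|x\|=\sqrt n$. Therefore $\sigma_1(A)\le\sqrt n\,R$. (The Frobenius bound $\|A\|_F\le\sqrt n R$ gives the same estimate.)

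Combining the two steps gives $\sigma_n(A)\ge(\sqrt nR)^{-(n-1)}$, as claimed. If $n=1$, then $\det G\ge1$ directly gives $\sigma_1\ge 1$, which agrees with the formula, since the exponent is $0$. Nothing here is a real obstacle; the only point that needs care is the integrality of $\det(A^TA)$, and it is immediate because $A^TA$ has integer entries.
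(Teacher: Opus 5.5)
Your proposal is correct and follows essentially the same argument as the paper. You bound $\sigma_1(A)\le\sqrt n R$ via the triangle inequality and (\ref{eq.cauchy_schwarz}), then combine $\prod_i\sigma_i(A)^2=\det(A^TA)\ge 1$ (integrality and full column rank) with $\sigma_i\le\sigma_1$ to get $\sigma_n(A)\ge\sigma_1(A)^{-(n-1)}$.
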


\begin{proof}
    A basic property of singular value decomposition is that
    \begin{align*}
        & \sigma_1(A) = \|A\| = \max_{\|z\|=1} \| A z \| = \max_{\|z\|=1}\|\sum_i z_i v_i\| 
        \\ & \le \max_i \|v_i\| \max_{\|z\|=1} \|z\|_1 \le \sqrt {n} R.
    \end{align*}
    Here the first inequality follows since $\|\sum_i z_i v_i\| \le \sum_i |z_i| \|v_i\|$ by the triangle inequality and the
    second one follows by (\ref{eq.cauchy_schwarz}). 
    So using the property $\sigma_1(A) \dots \sigma_n(A) = \sqrt{\det A^T A}$ we have
    \[
        \sigma_{n}(A) \ge \frac { \sqrt{\det A^T A}} {\sigma_1(A)^{n-1}} \ge \frac 1 {\sigma_1(A)^{n-1}} \ge (\sqrt n R)^{-(n-1)}.
    \]
\end{proof}

\begin{lemma}\label{lem.korkinzolotarev}
    Fix a positive integer $r$.
    Let $S \subseteq \mathbb{Z}^r$ be such that $\|x\| \le R$
    for all $x \in S$.

    Let $L$ be the set generated by
    all finite linear combinations of 
    vectors from $S$ with integer coefficients.

    Suppose $0 \in S$. 
    Let $r'$ be the dimension of 
    the subspace of $\mathbb{R}^r$ spanned by $S$.

    Then there is an $r \times r'$ integer matrix $A$ such that 
    \[
        S \subseteq \{A x: x \in \mathbb{Z}^{r'}, \|x\| \le c(R, r) \}
    \]
    where $c(R, r) =   R(\sqrt {\frac {r(r+3)} 4} R^2)^{r-1} \le  r^{r-1} R^{2r-1}$.
\end{lemma}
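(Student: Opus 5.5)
We plan to build the lattice basis greedily from short vectors of $S$ and control the coefficients by Cramer's rule, which is what produces the bound $c(R,r)$.

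Since $0 \in S$ and $S$ spans an $r'$-dimensional subspace $W$, pick $r'$ linearly independent vectors $s_1,\dots,s_{r'} \in S$. The lattice $L \subseteq \mathbb{Z}^r \cap W$ has rank $r'$. By the lattice basis theorem (as already used in the proof of Lemma~\ref{lem.less_peaked_medium}), $L$ has a basis $b_1,\dots,b_{r'}$. We want to choose this basis so that it is short, and then express each $x \in S$ in it with small coefficients.

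\textbf{Step 1: a short basis.} Every lattice has a basis whose vectors are not much longer than its successive minima; this is the Korkin--Zolotarev / Mahler-type statement suggested by the lemma's label. The $r'$ independent vectors $s_i$ all have length at most $R$, so every successive minimum of $L$ is at most $R$. The standard conversion from $r'$ independent short lattice vectors to a basis is the following. For $i = 1,\dots,r'$, let $L_i = L \cap \mathrm{span}(s_1,\dots,s_i)$, and choose $b_i \in L_i$ whose component orthogonal to $\mathrm{span}(s_1,\dots,s_{i-1})$ is minimal and nonzero. Then size-reduce $b_i$ against $b_1,\dots,b_{i-1}$. This gives
\[
\|b_i\| \le \sqrt{\max(1, i/4)}\, R,
\]
so $\|b_i\| \le \sqrt{\frac{r(r+3)}{4}}\, R$ is a crude uniform bound that matches the constant in the statement. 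Let $A$ be the $r\times r'$ integer matrix with columns $b_i$.

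\textbf{Step 2: coefficients via Cramer.} For $x \in S \subseteq L$, write $x = Az$ with $z \in \mathbb{Z}^{r'}$. Solve using the Gram system $A^TA z = A^T x$, so that
\[
z_j = \frac{\det(A^TA \text{ with column } j \text{ replaced by } A^Tx)}{\det A^TA}.
\]
The matrix $A^TA$ is a nonzero integer matrix, hence $\det A^TA \ge 1$. The numerator is a Gram-type determinant, and Hadamard's inequality bounds it by $\|x\|\prod_{i\ne j}\|b_i\|$ times the corresponding square-root factor. More cleanly, $|z_j| \le \|x\| \prod_{i\neq j}\|b_i\| / \sqrt{\det A^TA} \le R \cdot \big(\sqrt{r(r+3)/4}\,R\big)^{r'-1}$. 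Summing the coordinates up to a $\sqrt{r'}$ factor and absorbing it, or bounding the norm directly through the same Hadamard estimate applied to the whole vector, yields $\|z\| \le c(R,r)$. The constant given in the statement has exponent $R^2$ inside, which suggests the author bounds via $\det(A^TA)$ entries of size $R^2$. So I would use Hadamard on $A^TA$ (entries bounded by $\frac{r(r+3)}{4}R^2$) rather than on $A$, giving exactly $R(\sqrt{r(r+3)/4}\,R^2)^{r-1}$. The final inequality $\le r^{r-1}R^{2r-1}$ is arithmetic, using $\sqrt{r(r+3)/4}\le r$ for $r\ge1$.

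\textbf{Main obstacle.} The main obstacle is Step 1: justifying that $L$ has a basis with all vectors of length $O_r(R)$. The greedy construction must be carried out carefully. The minimal orthogonal component is attained because $L_i$ projects onto a discrete rank-one lattice. Size reduction keeps the projection coefficients in $[-\frac12,\frac12]$, which bounds $\|b_i\|^2 \le \|b_i^*\|^2 + \frac14\sum_{k<i}\|b_k^*\|^2$ with $\|b_k^*\|\le R$. This inequality is the source of the $(r+3)/4$ factor.
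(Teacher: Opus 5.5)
Your proof is correct, but it takes a somewhat different route from the paper's.

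\textbf{The paper's route.} It cites a Korkin--Zolotarev basis with $\|v_i\| \le \sqrt{(i+3)/4}\,\lambda_i(L)^2 \le \sqrt{(r'+3)/4}\,R^2$. The square is harmless because $\lambda_i(L)\ge 1$ for a sublattice of $\mathbb{Z}^r$, and it is the only source of the $R^2$ in $c(R,r)$. The paper then bounds the coefficients globally through the pseudoinverse: $\|y\| \le R/\sigma_{r'}(A)$, with $\sigma_{r'}(A) \ge (\sqrt{r'}\max_i\|v_i\|)^{-(r'-1)}$ from Lemma~\ref{lem.singular_value_lower_bound}, which rests on $\prod_i\sigma_i(A)=\sqrt{\det A^TA}\ge 1$.

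\textbf{Your route.} You build the short basis yourself: pick $b_i$ with minimal nonzero projection orthogonal to $\mathrm{span}(s_1,\dots,s_{i-1})$, then size-reduce. This is the Cassels/Mahler-type argument behind the KZ bound, and it gives $\|b_i\|\le\sqrt{(i+3)/4}\,R$, linear in $R$. You then bound each coordinate by Cramer's rule.

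The Cramer step should be written out rather than gestured at. Since $x$ lies in the column span, replacing column $j$ of $A$ by $x$ gives $A_j=AM$ with $\det M=z_j$. Hence
\[
|z_j|=\sqrt{\det(A_j^TA_j)/\det(A^TA)}\le \|x\|\prod_{i\ne j}\|b_i\|,
\]
using Hadamard and $\det A^TA\ge 1$.

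Both routes rest on the same two facts: a short basis, and integrality forcing $\det A^TA\ge1$. Yours is self-contained and gives the sharper bound $\|z\|\le R\bigl(\sqrt{r'(r'+3)/4}\,R\bigr)^{r'-1}$, which implies $c(R,r)$ because $R\ge1$. The paper's is shorter because it takes the basis from the literature.

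\textbf{Bookkeeping to fix.}
\begin{enumerate}
\item Your construction gives $\|b_i\|\le\sqrt{(i+3)/4}\,R$, not $\sqrt{\max(1,i/4)}\,R$ as you first state. The latter needs an extra argument, but the former suffices.
\item To absorb the $\sqrt{r'}$ from passing from $\max_j|z_j|$ to $\|z\|$, keep the per-column bound $\sqrt{(r'+3)/4}\,R$. Then use $\sqrt{r'}\le\sqrt{r'}^{\,r'-1}$ for $r'\ge2$, together with $R\ge1$ (true because $S$ contains a nonzero integer vector).
\item With the crude per-column bound $\sqrt{r(r+3)/4}\,R$ the absorption fails. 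At $R=1$, $r'=r=2$ you would get $\sqrt5>\sqrt{10}/2=c(1,2)$.
\item Drop the speculation that ``Hadamard on $A^TA$'' gives exactly $c(R,r)$. It does not produce that constant, and it is unnecessary, since your main bound already implies the statement.
\end{enumerate}
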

\begin{proof}
    $L$ is a lattice of rank $r'$, see, e.g. \cite{korkinbases}.
    It has a \emph{lattice basis} consisting of $r'$ vectors in $\mathbb{Z}^{r'}$
    such that $L$ is equal to the set of their linear combinations with integer coefficients.

    It is known in the literature, see, e.g., Theorem~2.1 of \cite{korkinbases},
    that $L$ has a so called \emph{Korkin--Zolotarev} lattice basis $v_1, \dots, v_{r'}$ which satisfies
    \[
        \|v_i\| \le \sqrt {\frac {i+3} 4} \lambda_i(L)^2, \quad i \in \{1, \dots, r'\},
    \]
    where $\lambda_i(L)$ is the $i$th \emph{successive minimum} of $L$, 
    which is defined as the infimum among the numbers $s$ such the closed ball $B(0, s)$ in $\mathbb{R}^{r}$ contains $i$ linearly independent vectors from $L$.
    Since our set $S$ must contain $r'$ linearly independent vectors from $L$, we have
    $\lambda_i(L) \le R$ for all $i \in \{1, \dots, r'\}$.
    So $\|v_i\| \le  \sqrt {\frac {r'+3} 4} R^2$. 

    Let $A$ be an $r \times r'$ matrix with the Korkin--Zolotarev basis vectors $v_1, \dots, v_{r'}$ as columns.
    Let
    $x \in S \subseteq L$
    and let $y \in \mathbb{Z}^{r'}$ be the 
    unique vector such that $x = A y$. 
    Since $A$ has full column rank, it has a pseudoinverse 
    $(A^T A)^{-1} A^T$.
    Thus
    \[
        y = (A^T A)^{-1} A^T x
    \]
    and
    \[
        \|y\| \le \|(A^T A)^{-1} A^T\| \|x\| \le \sigma_1((A^T A)^{-1} A^T) R
    \]
    A basic property of the pseudoinverse of the matrix $A$ is that it has singular values which are reciprocals of the singular values of $A$. Thus $\|(A^T A)^{-1} A^T\| = \sigma_{r'}^{-1}(A)$.
    Another basic fact is that $\sigma_1(A) \times \dots \times \sigma_{r'}(A) = \sqrt{\det A^T A} \ge 1$, here the inequality is because $A$ has integer values and the matrix $A$ has rank $r'$.

    By Lemma~\ref{lem.singular_value_lower_bound}
    \[
        \sigma_{r'}(A) \ge 
         (\sqrt {\frac {r'(r'+3)} 4} R^2)^{-(r'-1)}
    \]
    so
    \[
        \|y\| \le R \sigma_{r'}(A)^{-1} \le c(R, r):= R (\sqrt {\frac {r(r+3)} 4} R^2)^{r-1}.
    \]
\end{proof}

\begin{lemma}\label{lem.normal_length_tail}
    Let ${\bf X}$ be a random vector in $\mathbb{R}^d$ with normal distribution
    of mean zero and a positive definite covariance matrix $\Sigma$. Let $\sigma_1(\Sigma)$ be the largest singular value of $\Sigma$. Then for any $t > 0$ such that
    $d \le \frac t {16 \sigma_1(\Sigma)}$ we have
    \[
        \pr(\|{\bf X}\|^2 \ge t) \le \exp(-\frac {t} {4 \sigma_1(\Sigma)}).
    \]
\end{lemma}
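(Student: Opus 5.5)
The plan is to diagonalize $\Sigma$ and reduce the tail bound to a Chernoff bound for a weighted sum of independent $\chi^2_1$ variables. Write $\Sigma = U \Lambda U^T$ with $U$ orthogonal and $\Lambda = \mathrm{diag}(\lambda_1,\dots,\lambda_d)$. Since $\Sigma$ is positive definite, its eigenvalues and singular values coincide, so $0<\lambda_i \le \sigma_1(\Sigma)$. Then ${\bf X} \sim U \Lambda^{1/2} {\bf G}$ with ${\bf G}$ standard normal in $\mathbb{R}^d$. By orthogonal invariance of the norm,
\[
\|{\bf X}\|^2 \sim \sum_{i=1}^d \lambda_i G_i^2 ,
\]
where $G_1,\dots,G_d$ are independent standard normal variables.

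Next I would apply the exponential Markov inequality with the parameter $s = \frac 1 {4\sigma_1(\Sigma)}$. For each $i$ we have $2 s \lambda_i \le \frac 1 2$, so $\E \exp(s\lambda_i G_i^2) = (1-2s\lambda_i)^{-1/2}$ is finite. Using the elementary inequality $-\frac12\ln(1-x) \le x$ for $x\in[0,\frac12]$, this gives
\[
\E \exp\Big(s\sum_i \lambda_i G_i^2\Big) \le \exp\Big(2s\sum_i\lambda_i\Big) \le \exp(2 s d \sigma_1(\Sigma)) = e^{d/2}.
\]
Hence
\[
\pr(\|{\bf X}\|^2\ge t) \le \exp\Big(\frac d 2 - \frac t {4\sigma_1(\Sigma)}\Big).
\]

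This is not quite the claimed bound, so the main obstacle is absorbing the $e^{d/2}$ factor, and the assumption $d \le \frac{t}{16\sigma_1(\Sigma)}$ is what handles it. I would redo the computation with a larger parameter, $s = \frac{3}{8\sigma_1(\Sigma)}$. Then $x = 2s\lambda_i \le \frac34$, and on $[0,\frac34]$ we have $-\frac12\ln(1-x) \le \ln 2 \le 0.7$. The moment generating function is then at most $e^{0.7d}$, giving
\[
\pr(\|{\bf X}\|^2\ge t) \le \exp\Big(0.7 d - \frac{3t}{8\sigma_1}\Big) \le \exp\Big(\frac{0.7t}{16\sigma_1} - \frac{6t}{16\sigma_1}\Big) \le \exp\Big(-\frac{t}{4\sigma_1}\Big),
\]
because $6-0.7 \ge 4$. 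With this choice the proof is complete.
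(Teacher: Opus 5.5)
Your proof is correct, and it takes a more self-contained route than the paper. The paper writes ${\bf X} = \Sigma^{1/2}{\bf Z}$ and uses the pointwise bound $\|{\bf X}\|^2 \le \sigma_1(\Sigma)\|{\bf Z}\|^2$ to reduce to the standard Gaussian. It then cites the Laurent--Massart inequality $\pr(\|{\bf Z}\|^2 - d \ge 2\sqrt{dt}+2t)\le e^{-t}$, noting that for $d\le t/4$ the threshold $d+2\sqrt{dt}+2t$ is at most $4t$, and rescales $t$.

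You instead diagonalize $\Sigma$ and run the Chernoff argument directly on $\sum_i \lambda_i G_i^2$, using $\lambda_i \le \sigma_1(\Sigma)$ inside the moment generating function rather than pointwise. With $s = 3/(8\sigma_1(\Sigma))$ each factor $(1-2s\lambda_i)^{-1/2}$ is at most $2$. The resulting exponent is at most $(\ln 2 - 6)\,t/(16\sigma_1(\Sigma))$ under the hypothesis $d \le t/(16\sigma_1(\Sigma))$, which is below the required $-4t/(16\sigma_1(\Sigma))$.

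The paper's route is shorter because it cites a sharp known inequality. Yours needs no external reference: it is essentially the standard MGF argument behind Laurent--Massart, specialized to the crude constants this lemma needs. Your first computation with $s = 1/(4\sigma_1(\Sigma))$ does not reach the claimed bound, so you can drop it from the write-up.
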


\begin{proof}
    A Laurent and Massart bound (Lemma~1 of \cite{laurentmassart})
    says that for a standard normal random vector ${\bf Z}$ in $\mathbb{R}^d$ the corresponding
    $\chi$-squared random variable $\|{\bf Z}\|^2$ satisfies
\[
    \pr(\|{\bf Z}\|^2 - d \ge 2 \sqrt{ d t} + 2 t) \le e^{-t}.
\]
    So if $d \le \frac t 4$ we have $\pr(\|{\bf Z}\|^2 \ge 4 t) \le e^{-t}$.
    Equivalently, if $d \le \frac t {16}$ then
    \begin{equation}\label{eq.tail_chi_sq}
        \pr(\|{\bf Z}\|^2 \ge t) \le e^{- \frac t 4} 
    \end{equation}
    We have ${\bf X} \sim \Sigma^{\frac 1 2} {\bf Z}$ so
    $\|{\bf X}\|^2 = \langle {\bf X}, {\bf X} \rangle =
    {\bf Z}^T (\Sigma^{\frac 1 2})^T \Sigma^{\frac 1 2} {\bf Z} = \|{\bf Z}\|^2 (\frac {{\bf Z}} {\|{\bf Z}\|})^T \Sigma  \frac {{\bf Z}} {\|{\bf Z}\|}$.
    By the properties of singular values, the maximum of $x^T \Sigma x$ over unit length vectors $x$ is exactly
    the largest singular value $\sigma_1(\Sigma)$.
    So  $\|{\bf X}\|^2 \le \|{\bf Z}\|^2 \sigma_1(\Sigma)$. 

    So if $d \le \frac t {16 \sigma_1(\Sigma)}$, we have by  (\ref{eq.tail_chi_sq})
    \begin{align*}
        &\pr(\|{\bf X}\|^2 \ge t) \le  \pr( \|{\bf Z}\|^2 \sigma_1(\Sigma) \ge t)
        \le \exp(- \frac {t} {4 \sigma_1(\Sigma)}).
    \end{align*}
\end{proof}

\end{document}